\newcommand{\journal}[1]{\iftoggle{journal}{#1}{}}
\newcommand{\preprint}[1]{\iftoggle{journal}{}{#1}}
\newcommand{\loose}{\journal{\looseness=-1}}
\newcommand{\draft}[1]{\iftoggle{draft}{#1}{}}
\newcommand{\notdraft}[1]{\iftoggle{draft}{}{#1}}
\journal{\usepackage{type1cm}}
\preprint{
\usepackage[letterpaper, left=1in, right=1in, top=1in, bottom=1in]{geometry}
}
\preprint{
\usepackage[colorlinks=true, linkcolor=blue!70!black, citecolor=blue!70!black,urlcolor=black,breaklinks]{hyperref}
}
\journal{
\usepackage[colorlinks=true, linkcolor=blue!70!black, citecolor=blue!70!black,urlcolor=blue!70!black,breaklinks]{hyperref}
\usepackage{breakcites}
}
\preprint{
\usepackage{natbib}
\bibliographystyle{abbrvnat}
\bibpunct{(}{)}{;}{a}{,}{,}
}
\journal{
\usepackage[authoryear]{natbib}
}
\preprint{
\usepackage[nameinlink,capitalize]{cleveref}
}
\journal{
\usepackage[capitalize]{cleveref}
}
\newcommand{\pref}[1]{\cref{#1}}
\newcommand{\pfref}[1]{Proof of \pref{#1}}
\newcommand{\savehyperref}[2]{\texorpdfstring{\hyperref[#1]{#2}}{#2}}
\preprint{\Crefformat{figure}{#2Figure #1#3}}
\journal{\Crefformat{figure}{Figure #2#1#3}}
\crefname{framedmetaalgorithm}{Meta-Algorithm}{Meta-Algorithms}
\theoremstyle{theorem}  %
\newtheorem{lemma}{Lemma}
\newtheorem{corollary}{Corollary}
\newtheorem{proposition}{Proposition}
\newtheorem{assumption}{Assumption}
\Crefname{assumption}{Assumption}{Assumptions}
\newtheorem{theorem}{Theorem}
\theoremstyle{plain}
\newtheorem{remark}{Remark}
\newtheorem{example}{Example}
\newtheorem{definition}{Definition}
\preprint{
\xpatchcmd{\proof}{\itshape}{\normalfont\proofnameformat}{}{}
\newcommand{\proofnameformat}{\bfseries}
}
\DeclarePairedDelimiter{\abs}{\lvert}{\rvert} %
\DeclarePairedDelimiter{\brk}{[}{]}
\DeclarePairedDelimiter{\crl}{\{}{\}}
\DeclarePairedDelimiter{\prn}{(}{)}
\DeclarePairedDelimiter{\nrm}{\|}{\|}
\DeclarePairedDelimiter{\tri}{\langle}{\rangle}
\DeclarePairedDelimiter{\ceil}{\lceil}{\rceil}
\DeclarePairedDelimiter{\floor}{\lfloor}{\rfloor}
\let\Pr\undefined
\DeclareMathOperator{\En}{\mathbb{E}}
\DeclareMathOperator{\Pr}{Pr}
\DeclareMathOperator*{\argmin}{arg\,min} %
\newcommand{\ls}{\ell}
\newcommand{\indic}{\mathbbm{1}}    %
\newcommand{\eps}{\epsilon}
\newcommand{\veps}{\varepsilon}
\newcommand{\ldef}{\vcentcolon=}
\newcommand{\rdef}{=\vcentcolon}
\newcommand{\wt}[1]{\widetilde{#1}}
\newcommand{\wh}[1]{\widehat{#1}}
\def\ddefloop#1{\ifx\ddefloop#1\else\ddef{#1}\expandafter\ddefloop\fi}
\def\ddef#1{\expandafter\def\csname bb#1\endcsname{\ensuremath{\mathbb{#1}}}}
\def\ddefloop#1{\ifx\ddefloop#1\else\ddef{#1}\expandafter\ddefloop\fi}
\def\ddef#1{\expandafter\def\csname b#1\endcsname{\ensuremath{\mathbf{#1}}}}
\def\ddef#1{\expandafter\def\csname c#1\endcsname{\ensuremath{\mathcal{#1}}}}
\def\ddef#1{\expandafter\def\csname h#1\endcsname{\ensuremath{\widehat{#1}}}}
\def\ddef#1{\expandafter\def\csname hc#1\endcsname{\ensuremath{\widehat{\mathcal{#1}}}}}
\def\ddef#1{\expandafter\def\csname t#1\endcsname{\ensuremath{\widetilde{#1}}}}
\def\ddef#1{\expandafter\def\csname tc#1\endcsname{\ensuremath{\widetilde{\mathcal{#1}}}}}
\newcommand{\Holder}{H{\"o}lder}
\let\wt\undefined
\newcommand{\wt}[1]{\widetilde{#1}}
\newcommand{\mb}[1]{\boldsymbol{#1}}
\newcommand{\R}{\bbR}
\newcommand{\conv}{\textrm{conv}}
\newcommand{\xr}[1][n]{x_{1:#1}}
\newcommand{\zr}[1][n]{z_{1:#1}}
\newcommand{\grad}{\nabla}
\newcommand{\trn}{\top}
\newcommand{\yh}{\hat{y}}
\renewcommand{\trn}{\dagger}
\newmdtheoremenv[innertopmargin=10pt,linewidth=.3mm]{framedmetaalgorithm}{Meta-Algorithm}
\newmdtheoremenv[innertopmargin=10pt,linewidth=.3mm]{framedalgorithm}{Algorithm}
\preprint{
\usepackage{inconsolata}
\usepackage[scaled=.88]{helvet}
}
\newenvironment{revone}{\color{black}}{\ignorespacesafterend}
\newcommand{\revonecolor}[1]{\iftoggle{revone}{{\color{black}#1}}{}}
\newcommand{\thetabar}{\bar{\theta}}
\newcommand{\Ot}{\wt{O}}
\newcommand{\littleo}{o}
\newcommand{\bigoh}{O}
\newcommand{\bigoht}{\Ot}
\newcommand{\Tk}{T_{\cK}}
\newcommand{\unitrange}{\brk*{-1,+1}}
\newcommand{\vphi}{\varphi}
\newcommand{\Pn}{\ensuremath{\mathbb{P}}}
\newcommand{\sh}{\ensuremath{\mathrm{star}}}
\newcommand{\rbar}{|}
\newcommand{\Rad}{\mathfrak{R}_{n}}
\newcommand{\Rademp}{\mathfrak{R}_n}
\newcommand{\Radexp}{\mathcal{R}_n}
\newcommand{\midsem}{;\,}
\newcommand{\poprisk}{L_{\cD}}
\newcommand{\approxleq}{\lesssim}
\renewcommand{\yh}{\wh{y}}
\renewcommand{\conv}{\mathrm{conv}}
\newcommand{\starhull}{\mathrm{star}}
\newcommand{\var}{\mathrm{Var}}
\newcommand{\varn}{\mathrm{Var}_{n}}
\newcommand{\auxX}{\wt{X}}
\newcommand{\auxY}{\wt{Y}}
\newcommand{\auxS}{\wt{S}}
\newcommand{\auxL}{\wt{L}}
\newcommand{\varall}{z}
\newcommand{\varone}{w} %
\newcommand{\vartwo}{x} %
\newcommand{\Varall}{\cZ}
\newcommand{\Varone}{\cW} %
\newcommand{\Vartwo}{\cX} %
\newcommand{\estone}{\wh{g}}
\newcommand{\esttwo}{\wh{\theta}}
\newcommand{\esttwoinit}{\wh{\theta}_{\mathrm{init}}}
\newcommand{\Aclass}{\cA}
\newcommand{\esta}{\wh{a}}
\newcommand{\gta}{a_0}
\newcommand{\gtF}{\cF_{0}}
\newcommand{\estF}{\cF}
\newcommand{\pone}{p_1}
\newcommand{\ptwo}{p_2}
\newcommand{\done}{d_1}
\newcommand{\dtwo}{d_2}
\newcommand{\vecone}{\cV_1}
\newcommand{\vectwo}{\cV_2}
\newcommand{\nuisance}{\cG}
\newcommand{\target}{\Theta}
\newcommand{\targetemp}{\wh{\Theta}}
\newcommand{\normone}[1]{\nrm*{#1}_{\vecone}}
\newcommand{\normtwo}[1]{\nrm*{#1}_{\vectwo}}
\newcommand{\dimone}{K_1}
\newcommand{\dimtwo}{K_2}
\newcommand{\dist}{\cD}
\newcommand{\Rate}{\mathrm{Rate}_{\cD}}
\newcommand{\besttwo}{\theta^{\star}}
\newcommand{\gtone}{g_0}
\newcommand{\gttwo}{\theta_0}
\newcommand{\sampleone}{S_1}
\newcommand{\sampletwo}{S_2}
\newcommand{\samplethree}{S_3}
\newcommand{\samplefour}{S_4}
\newcommand{\algone}[1]{\mathrm{Alg}(\nuisance,#1)}
\newcommand{\momentconstant}{C_{2\to{}4}}
\renewcommand{\trn}{\top}
\newcommand{\ind}[1]{^{\scriptscriptstyle (#1)}}
\renewcommand{\hat}[1]{\wh{#1}}
\newcommand{\indep}{\perp}
\newcommand{\param}{parameter\xspace}
\newcommand{\params}{parameters\xspace}
\newcommand{\targeteps}{\target_0(\veps)}
\newcommand{\loglink}{\sigma_{\mathrm{log}}}
\newcommand{\fstar}{f^{\star}}
\journal{\renewcommand{\footnote}{\textcolor{red}{footnote}}}
\newcommand{\footnoteorinline}[1]{\preprint{\footnote{#1}}\journal{ #1}}
\newcommand{\footnoteorhide}[1]{\preprint{\footnote{#1}}\journal{}}
\newcommand{\si}{\mathsf{si}}
\newcommand{\Lsi}{L_{\si}}
\newcommand{\Rsi}{R_{\si}}
\newcommand{\Tsi}{T_{\si}}
\newcommand{\tausi}{\tau_{\si}}
\newcommand{\musi}{\mu_{\si}}
\newcommand{\gammasi}{\lambda_{\si}}
\newcommand{\lambdasi}{\gammasi}
\newcommand{\betasi}{\beta_{\si}}
\newcommand{\rsi}{r_{\si}}
\preprint{
\setcounter{tocdepth}{3}
}
\journal{
  \notdraft{\setcounter{tocdepth}{0}}
}
\newcommand{\alphamark}[1]{\textnormal{#1)}}
\preprint{
  \let\oldparagraph\paragraph
  \renewcommand{\paragraph}[1]{\oldparagraph{#1.}}
  }
\journal{
  \let\oldparagraph\paragraph
  \renewcommand{\paragraph}[1]{\oldparagraph*{#1}}
}
\preprint{
\usepackage{parskip}
}
\preprint{
\title{\huge{Orthogonal Statistical Learning}}
\author{Dylan J. Foster \\ Microsoft Research \\dylanfoster@microsoft.com\and Vasilis Syrgkanis \\ Stanford University \\vsyrgk@stanford.edu}
\date{}
}
\begin{document}
\preprint{
\maketitle
\begin{abstract}
We provide non-asymptotic excess risk guarantees for statistical learning in a setting where the population risk with respect to which we evaluate the target parameter depends on an unknown \emph{nuisance parameter} that must be estimated from data. We analyze a two-stage sample splitting meta-algorithm that takes as input arbitrary estimation algorithms for the target parameter and nuisance parameter. We show that if the population risk satisfies a condition called \emph{Neyman orthogonality}, the impact of the nuisance estimation error on the excess risk bound achieved by the meta-algorithm is of second order. Our theorem is agnostic to the particular algorithms used for the target and nuisance and only makes an assumption on their individual performance. This enables the use of a plethora of existing results from machine learning to give new guarantees for learning with a nuisance component. Moreover, by focusing on excess risk rather than parameter estimation, we can provide rates under weaker assumptions than in previous works and accommodate settings in which the target parameter belongs to a complex nonparametric class. We provide conditions on the metric entropy of the nuisance and target classes such that \emph{oracle rates} of the same order as if we knew the nuisance parameter are achieved. \end{abstract}
}

\journal{
\begin{frontmatter}
\title{Orthogonal Statistical Learning}
\runtitle{Orthogonal Statistical Learning}

\begin{aug}
\author[A]{\fnms{Dylan J.} \snm{Foster}\ead[label=e1]{dylanfoster@microsoft.com}}
\and
\author[B]{\fnms{Vasilis} \snm{Syrgkanis}\ead[label=e2]{vasy@microsoft.com}}
\address[A]{Microsoft Research, New England,
\printead{e1}}
\address[B]{Microsoft Research, New England,
\printead{e2}}
\end{aug}

\begin{abstract}
We provide non-asymptotic excess risk guarantees for statistical learning in a setting where the population risk with respect to which we evaluate the target parameter depends on an unknown \emph{nuisance parameter} that must be estimated from data. We analyze a two-stage sample splitting meta-algorithm that takes as input arbitrary estimation algorithms for the target parameter and nuisance parameter. We show that if the population risk satisfies a condition called \emph{Neyman orthogonality}, the impact of the nuisance estimation error on the excess risk bound achieved by the meta-algorithm is of second order. Our theorem is agnostic to the particular algorithms used for the target and nuisance and only makes an assumption on their individual performance. This enables the use of a plethora of existing results from machine learning to give new guarantees for learning with a nuisance component. Moreover, by focusing on excess risk rather than parameter estimation, we can provide rates under weaker assumptions than in previous works and accommodate settings in which the target parameter belongs to a complex nonparametric class. We provide conditions on the metric entropy of the nuisance and target classes such that \emph{oracle rates} of the same order as if we knew the nuisance parameter are achieved. \end{abstract}

\begin{keyword}[class=MSC2020]
\kwd[Primary ]{62G08}
\kwd[; secondary ]{62C20, 62D20}
\end{keyword}

\begin{keyword}
\kwd{Statistical learning}
\kwd{double machine learning}
\kwd{Neyman orthogonality}
\kwd{policy learning}
\kwd{treatment effects} 
\kwd{local Rademacher complexity}
\end{keyword}

\end{frontmatter}
}

\preprint{
{ %
\hypersetup{linkcolor=black}
\tableofcontents
}
\newpage
}

\draft{
{ %
\hypersetup{linkcolor=black}
\tableofcontents
}
\newpage
}

\section{Introduction}
\label{sec:intro}

Predictive models based on modern machine learning methods are becoming increasingly widespread in policy making, with applications in healthcare, education, law enforcement, and business decision making. Most problems that arise in policy making, such as attempting to predict counterfactual outcomes for different interventions or optimizing policies over such interventions, are not pure prediction problems, but rather are causal in nature. It is important to address the causal aspect of these problems and build models that have a causal interpretation.

A common paradigm in the search of causality is that to estimate a model with a causal interpretation from observational data---that is, data not collected via randomized trial or via a known treatment policy---one typically needs to estimate many other quantities that are not of primary interest, but that can be used to de-bias a purely predictive machine learning model by formulating an appropriate loss. One example of such a \emph{nuisance parameter} is the propensity for taking an action under the current policy, which can be used to form unbiased estimates for the reward for new policies, but is typically unknown in datasets that do not come from controlled experiments. 

To make matters more concrete, let us walk through an example for which certain variants have been well-studied in machine learning \citep{dudik2011doubly,swaminathan2015counterfactual,nie2017quasi,kallus2018policy}. Suppose a decision maker wants to estimate the causal effect of some treatment $T\in \{0,1\}$ on an outcome $Y$ as a function of a set of observable features $X$; the causal effect will be denoted as $\theta(X)$. Typically, the decision maker has access to data consisting of tuples $(X_i, T_i, Y_i)$, where $X_i$ is the observed feature for sample $i$, $T_i$ is the treatment taken, and $Y_i$ is the observed outcome. 
Due to the partially observed nature of the problem, one needs to create unbiased estimates of the unobserved outcome. A standard approach is to make an \emph{unconfoundedness} assumption \citep{rosenbaum1983central} and use the so-called \emph{doubly-robust} formula, which is a combination of direct regression and inverse propensity scoring. Let $Y_i(t)$ denote the potential outcome for treatment $t$ in sample $i$, and let $f_0(t,x_i)\ldef\En\brk[\big]{Y_i(t) \mid x_i}$ and $p_0(t,x_i) \ldef \En\brk*{1\{T=t\} \mid x_i}$. If $(Y_i(0),Y_i(1))\indep{}T_i\mid{}X_i$, then the following is an unbiased estimator for the \revoneedit{conditional mean potential outcome (given covariates):}
\begin{equation}
\hat{Y}_i{(t)} = f_0(t,x_i) + \frac{(Y_i - f_0(t,x_i))\, 1\{T_i=t\}}{p_0(t,x_i)}.
\end{equation}
Given such an estimator, we can estimate the treatment effect by running a regression between the unbiased estimates and the features, i.e. solve $\min_{\theta\in \target} \sum_i \prn{\hat{Y}(1) - \hat{Y}(0) - \theta(X_i)}^2$ over a target parameter class $\target$. In the population limit, with infinite samples, this corresponds to finding a \param $\theta(x)$ that minimizes the population risk $\En\brk[\big]{(\hat{Y}_i(1) - \hat{Y}_i(0) - \theta(X))^2}$. Similarly, if the decision maker is interested in policy optimization rather than estimating treatment effects, they can use these unbiased estimates to solve $\min_{\theta\in \target} \sum_{i} (\hat{Y}_i(0) - \hat{Y}_i(1))\cdot \theta(X_i)$ over a policy space $\target$ of functions mapping features to $\{0,1\}$. However, when dealing with observational data, the functions $f_0$ and $p_0$ are not known, and must be estimated if we wish to evaluate the proxy labels $\hat{Y}(t)$. Since these functions are only used as a means to learn the target parameter $\theta$, we may regard them as nuisance parameters. The goal of the learner is to estimate a target parameter that achieves low population risk when evaluated at the \emph{true} nuisance parameters as opposed to the estimated nuisance parameters, since only then does the model have a causal interpretation. 

This phenomenon is ubiquitous in causal inference and motivates us to formulate the abstract problem of \emph{statistical learning with a nuisance component}: Given $n$ i.i.d. examples from a distribution $\cD$, a learner is interested in finding a \emph{target \param} $\esttwo\in \target$ so as to minimize a population risk function $\poprisk: \target \times \nuisance \rightarrow \R$. The population risk depends not just on the target \param, but also on a  \emph{nuisance \param} whose true value $\gtone\in \nuisance$ is unknown to the learner. The goal of the learner is to produce an estimate that has small \emph{excess risk} evaluated at the unknown true nuisance \param:
\begin{equation}
\label{eq:excess_risk_intro}
\poprisk(\esttwo,g_0) - \inf_{\theta\in\target}\poprisk(\theta,g_0)\to_n0.
\end{equation}
Depending on the application, such an excess risk bound can take different interpretations. For many settings, such as treatment effect estimation, it is closely related to mean squared error, while in policy optimization it typically corresponds to regret. Following the tradition of statistical learning theory \citep{vapnik1995nature,bousquet2004introduction}, we make excess risk the primary focus of our work, independent of the interpretation. We develop algorithms and analysis tools that generically address \pref{eq:excess_risk_intro}, then apply these tools to a number of applications of interest.

The problem of statistical learning with a nuisance component is strongly connected to the well-studied semiparametric inference problem \citep{levit1976efficiency,ibragimov1981statistical,pfanzagl1982contributions,bickel1982adaptive,klaassen1987consistent,robinson1988root,bickel1993efficient,newey1994asymptotic,robins1995semiparametric, ai2003efficient,van2003unified,van2003unifiedb,ai2007estimation,tsiatis2007semiparametric,kosorok2008introduction,van2011targeted,ai2012semiparametric,chernozhukov2016locally,belloni2017program,chernozhukov2016double}, which focuses on providing so-called ``$\sqrt{n}$-consistent and asymptotically normal'' estimates for a low-dimensional target parameter $\theta_0$ (which may be expressed as a population risk minimizer or a solution to estimating equations) in the presence of a typically nonparametric nuisance parameter. Unlike the semiparametric inference problem, statistical learning with a nuisance component does not require a well-specified model, nor a unique minimizer of the population risk. Moreover, we do not ask for parameter recovery or asymptotic inference (e.g., asymptotically valid confidence intervals). Rather, we are content with an excess risk bound, regardless of whether there is an underlying true parameter to be identified. As a consequence, we provide guarantees even in the presence of misspecification, and when the target \param belongs to a large, potentially nonparametric class. For example, one line of previous work gives semiparametric inference guarantees when the nuisance \param is a neural network \citep{chen1999improved,farrell2018deep}; by focusing on excess risk we can give guarantees for the case where the \emph{target} \param is a neural network.

The case where the target parameter belongs to an arbitrary class has not been addressed at the level of generality we consider in the present work, but we mention some prior work that goes beyond the low-dimensional/parametric setup for special cases. \cite{athey2017efficient} and \cite{zhou2018offline} give guarantees based on metric entropy of the target class for the specific problem of treatment policy learning. For estimation of treatment effects, various nonparametric classes have been used for the target class on a case by case basis, including kernels \citep{nie2017quasi}, random forests \citep{athey2016generalized, oprescu2018orthogonal,friedberg2018}, and high-dimensional linear models \citep{chernozhukov2017orthogonal,chernozhukov2018plugin}. Other results allow for fairly general choices for the target parameter class in specific statistical models \citep{rubin2005general,rubin2007doubly,diaz2013targeted,van2014targeted,kennedy2017non,kennedy2019robust,kunzel2017meta}. Our work unifies these directions into a single framework, and our general tools lead to improved or refined results when specialized to many of these individual settings.

Our approach is to reduce the problem of statistical learning with a nuisance component to the standard formulation of statistical learning. We build on a recent thread of research on semiparametric inference known as ``double'' or ``debiased'' machine learning %
\citep{chernozhukov2016locally,chernozhukov2017orthogonal,chernozhukov2016double,chernozhukov2018double,chernozhukov2018plugin}, which leverages sample splitting to provide inference guarantees under weak assumptions on the estimator for the nuisance parameter. Rather than directly analyzing particular algorithms and models for the target parameter (e.g., regularized regression, gradient boosting, or neural network estimation), we assume a black-box guarantee for the excess risk in the case where a nuisance value $g\in\nuisance$ is fixed. Our main theorem asks only for the existence of an algorithm $\mathrm{Alg}(\target, S \midsem g)$ that, for any given nuisance \param $g$ and data set $S$, achieves low excess risk with respect to the population risk $L_{\cD}(\theta, g)$, i.e. with probability at least $1-\delta$,
\begin{equation}
\poprisk(\esttwo,g) - \inf_{\theta\in\target}\poprisk(\theta,g) \leq \Rate(\target,  S, \delta\midsem{}g).
\end{equation}
Likewise, we assume the existence of a black-box algorithm $\mathrm{Alg}(\nuisance,S)$ to estimate the nuisance component $\gtone$ from the data, with the required estimation guarantee varying from problem to problem.

\begin{figure}[tp]
\begin{framedmetaalgorithm}[Two-Stage Estimation with Sample Splitting]~\\
\textbf{Input}: Sample set $S=z_1,\ldots,z_n$.
\begin{itemize}
\item Split $S$ into subsets $\sampleone=z_1,\ldots,z_{\floor{n/2}}$ and $\sampletwo=S\setminus{}\sampleone$.
\item Let $\estone$ be the output of $\mathrm{Alg}(\nuisance,\sampleone)$.
\item Return $\esttwo$, the output of $\mathrm{Alg}(\target,\sampletwo\midsem\estone)$.
\end{itemize}
\label{alg:sample_splitting}
\end{framedmetaalgorithm}
\end{figure}
Given access to the two black-box algorithms, we analyze a simple sample splitting meta-algorithm for statistical learning with a nuisance component, presented as \savehyperref{alg:sample_splitting}{Meta-Algorithm \ref{alg:sample_splitting}}.
We can now state the main question addressed in this paper: 
\emph{When is the excess risk achieved by sample splitting robust to nuisance component estimation error?}

In more technical terms, we seek to understand when the two-stage sample splitting meta-algorithm achieves an excess risk bound with respect to $\gtone$, in spite of error in the estimator $\estone$ output by the first-stage algorithm. Robustness to nuisance estimation error allows the learner to use more complex models for nuisance estimation and---under certain conditions on the complexity of the target and nuisance \param classes---to learn target \params whose error is, up to lower order terms, as good as if the learner had known the true nuisance \param in advance. Such a guarantee is referred to as achieving an \emph{oracle rate} in semiparametric inference.

\paragraph{Overview of results}  We use \emph{Neyman orthogonality} \citep{neyman1959optimal,neyman1979c}, a key tool in inference in semiparametric models \citep{newey1994asymptotic,van2000asymptotic,robins2008higher,zheng2010asymptotic,belloni2017program,chernozhukov2016double}, to provide oracle rates for statistical learning with a nuisance component. We show that if the population risk satisfies a functional analogue of Neyman orthogonality, the estimation error of $\estone$ has a second order impact on the overall excess risk (relative to $g_0$) achieved by $\esttwo$. To gain some intuition, Neyman orthogonality is weaker condition than double-robustness, albeit similar in flavor, (see, e.g., \cite{chernozhukov2016locally}) and is satisfied by both the treatment effect loss and the policy learning loss described in the introduction. In more detail, our variant of the Neyman orthogonality condition asserts that a functional cross-derivative of the loss vanishes when evaluated at the optimal target and nuisance \params. Prior work provides a number of means through which to construct Neyman orthogonal losses whenever certain moment conditions are satisfied by the data generating process \citep{chernozhukov2016double,chernozhukov2016locally,chernozhukov2018plugin}. Indeed, orthogonal losses can be constructed in settings including treatment effect estimation, policy learning, missing and censored data problems, estimation of structural econometric models, and game-theoretic models.

We identify two regimes of excess risk behavior:
\begin{enumerate}
\item \emph{Fast rates.} When the population risk is strongly convex with respect to the prediction of the target \param (e.g., the treatment effect estimation loss), then typically so-called \emph{fast rates} (e.g., rates of order of $O(1/n)$ for parametric classes) are optimal if the true nuisance \param is known. Letting $R_{\nuisance}$ denote the estimation error of the nuisance component, in this setting we show that orthogonality implies that the first stage error has an impact on the excess risk of the order of $R_{\nuisance}^4$ (in particular, $n^{-1/4}$-RMSE rates for the nuisance suffice when the target is parametric).
\item \emph{Slow rates.} Absent any strong convexity of the population risk (e.g., for the treatment policy optimization loss), typically \emph{slow rates} (e.g. rates of order $O(1/\sqrt{n})$ for parametric classes) are optimal if the true nuisance \param is known.  For this setting, we show that the impact of nuisance estimation error is of the order $R_{\nuisance}^2$ so, once again, $n^{-1/4}$ RMSE rates for the nuisance suffice when the target is parametric.
\end{enumerate}

To make the conditions above concrete for arbitrary classes, we give conditions on the relative complexity of the target and nuisance classes---quantified via \emph{metric entropy}---under which the sample splitting meta-algorithm achieves oracle rates, assuming the two black-box estimation algorithms are instantiated appropriately. This allows us to extend several prior works beyond the parametric regime to complex nonparametric target classes. Our technical results extends the works of \cite{yang1999information,rakhlin2017empirical}, which provide minimax optimal rates without nuisance components and utilize the technique of \emph{aggregation} in designing optimal algorithms.

The flexibility of our approach allows us to instantiate our framework with any machine learning model and algorithm of interest for both nuisance and target \param estimation, and to utilize the vast literature on generalization bounds in machine learning to establish refined (e.g., data-dependent or dimension-independent) rates for several classes of interests. For instance, our approach allows us to leverage recent work on size-independent generalization error of neural networks.

Moving beyond black-box results, we use our main theorems as a starting point to provide sharp analyses for certain general-purpose statistical learning algorithms for target estimation in the presence of nuisance parameters. First, we provide a new analysis for empirical risk minimization with plug-in estimation of nuisance parameters, wherein we extend the classical local Rademacher complexity analysis of empirical risk minimization \citep{koltchinskii2000rademacher,bartlett2005local} to account for the impact of the nuisance error (leveraging orthogonality). Second, in the slow rate regime we give a new analysis of \emph{variance-penalized} empirical risk minimization with plug-in nuisance estimation, which allows us to recover and extend several prior results in the literature on policy learning. Our result improves upon the variance-penalized risk minimization approach of \cite{maurer2009empirical} by replacing the dependence on the metric entropy at a fixed approximation level with the \emph{critical radius}, which is related to the entropy integral.

As a consequence of focusing on excess risk, we obtain oracle rates under weaker assumptions on the data generating process than in previous works. Notably, we obtain guarantees even when the target \param is misspecified and the target parameters are not identifiable. For instance, for sparse high-dimensional linear classes, we obtain optimal prediction rates with no restricted eigenvalue assumptions. We highlight the applicability of our results to four settings of primary importance in the literature: 1) estimation of heterogeneous treatment effects from observational data, 2) offline policy optimization, 3) domain adaptation, 4) learning with missing data. For each of these applications, our general theorems allow for the use of arbitrary estimators for the nuisance and target \param classes and provide robustness to the nuisance estimation error. 
\subsection{Related work}
\paragraph{General frameworks for learning/inference with nuisance parameters} The work of \cite{van2003unified} and subsequent refinements and extensions \citep{van2006cross,van2007super} develops cross-validation methodology for a similar risk minimization setting in which the target risk parameter depends on an unknown nuisance parameter. \cite{van2003unified} analyze a cross-validation meta-algorithm in which the learner simultaneously forms a nuisance parameter estimator and a set of candidate target parameter estimators using a set of training samples, then selects a final estimate for the target parameter by minimizing an empirical loss over a validation set. The train and validation splits may be chosen in a general fashion that encompasses $K$-fold and Monte Carlo validation.
They provide finite-sample oracle rates for the excess risk in the case where the target parameter belongs to a finite class (in particular, rates of the type $\log\abs*{\target}/n$ for a class of square losses and $\sqrt{\log\abs*{\target}/n}$ for general losses), and also extend these guarantees to linear combinations of basis functions via pointwise $\veps$-nets (in our language, such classes are \emph{parametric}). Overall, our approach offers several new benefits:
\begin{itemize}
\item By completely splitting nuisance estimation and target estimation into separate stages and taking advantage of orthogonality, we can provide
  meta-theorems on robustness that are invariant to the choice of learning algorithm both for the first and second stage, which obviates the need to assume the target class is finite or admits a linear representation (\pref{sec:orthogonal}).
\item When we do specialize to algorithms such as ERM and variants, we can provide finite-sample guarantees for rich classes of target \params
  in terms of sharp learning-theoretic complexity measures such as local Rademacher complexity and empirical metric entropy (\pref{sec:erm}). In particular, we can provide conditions under which oracle rates are attained under very general complexity assumptions on the target and nuisance parameters (\pref{sec:oracle}).%
\end{itemize}
The methodology of \cite{van2003unified} can be used to directly estimate a target parameter or to select the best of many candidate nuisance estimators in a data-driven fashion. \cite{van2007super} refers to the use of this cross-validation methodology to perform data-adaptive estimation of nuisance parameters as the ``super learner'', and subsequent work has advocated for its use for nuisance estimation within a framework for semiparametric inference known as \emph{targeted maximum likelihood estimation} (TMLE). TMLE \citep{scharfstein1999theory,van2006targeted,zheng2010asymptotic,van2011targeted} and its more general variant, targeted minimum loss-based estimation, are general frameworks for semiparametric inference which---like our framework---employ empirical risk minimization in the presence of nuisance parameters. TMLE estimates the target parameter by repeatedly minimizing an empirical risk (typically the negative log-likelihood) in order to refine an initial estimate. This approach easily incorporates constraints, and can be used in tandem with the super learning technique. The analysis leverages orthogonality, and is also agnostic to how the nuisance estimates are obtained. However, the main focus of this framework is on the classical semiparametric inference objective; minimizing a population risk is not the end goal as it is here.

\paragraph{Specific instances of risk minimization with nuisance parameters}
A number of prior works employ empirical risk minimization with nuisance parameters for specific statistical models \citep{rubin2005general,rubin2007doubly,diaz2013targeted,van2014targeted,kennedy2017non,kennedy2019robust,kunzel2017meta}. These results allow for general choices for the target class and nuisance class (typically subject to Donsker conditions, or with guarantees in the vein of \cite{van2003unified}), and the main focus is semiparametric inference rather than excess risk guarantees.

\paragraph{Nonparametric target parameters}
Outside of the risk minimization-based approaches above and the examples in the prequel \citep{athey2016generalized,nie2017quasi,athey2017efficient,zhou2018offline,oprescu2018orthogonal,friedberg2018,chernozhukov2017orthogonal,chernozhukov2018plugin}, a number of other results also consider inference for nonparametric target parameters in the presence of nuisance parameters. In \cite{van2006estimating}, the target is a Lipschitz function over $[0,\infty)$ (the marginal survival function) and an estimation rate of $n^{-2/3}$ is given. \cite{wang2010nonparametric} consider estimation of smooth nonparametric target parameters in the presence of missing outcomes, and give algorithms based on kernel smoothing. \cite{robins2001comment,robins2008higher} consider settings where the target parameter is scalar, but the optimal rate is nonparametric due to the presence of complex nuisance parameters.

\paragraph{Sample splitting}
While our use of sample splitting is directly inspired by recent use of the technique in double/debiased machine learning \citep{chernozhukov2016locally,chernozhukov2016double}, the basic technique dates back to the early days of semiparametric inference and it has found use in many other works to remove Donsker conditions for estimation in the presence of nuisance parameters \citep{bickel1982adaptive,klaassen1987consistent,van2000asymptotic,robins2008higher,zheng2010asymptotic}.

\paragraph{Limitations}
Our results are quite general, but there are some applications that go beyond the scope of our framework. For example, while we consider only plug-in estimation for the nuisance \params, several works attain refined results by using specialized estimators \cite{van2006targeted,hirshberg2017augmented,chernozhukov2018double,ning2018robust}. While our focus is on methods based on loss minimization, some problems such as nonparametric instrumental variables \citep{newey2003instrumental,hall2005nonparametric,blundell2007semi,chen2009efficient,chen2012estimation,chen2015sieve,chen2018optimal} are more naturally posed in terms of conditional moment restrictions.\preprint{\footnote{In fact, nonparametric IV can be cast as a special case of the setup in \pref{eq:excess_risk}, but we do not know of any estimators for this problem that satisfy the conditions required to apply our main theorems.}}\journal{ In fact, nonparametric IV can be cast as a special case of the setup in \pref{eq:excess_risk}, but we do not know of any estimators for this problem that satisfy the conditions required to apply our main theorems.}

\begin{revone}
  Another direction where our results leave room for future improvement concerns the reliance on Neyman orthogonality. While Neyman orthogonality is a fairly general condition which allows one to handle many nuisance parameters simultaneously, many problems admit additional structure which can lead to more refined guarantees. For example, in the context of treatment effect estimation, subsequent work of \cite{kennedy2020optimal} uses the doubly robust structure of the problem to give guarantees that accommodate the case where different nuisance components (regression functions and propensity scores) are estimated at different rates.
\end{revone}

\subsection{Organization}

\begin{revone}
The first part of this paper presents our main results. \pref{sec:setup} contains technical preliminaries and definitions, and \pref{sec:orthogonal} presents our main theorems concerning the excess risk of \pref{alg:sample_splitting}. \pref{sec:orthogonal} also includes basic examples in which we apply these theorems to treatment effect estimation and policy learning.%

Our main results are stated at a high level of generality, and consider generic estimation algorithms for the target and nuisance parameters. In the second part of the paper, we make matters more concrete and focus on specific algorithms.  We leverage the main theorems to give explicit bounds based on the statistical capacity of the target and nuisance class. In particular:
\begin{itemize}
\item \pref{sec:erm} (\emph{Plug-in Empirical Risk Minimization}) provides explicit bounds for plug-in empirical risk minimization as the second stage of the meta-algorithm.
\item \pref{sec:oracle} (\emph{Sufficient Conditions for Oracle Rates}) considers aggregation based algorithms that go beyond empirical risk minimization, and gives sufficient conditions (as a function of the statistical capacity of the nuisance and target class) under which \pref{alg:sample_splitting} can be configured such that oracle excess risk bounds are achieved. 
\end{itemize}
We conclude with discussion in \pref{sec:discussion}. Additional results are deferred to the appendix, which is split into three parts. \pref{part:experiments} contains experiments, and \pref{part:additional} contains supplementary theoretical results, including sufficient conditions for Neyman orthogonality, applications of our main results to specific settings, and further guarantees for specific algorithms and function classes. \pref{part:proofs} contains proofs for our main results.
\end{revone}

\section{Framework: Statistical Learning with a Nuisance Component}
\label{sec:setup}

We work in a learning setting in which observations belong to an abstract set $\Varall$. We receive a sample set $S\ldef{}z_1,\ldots,z_n$ where each $z_t$ is drawn i.i.d. from an unknown distribution $\dist$ over $\cZ$. Define variable subsets $\Vartwo\subseteq{}\Varone\subset\Varall$; the restriction $\Vartwo\subseteq{}\Varone$ is not strictly necessary but simplifies notation. We focus on learning \params that come from a \emph{target \param class} $\target:\Vartwo\to\vectwo$ and \emph{nuisance \param class} $\nuisance:\Varone\to{}\vecone$, where $\vecone$ and $\vectwo$ are finite dimensional vector spaces of dimension $\dimone$ and $\dimtwo$ respectively, equipped with norms $\normone{\cdot}$ and $\normtwo{\cdot}$. Note that since our results are fully non-asymptotic, the classes $\target$ and $\nuisance$ may be taken to grow with $n$.%

Given an example $\varall_{t}\in{}\Varall$, we write $\varone_t\in\Varone$ and $\vartwo_t\in\Vartwo$ to denote the subsets of $\varall_t$ that act as arguments to the nuisance and target parameters respectively. For example, we may write $g(\varone_t)$ for $g\in\nuisance$ or $\theta(\vartwo_t)$ for $\theta\in\target$. We assume that the function spaces $\target$ and $\nuisance$ are equipped with \vsedit{pre-norms $\nrm*{\cdot}_{\target}$ and $\nrm*{\cdot}_{\nuisance}$ respectively, which need to satisfy non-negativity and $\|0\|=0$, but not necessarily the triangle inequality nor absolute homogeneity.} In our applications, both pre-norms take the form 
$\nrm*{f}_{L_p(\cV,\cD)}=\prn*{\En_{z\sim{}\cD}\nrm*{f(z)}_{\cV}^{p}}^{1/p}$
for functions $f:\cZ\to\cV$, where $\cV\in\crl*{\vecone,\vectwo}$.

We measure performance of the target predictor through the real-valued \emph{population loss functional} $\poprisk(\theta,g)$, which maps a target predictor $\theta$ and nuisance predictor $g$ to a loss. The subscript $\cD$ in $\poprisk$ denotes that the functional depends on the underlying distribution $\cD$. For all of our applications, $\poprisk$ has the following structure, in line the classical statistical learning setting: First define a pointwise loss function $\ls(\theta,g\midsem{}z)$, then define $\poprisk(\theta,g) \ldef \En_{z\sim{}\cD}\brk{\ls(\theta,g\midsem{}z)}$. Our general framework does not explicitly assume this structure, however.

Let $g_0\in\nuisance$ be the unknown true value for the nuisance parameter. Given the samples $S$, and without knowledge of $g_0$, we aim to produce a \emph{target predictor} $\esttwo$ that minimizes the \emph{excess risk} evaluated at $\gtone$
\begin{equation}
\label{eq:excess_risk}
\poprisk(\esttwo,g_0) - \inf_{\theta\in\target}\poprisk(\theta,g_0).
\end{equation}
As discussed in the introduction, we will always produce such a predictor via the sample splitting meta-algorithm (\pref{alg:sample_splitting}), which makes uses of a nuisance predictor $\estone$.

When the infimum in the excess risk is obtained, we use $\besttwo$ to denote the corresponding minimizer, in which case the excess risk can be written as
\[
\poprisk(\esttwo,\gtone) - \poprisk(\besttwo,\gtone).
\]
We occasionally use the notation $\gttwo$ to refer to a particular target parameter with respect to which the second stage satisfies a \emph{first-order condition}, e.g. $D_{\theta}L_{\cD}(\gttwo,\gtone)[\theta-\gttwo]=0\,\,\forall{}\theta\in\target$. If $\gttwo\in\target$ and the population risk is convex, then we can take $\besttwo=\gttwo$ without loss of generality, but we do not assume this, and in general we do not assume existence of a such a parameter $\gttwo$.
\paragraph{Notation} 
We let $\tri*{\cdot,\cdot}$ denote the standard inner product. $\nrm*{\cdot}_{p}$ will denote the $\ls_{p}$ norm over $\bbR^{d}$ and $\nrm*{\cdot}_{\sigma}$ will denote the spectral norm over $\bbR^{d_1\times{}d_2}$.

Unless otherwise stated, the expectation $\En\brk*{\cdot}$, probability $\bbP(\cdot)$, and variance $\var(\cdot)$ operators will be taken with respect to the underlying distribution $\cD$. We define empirical analogues $\En_{n}\brk{\cdot}$, $\bbP_n(\cdot)$, and $\varn(\cdot)$ with respect to a sample set $z_1,\ldots,z_n$, whose value will be clear from context. For a vector space $\cV$ with norm $\nrm*{\cdot}_{\cV}$ and function $f:\cZ\to\cV$, we define $\nrm*{f}_{L_p(\cV,\cD)}=\prn*{\En_{z\sim{}\cD}\nrm*{f(z)}_{\cV}^{p}}^{1/p}$ for $p\in(0,\infty)$, with $L_p(\ell_q,\cD)$ referring to the special case where $\nrm*{\cdot}_{\cV}=\nrm*{\cdot}_{q}$. For a sample set $S=z_{1:n}$, we define the empirical variant $\nrm*{f}_{L_p(\cV,S)}=\prn*{\frac{1}{n}\sum_{i=1}^{n}\nrm*{f(z_i)}_{\cV}^{p}}^{1/p}$. When $\cV=\bbR$, we drop the first argument and write $L_p(\cD)$ and $L_p(S)$. We extend these definitions to $p=\infty$ in the natural way.

For a subset $\cX$ of a vector space, $\conv(\cX)$ will denote the convex hull. For an element $x\in\cX$, we define the star hull via 
\begin{equation}
\starhull(\cX,x) = \crl*{t\cdot{}x+(1-t)\cdot{}x'\mid{}x'\in\cX,t\in\brk*{0,1}},
\end{equation}
and adopt the shorthand $\starhull(\cX) \ldef \starhull(\cX,0)$.

Given functions $f,g:\cX\to [0,\infty)$ where $\cX$ is any set,
we use non-asymptotic big-$O$ notation, writing $f=O(g)$ if there exists a numerical
constant $c < \infty$ such that $f(x)\leq{}c\cdot{}g(x)$ for all $x\in\cX$
and $f=\Omega(g)$ if there is a numerical constant $c>0$ such that
$f(x)\geq{}c\cdot{}g(x)$. We write $f = \Ot(g)$ as shorthand for $f=O(g
\max\{1,\mathrm{polylog}(g)\})$.

\section{Orthogonal Statistical Learning}
\label{sec:orthogonal}

In this section we present our main results on orthogonal statistical learning, which state that under certain conditions on the loss function, the error due to estimation of the nuisance component $\gtone$ has higher-order impact on the prediction error of the target component. The results in this section, which form the basis for all subsequent results, are \emph{algorithm-independent}, and only involve assumptions on properties of the population risk $\poprisk$. To emphasize the high level of generality, the results in this section invoke the learning algorithms in \pref{alg:sample_splitting} only through ``rate'' functions $\Rate(\nuisance,\ldots)$ and $\Rate(\target,\ldots)$ which respectively bound the estimation error of the first stage and the excess risk of the second stage.
\begin{definition}[Algorithms and Rates]
\label{def:algorithms}
The first and second stage algorithms and corresponding rate functions are defined as follows:
\journal{\begin{enumerate}}
  \preprint{\begin{enumerate}[label=\textnormal{\textbf{\alph*)}}]}
\item \emph{Nuisance algorithm and rate.} The first stage learning algorithm $\algone{S}$, when given a sample set $S$ from distribution $\cD$, outputs a predictor $\estone$ for which
\[
\nrm*{\estone-\gtone}_{\nuisance} \leq{} \Rate(\nuisance, S, \delta)
\]
with probability at least $1-\delta$.
\item \emph{Target algorithm and rate.}  The second stage learning algorithm $\mathrm{Alg}(\target,S\midsem{}g)$, when given sample set $S$ from distribution $\cD$ and any $g\in\cG$ outputs a predictor $\esttwo$ for which
\[
\poprisk(\esttwo,g) - L_{\cD}(\besttwo, g) \leq{} \Rate(\target,  S, \delta\midsem{}\esttwo,g)
\]
with probability at least $1-\delta$. 
\end{enumerate}
\revoneedit{We let $\targetemp$ denote any function class (fixed a-priori) for which $\esttwo,\besttwo\in\targetemp$ almost surely.}
We denote worst-case variants of the rates by $\Rate(\nuisance, n, \delta) \ldef \sup_{S:\abs*{S}=n}\Rate(\nuisance, S, \delta)$ and $\Rate(\target, n, \delta\midsem{}\esttwo,g) \ldef \sup_{S:\abs*{S}=n}\Rate(\target, S, \delta\midsem{}\esttwo,g)$.				
\end{definition}
Observe that if one naively applies the algorithm for the target class
using the nuisance predictor $\estone$ as a plug-in estimate for
$\gtone$, the rate stated in \pref{def:algorithms} will only yield a ``pseudo''-excess risk bound of the form
\begin{equation}
\label{eq:excess_risk_wrong}
\poprisk(\esttwo,\estone) - \poprisk(\besttwo,\estone) \leq{} \Rate(\target,  S, \delta\midsem{}\esttwo,\estone).
\end{equation}
This clearly does not match the desired bound \pref{eq:excess_risk},
which concerns the excess risk evaluated at $\gtone$ rather than the plug-in estimate $\estone$. The bulk of our work is to show that orthogonality can be used to correct this mismatch.

\pref{def:algorithms} and subsequent results are stated in terms of a class $\targetemp$ containing $\esttwo$, which in general may have $\targetemp\neq{}\target$. This extra level of generality serves two purposes. First, it allows for refined analysis in the case where $\targetemp\subset\target$, which is encountered when using algorithms based on regularization that do not impose hard constraints on, e.g., the norm of the estimator. Second, it permits the use of \emph{improper prediction}, i.e. $\targetemp\supset\target$, which in some cases is required to obtain optimal rates for misspecified models \citep{audibert2008progressive,foster2018logistic}.

Recall that for a sample set $S=z_1,\ldots,z_n$, the empirical loss is
defined via $L_{S}(\theta,g) =
\frac{1}{n}\sum_{t=1}^{n}\ls(\theta,g\midsem{}z_t)$. Many classical
results from statistical learning can be applied to the double machine
learning setting by minimizing the empirical loss with plug-in
estimates for $g_0$, and we can simply cite these results to provide
examples of $\Rate$ for the target class $\target$. Note however that
this structure is not assumed by \pref{def:algorithms}, and we indeed
consider algorithms that do not have this form
\revoneedit{(cf. \pref{sec:oracle})}.  \revoneedit{Let us highlight that we allow the function
  $\Rate(\target,S,\delta;\esttwo,\estone)$ to depend on both the
  target estimator $\esttwo$ and the nuisance estimator $\estone$;
  this extra level of generality is useful for deriving
  algorithm-specific guarantees (cf. \pref{sec:erm}).}

\paragraph{Fast rates and slow rates}
The rates presented in this section fall into two distinct categories, which we distinguish by referring to them as either \emph{fast rates} or \emph{slow rates}. The meaning of the word ``fast'' or ``slow'' here is two-fold: First, for fast rates, our assumptions on the loss imply that when the target class $\target$ is not too large (e.g. a parametric or VC-subgraph class) prediction error rates of order $O(1/n)$ are possible in the absence of nuisance parameters. For our slow rate results, the best prediction error rate that can be achieved is $O(1/\sqrt{n})$, even for small classes. This distinction is consistent with the usage of the term fast rate in statistical learning \citep{bousquet2004introduction,bartlett2005local,srebro2010smoothness}, and we will see concrete examples of such rates for specific classes in later sections (\pref{sec:erm}, \pref{sec:algsandrates}). 

The second meaning of ``fast'' versus ``slow'' refers to the first stage: When estimation error for the nuisance is of order $\veps$, the impact on the second stage in our fast rate results is of order $\veps^{4}$, while for our slow rate results the impact is of order $\veps^{2}$. The fast rate regime---particularly, the $\veps^{4}$-type dependence on the nuisance error---will be the more familiar of the two for readers accustomed to semiparametric inference. While fast rates might at first seem to strictly improve over slow rates, these results require stronger assumptions on the loss. Our results in \pref{sec:oracle} show that which setting is more favorable will in general depend on the precise relationship between the complexity of the target \param class and the nuisance parameter class.

\subsection{Fast Rates Under Strong Convexity}
\label{ssec:orthogonal_strongly_convex}

We first present general conditions under which the sample splitting meta-algorithm obtains so-called fast rates for prediction. Our assumptions are stated in terms of \emph{directional derivatives} with respect to the target and nuisance \params.
\begin{definition}[Directional Derivative]
Let $\cF$ be a vector space of functions. For a functional $F:\cF\to\bbR$, we define the derivative operator $D_{f}F(f)[h] = \frac{d}{dt}F(f+th)\Big|_{t=0}$
for a pair of functions $f,h\in\cF$. Likewise, we define
$D_{f}^{k}F(f)[h_1,\ldots,h_k] = \frac{\partial^{k}}{\partial{}t_1\ldots\partial{}t_k}
F(f + t_1{}h_1+\ldots+t_k{}h_k)\Big|_{t_1=\cdots=t_k=0}$. When considering a functional in two arguments, e.g. $\poprisk(\theta,g)$, we write $D_{\theta}\poprisk(\theta,g)$ and $D_{g}\poprisk(\theta,g)$ to make the argument with respect to which the derivative is taken explicit.
\end{definition}
\revoneedit{To present our results, we fix a representative $\besttwo\in\argmin_{\theta\in\target}\poprisk(\theta,\gtone)$. In general, the minimizer may not be unique---indeed, by focusing on excess risk, we can provide guarantees even when parameter recovery is impossible. Thus, we assume that a single fixed representative $\besttwo$ is used throughout all the assumptions stated in this subsection.} 

Our first assumption is the starting point for this work, and asserts that the population loss is \emph{orthogonal} in the sense that the certain pathwise derivatives vanish.
\begin{assumption}[Orthogonal Loss]
\label{ass:orthogonal}
The population risk $\poprisk$ is \emph{Neyman orthogonal}:
\begin{equation}
\label{eq:orthogonal}
D_{g}D_{\theta}\poprisk(\besttwo,\gtone)[\theta-\besttwo,g-\gtone] = 0\quad\forall{}\theta\in\targetemp,\forall{}g\in\nuisance.
\end{equation}
\end{assumption}
\revoneedit{Note that while \pref{ass:orthogonal} is stated in terms of the risk
$\poprisk$, it is typically satisfied by choosing a particular
point-wise loss function whose expectation equals the risk; examples
are given in the sequel. The construction of such a point-wise loss is typically achieved by adding a de-biasing correction term to some ``initial'' loss, whose minimizer is the target quantity (see Appendix~\ref{sec:orthogonal_loss} for details on automated orthogonal loss construction). The de-biasing correction reduces the impact of errors in the nuisance function estimates on the gradient of the loss, and is related to the notion of an efficient influence function in semi-parametric inference (however, our estimand is not necessarily pathwise differentiable, and hence violates the basic premise of most semi-parametric inference theory).}

Beyond orthogonality, our main theorem requires three additional
assumptions, all of which are fairly standard in the context of fast
rates for statistical learning. We require a first-order optimality
condition for the target class, and require that the population risk
is both smooth and strongly convex with respect to the target parameter.

\begin{assumption}[First Order Optimality]
\label{ass:well_specified}
The minimizer for the population risk satisfies the first-order optimality condition:
\begin{equation}
\label{eq:well_specified}
D_{\theta}\poprisk(\besttwo,\gtone)[\theta-\besttwo]\geq 0 \quad{}\forall{}\theta\in\starhull(\targetemp,\besttwo).
\end{equation}
\end{assumption}
\begin{remark}
The first-order condition is typically satisfied for models that are \emph{well-specified}, meaning that there is some variable in $\varall$ that identifies the target \param $\gttwo$. More generally, it suffices to ``almost'' satisfy the first-order condition, i.e. to replace \pref{eq:well_specified} by the condition
\begin{equation}
D_{\theta}\poprisk(\besttwo,\gtone)[\theta-\besttwo]
 \geq{} - o_n\prn*{\Rate(\target, n, \delta\midsem{}\esttwo,\estone)}.
\end{equation}
The first-order condition is also satisfied whenever $\targetemp$ is star-shaped around $\besttwo$, i.e. $\starhull(\targetemp,\besttwo)\subseteq\targetemp$.
\end{remark}
\begin{assumption}[Higher-Order Smoothness]
\label{ass:smooth_loss}
There exist constants $\beta_1$ and $\beta_2$ such that the following derivative bounds hold:
\journal{\begin{enumerate}}
\preprint{\begin{enumerate}[label=\textnormal{\textbf{\alph*)}}]}
\item \emph{Second-order smoothness with respect to target.} For all $\theta\in\targetemp$ and all $\bar{\theta}\in\starhull(\targetemp,\besttwo)$:
\begin{equation}
\label{eq:smooth1}
D^{2}_{\theta}\poprisk(\bar{\theta},g_0)[\theta-\besttwo,\theta-\besttwo] \leq{}\beta_1\cdot \nrm*{\theta-\besttwo}_{\target}^{2}.
\end{equation}
\item \emph{Higher-order smoothness.} \revoneedit{There exists $r\in[0,1)$ such
  that for all $\theta\in\starhull(\targetemp,\besttwo),\;g\in\nuisance$, and $\bar{g}\in\starhull(\nuisance,\gtone)$:
\begin{align}
\label{eq:smooth2}
\abs*{D^{2}_{g}D_{\theta}\poprisk(\besttwo,\bar{g})[\theta-\besttwo,g-\gtone,g-\gtone]} 
\leq{}~& \beta_2\cdot \nrm*{\theta-\besttwo}^{1-r}_{\target} \cdot \nrm*{g-\gtone}_{\nuisance}^2.
\end{align}
}
\end{enumerate}
\end{assumption}

\begin{assumption}[Strong Convexity]
\label{ass:strong_convex_loss}
The population risk is strongly convex with respect to the target parameter: \revoneedit{There exist constants $\lambda,\kappa>0$ such that for all $\theta\in\targetemp$ and $g\in\nuisance$,
\begin{equation}
\label{eq:strongly_convex}
D^{2}_{\theta}\poprisk(\bar{\theta}, g)[\theta-\besttwo,\theta-\besttwo]\geq{}\lambda\nrm*{\theta-\besttwo}_{\target}^{2}  - \kappa \nrm*{g - \gtone}_{\nuisance}^{\frac{4}{1+r}} \quad{}\forall{}\bar{\theta}\in\starhull(\targetemp,\besttwo),
\end{equation}
where $r\in[0,1)$ is as in \pref{ass:smooth_loss}.}
\end{assumption}

\pref{ass:smooth_loss} and \pref{ass:strong_convex_loss} are easily
satisfied whenever the loss is obtained by applying a square loss or
another smooth, strongly convex loss pointwise to the prediction of the
target class; concrete examples are given in
\pref{sec:sufficient}. \revoneedit{For most of our results, we apply
  these assumptions with $r=0$, but the case $r>0$ will prove useful
  for certain settings in which strong $L_{\infty}$-type estimation guarantees for the
  target parameter are available (cf. \pref{ex:r_learner}). In general,
  \pref{ass:orthogonal,ass:well_specified,ass:strong_convex_loss,ass:smooth_loss}
  do not imply that $\besttwo$ is uniquely identified unless
  $\nrm{\cdot}_{\target}$ is a norm. However, if the assumptions are
  satisfied by two parameters $\besttwo\neq\tilde{\theta}^{\star}$, we
  must have $\nrm{\besttwo-\tilde{\theta}^{\star}}_{\target}=0$,
  meaning convergence in the sense that $\nrm{\esttwo-\besttwo}_{\target}\to{}0$ is equivalent for both representatives.}\loose

We now state our main theorem concerning fast rates.
\begin{theorem}
\label{thm:generic_strongly_convex}
Suppose there exists $\besttwo\in\argmin_{\theta\in\target}\poprisk(\theta,\gtone)$ such that \pref{ass:orthogonal,ass:well_specified,ass:strong_convex_loss,ass:smooth_loss}, are satisfied. Then the sample splitting meta-algorithm (\pref{alg:sample_splitting}) produces a \param $\esttwo$ such that with probability at least $1-\delta$,
\begin{equation}
\label{eq:generic_strongly_convex_l2}
\nrm[\big]{\esttwo-\besttwo}_{\target}^{2} \leq{} C_1\cdot\Rate(\target,\sampletwo,\delta/2\midsem{}\esttwo,\estone) +
C_2
\cdot{}\prn*{
\Rate(\nuisance,\sampleone,\delta/2)
}^{\frac{4}{1+r}},
\end{equation}
where $C_1\leq\frac{4}{\lambda}$ and $C_2\leq2\prn[\Big]{\prn[\big]{\frac{\beta_2}{\lambda}}^{\frac{2}{1+r}} + \frac{\kappa}{\lambda}}$. In addition,
\begin{equation}
\label{eq:generic_strongly_convex_excess_risk}
\begin{aligned}
L_{\cD}(\esttwo,\gtone) - L_{\cD}(\besttwo, \gtone)\leq{} \frac{\beta_1\, C_1}{2}\cdot\Rate(\target,\sampletwo,\delta/2\midsem{}\esttwo,\estone) +
\frac{\beta_1\, C_2}{2}
\cdot\prn*{
\Rate(\nuisance,\sampleone,\delta/2)
}^{\frac{4}{1+r}}.
\end{aligned}
\end{equation}
\end{theorem}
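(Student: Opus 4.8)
The plan is to run everything on the intersection of the two favorable events of \pref{def:algorithms}, each invoked at confidence $\delta/2$: by a union bound (legitimate since $\estone$ depends only on $\sampleone$, which is independent of $\sampletwo$, and the second-stage guarantee holds for \emph{any} fixed plug-in nuisance), with probability at least $1-\delta$ we may assume both $\rho := \nrm*{\estone-\gtone}_{\nuisance}\le R_1 := \Rate(\nuisance,\sampleone,\delta/2)$ and $\poprisk(\esttwo,\estone)-\poprisk(\besttwo,\estone)\le R_2 := \Rate(\target,\sampletwo,\delta/2\midsem\estone)$. All that follows is deterministic on this event. Write $v := \esttwo-\besttwo$; since $\esttwo,\besttwo\in\targetemp$ and $\estone\in\nuisance$, the pair $(v,\,\estone-\gtone)$ is an admissible argument in \pref{ass:orthogonal} and \pref{ass:smooth_loss}. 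The first step is an $L_2$-type bound on $v$: Taylor-expanding $t\mapsto\poprisk(\besttwo+tv,\estone)$ to second order along $v$ and applying \pref{ass:strong_convex_loss} with nuisance argument $g=\estone$ — which is exactly where the term $\kappa\nrm*{\estone-\gtone}_\nuisance^4=\kappa\rho^4$ enters — gives
\[
R_2 \;\ge\; \poprisk(\esttwo,\estone)-\poprisk(\besttwo,\estone) \;\ge\; D_\theta\poprisk(\besttwo,\estone)[v] \;+\; \tfrac{\lambda}{2}\nrm*{v}_\target^2 \;-\; \tfrac{\kappa}{2}\rho^4 .
\]

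The crux is to lower-bound the first-order term $D_\theta\poprisk(\besttwo,\estone)[v]$. I would compare it to $D_\theta\poprisk(\besttwo,\gtone)[v]$, which is nonnegative by the first-order optimality condition \pref{ass:well_specified} (applied with $\theta=\esttwo\in\targetemp\subseteq\starhull(\targetemp,\besttwo)$). For the difference, expand $r\mapsto D_\theta\poprisk(\besttwo,\gtone+r(\estone-\gtone))[v]$ with the target argument held fixed at $\besttwo$: its increment from $r=0$ to $r=1$ equals $\int_0^1 D_gD_\theta\poprisk(\besttwo,g_r)[v,\estone-\gtone]\,dr$, and by Neyman orthogonality (\pref{ass:orthogonal}) the integrand vanishes at $r=0$; writing it as $\int_0^r D_g^2 D_\theta\poprisk(\besttwo,g_s)[v,\estone-\gtone,\estone-\gtone]\,ds$ and invoking the higher-order smoothness bound of \pref{ass:smooth_loss} yields $\bigl|D_\theta\poprisk(\besttwo,\estone)[v]-D_\theta\poprisk(\besttwo,\gtone)[v]\bigr|\le\tfrac{\beta_2}{2}\nrm*{v}_\target\rho^2$. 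Hence $D_\theta\poprisk(\besttwo,\estone)[v]\ge-\tfrac{\beta_2}{2}\nrm*{v}_\target\rho^2$, and substituting into the display gives $\tfrac{\lambda}{2}\nrm*{v}_\target^2\le R_2+\tfrac{\beta_2}{2}\nrm*{v}_\target\rho^2+\tfrac{\kappa}{2}\rho^4$. Absorbing the cross term with Young's inequality ($\tfrac{\beta_2}{2}\nrm*{v}_\target\rho^2\le\tfrac{\lambda}{4}\nrm*{v}_\target^2+\tfrac{\beta_2^2}{4\lambda}\rho^4$) and using $\rho\le R_1$ produces exactly \pref{eq:generic_strongly_convex_l2}.

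For the excess-risk bound \pref{eq:generic_strongly_convex_excess_risk} I would Taylor-expand $t\mapsto\poprisk(\besttwo+tv,\gtone)$ and use the second-order target-smoothness bound of \pref{ass:smooth_loss}, which is stated at $g=\gtone$: $\poprisk(\esttwo,\gtone)-\poprisk(\besttwo,\gtone)\le D_\theta\poprisk(\besttwo,\gtone)[v]+\tfrac{\beta_1}{2}\nrm*{v}_\target^2$. The term $D_\theta\poprisk(\besttwo,\gtone)[v]$ is only known to be nonnegative, so instead of discarding it I would feed back the estimates from the previous step: $D_\theta\poprisk(\besttwo,\gtone)[v]\le D_\theta\poprisk(\besttwo,\estone)[v]+\tfrac{\beta_2}{2}\nrm*{v}_\target\rho^2\le R_2-\tfrac{\lambda}{2}\nrm*{v}_\target^2+\tfrac{\kappa}{2}\rho^4+\tfrac{\beta_2}{2}\nrm*{v}_\target\rho^2$. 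Substituting, applying Young's inequality once more to the cross term, and noting that $\beta_1\ge\lambda$ holds automatically (since \pref{ass:strong_convex_loss} with $g=\gtone$ and \pref{ass:smooth_loss} sandwich the same quadratic form $D_\theta^2\poprisk(\cdot,\gtone)[v,v]$), one finds the excess risk is at most $\tfrac{\beta_1}{2}$ times the right-hand side of \pref{eq:generic_strongly_convex_l2} — which is precisely \pref{eq:generic_strongly_convex_excess_risk}.

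The main obstacle, and the entire content of the theorem, is the orthogonality step: one has to arrange every expansion so that the discrepancy between the plug-in nuisance $\estone$ and the truth $\gtone$ is felt only through a \emph{second} derivative in $g$ — hence the factor $\rho^2$, which Young's inequality against the strong-convexity gain converts into the fourth-order quantity $R_1^4$. Concretely this forces one to differentiate in $g$ only with the target argument pinned at $\besttwo$, so that the sole derivatives ever appearing are the ones the assumptions actually control ($D_\theta$ and $D_\theta^2$ at $\gtone$, and $D_g^2D_\theta$ at $\besttwo$), and never a mixed object such as $D_\theta^2D_g$. Everything else is routine constant-tracking via Young's inequality to land on the stated coefficients.
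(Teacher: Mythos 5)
Your proof is correct and follows essentially the same route as the paper's: a second-order Taylor expansion of $\poprisk(\cdot,\estone)$ in $\theta$ combined with \pref{ass:strong_convex_loss}, a Taylor expansion of the first-order term $D_\theta\poprisk(\besttwo,\cdot)[\esttwo-\besttwo]$ in $g$ with the target pinned at $\besttwo$ so that Neyman orthogonality kills the linear piece and \pref{ass:smooth_loss} controls the remainder, the AM--GM/Young trade of the resulting cross term against the strong-convexity gain, and a smoothness expansion at $\gtone$ for the excess-risk bound. The only differences are cosmetic --- you phrase the remainder in integral form where the paper uses the Lagrange mean-value form, and for \pref{eq:generic_strongly_convex_excess_risk} you feed the inequality $D_\theta\poprisk(\besttwo,\estone)[\esttwo-\besttwo]\le \Rate(\target,\sampletwo,\delta/2\midsem{}\estone)-\tfrac{\lambda}{2}\nrm*{\esttwo-\besttwo}_\target^2+\tfrac{\kappa}{2}\rho^4$ back in and apply Young's a second time, whereas the paper keeps $D_\theta\poprisk(\besttwo,\gtone)[\esttwo-\besttwo]$ explicit in the quadratic bound and cancels it in one step against the negative coefficient $\prn*{1-2\beta_1/\lambda}$ --- but both routes land on exactly the stated constants, and your sandwich observation that \pref{ass:strong_convex_loss} at $g=\gtone$ together with \pref{ass:smooth_loss} forces $\beta_1\ge\lambda$ correctly justifies the ``without loss of generality'' clause the paper invokes.
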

\revoneedit{
  The majority of the results in this paper concern the special case
  in which $r=0$. In this case, since
$\Rate(\nuisance,\sampleone,\delta/2)\propto\nrm{\estone-\gtone}_{\nuisance}$, 
\pref{thm:generic_strongly_convex} shows that for
\pref{alg:sample_splitting}, the impact of the unknown nuisance
parameter on the prediction is of second-order,
i.e.
\[
\nrm[\big]{\esttwo-\besttwo}_{\target}^{2} \text{ and }  L_{\cD}(\esttwo,\gtone) - L_{\cD}(\besttwo, \gtone) \approxleq{}
\Rate(\target,\sampletwo,\delta/2\midsem{}\esttwo,\estone)
+ \nrm[\big]{\estone-\gtone}_{\nuisance}^{4}
\]
}This implies that if the optimal rate without nuisance
parameters is of order $O(n^{-1})$, it suffices to take
$\nrm{\estone-\gtone}_{\nuisance}^{2}=\littleo(n^{-1/2})$ to achieve
the oracle rate.
\begin{revone}
\begin{proof}[\pfref{thm:generic_strongly_convex}]
We prove \pref{thm:generic_strongly_convex} by performing a Taylor
expansion to relate the excess risk at $\estone$ to the excess risk at
$\gtone$, employing orthogonality and self-bounding arguments to control
cross terms. We abbreviate
$R_{\Theta}\ldef\Rate(\target,\sampletwo,\delta/2\midsem{}\esttwo,\estone)$ and
$R_{\cG}\ldef\Rate(\nuisance,\sampleone,\delta/2)$ to simplify notation.

By a second-order Taylor expansion on the risk at $\estone$, there exists $\bar{\theta}\in\starhull(\targetemp, \besttwo)$ such that
\[
\frac{1}{2}\cdot{}D^{2}_{\theta}\poprisk(\bar{\theta},\estone)[\esttwo-\besttwo,\esttwo-\besttwo]
= \poprisk(\esttwo,\estone) - \poprisk(\besttwo,\estone) - D_{\theta}L_{\cD}(\besttwo,\estone)[\esttwo-\besttwo].
\]
Next, using the strong convexity assumption (\pref{ass:strong_convex_loss}), we have
\begin{align*}
D^{2}_{\theta}\poprisk(\bar{\theta},\estone)[\esttwo-\besttwo,\esttwo-\besttwo] 
\geq~& \lambda \cdot{}\nrm[\big]{\esttwo-\besttwo}_{\target}^{2} - \kappa \cdot \nrm[\big]{\estone - \gtone}_{\nuisance}^\frac{4}{1+r} = \lambda \cdot{}\nrm[\big]{\esttwo-\besttwo}_{\target}^{2} - \kappa\cdot{}R_{\cG}^\frac{4}{1+r}.
\end{align*}
Combining these statements, we conclude that
\[
\frac{\lambda}{2}\cdot{}\nrm[\big]{\esttwo-\besttwo}_{\target}^{2} \leq \poprisk(\esttwo,\estone) - \poprisk(\besttwo,\estone)  + \frac{\kappa}{2}\cdot R_{\cG}^{\frac{4}{1+r}} - D_{\theta}L_{\cD}(\besttwo,\estone)[\esttwo-\besttwo].
\]
Using the assumed rate for $\esttwo$ (\pref{def:algorithms}), this implies the inequality
\begin{equation}
\label{eq:generic_strongly_convex_lower}
\frac{\lambda}{2}\cdot{}\nrm[\big]{\esttwo-\besttwo}_{\target}^{2}
\leq{} R_{\Theta} + \frac{\kappa}{2}\cdot R_{\cG}^{\frac{4}{1+r}} - D_{\theta}L_{\cD}(\besttwo,\estone)[\esttwo-\besttwo].
\end{equation}
We now apply a second-order Taylor expansion (using the assumed derivative continuity from \pref{ass:smooth_loss}), which implies that there exists $\bar{g}\in\starhull(\nuisance,\gtone)$ such that
\begin{align*}
- D_{\theta}\poprisk(\besttwo,\estone)[\esttwo-\besttwo] 
&=
- D_{\theta}\poprisk(\besttwo,\gtone)[\esttwo-\besttwo] - D_{g}D_{\theta}\poprisk(\besttwo,\gtone)[\esttwo-\besttwo,\estone-\gtone]
\journal{\\&~~~~}-
  \frac{1}{2}\cdot{}D^{2}_{g}D_{\theta}\poprisk(\besttwo,\bar{g})[\esttwo-\besttwo,\estone-\gtone,\estone-\gtone].
  \end{align*}
  Using orthogonality of the loss (\pref{ass:orthogonal}), this is equal to
  \begin{align*}
- D_{\theta}\poprisk(\besttwo,\gtone)[\esttwo-\besttwo] 
- \frac{1}{2}\cdot{}D^{2}_{g}D_{\theta}\poprisk(\besttwo,\bar{g})[\esttwo-\besttwo,\estone-\gtone,\estone-\gtone].
  \end{align*}
We use the second order smoothness assumed in \pref{ass:smooth_loss}
to upper bound by
\begin{align*}
\leq{}
- D_{\theta}\poprisk(\besttwo,\gtone)[\esttwo-\besttwo] 
+ \frac{\beta_2}{2}\cdot{}\nrm[\big]{\esttwo-\besttwo}_{\target}\cdot\nrm[\big]{\estone-\gtone}_{\nuisance}^{2}.
\end{align*}
Invoking Young's inequality and using that $r\in[0,1)$, we have that
for any constant $\eta>0$, this is at most
\begin{align*}
&\leq{}
D_{\theta}\poprisk(\besttwo,\gtone)[\esttwo-\besttwo]
                                                                                                                + \frac{\beta_2 \eta}{4}\cdot{}\nrm[\big]{\esttwo-\besttwo}_{\target}^{2}
+ \frac{\beta_2}{2\eta^{\frac{1-r}{1+r}}}\cdot{}\nrm[\big]{\estone-\gtone}_{\nuisance}^{\frac{4}{1+r}}.
\end{align*}
Lastly, we use the assumed rate for $\estone$
(\pref{def:algorithms}) to bound by
\begin{align*}
  \leq{}
 - D_{\theta}\poprisk(\besttwo,\gtone)[\esttwo-\besttwo] 
 + \frac{\beta_2
    \eta}{4}\cdot{}\nrm[\big]{\esttwo-\besttwo}_{\target}^{2} + \frac{\beta_2}{2\eta^{\frac{1-r}{1+r}}}\cdot{}
R_{\cG}^{\frac{4}{1+r}}.
\end{align*}
Choosing $\eta = \frac{\lambda}{\beta_2}$, combining this string of inequalities with \pref{eq:generic_strongly_convex_lower}, and rearranging, we have:
\begin{equation}
\label{eq:strongly_convex_quadratic}
\begin{aligned}
\nrm[\big]{\esttwo-\besttwo}_{\target}^{2} 
&\leq \frac{4}{\lambda}\prn*{- D_{\theta}\poprisk(\besttwo,\gtone)[\esttwo-\besttwo] 
+ R_{\Theta}} +
2\prn[\bigg]{\prn[\bigg]{\frac{\beta_2}{\lambda}}^{\frac{2}{1+r}} + \frac{\kappa}{\lambda}}\cdot R_{\cG}^{\frac{4}{1+r}}.
\end{aligned}
\end{equation}
\pref{ass:well_specified} implies that $D_{\theta}L_{\cD}(\besttwo,\gtone)[\esttwo-\besttwo]\geq{}0$, which establishes the inequality \pref{eq:generic_strongly_convex_l2}. 

To derive the inequality \pref{eq:generic_strongly_convex_excess_risk}, we use another Taylor expansion, which implies that there exists $\bar{\theta}\in\starhull(\targetemp, \besttwo)$ such that
\begin{align}
\poprisk(\esttwo,\gtone) - \poprisk(\besttwo,\gtone)
&= D_{\theta}\poprisk(\besttwo,\gtone)[\esttwo-\besttwo] 
+ \frac{1}{2}\cdot{}D^{2}_{\theta}\poprisk(\bar{\theta},\gtone)[\esttwo-\besttwo,\esttwo-\besttwo].\notag
\intertext{Using the smoothness bound from \pref{ass:smooth_loss},
                                                       we upper bound
                                                       this by}
&\leq{} D_{\theta}\poprisk(\besttwo,\gtone)[\esttwo-\besttwo] 
+ \frac{\beta_1}{2}\cdot\nrm[\big]{\esttwo-\besttwo}_{\target}^{2}.\label{eq:generic_strongly_convex_risk}
\end{align}
We combine \pref{eq:strongly_convex_quadratic} with \pref{eq:generic_strongly_convex_risk} to conclude that $\poprisk(\esttwo,\gtone) - \poprisk(\besttwo,\gtone)$ is bounded by
\begin{align*}
\frac{2\beta_1}{\lambda} \cdot R_{\Theta} +
  \beta_1\prn[\bigg]{\prn*{\frac{\beta_2}{\lambda}}^{\frac{2}{1+r}} + \frac{\kappa}{\lambda}}\cdot R_{\cG}^{\frac{4}{1+r}}
- \prn[\bigg]{\frac{2\beta_1}{\lambda}-1}\cdot{}D_{\theta}L_{\cD}(\besttwo,\gtone)[\esttwo-\besttwo].
\end{align*}
The result follows by again using that $D_{\theta}L_{\cD}(\besttwo,\gtone)[\esttwo-\besttwo]\geq{}0$, along with the fact that $\beta_1/\lambda\geq{}1$ without loss of generality.
\end{proof}
\end{revone}

There is one issue not addressed by \pref{thm:generic_strongly_convex}: If the nuisance parameter $\gtone$ were known, the rate for the target parameters would be $\Rate(\target,\ldots\midsem{}\esttwo,\gtone)$, but the bound in \pref{eq:generic_strongly_convex_excess_risk} scales instead with $\Rate(\target,\ldots\midsem{}\esttwo,\estone)$. This is addressed in \pref{sec:erm,sec:oracle}, where---building on \pref{thm:generic_strongly_convex}---we show that for many standard algorithms, the cost to relate these quantities grows as $\prn*{\Rate(\nuisance,\sampleone,\delta/2)}^{4}$, and can be absorbed into the second term in \pref{eq:generic_strongly_convex_l2} or \pref{eq:generic_strongly_convex_excess_risk}.\loose

\subsection{Beyond Strong Convexity: Slow Rates}
\label{ssec:orthogonal_slow_rate}
The strong convexity assumption used by
\pref{thm:generic_strongly_convex} requires curvature only in the
prediction space, not the parameter space. This is considerably weaker
than what is assumed in prior works on double machine learning (e.g.,
\cite{chernozhukov2018plugin}), and is a major advantage of analyzing
prediction error rather than parameter recovery. Nonetheless, in some
situations even assuming strong convexity on predictions may be
unrealistic. A second advantage of studying prediction is that, while
parameter recovery is not possible in this case, it is still possible
to achieve low prediction error, albeit with slower rates than in the
strongly convex case. We now give guarantees under which these
(slower) oracle rates for prediction error can be obtained in the
presence of nuisance \params using
\pref{alg:sample_splitting}. \revoneedit{As in the prequel, we fix a
  representative
  $\besttwo\in\argmin_{\theta\in\target}\poprisk(\theta,\gtone)$
  throughout this subsection.}

The key technical assumption for next result is \emph{universal orthogonality}, which informally states that the loss is not simply orthogonal around $\besttwo$, but rather is orthogonal for all $\theta\in\target$.

\begin{assumption}[Universal Orthogonality]
\label{ass:universal_orthogonality}
For all $\bar{\theta}\in\starhull(\targetemp,\besttwo) + \starhull(\targetemp-\besttwo,0)$,
\[
D_{g}D_{\theta}L_{\cD}(\bar{\theta},\gtone)[\theta-\besttwo,g-\gtone]=0\quad\forall{}g\in\nuisance,\;\;\revoneedit{\theta\in\targetemp}.
\]
\end{assumption}
\revoneedit{Universal orthogonality is a strengthening of \pref{ass:orthogonal}, which requires that the cross derivative at $\gtone$ vanishes for all $\bar{\theta}\in\starhull(\targetemp,\besttwo)$, rather than only at $\besttwo$.} It is satisfied for examples including treatment effect estimation (\pref{sec:treatment_effect}) and policy learning (\pref{sec:policy_learning_body}), and is used implicitly in previous work in these settings \citep{nie2017quasi,athey2017efficient}. Beyond orthogonality, we require a mild smoothness assumption for the nuisance class.\loose
\begin{assumption}
\label{ass:smooth_loss_slow}
The derivatives $D^{2}_{g}L_{\cD}(\theta,g)$ and $D_{\theta}^{2}D_{g}L_{\cD}(\theta,g)$ are continuous.
Furthermore, there exists a constant $\beta$ such that for all $\theta\in\starhull(\targetemp,\besttwo)$ and $\bar{g}\in\starhull(\nuisance,\gtone)$,
\begin{equation}
\abs*{D^{2}_{g}L_{\cD}(\theta,\bar{g})[g-\gtone,g-\gtone]}\leq{}\beta \cdot{}\nrm*{g-\gtone}_{\nuisance}^{2}\quad\forall{}g\in\nuisance.
\end{equation}
\end{assumption}
Our main theorem for slow rates is as follows.
\begin{theorem}
\label{thm:orthogonal_slow}
Suppose that there is $\besttwo\in\argmin_{\theta\in\target}\poprisk(\theta,\gtone)$ such that \pref{ass:universal_orthogonality}  and \pref{ass:smooth_loss_slow} are satisfied. Then with probability at least $1-\delta$, the target \param $\esttwo$ produced by \pref{alg:sample_splitting} enjoys the excess risk bound:
\[
L_{\cD}(\esttwo,\gtone) - L_{\cD}(\besttwo, \gtone) 
\leq{}  \Rate(\target, \sampletwo, \delta/2\midsem{}\esttwo,\estone) + \beta \cdot\prn*{\Rate(\nuisance,\sampleone,\delta/2)}^{2}.
\]
\end{theorem}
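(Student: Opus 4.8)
The plan is to condition on the success of the two black-box subroutines and then \emph{transfer} the second-stage guarantee, which is stated relative to the plug-in nuisance $\estone$, into an excess risk bound relative to the true nuisance $\gtone$, paying only a second-order price through a Taylor expansion in the nuisance argument. No strong convexity, first-order optimality, or target-smoothness hypothesis is used, in contrast to \pref{thm:generic_strongly_convex}; the only levers are universal orthogonality and the nuisance-smoothness bound.

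First I would fix the good events. By \pref{def:algorithms} the event $\cE_1\ldef\{\nrm*{\estone-\gtone}_{\nuisance}\le\Rate(\nuisance,\sampleone,\delta/2)\}$ has probability at least $1-\delta/2$. Since $\estone$ is a function of $\sampleone$ only and $\sampletwo\indep\sampleone$, I condition on $\sampleone$ --- so that $\estone$ is a fixed element of $\nuisance$ --- and apply the second-stage guarantee of \pref{def:algorithms} with fixed nuisance $g=\estone$ and fresh sample $\sampletwo$, yielding that $\cE_2\ldef\{L_{\cD}(\esttwo,\estone)-L_{\cD}(\besttwo,\estone)\le\Rate(\target,\sampletwo,\delta/2\midsem\estone)\}$ also has probability at least $1-\delta/2$; a union bound gives $\Pr(\cE_1\cap\cE_2)\ge1-\delta$. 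I work on this event henceforth, writing $h\ldef\estone-\gtone$.

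Next, I would decompose the target excess risk at the true nuisance as
\[
L_{\cD}(\esttwo,\gtone)-L_{\cD}(\besttwo,\gtone)=\bigl[L_{\cD}(\esttwo,\estone)-L_{\cD}(\besttwo,\estone)\bigr]+\bigl[v(\gtone)-v(\estone)\bigr],
\]
where $v(g)\ldef L_{\cD}(\esttwo,g)-L_{\cD}(\besttwo,g)$. The first bracket is at most $\Rate(\target,\sampletwo,\delta/2\midsem\estone)$ on $\cE_2$, so it remains to show $v(\gtone)-v(\estone)\le\beta\nrm*{h}_{\nuisance}^{2}$. For this I Taylor-expand $s\mapsto v(\gtone+sh)$ to second order along the segment $g_s\ldef\gtone+sh\in\starhull(\nuisance,\gtone)$ (valid by the continuity hypotheses in \pref{ass:smooth_loss_slow}):
\[
v(\estone)-v(\gtone)=D_{g}v(\gtone)[h]+\int_0^1(1-s)\,D_{g}^{2}v(g_s)[h,h]\,ds.
\]
The first-order term vanishes: the map $r\mapsto D_{g}L_{\cD}(\besttwo+r(\esttwo-\besttwo),\gtone)[h]$ has derivative $D_{\theta}D_{g}L_{\cD}(\besttwo+r(\esttwo-\besttwo),\gtone)[\esttwo-\besttwo,h]$, which is identically zero by \pref{ass:universal_orthogonality} since each intermediate point $\besttwo+r(\esttwo-\besttwo)$ lies in $\starhull(\targetemp,\besttwo)$; hence $D_{g}v(\gtone)[h]=D_{g}L_{\cD}(\esttwo,\gtone)[h]-D_{g}L_{\cD}(\besttwo,\gtone)[h]=0$. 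For the remainder, \pref{ass:smooth_loss_slow} applied with $\theta\in\{\esttwo,\besttwo\}\subseteq\starhull(\targetemp,\besttwo)$ and $\bar g=g_s$ gives $\abs*{D_{g}^{2}v(g_s)[h,h]}\le2\beta\nrm*{h}_{\nuisance}^{2}$, and since $\int_0^1(1-s)\,ds=\tfrac12$ I conclude $\abs*{v(\gtone)-v(\estone)}\le\beta\nrm*{h}_{\nuisance}^{2}\le\beta\bigl(\Rate(\nuisance,\sampleone,\delta/2)\bigr)^{2}$ on $\cE_1$. Combining the two brackets gives the claim.

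The conceptual content is short --- the first-order contribution in the nuisance is killed by orthogonality and the leftover second-order contribution is exactly $\beta\bigl(\Rate(\nuisance,\cdot)\bigr)^2$ --- so the real work is functional-calculus bookkeeping: checking that the relevant directional derivatives exist and are continuous along the segments in play so that Taylor's theorem with integral remainder applies, and that every intermediate target stays in $\starhull(\targetemp,\besttwo)$ and every intermediate nuisance in $\starhull(\nuisance,\gtone)$ (this is where the star-hull structure in the hypotheses of \pref{ass:universal_orthogonality} and \pref{ass:smooth_loss_slow} is essential). The one subtlety that needs care is that the second-stage algorithm only certifies $\esttwo\in\targetemp$ rather than $\esttwo\in\target$, so applying \pref{ass:universal_orthogonality} to the direction $\esttwo-\besttwo$ requires the room built into its enlarged domain $\starhull(\targetemp,\besttwo)+\starhull(\targetemp-\besttwo,0)$; I expect this matching of domains to be the only place where a careless argument could slip.
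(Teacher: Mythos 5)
Your argument is correct and structurally mirrors the paper's own proof: both split the target excess risk at $\gtone$ into the plug-in excess risk at $\estone$ (controlled by $\Rate(\target,\sampletwo,\delta/2\midsem\estone)$) plus a nuisance-correction term, Taylor-expand that correction to second order in the nuisance direction $\estone-\gtone$, kill the first-order contribution using universal orthogonality, and bound the remainder by $\beta\nrm*{\estone-\gtone}_{\nuisance}^2$ via \pref{ass:smooth_loss_slow}. Where you diverge is in how the first-order cancellation is established. The paper Taylor-expands the two loss differences separately and then relates the surviving first-order terms $D_g L_{\cD}(\esttwo,\gtone)[\estone-\gtone]$ and $D_g L_{\cD}(\besttwo,\gtone)[\estone-\gtone]$ via a \emph{second-order} Taylor expansion in $\theta$; the first-order term of that expansion vanishes directly by \pref{ass:universal_orthogonality}, but showing the second-order term $\tfrac{1}{2}D_\theta^2 D_g L_{\cD}(\bar\theta,\gtone)[\cdot]$ also vanishes requires a separate limiting argument, and this limit is exactly where the enlarged domain $\starhull(\targetemp,\besttwo)+\starhull(\targetemp-\besttwo,0)$ of the orthogonality assumption gets used. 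You instead Taylor-expand the single difference $v(g)=L_{\cD}(\esttwo,g)-L_{\cD}(\besttwo,g)$ once, and verify $D_g v(\gtone)[h]=0$ by noting that the one-dimensional map $r\mapsto D_g L_{\cD}(\besttwo+r(\esttwo-\besttwo),\gtone)[h]$ has everywhere-vanishing derivative along $\starhull(\targetemp,\besttwo)$. That fundamental-theorem-of-calculus route is a bit tidier: it needs only $C^1$ regularity of $\theta\mapsto D_g L_{\cD}(\theta,\gtone)[h]$ (supplied by \pref{ass:smooth_loss_slow}) rather than a second-order-in-$\theta$ expansion, and it does not require the second summand $\starhull(\targetemp-\besttwo,0)$ in the domain of \pref{ass:universal_orthogonality}. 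One correction to the remark you close with: the ``room'' you need for the direction $\esttwo-\besttwo$ is not supplied by the enlarged domain (which scopes the base point $\bar\theta$, not the direction); both your proof and the paper's invoke \pref{ass:universal_orthogonality} with the direction $\esttwo-\besttwo$ where $\esttwo\in\targetemp$ may lie outside $\target$, so the assumption's quantifier ``$\forall\theta\in\target$'' must implicitly be read as scoping over $\targetemp$ (or over the linear span of $\target-\besttwo$). This is a shared imprecision with the paper, not a flaw specific to your argument.
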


\begin{revone}
  For generic Lipschitz losses, the optimal rate for parametric
  classes---in the absence of nuisance parameters---scales with
  $n^{-1/2}$. Without orthogonality, one expects the dependence
  on nuisance estimation error to scale linearly with
  $\nrm{\estone-\gtone}_{\nuisance}$, which would require
  $\nrm{\estone-\gtone}_{\nuisance}^{2}=\littleo(n^{-1})$ to achieve
  the oracle rate. \pref{thm:orthogonal_slow} shows that under
  orthogonality, the impact of nuisance parameter estimation is of
  lower order, and it suffices that
  $\nrm{\estone-\gtone}_{\nuisance}^{2}=\littleo(n^{-1/2})$.
  The proof follows similar reasoning to that of
  \pref{thm:generic_strongly_convex}; see \pref{app:orthogonal}.
\end{revone}

\subsection{Example: Treatment Effect Estimation}
\label{sec:treatment_effect}

\newcommand{\lambdap}{\lambda_{p}}
\newcommand{\lambdare}{\lambda_{\mathsf{re}}}

To make matters concrete, we now walk through a detailed example in which we specialize our general framework to the well-studied problem of treatment effect estimation. We show how the setup falls in our framework, explain what statistical assumptions are required to apply our main theorems, and show how to interpret the resulting excess risk bounds.

Following, e.g., \cite{robinson1988root,nie2017quasi}, we receive examples $z=(X,W,Y,T)$ according to the following data generating process:
\begin{equation}
\label{eqn:treatment_control_dgp}
\begin{aligned}
&Y = T\cdot{}\theta_0(X) + f_0(W) + \veps_1,&&\En\brk*{\veps_1\mid{}X,W,T}=0,\\
&T=e_0(W) + \veps_2,&&\En\brk*{\veps_2\mid{}X,W}=0,
\end{aligned}
\end{equation}
where $X\in\cX$ and $W\in\cW$ are covariates, $T\in\crl*{0,1}$ is the treatment variable, and $Y\in\bbR$ is the target variable. The true target \param is $\theta_0:\cX\to\bbR$, but we do not necessarily assume that $\gttwo\in\target$. The functions $e_0:\cW\to\brk*{0,1}$ and $f_{0}:\cW\to\bbR$ are unknown; we define $m_0(x,w)=\En\brk*{Y\mid{}X=x,W=w}=\theta_0(x)e_0(w) + f_0(w)$ and take $g_0=\crl*{m_0,e_0}$ to be the true nuisance parameter. We set $\varone=(X,W,T)$ and $\vartwo=(X)$.

\begin{revone}
\subsubsection{Residualized Loss (R-Loss)}
Following \cite{robinson1988root,nie2017quasi}, we consider the residualized square loss
\begin{equation}
\label{eq:cate_control_loss}
\ls(\theta,\crl*{m,e};z) = \prn*{(Y-m(X,W) - (T-e(W))\theta(X)}^{2}.
\end{equation}
Let us take a moment to interpret the meaning of excess risk under this loss. %
It is simple to verify that if the true nuisance parameters $g_0=\crl*{m_0,e_0}$ are plugged in, then
\[
L_{\cD}(\theta,\gtone) - L_{\cD}(\gttwo,\gtone) = \En\prn*{(T-e_{0}(W))\cdot{}(\theta(X)-\theta_{0}(X))}^{2}.
\]
Thus, if a predictor $\theta$ has low risk it, must be good at predicting $\theta_{0}(X)$ whenever there is sufficient variation in the treatment $T$. In addition, If the model is not well-specified ($\gttwo\notin\target$) but $\target$ is convex, we can still deduce that 
\[
L_{\cD}(\theta,\gtone) - L_{\cD}(\besttwo,\gtone) \geq{} \En\prn*{(T-e_{0}(W))\cdot{}(\theta(X)-\besttwo(X))}^{2},
\]
so in this case low excess risk implies that we predict nearly as well as the best predictor in class (again, assuming sufficient variation in $T$).

\paragraph{Applying the main results: Fast rates}
We now apply \pref{thm:generic_strongly_convex} to derive oracle excess risk bounds for the residualized loss. Let us consider the seminorms $\nrm*{\theta}_{\target}^2\ldef\En\brk*{(T-e_0(W))^{2}\theta^2(X)}$ and $\nrm*{\cdot}_{\cG}\ldef\nrm*{\cdot}_{L_4(\ell_2,\cD)}$ and $r=0$. Establishing the basic orthogonality and first-order conditions required to apply \pref{thm:generic_strongly_convex} is a simple exercise (see \pref{app:orthogonal} for a full derivation). To establish the smoothness and strong convexity properties \pref{ass:smooth_loss} and \pref{ass:strong_convex_loss}, we require mild boundedness assumptions, and a lower bound on the coverage parameter
\begin{equation}
\label{eq:rsc_treatment}
\lambdare\ldef\inf_{\theta\in\target}\crl*{\frac{\En(T-e_0(W))^{2}\prn*{\theta(X)-\besttwo(X)}^{2}}{\En\prn*{\theta(X)-\besttwo(X)}^{2}}} > 0.
\end{equation}
In particular, we have the following result.
\end{revone}

\begin{revone}
  \begin{proposition}
    \label{prop:treatment_example}
    Consider the treatment effect estimation setting with the residualized loss and norms $\nrm*{\theta}_{\target}^2\ldef\En\brk*{(T-e_0(W))^{2}\theta^2(X)}$ and $\nrm*{\cdot}_{\cG}\ldef\nrm*{\cdot}_{L_4(\ell_2,\cD)}$. Suppose that $\gttwo\in\target$ and $\abs{\gttwo(x)}\leq{}1$. Then the assumptions of \pref{thm:generic_strongly_convex} are satisfied with constants $r=0$, $\lambda = \frac{1}{4}$,  $\kappa=4\lambdare^{-1}$, $\beta_1=1$, and $\beta_2= 4\lambdare^{-1/2}$. As a result, the sample splitting meta-algorithm (\pref{alg:sample_splitting}) with the residualized loss produces a \param $\esttwo$ such that with probability at least $1-\delta$,
\begin{align*}
L_{\cD}(\esttwo,\gtone) - L_{\cD}(\gttwo, \gtone)
\leq{} 8\cdot{}\Rate(\target,\sampletwo,\delta/2\midsem{}\esttwo,\estone) +
144\lambdare^{-1}\cdot{}\prn*{
\Rate(\nuisance,\sampleone,\delta/2)
}^{4}.
\end{align*}
More generally, whenever $\Theta$ is convex, the same conclusion holds with $\gttwo$ replaced by $\besttwo\in\argmin_{\theta\in\target}\poprisk(\theta,\gtone)$, regardless of whether $\gttwo\in\target$.
\end{proposition}
\pref{prop:treatment_example} implies that for any class, oracle rates for excess risk are achievable whenever  $\Rate(\nuisance,n,\delta)=\littleo\prn*{\Rate(\target,\cdots)^{1/4}}$. Interestingly, in the case where the target class $\target$ is convex, this holds even when the target parameter is arbitrarily misspecified.  In addition, the excess risk bound in \pref{prop:treatment_example} has the desirable property that the coverage parameter $\lambdare$ enters only through the higher-order nuisance error term. 

Let us interpret the coverage parameter $\lambdare$, which acts as a problem-dependent constant whose value reflects the interaction between the treatment policy and the treatment effect. In general, to lower bound $\lambdare$, it suffices to assume that $\var(T-e_0(W) \mid X)\geq \eta$ for some $\eta> 0$, with no further assumptions required on the data distribution or target parameter class. This condition is typically referred to as \emph{overlap}, since it requires that the treatment is not deterministic for any realization of the covariates, and implies that $\lambdare\geq{}\eta$. On the other hand, even if overlap is not satisfied, one can still lower bound the coverage parameter. To do so, we focus on a special case investigated in \cite{chernozhukov2017orthogonal} and \cite{chernozhukov2018plugin}, where $\target$ is a class of high-dimensional predictors of the form $\theta(x) = \tri*{w,\phi(x)}$, where $w\in\bbR^{p}$ and $\phi:\cX\to\bbR^{p}$ is a fixed featurization; in general, the dimension $p$ may grow with $n$, with  $p\gg{}n$. In this case, note that it suffices that the matrix $\En\brk*{\mathrm{Var}(T-e_0(W)\mid X)\phi(X)\phi(X)^{\trn}}$ satisfies a restricted minimum eigenvalue condition. Hence, a lower bound on $\lambdare$ generalizes assumptions used in \cite{chernozhukov2017orthogonal,chernozhukov2018plugin}. 

\end{revone}

\paragraph{Stronger guarantees for specific target classes}
The results in the prequel apply to arbitrary target classes, but require that the nuisance estimation algorithm is close in the $L_4$ norm (i.e., $\nrm*{\cdot}_{\cG}=\nrm*{\cdot}_{L_4(\ls_2,\cD)}$). For specific target classes (typically, classes with additional structure that facilitates estimation in parameter error), it is possible to provide improved guarantees that scale with weaker $L_2$ estimation error for the nuisance class. To illustrate the flexibility of \pref{thm:generic_strongly_convex} in accommodating such cases, we consider a constrained variant of the \emph{R-learner} of \citet{nie2017quasi} and recover the oracle rates from this work.
\begin{revone}
\begin{example}[Constrained R-Learner]
  \label{ex:r_learner}
  The R-learner of \cite{nie2017quasi} corresponds to a special case of the treatment effect estimation setup in \pref{eqn:treatment_control_dgp} in which the target parameter belongs to a kernel class, and is estimated by minimizing the orthogonal loss \pref{eq:cate_control_loss} with regularization. Specializing the sample splitting meta-algorithm (\pref{alg:sample_splitting}) to this setting, we obtain a constrained variant of their method. \preprint{\\}In more detail, consider the treatment effect estimation setting with $\nrm*{\theta}_{\target}^2\ldef\En\brk*{(T-e_0(W))^{2}\theta^2(X)}$ and $\nrm*{\cdot}_{\cG}\ldef\nrm*{\cdot}_{L_2(\ell_2,\cD)}$. Let $\cH$ be a reproducing kernel Hilbert space (RKHS) with norm $\nrm{\cdot}_{\cH}$ and kernel $\cK$. Assume that $\abs{Y}\leq{}1$ almost surely and that treatments satisfy overlap, and consider the constrained target parameter class
\[
\target = \crl*{\theta\in\cH\mid{}\nrm{\theta}_{\cH}\leq{}c, \nrm{\theta}_{L_{\infty}(\cD)}\leq{}1}, 
\]
where $c\geq{}1$ is a parameter. For the target estimation algorithm, consider the plug-in empirical risk minimizer
\[
  \esttwo = \arg\min_{\theta \in \target}\sum_{i=n/2}^n \ell(\theta(\vartwo_i), \estone(\varone_i); \varall_i),
\]
where $\estone$ is the nuisance estimator. Assume that the kernel $\cK$ has eigenvalue decay of the form $\sigma_j\sim j^{-1/p}$ for some parameter $p\in(0,1)$ and that a smoothed version of $\theta_0$ lies in the RKHS for smoothing parameter $\alpha\in (0,1/2)$ (refer to proof in \pref{app:example_proofs} for definitions), and choose $c\propto{}n^{\alpha/(p+(1-2\alpha))}$. If the nuisance estimation algorithm has $\nrm*{\estone-\gtone}_{L_2(\ls_2,\cD)}\leq\Rate(\nuisance,\sampleone,\delta)=\wt{o}(n^{-1/4})$, then with probability at least $1-\delta$, \pref{alg:sample_splitting} has
\begin{equation}
  \label{eq:r_learner}
  \poprisk(\esttwo,\gtone) - \poprisk(\besttwo,\gtone)
  \leq \bigoht\prn*{n^{-\frac{1-2\alpha}{p+(1-2\alpha)}} + \frac{\log(\delta^{-1})}{n}},
\end{equation}
where $\bigoht(\cdot)$ suppresses dependence on regularity parameters and $\log(n)$ factors. This matches the best known rate for the oracle learner \citep{nie2017quasi}. 
\end{example}
This example shows that an $\bigoh(n^{-1/4})$ rate in $L_2$-error for the nuisances suffices to achieve the optimal rate in the absence of nuisance parameter. The proof leverages a lemma of \cite{mendelson2010regularization}, which states that for all $\theta\in\cH$,
$\nrm{\theta}_{L_{\infty}(\cD)}\approxleq\nrm{\theta}_{\cH}^{p}  \nrm{\theta}_{L_{2}(\cD)}^{1-p}$, where $p$ is the eigenvalue decay parameter. This allows us to establish \pref{ass:smooth_loss} and \pref{ass:strong_convex_loss} with respect to $L_2$-error for the nuisance parameter, at the cost of incurring exponent $r=p$, rather than $r=0$ as in the generic result (\pref{prop:treatment_example}). Moreover, as a consequence of the norm comparison inequality of \cite{mendelson2010regularization}, the $L_2(\cD)$-error bound from our theorem also implies a bound on $L_{\infty}(\cD)$ error.
\end{revone}

\paragraph{Slow rates}
We mention in passing that some distributions may simply not satisfy the coverage condition in \pref{eq:rsc_treatment}. In this case, we can appeal to \pref{thm:orthogonal_slow} (we show in \pref{app:example_proofs} that the residualized loss satisfies the universal orthogonality property), which does not require any lower bounds in the vein of \pref{eq:rsc_treatment}, but leads to slower rates. In general, whether the fast rate (\pref{thm:generic_strongly_convex}) or slow rate (\pref{thm:orthogonal_slow}) will give better results given finite samples will depend on the behavior of the data distribution and target class.

\begin{revone}
\subsubsection{Doubly-Robust Loss (DR-Loss)}
As an alternative to the residualized loss, for the special case of  a binary treatment, we can use the doubly-robust approach described in \pref{sec:intro}. Consider the special case of \pref{eq:treatment_control_dgp} in which $X=W$. Recall that that $e_0(X)=\En\brk*{T\mid{}X}$ is the treatment propensity, and define $f_0(t,x)=\En\brk*{Y\mid{}T=t,X=x}$. Define
\[
\varphi(f,e;z) \ldef f(1,X) - f(0,X) - \frac{T-e(X)}{e(X)(1-e(X))}(Y-f(T,X)).\] 
We take $g=\crl*{f(1,\cdot),f(0,\cdot), e}$ as the nuisance parameter. Then the doubly robust loss takes the form
\begin{equation}
\label{eq:doubly_robust_treatment_loss}
\ls(\theta,\crl{f(0,\cdot),f(1,\cdot),e};z) = \prn*{\varphi(f,e;z)- \theta(X)}^{2}.
\end{equation}
One can verify that $\En\brk*{\vphi(f_0,e_0;z)\mid{}X}=\theta_0(X)$. As a result, whenever the true nuisance parameters $g_0=\crl{f_0(0,\cdot),f_0(1,\cdot),e_0}$ are plugged in, the oracle excess risk satisfies
\[
L_{\cD}(\theta,\gtone) - L_{\cD}(\gttwo,\gtone) = \En\brk*{\prn*{\theta(X)-\theta_{0}(X)}^{2}},
\]
and hence is equivalent to $L_2$-error.
It is straightforward to verify that the doubly-robust loss satisfies the preconditions of \pref{thm:generic_strongly_convex}, which leads to the following result.
\begin{proposition}
  \label{prop:doubly_robust}
  Consider the treatment effect estimation setting with the doubly-robust loss and norms $\nrm*{\cdot}_{\target}\ldef\nrm{\cdot}_{L_2(\cD)}$ and $\nrm*{\cdot}_{\cG}\ldef\nrm*{\cdot}_{L_4(\ell_2,\cD)}$. Suppose that $\gttwo\in\target$ and that $\abs{\theta(x)},\nrm*{g(x)}_{\infty}\leq{}1$ for all $\theta\in\target,g\in\nuisance$. In addition, assume that $\abs{Y}\leq{}1$ almost surely, and that $\eta{}\leq{}e(X)\leq{}1-\eta$ for all $g=\crl{f(0,\cdot),f(1,\cdot),e}\in\cG$. Then the assumptions of \pref{thm:generic_strongly_convex} are satisfied with constants $r=0$, $\lambda = \frac{1}{4}$,  $\kappa=0$, $\beta_1=1$, and $\beta_2= 24\eta^{-3}$. As a result, the sample splitting meta-algorithm (\pref{alg:sample_splitting}) with the doubly-robust loss produces a \param $\esttwo$ such that with probability at least $1-\delta$,
\begin{align*}
L_{\cD}(\esttwo,\gtone) - L_{\cD}(\gttwo, \gtone)
\leq{} 8\cdot{}\Rate(\target,\sampletwo,\delta/2\midsem{}\esttwo,\estone) +
  \bigoh(\eta^{-6})\cdot{}\prn*{
\Rate(\nuisance,\sampleone,\delta/2)
}^{4}.
\end{align*}
\end{proposition}

This approach was further developed in the subsequent work of \citet{kennedy2020optimal}, who termed it the \emph{DR-Learner}, and provided improved oracle estimation rates which are doubly-robust with respect to the estimation errors for the propensities and conditional means. A variant of our two-stage algorithm for the doubly robust loss was also explored in the prior work of \cite{oprescu2018orthogonal} for the special case where the target estimation algorithm is a Generalized Random Forest. 

\vsedit{We note that the explicit dependence on $\eta$ in the second order term in \cref{prop:doubly_robust} can be avoided if one instead re-defines the inverse propensity term $a_0(T,X)=\frac{T-e_0(X)}{e_0(X)\, (1-e_0(X))}$ as the nuisance function. In this case, the second part of \pref{ass:smooth_loss} is satisfied with respect to the product pre-norm: $\nrm*{g}_{\nuisance}=\nrm*{(f,a)}=\sqrt{\|f\|_{L_4(\ell_2,\cD)}\, \|a\|_{L_4(\ell_2,\cD)}}$, since we have that:
\begin{align*}
  \abs*{D^{2}_{g}D_{\theta}\poprisk(\besttwo,\bar{g})[\nu_\theta, \nu_g, \nu_g]} 
  ={}~& 4\En\brk*{\nu_\theta(X)\, \nu_a(T,X)\, \nu_f(T,X) }
  \leq{} 4 \nrm*{\nu_\theta}_{L_2(\cD)} \cdot \nrm*{\nu_g}_{\nuisance}^2
\end{align*}
Applying \pref{thm:generic_strongly_convex} with this definition of nuisance pre-norm yields a doubly robust version of \pref{prop:doubly_robust}, where only products of nuisance estimation rates arise.}
\end{revone}

\subsection{Example: Policy Learning}
\label{sec:policy_learning_body}
As a second example, we show how to apply our framework to the classical problem of policy learning. Compared to our treatment effect estimation example, losses for this setting do not typically satisfy the strong convexity property, meaning that \pref{thm:orthogonal_slow} is the relevant meta-theorem, and slow rates are to be expected.

In policy learning, we receive examples of the form $Z=(X,T,Y)$, where $Y\in \R$ is an incurred loss, $T \in \cT$ is a treatment vector and $X\in \cX$ is a vector of covariates. %
The treatment $T$ is chosen based on an unknown, potentially randomized policy which depends on $X$. Specifically, we assume the following data generating process:
\begin{equation}
\label{eq:treatment_control_dgp}
\begin{aligned}
&Y = f_0(T, X) + \veps_1,&&\En\brk*{\veps_1\mid{}X,T}=0,\\
&T=e_0(X) + \veps_2,&&\En\brk*{\veps_2\mid{}X}=0.
\end{aligned}
\end{equation}
The learner wishes to optimize over a set of treatment policies $\target\subseteq{}(\cX\to\cT)$ (i.e., policies take as input covariates $X$ and return a treatment). Their goal is to produce a policy $\esttwo$ that achieves small regret with respect to the population risk:
\begin{equation}
\label{eq:policy_risk}
\En\brk[\big]{f_0(\esttwo(X), X)} - \min_{\theta \in \target} \En\brk*{f_0(\theta(X), X)}.
\end{equation}
This formulation has been extensively studied in statistics \citep{qian2011performance,zhao2012estimating,zhou2017residual,athey2017efficient,zhou2018offline} and machine learning \citep{beygelzimer2009offset,dudik2011doubly,swaminathan2015counterfactual,kallus2018policy}\preprint{; in the latter, it is sometimes referred to as counterfactual risk minimization}.

The learner does not know the so-called counterfactual outcome function $f_0$, so it is treated as a nuisance parameter. Typically, orthogonalization of this nuisance parameter is possible by utilizing the secondary treatment equation in \pref{eq:treatment_control_dgp} and fitting a \param for the observational policy $e_0$, which is also treated as a nuisance parameter. We can then write the expected counterfactual reward as
\begin{equation}
f_0(t, X) = \En\brk*{\ell(t, f_0, e_0;Z) \mid X}
\end{equation}
for some known loss function $\ell$ that utilizes the treatment \param $e_0$. Letting $g_0=\crl*{f_0,e_0}$, the learner's goal can be phrased as minimizing the population risk,
\begin{equation}
\label{eq:policy_generic}
\En\brk*{f_0(\theta(X), X)} = \En\brk*{\En\brk*{\ell(\theta(X), f_0, e_0;Z) \mid X}} = \En\brk*{\ell(\theta(X), f_0, e_0;Z)} \rdef \poprisk(\theta, g_0),
\end{equation}
over $\theta \in \target$. This formulation clearly falls into our orthogonal statistical learning framework, where the target parameter is the policy $\theta$ and the counterfactual outcome $f_0$ and observed treatment policy $e_0$ together form the nuisance parameter $g_0\ldef\crl*{f_0,e_0}$. To facilitate the use of estimation for the nuisance components, one typically assumes access to function classes $\cE$ and $\cF$ with $e_0\in\cE$ and $f_0\in\cF$ (so that $\cG=\cF\times\cE$), and fits the nuisance parameters via regression over these classes.

We make this discussion concrete for the special case of binary treatments $T\in \{0,1\}$, with additional examples in \pref{sec:policy}. To simplify notation, define $p_0(t, x) = \bbP\brk*{T=t \mid X=x}$, so that $p_0(t, x) = e_0(x)$ if $t=1$ and $1-e_0(x)$ if $t=0$. Consider the nuisance parameter $g=\crl*{f(0,\cdot),f(1,\cdot),e}$. Then the loss\preprint{ function}
\begin{equation}
\ell(t, g;Z) = f(t, X) + 1\{T=t\} \frac{(Y - f(t, X))}{p(t, X)},
\end{equation}
has the structure in \pref{eq:policy_generic}: it evaluates to the true risk \pref{eq:policy_risk} whenever the true nuisance parameter is plugged in. This formulation leads to the well-known doubly-robust estimator for the counterfactual outcome \citep{cassel1976some,robins1994estimation,robins1995semiparametric,dudik2011doubly}. It is straightforward to verify that the resulting population risk is orthogonal with respect to $g$. We can also obtain an equivalent loss function by subtracting the loss incurred by choosing treatment $0$. Define
\begin{equation}
\label{eq:binary_treatment}
\beta(g;Z)= \prn*{f(1, X) - f(0, X) + T\frac{Y - f(1, X)}{e(X)} - (1-T)\frac{Y - f(0, X)}{1-e(X)}},
\end{equation}
and set $\ell(t, g;Z) = \beta(g;Z)\cdot{}t$. This formulation leads to a linear population risk:
\begin{equation}
\poprisk(\theta, g) = \En\brk*{\beta(g;Z)\cdot \theta(X)}.
\end{equation}
This population risk satisfies universal orthogonality, and \pref{thm:orthogonal_slow} can be applied with $\nrm{\cdot}_{\nuisance}=\nrm{\cdot}_{L_2(\ls_2,\cD)}$ whenever the nuisance parameters are bounded appropriately. In particular, we have the following corollary of \pref{thm:orthogonal_slow}.

\begin{revone}
  \begin{proposition}
    \label{prop:policy_learning}
    Consider the policy learning setting with binary treatments and norm $\nrm*{\cdot}_{\nuisance}=\nrm*{\cdot}_{L_2(\ls_2,\cD)}$. Suppose that $\abs{Y}\leq{}1$ almost surely, and that all $g=\crl*{f(0,\cdot),f(1,\cdot),e}\in\cG$ have $\abs{f(t,X)}\leq{}1$ and $e(X)\in\brk{\eta,1-\eta}$ for some $\eta\in(0,1/2]$. Then with probability at least $1-\delta$, the target \param $\esttwo$ produced by \pref{alg:sample_splitting} enjoys the excess risk bound:\loose
\[
L_{\cD}(\esttwo,\gtone) - L_{\cD}(\besttwo, \gtone) 
\leq{}  \Rate(\target, \sampletwo, \delta/2\midsem{}\esttwo,\estone) + \bigoh(\eta^{-3}) \cdot
\prn*{\Rate(\nuisance,\sampleone,\delta/2)}^{2}.
\]
\end{proposition}
Note that this bound depends on the overlap parameter $\eta$ only through the nuisance error term. We mention in passing that explicit dependence on this parameter can be avoided entirely by treating the inverse propensity term $a_0(T,X)=\frac{T-e_0(X)}{e_0(X)\, (1-e_0(X))}$ as nuisance parameter (see, e.g., \cite{chernozhukov2021automatic}). In this case, note that \pref{ass:smooth_loss_slow} is satisfied with respect to the product pre-norm: $\nrm*{g}_{\nuisance}=\nrm*{(f,a)}=\sqrt{\|f\|_{L_2(\ell_2,\cD)}\, \|a\|_{L_2(\ell_2,\cD)}}$, since we have that:
\begin{align*}
  \abs*{D^{2}_{g}\poprisk(\theta,\bar{g})[\nu_g, \nu_g]} 
  ={}~& 4\En\brk*{\theta(X)\, \nu_a(T,X)\, \nu_f(T,X) }
  \leq{} 4 \nrm*{\nu_g}_{\nuisance}^2
\end{align*}
Applying our \pref{thm:generic_strongly_convex} with this definition of nuisance pre-norm, yields a doubly robust version of \pref{prop:policy_learning}, where only products of nuisance estimation rates arise. Recent follow-up work of \cite{chernozhukov2021automatic} provides a statistical learning approach for estimating such nuisance functions with respect to mean-squared error, which is based on minimizing an empirical analogue of the risk function $\En\brk*{a(T, X)^2 - 2 (a(1, X) - a(0,X))}$.
\end{revone}

\subsection{Discussion}
\label{sec:orthogonal_discussion}
\begin{revone}
We close by discussing extensions that build on the
results presented in this section, as well as additional connections between our
results and existing techniques in the literature on semiparametric
inference and double machine learning.

\paragraph{Experiments, additional tools, and applications}
\pref{part:experiments} of the appendix contains an empirical evaluation of the
techniques presented in this section, with applications to treatment
effect estimation and policy learning.  \pref{part:additional} of the
appendix contains supplementary theoretical results that build on the development of this section,
including user-friendly variants of the main theorems
(\pref{sec:user_friendly}), construction of orthogonal losses
(\pref{sec:orthogonal_loss}), sufficient conditions to apply the main theorems
(\pref{sec:sufficient}), and further applications
(\pref{sec:applications}).

\paragraph{Construction of orthogonal losses}

While orthogonal losses are already known for many problem settings
and statistical models (treatment effect estimation, policy learning,
regression with missing or censored data, and so on), for new
problems one often begins with a loss that is not necessarily
orthogonal. In \pref{sec:orthogonal_loss}, we give a
generic approach to construct orthogonal losses, building on a
technique from \cite{chernozhukov2018plugin}.

\paragraph{One the use of cross-fitting}
\pref{alg:sample_splitting} relies on sample splitting. While this
strategy is quite general and results in rate-optimal estimates, it
can be inefficient, since the target parameter is only estimated using
a subset of the data. A more practical alternative is to employ the
well-known \emph{cross-fitting} approach (e.g., \cite{chernozhukov2016double}), in which we split the data
into $K$ folds, obtain estimators using complementary folds,
and combine the results. Cross-fitting variants of
\pref{alg:sample_splitting} are given in
\pref{app:additional_algorithms} as \pref{alg:cross_fitting,alg:cross_fitting2}.\loose

One can show that under fairly genreal assumptions, the analysis in
this section remains valid when cross-fitting is employed, and we recommend this in
practice. However, compared to the setting considered in
\cite{chernozhukov2016double}, in which cross-fitting provably enjoys improved
efficiency over basic sample splitting, there is no hope of
establishing that cross-fitting improves efficiency at the level of
generality considered in the present work. This is because our framework
permits the use of arbitrary, potentially nonparametric or
high-dimensional estimators
which may be biased due to the use of regularization or
constraints and---for example---may not be asymptotically linear. As a result, even in the absence of nuisance parameters,
there is no guarantee that averaging multiple target estimators obtained
from independent sample splits will lead to improved efficiency.

\paragraph{One the use of influence functions}

A special case of our framework can be phrased in the language of classical
semiparametric inference as follows: If the population risk functional is pathwise differentiable, and one estimates the
target by minimizing an estimator for the risk based on influence functions,
\vsedit{which will typically lead to a Neyman orthogonal loss and}
the resulting target estimator will have
favorable second-order errors dependence on the error of the nuisance
estimator; see \cite{curth2020estimating} for follow-up work which
takes this approach explicitly.
However, Neyman orthogonality goes beyond pathwise
differentiability (for instance, one can construct orthogonal losses assuming only existence of pathwise
derivatives locally at $(\besttwo,\gtone)$; see \pref{sec:orthogonal_loss}), and
our results apply in settings where influence functions may not
exist. Moreover, one can obtain Neyman orthogonal losses without invoking influence functions, hence 
orthogonal statistical learning is a more flexible framework. See \cite{van2003unifiedb,tsiatis2007semiparametric,kosorok2008introduction,kennedy2016semiparametric}
for a review of influence functions and semiparametric theory.

\end{revone}

\section{\revonecolor{Instantiating the Main Results: Plug-In Empirical Risk Minimization}}
\label{sec:erm}
\newcommand{\muhat}{\hat{\mu}}

\revoneedit{The results in \cref{sec:orthogonal} are stated at a high level of generality, and concern generic estimation algorithms for the target and nuisance parameters. In this section we shift our focus to specific algorithms, and instantiate our general tools to provide explicit bounds based on intrinsic properties of the function classes under consideration.} In particular, we develop algorithms and analysis for orthogonal statistical learning with $M$-estimation losses of the form
\begin{equation}
\poprisk(\theta, g) = \En\brk*{\ell(\theta(\vartwo), g(\varone); \varall)}.
\end{equation}
We analyze one of the most natural and widely used estimation algorithms for the target parameter: \emph{plug-in empirical risk minimization (plug-in ERM)}. Specifically, recalling that $S=S_1\cup{}S_2$, we define the empirical risk via
\begin{align}
L_{\sampletwo}(\theta, g) =~& \frac{1}{n} \sum_{i=1}^n \ell(\theta(\vartwo_i), g(\varone_i); \varall_i),
\end{align}
where we adopt the convention that $\abs*{S}=2n$ with $\sampletwo=\crl*{z_1,\ldots,z_n}$ to keep notation compact. The plug-in ERM algorithm returns the minimizer plug-in empirical loss obtained by plugging in the first-stage estimate of the nuisance component:
\begin{equation}
\esttwo = \arg\min_{\theta \in \target} L_{\sampletwo}(\theta, \estone).
\end{equation}
\revoneedit{We provide oracle excess risk bounds for the plug-in ERM algorithm (and variants) in terms of statistical standard complexity measures for the target class $\target$. The main results in this section show that the impact of $\estone$ on the oracle excess risk achieved ERM is of second order, and that classical excess risk bounds carry over up to lower order terms and constant factors. These results are derived by bounding the second-stage $\Rate(\target,\sampletwo,\delta\midsem{}\esttwo,\estone)$ using (localized) empirical process tools, then appealing to the main theorems (\pref{thm:generic_strongly_convex} and \pref{thm:orthogonal_slow}).}

In the fast rate regime (i.e., for strongly convex losses) we offer a generalization of the local Rademacher complexity analysis of \cite{bartlett2005local} in the presence of an estimated nuisance component and show that the notion of the \emph{critical radius} of the class $\target$ still governs rate $\Rate(\target,\sampletwo,\delta\midsem{}\esttwo,\estone)$. This leads to \preprint{several }applications of our theory to specific target classes, including sparse linear models, neural networks and kernels (\pref{sec:specific_classes}).\loose

In the slow rate regime (i.e., for generic Lipschitz losses), we offer a novel moment-penalized variant of the plug-in ERM algorithm that achieves a rate whose leading term is equal to the critical radius, multiplied by the \emph{variance} of the population loss evaluated at the optimal target parameter. This offers an improvement over prior variance-penalized ERM approaches \citep{maurer2009empirical}, whose leading term depends on the metric entropy of the target function class at single scale, and which typically is larger than the critical radius.%

\paragraph{Technical preliminaries}
To present our main results, we need to introduce additional tools from empirical process theory and statistical learning. For any real-valued function class $\cG$, define the localized Rademacher complexity:
\begin{equation}
\Radexp(\cG,\delta) \ldef \En_{\epsilon, \zr[n]}\brk*{\sup_{g\in \cG: \|g\|_{L_2(\cD)}\leq \delta} \left| \frac{1}{n}\sum_{i=1}^n \epsilon_i\, g(z_i) \right| },
\end{equation}
where $\eps_1,\ldots,\eps_n$ are independent Rademacher random variables. Let $\cR_n(\cG)$ denote the non-localized Rademacher complexity (that is, $\cR_n(\cG,\infty)$). We also make use of the \emph{metric entropy} of a function class (which is closely related to the Rademacher complexity). \dfedit{We make the mild assumption that $\target$ and $\nuisance$ are separable, so as to ensure that associated empirical processes are measurable (cf. \citet[pp 314-315]{boucheron2013concentration}).}
\begin{definition}[Metric Entropy]
\label{def:metric_entropy}
 For any real-valued function class $\cG$ and sample $\zr[n]$, the empirical metric entropy $\cH_p(\cG, \veps, \zr[n])$ is the logarithm of the size of the smallest function class $\cG'$, such that for any $g\in \cG$ there exists $g'\in \cG'$, with $\|g - g'\|_{L_p(\zr[n])}\leq \veps$. Moreover $\cH_p(\cG, \veps, n)$ will denote the maximal empirical entropy over all possible sample sets $z_1,\ldots,z_n$.
\end{definition}
Finally, for a vector-valued function class $\cF$, let $\cF\rbar_t = \{ f_t: (f_1, \ldots, f_t, \ldots, f_d) \in \cF\}$ denote the projection of the class onto the $t$-th coordinate.

\subsection{Fast Rates for Plug-In Empirical Risk Minimization}
Our first contribution is an extension of the foundational results of \cite{bartlett2005local,koltchinskii2000rademacher}---which bound the excess risk for empirical risk minimization in terms of local Rademacher complexities---to incorporate misspecification due to nuisance parameter estimation error. A crucial parameter in this approach is the \emph{critical radius} $\delta_n$ of a function class $\cG$, defined as the smallest solution to the inequality
\begin{equation}
\label{eq:fixed_point0}
\Radexp(\cG,\delta_n) \leq \delta_n^2.
\end{equation}
Classical work shows that in the absence of a nuisance component, if a loss $\ell(\theta(z);z)$ is Lipschitz in its first argument and satisfies standard assumptions required for fast rates (strong convexity in the first argument), then empirical risk minimization achieves an excess risk bound of order $\delta_n^2$. For the case of parametric classes, $\delta_n=\Ot(n^{-1/2})$, leading to the fast $\Ot(n^{-1})$ rates for strongly convex losses. For more general classes (cf. \cite{wainwright2019}) the critical radius is---up to constant factors---equal to the solution to an inequality on the metric entropy of the function class (cf. \pref{app:specific}):
\begin{equation}
\int_{\frac{\delta_n^{2}}{8}}^{\delta_n} \sqrt{\frac{\cH_2(\cG(\delta_n,\zr[n]),\veps,\zr[n])}{n}} d\veps \leq{} \frac{\delta_n^{2}}{20},
\end{equation}
where $\cG(\delta,\zr[n]) \ldef \crl{g\in\cG: \nrm*{g}_{L_2(\zr[n])}\leq{}\delta}$; see \pref{sec:specific_classes} for concrete examples.

Our first theorem in this section extends this result in the presence of a nuisance component and bounds the excess risk of the plug-in ERM algorithm by the critical radius of the target function class $\target$ (more precisely, the worst-case critical radius for each coordinate of the target class, since we deal with vector-valued function classes).
\begin{theorem}[Fast Rates for Plug-In ERM]\label{thm:fast_erm}
Consider a function class $\target:\cX \rightarrow \R^{\dimtwo}$ with $R\ldef{}\sup_{\theta\in \target} \|\theta\|_{L_{\infty}(\ls_2,\cD)}\vee{}1$. Let $\delta_n^2 = \Omega\left(\frac{R^{2}\dimtwo\log(\log(n))}{n}\right)$ be a solution to the equation:\loose
\begin{equation}\label{eqn:critical_radius}
\Radexp(\sh(\Theta\rbar_{t} - \theta_t^\star),\delta) \leq \frac{\delta^2}{R},\quad \forall t\in \{1,\ldots, d\},
\end{equation}
where $\besttwo_t$ is the projection of $\besttwo$ onto coordinate $t$. Suppose that $\ell(\cdot, \estone(\varone);\varall)$ is $L$-Lipschitz with respect to the $\ell_2$ norm and that the population risk $\poprisk$ satisfies \pref{ass:orthogonal,ass:well_specified,ass:strong_convex_loss,ass:smooth_loss} with $\nrm*{\cdot}_{\target}=\|\cdot\|_{L_2(\ell_2,\cD)}$ and $\nrm*{\cdot}_{\nuisance}$ arbitrary. \begin{revone}Define $B_1\ldef \frac{L^{2}\dimtwo^2}{\lambda^2}$ and $B_2\ldef \prn[\big]{\frac{\beta_2}{\lambda}}^{\frac{2}{1+r}}
    + \frac{\kappa}{\lambda}$, and let $\esttwo$ be the outcome of the plug-in ERM algorithm. Then, with probability at least $1-\delta$,
  \begin{align}
    \nrm[\big]{\esttwo-\besttwo}_{\target}^2 \leq
\bigoh\prn*{
  B_1\cdot{}\prn*{\frac{\delta_n^{2}}{R^2} + \frac{\log(\delta^{-1})}{n}} + B_2\cdot{}\nrm*{\estone-\gtone}_{\cG}^{\frac{4}{1+r}}
  },
  \end{align}
  and
\begin{align}
\poprisk(\esttwo, \gtone) - \poprisk(\besttwo, \gtone)=
\bigoh\prn*{
  \beta_1B_1\cdot{}\prn*{\frac{\delta_n^{2}}{R^2} + \frac{\log(\delta^{-1})}{n}} + \beta_1B_2\cdot{}\nrm*{\estone-\gtone}_{\cG}^{\frac{4}{1+r}}
  }.
\end{align}
\end{revone}
\end{theorem}
\journal{\vspace{-15pt}}
\revoneedit{Critically, when $r=0$, the dependence on the nuisance estimation error scales as $\|\estone - \gtone\|_{\nuisance}^4$ due to orthogonality, meaning that we can use a complex function class for nuisance estimation without spoiling the rate for the target class. This result is proven in two steps. First, we show that one can take $\Rate(\target,n,\delta;\esttwo,\estone)\approxleq{}\delta_n\cdot{}\nrm{\esttwo-\besttwo}_{\target}+\delta_n^2$; this result uses standard empirical process theory tools, and does not leverage orthogonality. Then, we invoke orthogonality through \pref{thm:generic_strongly_convex} to derive the final guarantee. See \pref{app:constrained} for details.
  }

%
%
%

\subsection{Slow Rates and Variance Penalization}
\label{sec:variance_penalized}
We now turn to the slow rate regime from \pref{ssec:orthogonal_slow_rate}, where the loss is not necessarily strongly convex in the prediction. We prove upper bounds on the generalization error of a variance penalized version of the plug-in ERM algorithm. Our main result gives a slow rate that scales with the variance of the loss rather than the range, and is robust to nuisance estimation error. The basic algorithm we analyze first estimates the nuisance parameter, then estimates the optimal loss value $\mu^{\star}\ldef{}\inf_{\theta\in\target}\poprisk(\theta,g_0)$ using auxiliary samples, and finally performs plug-in empirical risk minimization with an empirical variance penalty which is centered using the estimate for $\mu^{\star}$. \revoneedit{See \pref{alg:variance_penalization} in \pref{app:additional_algorithms} for a full description.} To simplify notation, we assume that $\abs*{S}=3n$ and is partitioned equal splits $S=S_1\cup{}S_2\cup{}S_3$. Define the variance of the loss at $(\besttwo,\gtone)$ via
\[
V^{\star} = \var(\ls(\besttwo(\cdot),\gtone(\cdot);\cdot)).
\]
\begin{theorem}[Plug-In ERM with Centered Second Moment Penalization]
\label{thm:variance_penalized}
Consider the centered second moment-penalized plugin empirical risk minimizer in \pref{alg:variance_penalization}:
\begin{equation}
\hat{\theta} = \arg\min_{\theta\in \target} \crl*{L_{\sampletwo}(\theta, \hat{g}) + 36\delta_nR^{-1}\|\ell(\theta(\cdot),\estone(\cdot);\cdot)-\muhat\|_{L_2(\sampletwo)}},
\end{equation}
where $\estone$ is the output of $\mathrm{Alg}(\nuisance,\sampleone)$ and $\muhat = \inf_{\theta\in \target} L_{S_3}(\theta, \hat{g})$. Consider the function class $\cF = \{\ell(\theta(\cdot), \estone(\cdot); \cdot): \theta \in \target\}$, and let $R\ldef\sup_{f\in \cF}\|f\|_{L_{\infty}(\cD)}\vee{}1$ and $\fstar\ldef\ell(\besttwo(\cdot), \estone(\cdot); \cdot)$. Let $\delta_n^2 \geq 0$ be any solution to the inequality
\begin{equation}
  \label{eq:variance_critical}
\Radexp(\sh(\cF - f^\star),\delta) \leq \frac{\delta^2}{R},
\end{equation}
\begin{revone}
  Suppose that \pref{ass:universal_orthogonality} holds, $\ls(\theta(x),\cdot;z)$ is $L$-Lipschitz, and  \pref{ass:smooth_loss_slow} holds with parameter $\beta$ and $\nrm*{\cdot}_{\cG}\ldef\nrm*{\cdot}_{L_2(\ls_2,\cD)}$. Let $C\ldef (L^2+\beta{}R)$. Then with probability at least $1-\delta$,\loose
\begin{align*}
  &L_{\cD}(\esttwo, \gtone) - L_{\cD}(\besttwo, \gtone) \\&= O\prn[\bigg]{ \sqrt{V^{\star}}\prn[\bi
  gg]{\frac{\delta_n}{R}  + \sqrt{\frac{\log(\delta^{-1})}{n}}} + \frac{1}{R}\prn*{\delta_n^{2} + \cR_n^{2}(\ls\circ\target) + C\cdot{}\nrm*{\estone-\gtone}_{\nuisance}^{2}} + R\frac{\log(\delta^{-1})}{n}
  }.
\end{align*}
\end{revone}
\end{theorem}
\journal{\vspace{-15pt}}
\revoneedit{As with the previous result, \pref{thm:variance_penalized} is proven by first upper bounding $\Rate(\target,n,\delta;\esttwo,\estone)$ using empirical process tools, then invoking orthogonality through one of the main theorems (in this case, \pref{thm:orthogonal_slow}). The only complication is that the result requires the additional step of relating $\Rate(\target,\cdots,\estone)$ to the function $\Rate(\target,\cdots,\gtone)$, which entails bounding the variance of the loss at $\estone$ in terms of the variance of the loss at $\gtone$ and nuisance estimation error.\looseness=-1
}

Our approach offers  an improvement over the rates for empirical variance penalization in \cite{maurer2009empirical}, which provides a generalization error bound whose leading term is of the form: $
\sqrt{\frac{\var_n(\ell(\besttwo(\cdot), \estone(\cdot),\cdot)) \cH_{\infty}(\ls\circ\target, n^{-1}, z_{1:n})}{n}}$. The drawback of such a bound is that it evaluates the metric entropy at a fixed approximation level of $1/n$, which can be suboptimal compared to the critical radius. %
\revoneedit{In \pref{app:vc}, we show that for classes $\Theta$ with bounded VC dimension, this guarantee can be further improved as a consequence of our general machinery\preprint{, and give a bound which scales with the so-called \emph{Alexander capacity function}}.}

\begin{revone}
  \paragraph{Discussion}
  Due to space constraints, applications to specific target classes (sparse linear models, neural networks, kernel classes) are deferred to \pref{sec:specific_classes}.
\end{revone}

\section{\revonecolor{Instantiating the Main Results: Sufficient
    Conditions for Oracle Rates}}
\label{sec:oracle}

\begin{revone}
  The previous section developed guarantees for orthogonal statistical
  learning with a specific algorithm, plug-in empirical risk
  minimization. While empirical risk minimization is a
  workhorse of statistical learning, in general it does not attain
  minimax excess risk for rich function classes, even in the absence of
  nuisance parameters. In this section we build on the development so
  far and, by appealing to \emph{aggregation} techniques, provide
  algorithms that always attain minimax excess risk up to second-order
  dependence on nuisance parameters. 
  Our main results provide \emph{sufficient
  conditions} under which oracle rates are achieved which explicitly
depend on intrinsic properties of both the target and nuisance parameter classes. In
particular, we give sufficient conditions based on the relationship between the \emph{metric
  entropy} for the nuisance and target classes.\loose

For any real-valued
  function class $\cF$, we say that the \emph{complexity} of
  $\cF$ is $p$ if for all $\veps>0$,
  \begin{equation}
    \textstyle{\cH_{2}(\cF,\veps,n) = O\left(\max\{ \veps^{-p}, \log(1/\veps)\}\right)},
  \end{equation}
  where we recall that $\cH_2$ is the metric entropy defined in
  \pref{def:metric_entropy}.  When $p=0$, this corresponds to the case of parametric
    functions (e.g., linear models and VC-subgraph classes), while
  for $p>0$, we recover nonparametric function classes, such as
  Lipschitz/smooth functions or kernel spaces. We let $p_1$ and $p_2$
  denote the maximum complexity of any output coordinate projection
  for the nuisance and target class, respectively. We provide
  sufficient conditions on the pair $(p_1,p_2)$ under which the sample splitting
  meta-algorithm (\pref{alg:sample_splitting})---with an appropriate
  choice for the target and nuisance estimator---can achieve oracle rates.

\end{revone}

\label{sec:algsandrates}
\label{sec:square_oracle}
We focus on\preprint{ the important special case of} square losses of the form
\begin{equation}%
\ls(\theta(\vartwo),g(\varone); z) = \bigl(
\tri{\Lambda(g(\varone), v), \theta(\vartwo)}- \Gamma(g(\varone),\varall)\bigr)^{2}, \quad\quad\poprisk(\theta,g)=\En\brk{\ls(\theta,g;z)},
\label{eq:square_oracle}
\end{equation}
where $\Lambda$ and $\Gamma$ are known functions, and where we recall from \pref{sec:setup} that $x$, $w$ are subsets of the data $z$, and $v\subseteq{}z$ is an arbitrary auxiliary subset of the data. We assume that the nuisance \params are defined in terms of regression problems, i.e., that $\gtone(w)=\En\brk*{u\mid{}w}$ for some known random vector $u\subseteq{}z$. This assumption is standard in semiparametric literature \citep{bickel1993efficient,kosorok2008introduction,van2011targeted}, and implies that each coordinate $t$ of $g_0$ may be expressed as the minimizer of a squared loss: $g_{0,t}=\argmin_{g_t\in\cG\rbar_t}\En\brk[\big]{\prn*{g_{t}(w)-u_t}^2}$. In this setting, a sufficient condition for orthogonality is that
\begin{equation}\label{eqn:oracle_orthogonality}
\En\brk*{\nabla_{\gamma}\nabla_{\zeta} \ell(\besttwo(\vartwo), \gtone(\varone), z) \mid \varone} = 0,
\end{equation}
where $\nabla_{\zeta}$ and $\nabla_{\gamma}$ denote the gradient of $\ls$ with respect to the first and second argument\preprint{, respectively}.

In the absence of nuisance parameters, minimax optimal rates for excess risk in square loss regression have been characterized for the \emph{well-specified} setting in which %
\begin{equation}
\label{eq:oracle_well_specified}
\En\brk*{\Gamma(\gtone(\varone),\varall)\mid{}\varone,v} = \tri{\Lambda(\gtone(\varone),v), \gttwo(\vartwo)},
\end{equation}
for some $\gttwo\in\target$, 
and for the misspecified setting where this assumption is removed. In the former setting, the minimax rates are of order $\Theta\prn{n^{-\frac{2}{2+p_2}}}$ \citep{yang1999information}, while in the latter setting the optimal rate is $\wt{\Theta}\prn{n^{-\frac{2}{2+\ptwo}\wedge{}\frac{1}{\ptwo}}}$ \citep{rakhlin2017empirical}. We show that under orthogonality, the optimal \preprint{well-specified and misspecified }rates can be achieved in the presence of nuisance parameters even when the nuisance class $\nuisance$ is larger than the target class $\target$, provided it is not \emph{too} much larger. This generalizes the large body of results on semiparametric inference\preprint{ \citep{levit1976efficiency,ibragimov1981statistical,pfanzagl1982contributions,bickel1982adaptive,klaassen1987consistent,robinson1988root,bickel1993efficient,newey1994asymptotic,robins1995semiparametric, ai2003efficient,van2003unified,van2003unifiedb,ai2007estimation,tsiatis2007semiparametric,kosorok2008introduction,van2011targeted,ai2012semiparametric,chernozhukov2016locally,belloni2017program,chernozhukov2016double}}, which show under various assumptions that if the target class is parametric, one can obtain a $\sqrt{n}$-consistent estimator for the target if the nuisance estimator converges at a $n^{-\frac{1}{4}}$ rate.

Our main workhorse for the results in this section is the ``Aggregation of $\veps$-Nets'' or ``Skeleton Aggregation'' algorithm described in \cite{yang1999information} and extended to random design in \cite{rakhlin2017empirical}. \revoneedit{The Skeleton Aggregation method operates by splitting the samples in two, building an empirical cover for the function class under consideration using the first split, and then aggregating the elements of the cover using the second split. See \pref{sec:skeleton} for a full description. This approach is related to sieve-based methods (e.g., \citep{semenova2021debiased}), which employ parametric methods to learn a linear combination of basis elements that approximate the target, but \preprint{an important difference is that }Skeleton Aggregation builds the basis in a data-dependent fashion. We use Skeleton Aggregation as-is to provide rates for the first stage, and provide an extension in the presence of nuisance parameters for the second stage, which entails relating $\Rate(\target,n,\delta;\esttwo,\estone)$ to $\Rate(\target,n,\delta;\esttwo,\gtone)$.
}

\revoneedit{
  We caution that the algorithms in this section are only designed to attain the minimax rates for \emph{generic} square losses of the type in \pref{eq:square_oracle} (e.g., vanilla square loss regression), and specific special cases may admit better rates. Deriving minimax lower bounds for specific losses of interest (as in \cite{kennedy2020optimal}) is an interesting direction for future research.
  }

\paragraph{Assumptions}
Since our aim is to provide sufficient conditions based on the metric entropy of the classes $\target$ and $\nuisance$, which is already quite technical, we assume that all other problem-dependent parameters are constant. This is only for expository purposes.
\begin{assumption}
  \label{ass:square_oracle}
  \preprint{The classes are bounded in the sense that for all}
  \journal{For all}
  $\theta\in\target+\starhull(\target-\target)$ and \vsedit{$g\in\nuisance+\starhull(\nuisance-\nuisance,0)$} %
  the following bounds \vsedit{hold a.s.}: \alphamark{a}
  $\tri{\Lambda(g(\varone),v), \theta(\vartwo)}\in\unitrange$,
  \alphamark{b} $\Gamma(g(\varone),z)\in{}\unitrange$, \alphamark{c}
  $\Lambda(g(\varone), v) \Lambda(g(\varone), v)^{\trn}\preceq{}I$,
  \alphamark{d} $\nrm*{g(w)}_{\infty}, \|\theta(x)\|_{\infty} \leq
  1$, \vsedit{\alphamark{e} $\dimone,\dimtwo=\bigoh(1)$, \alphamark{f}
  $\nrm*{u}_{\infty}\leq{}1$ almost surely, \alphamark{g}
  the functions $\crl*{\Lambda_t(\cdot, v)}_{t=1}^{\dimtwo}$ and $\Gamma(\cdot, z)$ have $\bigoh(1)$-Lipschitz gradients with respect to $\ls_2$,
  \alphamark{h} the strong convexity condition
  $\En\brk[\big]{\tri*{\Lambda(\gtone(\varone), v),
    \theta(x)-\besttwo(x)}^{2}}\geq \gamma
  \En\nrm*{\theta(x)-\besttwo(x)}_{2}^{2}$ is satisfied for all $\theta\in\target+\starhull(\target-\target)$ for some $\gamma=\Omega(1)$.}
\end{assumption}
\pref{ass:square_oracle} implies that \pref{ass:smooth_loss} and \pref{ass:strong_convex_loss} are satisfied with respect to the seminorms $\nrm*{\theta}_{\target} \ldef \prn[\big]{\En
\tri*{\Lambda(g_{0}(\varone),v), \theta(\vartwo)}^{2}}^{1/2}$ and $\nrm*{\cdot}_{\nuisance} = \nrm*{\cdot}_{L_4(\ls_2,\cD)}$, with $r=0$. Since typical results on minimax oracle rates provide rates for the nuisance $g$ with respect to $\|\cdot\|_{L_2(\ls_2,\cD)}$, we assume control on the ratio between these seminorms. 

\begin{assumption}[Moment Comparison]
\label{ass:moment}
\journal{$\nrm*{g-\gtone}_{L_{4}(\ls_2,\cD)}\leq{}\momentconstant\cdot \nrm*{g-\gtone}_{L_{2}(\ls_2,\cD)},$ for all $g\in\nuisance$ for a constant $\momentconstant$.
}
\preprint{
  There is a constant $\momentconstant$ such that\[
\frac{\nrm*{g-\gtone}_{L_{4}(\ls_2,\cD)}}{\nrm*{g-\gtone}_{L_{2}(\ls_2,\cD)}}\leq{}\momentconstant,\quad{}\forall{}g\in\nuisance.
\]}
\end{assumption}
The moment comparison condition has been used in statistics as a minimal assumption for learning without boundedness \citep{lecue2013learning,mendelson2014learning,liang2015learning}. For example, suppose that each $g\in\nuisance$ has the form $x\mapsto{}\tri*{w,x}$ for $w,x\in\bbR^{d}$. Then $\momentconstant\leq{}3^{1/4}$ if $x$ is mean-zero gaussian and $\momentconstant\leq{}\sqrt{8}$ if $x$ follows any distribution that is independent across all coordinates and symmetric (via the Khintchine inequality). Moment comparison is also implied by the ``subgaussian class'' assumption used in \cite{mendelson2011discrepancy,lecue2013learning}.\preprint{\footnote{Suppose $\cG$ is scalar-valued and let $\nrm*{g}_{\psi_{2}}=\inf\crl*{c>0\mid{}\En\exp\prn*{g^{2}(w)/c^{2}}\leq{}2}$. Then the subgaussian class assumption for our setting asserts that $\nrm*{g-\gtone}_{\psi_{2}}\leq{}C\nrm*{g-\gtone}_{L_2(\cD)}$ for all $g\in\nuisance$.}} We emphasize that the moment constant $\momentconstant$ does not enter the leading term in any of our bounds---only the $\Rate(\nuisance,\ldots,)$ term in \pref{thm:generic_strongly_convex}---and so it does not affect the asymptotic rates under conditions on metric entropy growth of $\nuisance$ that we prescribe in the sequel. We also note that  this condition is not required for many classes of interest, where direct $L_{4}$ estimation rates are available (see discussion in \pref{sec:sufficient}). We adopt the condition here because it allows us to develop guarantees for arbitrary classes at the highest possible level of generality.

%

%
%
%
%
%
%
%
%
%
%
%
%
%
%
%
%
%
%
%
%
%
%
%
%
%
%
%
%
%
%
%
%
%
%
%
%
%
%
%
%
%
%
%
%
%
%
%
%
%
%
%
%
%

%

%
%
\paragraph{Main Result}
\revoneedit{The main theorem for this section provides sufficient conditions for oracle rates in the well-specified setting \pref{eq:oracle_well_specified}. For extensions of this result to misspecified models, as well as non-strongly convex losses, see \pref{sec:oracle_lipschitz}.
}

\begin{theorem}[Oracle Rates, Well-Specified Case]
  \label{thm:oracle_well_specified}
Suppose that we are in the well-specified setting \pref{eq:oracle_well_specified}, and that \pref{ass:orthogonal,ass:well_specified} and \pref{ass:square_oracle,ass:moment} are satisfied for the class $\targetemp$ defined below. Suppose that the following relationship holds:
\begin{equation}
\pone < 2\ptwo+2.
\end{equation}
Then for appropriate choice of sub-algorithms, the sample splitting meta-algorithm \pref{alg:sample_splitting} produces a predictor $\esttwo$ that guarantees that with probability at least $1-\delta$,
\begin{equation}
  \label{eq:oracle_well_specified2}
\poprisk(\esttwo,\gtone) - \poprisk(\gttwo,\gtone)
\leq{}  \Ot\prn[\big]{n^{-\frac{2}{2+\ptwo}}},
\end{equation}
\revoneedit{where $\Ot(\cdot)$ hides problem-dependent parameters and $\log(\delta^{-1})$ terms. This result matches the minimax rate in the absence of nuisance parameters. In particular, when $\ptwo\leq{}2$ it suffices to take $\targetemp=\target$ and use plug-in ERM for stage two, and when $\ptwo>2$ it suffices to take $\targetemp=\target + \starhull(\target-\target,0)$ and use Skeleton Aggregation for stage two; in both cases, it suffices to use Skeleton Aggregation for stage one.
  }
\end{theorem}
\begin{figure}[t]
  \begin{center}
  \includegraphics[width=.35\textwidth]{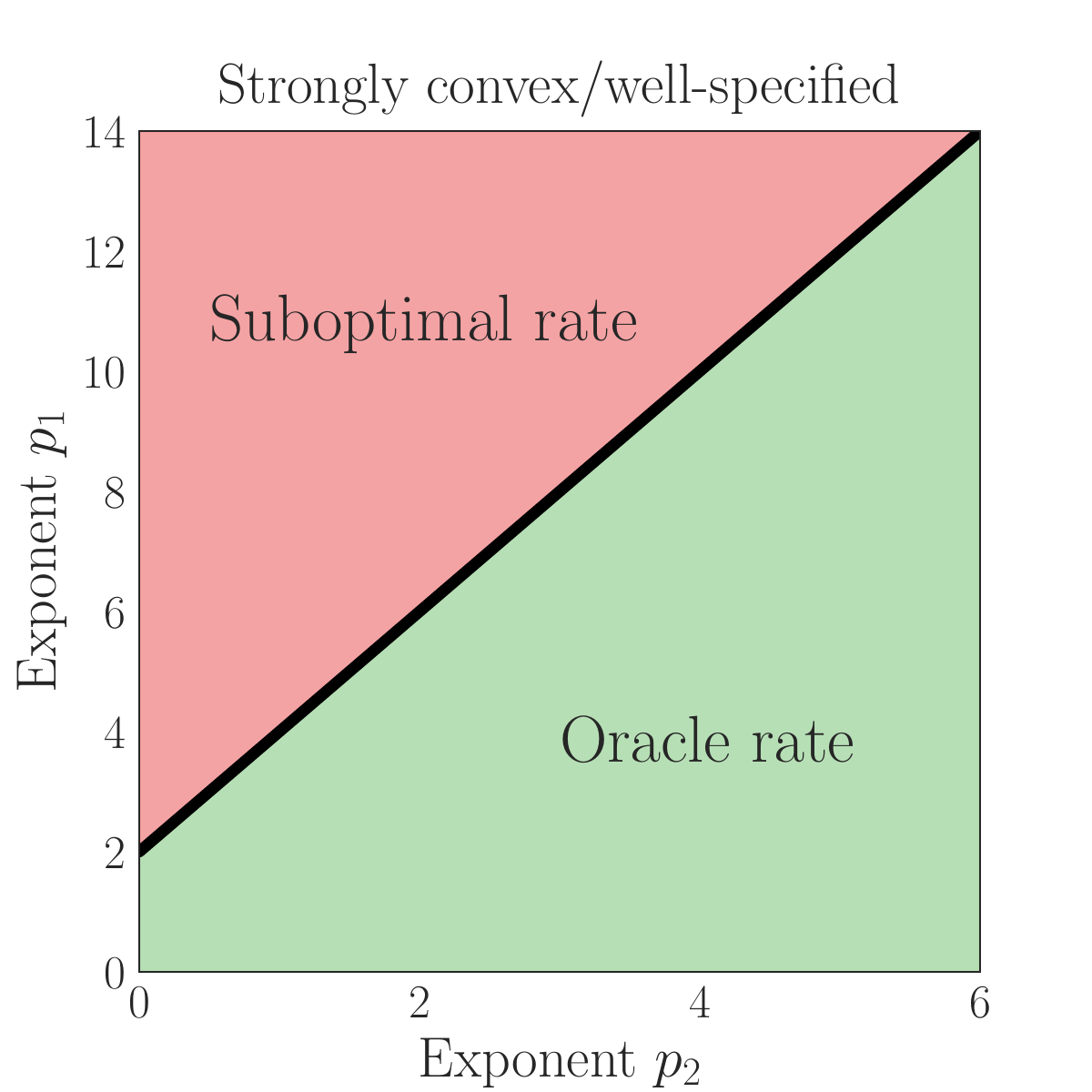}
\end{center}
\caption{Relationship between first and second stage for oracle rates; well-specified case.}
\label{fig:oracle_well_specified}
  \end{figure}

  \revoneedit{\pref{thm:oracle_well_specified} is proven by combining the main theorem (\pref{thm:generic_strongly_convex}) with algorithm-specific upper bounds on $\Rate(\target,\cdots)$ and $\Rate(\nuisance,\cdots)$. \pref{fig:oracle_well_specified} summarizes the sufficient conditions under which \pref{thm:oracle_well_specified} leads to the oracle rate $\Theta\prn{n^{-\frac{2}{2+p_2}}}$ \citep{yang1999information}}. In particular, whenever $\target$ is a parametric class (i.e. $\cH_{2}(\target,\veps,n)\propto\dtwo\log(1/\veps)$), it suffices to take $\pone<2$, which recovers the usual setup for semiparametric inference.

\section{Discussion}
\label{sec:discussion}
This paper initiates the systematic study of prediction error and
excess risk guarantees in the presence of nuisance parameters and
Neyman orthogonality. Our results highlight that orthogonality is
beneficial for learning with nuisance parameters even in the presence
of possible model misspecification, and even when the target
parameters belong to large nonparametric classes. We also show that
many of the typical assumptions used to analyze estimation in the
presence of nuisance parameters can be relaxed when excess risk
is the focus. There are many promising future directions,
including weakening assumptions, obtaining sharper guarantees for
specific settings and losses of interest (e.g., doubly-robust guarantees), and analyzing further
algorithms for general function classes (along the lines of
\pref{sec:erm,sec:oracle}). \revoneedit{We refer to the appendix for additional
results, as well as empirical results.}\loose

\preprint{\paragraph{Acknowledgements}}
\journal{\section*{Acknowledgements}}
We are grateful to \preprint{the anonymous COLT reviewers and to }Xiaohong Chen for
pointing out additional related work. Part of this work was completed
while DF was an intern at Microsoft Research\preprint{, New England}. DF
acknowledges support from the Facebook PhD fellowship and NSF Tripods
grant \#1740751.

\journal{
\begin{supplement}
\stitle{Supplementary for ``Orthogonal Statistical Learning''}
\sdescription{The supplementary material contains proofs, experiments, and additional results deferred due to space constraints.}
\end{supplement}
}

\iftoggle{supp}{}{
\journal{\bibliographystyle{imsart-nameyear}}
\bibliography{refs}
}

\appendix

\journal{
{
\hypersetup{linkcolor=black}
\newpage
\begin{center}
\begin{Large}
\textbf{Supplementary File for ``Orthogonal Statistical Learning''}
\end{Large}
\end{center}
\renewcommand{\contentsname}{Contents of Appendix}
\tableofcontents
\addtocontents{toc}{\protect\setcounter{tocdepth}{3}}
}
}

\clearpage

\preprint{\section*{Organization of Appendix}}
\journal{\section*{Organization}}
The appendix is organized as
follows.
\begin{itemize}
\item \pref{part:experiments} contains experimental results evaluating
  the performance of orthogonal risk minimization methods for examples
  considered in \pref{sec:orthogonal}.
  \item \pref{part:additional} contains supplemental theoretical results: Algorithms
  omitted from the main body for space
  (\pref{app:additional_algorithms}), user-friendly variants of the
  main theorems (\pref{sec:user_friendly}), construction of
  orthogonal losses (\pref{sec:orthogonal_loss}), sufficient
  conditions to apply the main theorems,
  (\pref{sec:sufficient}), sufficient conditions for oracle rates that
  extend \pref{sec:oracle} (\pref{sec:oracle_lipschitz}), applications
  of the main results (\pref{sec:applications}), further results
  regarding plug-in empirical risk minimization (generalization
  guarantees for specific function classes
  (\pref{sec:specific_classes}), and an application of the guarantees
  for plug-in variance-penalized ERM developed in \pref{sec:erm} to VC
  classes (\pref{app:vc}).
\item   \pref{part:proofs} contains proofs for the results presented in the
  main body of the paper: \pref{app:preliminaries} contains
  preliminaries, \pref{app:orthogonal} proves the main theorems for
  the sample-splitting meta-algorithm, \pref{app:constrained} proves
  the main results for plug-in empirical risk minimization (building
  on technical tools developed in \pref{app:general_m}), and
  \pref{app:rates} proves our results concerning oracle rates.
\end{itemize}

\part{Experiments}
\label{part:experiments}
\section{Experiments}
\newcommand{\E}{\mathbb{E}}

We conducted experiments to evaluate of the benefit of risk minimization with orthogonal loss functions as compared to non-orthogonal losses, focusing on the main applications considered in \pref{sec:orthogonal}: conditional average treatment effect (CATE) estimation under conditional exogeneity (\pref{sec:treatment_effect}) and policy learning (\pref{sec:policy_learning_body}).

For both applications, we use synthetic data. Each dataset contains $n$ i.i.d. examples, where each example is a tuple consisting of a $d$-dimensional random vector of confounders ($X\in \R^d$), a random binary treatment ($T\in \{0, 1\}$), and a random scalar outcome ($Y\in \R$). These random variables are drawn from a data generating process of the form:
\begin{align}
  \begin{aligned}
    T \sim~& \text{Bernoulli}(e_0(X)),& X \sim~& D_X\\
    Y =~& \tau_0(X)\, (T - 1/2) + b_0(X) + \epsilon,& \epsilon \sim~& \cN(0, \sigma^2)
  \end{aligned}\label{eq:exp_dgp}
\end{align}
We refer to $e_0(X)$ as the propensity function, $b_0(X)$ as the base response function, and $\tau_0(X)$ as the treatment effect function. The dimension $d$ and noise level $\sigma^2$ are parameters that vary across experiments.

Inspired by the set of experiments in the prior work of \cite{nie2017quasi,athey2017efficient} we consider six variants of the data-generating process in \pref{eq:exp_dgp} that capture different qualitative challenges for CATE estimation and policy learning.
\begin{itemize}
\item In {\bf Setup $\mb{A}$} we consider $X\sim U([0, 1])^d$, $e_0(X)=\text{clip}(\sin(\pi\, X[0]\, X[1]), .2, .8)$, $\tau_0(X)=.2 + \left(X[0] + X[1]\right)/2$, and $b_0(X)=\sin(\pi\, X[0]\, X[1]) + 2\,(X[2] - .5)^2 + X[3] + .5\, X[4]$, where $\text{clip}(x, a, b)\ldef \max\{a, \min\{b, x\}\}$ denotes the operator that clips $x$ to the interval $[a, b]$. This setup has a complicated propensity function and base response, but a relatively simple treatment effect function $\tau_0(X)$.
\item In {\bf Setup $\mb{B}$}, we consider $X\sim U([-.5, .5])^d$, $e_0(X)=.5$, $b_0(X)=\max\{0, X[0] + X[1], X[2]\} + \max\{0, X[3]+X[4]\}$, and $\tau_0(X)=X[0] + \log(1 + \exp\{X[1]\})$. This setup is a randomized trial with a complex base response and treatment effect function.
\item In {\bf Setup $\mb{C}$}, we consider $X\sim U([-.5, .5])^d$, $e_0(X)=\left(1 + \exp\{X[1] + X[2]\}\right)^{-1}$, $b_0(X)=2\, \log(1 + \exp\{X[0] + X[1] + X[2]\})$, and $\tau_0(X)=1$. This setup has a complex propensity and base response function, but a constant treatment effect.
\item In {\bf Setup $\mb{D}$}, we consider $X\sim U([-.5, .5])^d$, $e_0(X)=\left(1 + \exp\{-X[0]\} + \exp\{-X[1]\}\right)^{-1}$, $b_0(X)= .5\, \max\{0, X[0]+X[1]+X[2]\} + .5\, \max\{0, X[3] + X[4]\}$, and $\tau_0(X)=\max\{0, X[0]+X[1]+X[2]\} - \max\{0, X[3]+X[4]\}$. In this setup, the response under treatment is independent of the response under control, and hence there is no statistical benefit of jointly learning the two responses.
\item In {\bf Setup $\mb{E}$}, we consider $X\sim U([-.5, .5])^d$, $e_0(X)=\left(1 + \exp\{3\,X[1] + 3\, X[2]\}\right)^{-1}$, $b_0(X)=5\, \max\{0, X[0] + X[1]\}$, and $\tau_0(X)=2\, ((X[0]>.1)\;\vee\;(X[1] > .1)) - 1$. In this setup, we have a very wide range of responses that is largely explained by the confounder. Hence, failing to appropriately center the response around the predictable part can lead to large variance in the learning objective. Moreover, the treatment effect function is discontinuous.
  \item In {\bf Setup $\mb{F}$}, we consider $X\sim U([-.5, .5])^d$, $e_0(X)=\left(1 + \exp\{3\,X[1] + 3\, X[2]\}\right)^{-1}$, $b_0(X)=5\, \max\{0, X[0] + X[1]\}$, $\tau_0(X)=X[0] + \log(1 + \exp\{X[1]\})$. This setup is similar to setup $E$, but with a smooth treatment effect function.
\end{itemize}

For each setup, we consider three values for the sample size ($n\in \{500, 1000, 3000\}$), two values for the dimension of the confounders ($d\in \{6, 12\}$), and three values for the scale of the response noise ($\sigma\in \{.5, 1, 2\}$). For each such parameter setting, we run $100$ experiments and report average metrics and standard errors for the average metric (for the appropriate metric for each task).

\paragraph{Reproducibility}
  Code for reproducing the results is available at \url{https://github.com/vsyrgkanis/orthogonal_learning}. We note that because the FLAML package used for nuisance model estimation does not allow for control of random seeds for fully reproducible execution, results will have slight variation across multiple executions, albeit not to an extent that affects the qualitative take-aways.

\subsection{Conditional Average Treatment Effect Estimation}

We first consider the task of estimating the conditional average treatment effect $\tau_0(x)$ (i.e., we have $\theta_0=\tau_0$) with respect to the mean squared error metric $\E\brk[\big]{\prn[\big]{\hat{\theta}(X)-\theta_0(X)}^2}$. We use the two orthogonal losses considered in \pref{sec:treatment_effect}. The first is the residual-on-residual loss:
\begin{align}
    \poprisk(\theta,g) = \E\left[(Y - q(X) - \theta(X)\, (T - e(X))^2\right],
\end{align}
where $g=\crl{q,e}$, with the true values of the nuisance parameters given by $q_0(X)=\E[Y\mid X]$ and $e_0(X)=\E[T\mid X]$. Second, we consider the doubly-robust loss:
\begin{align}
    \poprisk(\theta,g) = \E\left[\left(f(1, X) - f(0, X) + \frac{e(X)-T}{e(X) (1 - e(X))}\, \left(Y - f(T, X)\right) - \theta(X)\right)^2\right],
\end{align}
where $g=\crl{f,e}$, with the true values of the nuisance parameters given by $f_0(T, X)=\E[Y\mid T, X]$ and $e_0(X)=\E[T\mid X]$. For each loss, we consider two risk minimization approaches: a semi-crossfitting approach that allows for some sample re-use, and a pure sample splitting approach.
\begin{itemize}
\item In the semi-crossfitting approach, we perform model selection using the FLAML \citep{wang2021flaml} automated machine learning package, which performs hyperparameter tuning and selection using a broad class of forest based models. Using the model class and hyperparameters chosen by FLAML, we perform cross-fitting: we train a set of the nuisance models (e.g. $\hat{e}, \hat{q}$) on half of the data and compute their values (e.g. $\hat{e}_i := e(X_i), \hat{q}_i := q(X_i)$) on the other half, and vice-versa. We then minimize the loss function over $\theta$ using all the samples with their computed nuisance values (e.g. $\frac{1}{n}\sum_{i=1}^n (Y_i - \hat{q}_i - \theta(X_i)\, (T_i - \hat{e}_i))^2$ for the residual-on-residual loss); this phase is also performed using FLAML.
\item In the pure sample-splitting approach, we split the dataset in half and perform hyperparameter tuning, model selection, and model fitting with FLAML on the first half. We then compute nuisance values on the second half. Finally, we estimate the target function $\hat{\theta}$, also using the second half of the dataset. This approach corresponds to the meta-algorithm presented in the main body (\pref{alg:sample_splitting}). 
\end{itemize}
We note that while the theoretical results in this paper only concern the pure sample-splitting approach, we expect that the semi-crossfitting approach will also benefit from the orthogonality of the loss, and it has been analyzed in prior literature for estimation of finite-dimensional target parameters \citep{chernozhukov2016double}.

We refer to the estimation approach that uses the residual-on-residual loss with semi-crossfitting as $\text{dml}$, and refer to the variant with pure sample splitting as $\text{dml\_split}$. Similarly, we refer to the two variants that use the doubly robust loss as $\text{dr}$ and $\text{dr\_split}$. Finally, as a benchmark we also consider the non-orthogonal loss:
\begin{align}
  \poprisk(\theta,\crl{f}) = \E\left[\left(f(1, X) - f(0, X) - \theta(X)\right)^2\right],
\end{align}
which we apply without any sample splitting or cross-fitting. We refer to this method as the $\text{slearner}$.

For each method, to perform risk minimization for the target parameter $\theta$, we applied the FLAML package, which performs automated machine learning and hyperparameter tuning across many candidate forest models. In order to make the target parameter inherently simpler than then nuisance parameters, we constrained the final stage model to depend only on the first $n_x < d$ coordinates of the variables $X$. For all setups except $D$, we used $n_x=4$, while for setup $D$ we used $n_x=5$ to maintain well-specification of the target model.

\paragraph{Results}
The average mean-squared error for the estimated target parameters $\hat{\theta}$ across the $100$ experiments and across the different setups are depicted in Figures~\ref{fig:setupA-cate}, \ref{fig:setupB-cate}, \ref{fig:setupC-cate}, \ref{fig:setupD-cate}, \ref{fig:setupE-cate}, \ref{fig:setupF-cate}. We find that the methods that use orthogonal losses either significantly out-perform or are comparable to the non-orthogonal loss in most domains. The superior performance vanishes only in the presence of small samples and high noise level $\sigma$, potentially due to large errors in the nuisance functions. Moreover, we find that for relatively small sample size, the orthogonal losses perform comparably to the oracle method, which uses the true models $p_0, q_0$ and minimizes the residual-on-residual loss (denoted by $\text{oracle}$). Moreover, we find that sample-splitting is sufficient to get a substantial boost in performance, and that cross-fitting tends to give small further improvement over sample-splitting.

\begin{figure}[H]
\scriptsize
\centering
\begin{tabular}{lllllllll}
\toprule
         &        &            &         oracle &            dml &     dml\_split &             dr &      dr\_split &       slearner \\
\midrule
\multirow{6}{*}{$n=500$} & \multirow{3}{*}{$d=6$} & $\sigma=0.5$ &  0.039 (0.003) &  0.057 (0.003) &  0.076 (0.004) &  0.077 (0.004) &  0.084 (0.005) &  0.113 (0.006) \\
         &        & $\sigma=1$ &  0.090 (0.011) &  0.083 (0.006) &  0.124 (0.011) &  0.092 (0.006) &  0.124 (0.010) &  0.137 (0.007) \\
         &        & $\sigma=2$ &  0.240 (0.047) &  0.181 (0.031) &  0.382 (0.074) &  0.171 (0.020) &  0.322 (0.093) &  0.181 (0.012) \\
\cline{2-9}
         & \multirow{3}{*}{$d=12$} & $\sigma=0.5$ &  0.041 (0.004) &  0.062 (0.003) &  0.076 (0.005) &  0.076 (0.003) &  0.085 (0.005) &  0.107 (0.006) \\
         &        & $\sigma=1$ &  0.073 (0.006) &  0.080 (0.005) &  0.161 (0.026) &  0.089 (0.005) &  0.134 (0.014) &  0.111 (0.007) \\
         &        & $\sigma=2$ &  0.187 (0.025) &  0.136 (0.009) &  0.219 (0.016) &  0.149 (0.010) &  0.290 (0.031) &  0.208 (0.013) \\
\cline{1-9}
\cline{2-9}
\multirow{6}{*}{$n=1000$} & \multirow{3}{*}{$d=6$} & $\sigma=0.5$ &  0.025 (0.002) &  0.055 (0.002) &  0.062 (0.003) &  0.071 (0.003) &  0.074 (0.003) &  0.118 (0.006) \\
         &        & $\sigma=1$ &  0.070 (0.017) &  0.061 (0.004) &  0.082 (0.007) &  0.078 (0.004) &  0.088 (0.004) &  0.116 (0.006) \\
         &        & $\sigma=2$ &  0.112 (0.015) &  0.119 (0.007) &  0.224 (0.036) &  0.102 (0.008) &  0.159 (0.015) &  0.155 (0.010) \\
\cline{2-9}
         & \multirow{3}{*}{$d=12$} & $\sigma=0.5$ &  0.022 (0.001) &  0.057 (0.003) &  0.067 (0.004) &  0.070 (0.003) &  0.074 (0.003) &  0.107 (0.006) \\
         &        & $\sigma=1$ &  0.058 (0.004) &  0.062 (0.003) &  0.088 (0.007) &  0.075 (0.004) &  0.086 (0.006) &  0.125 (0.006) \\
         &        & $\sigma=2$ &  0.120 (0.015) &  0.112 (0.007) &  0.180 (0.020) &  0.118 (0.012) &  0.146 (0.011) &  0.171 (0.009) \\
\cline{1-9}
\cline{2-9}
\multirow{6}{*}{$n=3000$} & \multirow{3}{*}{$d=6$} & $\sigma=0.5$ &  0.017 (0.001) &  0.038 (0.001) &  0.045 (0.002) &  0.073 (0.002) &  0.074 (0.002) &  0.115 (0.005) \\
         &        & $\sigma=1$ &  0.034 (0.002) &  0.048 (0.003) &  0.053 (0.002) &  0.071 (0.003) &  0.070 (0.003) &  0.112 (0.005) \\
         &        & $\sigma=2$ &  0.075 (0.009) &  0.064 (0.003) &  0.079 (0.004) &  0.078 (0.004) &  0.088 (0.007) &  0.142 (0.006) \\
\cline{2-9}
         & \multirow{3}{*}{$d=12$} & $\sigma=0.5$ &  0.017 (0.001) &  0.049 (0.002) &  0.052 (0.002) &  0.074 (0.002) &  0.076 (0.002) &  0.138 (0.004) \\
         &        & $\sigma=1$ &  0.034 (0.003) &  0.050 (0.002) &  0.058 (0.003) &  0.075 (0.003) &  0.079 (0.003) &  0.142 (0.004) \\
         &        & $\sigma=2$ &  0.094 (0.012) &  0.062 (0.003) &  0.079 (0.005) &  0.076 (0.004) &  0.098 (0.007) &  0.131 (0.007) \\
\bottomrule
\end{tabular}

\caption{Setup A. Conditional Average Treatment Effect (CATE) estimation. Mean-Squared-Error (MSE), averaged across $100$ experiments, with standard error.}
\label{fig:setupA-cate}
\end{figure}
 
\begin{figure}[H]
\scriptsize
\centering
\begin{tabular}{lllllllll}
\toprule
         &        &            &         oracle &            dml &     dml\_split &             dr &      dr\_split &       slearner \\
\midrule
\multirow{6}{*}{$n=500$} & \multirow{3}{*}{$d=6$} & $\sigma=0.5$ &  0.037 (0.002) &  0.063 (0.002) &  0.075 (0.002) &  0.063 (0.002) &  0.074 (0.003) &  0.154 (0.011) \\
         &        & $\sigma=1$ &  0.100 (0.007) &  0.084 (0.003) &  0.112 (0.006) &  0.088 (0.005) &  0.115 (0.005) &  0.186 (0.010) \\
         &        & $\sigma=2$ &  0.206 (0.018) &  0.142 (0.009) &  0.240 (0.023) &  0.171 (0.019) &  0.418 (0.143) &  0.263 (0.015) \\
\cline{2-9}
         & \multirow{3}{*}{$d=12$} & $\sigma=0.5$ &  0.042 (0.002) &  0.065 (0.001) &  0.075 (0.002) &  0.063 (0.001) &  0.079 (0.003) &  0.182 (0.011) \\
         &        & $\sigma=1$ &  0.090 (0.005) &  0.084 (0.003) &  0.104 (0.004) &  0.096 (0.007) &  0.137 (0.017) &  0.188 (0.010) \\
         &        & $\sigma=2$ &  0.224 (0.026) &  0.172 (0.013) &  0.283 (0.038) &  0.159 (0.007) &  0.540 (0.207) &  0.302 (0.018) \\
\cline{1-9}
\cline{2-9}
\multirow{6}{*}{$n=1000$} & \multirow{3}{*}{$d=6$} & $\sigma=0.5$ &  0.033 (0.001) &  0.056 (0.001) &  0.060 (0.001) &  0.055 (0.001) &  0.058 (0.001) &  0.139 (0.011) \\
         &        & $\sigma=1$ &  0.072 (0.008) &  0.071 (0.001) &  0.082 (0.003) &  0.070 (0.001) &  0.082 (0.002) &  0.174 (0.011) \\
         &        & $\sigma=2$ &  0.141 (0.010) &  0.109 (0.005) &  0.155 (0.012) &  0.137 (0.018) &  0.170 (0.012) &  0.228 (0.012) \\
\cline{2-9}
         & \multirow{3}{*}{$d=12$} & $\sigma=0.5$ &  0.028 (0.001) &  0.057 (0.001) &  0.064 (0.001) &  0.059 (0.001) &  0.064 (0.001) &  0.197 (0.011) \\
         &        & $\sigma=1$ &  0.063 (0.004) &  0.069 (0.001) &  0.088 (0.003) &  0.072 (0.002) &  0.086 (0.002) &  0.163 (0.010) \\
         &        & $\sigma=2$ &  0.155 (0.021) &  0.113 (0.005) &  0.233 (0.027) &  0.138 (0.013) &  0.181 (0.020) &  0.228 (0.012) \\
\cline{1-9}
\cline{2-9}
\multirow{6}{*}{$n=3000$} & \multirow{3}{*}{$d=6$} & $\sigma=0.5$ &  0.021 (0.001) &  0.057 (0.000) &  0.058 (0.001) &  0.057 (0.001) &  0.056 (0.001) &  0.109 (0.009) \\
         &        & $\sigma=1$ &  0.040 (0.003) &  0.064 (0.001) &  0.067 (0.001) &  0.063 (0.001) &  0.066 (0.001) &  0.146 (0.011) \\
         &        & $\sigma=2$ &  0.096 (0.009) &  0.074 (0.002) &  0.090 (0.003) &  0.076 (0.002) &  0.098 (0.006) &  0.205 (0.011) \\
\cline{2-9}
         & \multirow{3}{*}{$d=12$} & $\sigma=0.5$ &  0.022 (0.001) &  0.058 (0.001) &  0.059 (0.001) &  0.058 (0.001) &  0.057 (0.002) &  0.078 (0.006) \\
         &        & $\sigma=1$ &  0.040 (0.003) &  0.063 (0.001) &  0.066 (0.001) &  0.062 (0.001) &  0.065 (0.002) &  0.149 (0.011) \\
         &        & $\sigma=2$ &  0.087 (0.008) &  0.074 (0.002) &  0.090 (0.003) &  0.075 (0.002) &  0.090 (0.002) &  0.143 (0.010) \\
\bottomrule
\end{tabular}

\caption{Setup B. Conditional Average Treatment Effect (CATE) estimation. Mean-Squared-Error (MSE), averaged across $100$ experiments, with standard error.}
\label{fig:setupB-cate}
\end{figure}

\begin{figure}[H]
\scriptsize
\centering
\begin{tabular}{lllllllll}
\toprule
         &        &            &         oracle &            dml &     dml\_split &             dr &      dr\_split &       slearner \\
\midrule
\multirow{6}{*}{$n=500$} & \multirow{3}{*}{$d=6$} & $\sigma=0.5$ &  0.012 (0.004) &  0.033 (0.004) &  0.042 (0.004) &  0.035 (0.004) &  0.059 (0.006) &  0.238 (0.021) \\
         &        & $\sigma=1$ &  0.036 (0.006) &  0.053 (0.006) &  0.126 (0.014) &  0.066 (0.006) &  0.176 (0.075) &  0.246 (0.017) \\
         &        & $\sigma=2$ &  0.159 (0.029) &  0.171 (0.018) &  0.225 (0.011) &  0.167 (0.011) &  0.288 (0.019) &  0.398 (0.026) \\
\cline{2-9}
         & \multirow{3}{*}{$d=12$} & $\sigma=0.5$ &  0.009 (0.001) &  0.033 (0.004) &  0.044 (0.005) &  0.043 (0.005) &  0.060 (0.006) &  0.252 (0.021) \\
         &        & $\sigma=1$ &  0.030 (0.005) &  0.083 (0.010) &  0.108 (0.011) &  0.090 (0.011) &  0.128 (0.013) &  0.261 (0.018) \\
         &        & $\sigma=2$ &  0.127 (0.023) &  0.161 (0.009) &  0.255 (0.019) &  0.166 (0.010) &  0.250 (0.017) &  0.429 (0.029) \\
\cline{1-9}
\cline{2-9}
\multirow{6}{*}{$n=1000$} & \multirow{3}{*}{$d=6$} & $\sigma=0.5$ &  0.006 (0.001) &  0.013 (0.002) &  0.019 (0.003) &  0.021 (0.004) &  0.037 (0.005) &  0.185 (0.018) \\
         &        & $\sigma=1$ &  0.017 (0.003) &  0.042 (0.005) &  0.062 (0.006) &  0.050 (0.006) &  0.070 (0.006) &  0.233 (0.020) \\
         &        & $\sigma=2$ &  0.084 (0.014) &  0.104 (0.013) &  0.150 (0.015) &  0.099 (0.008) &  0.168 (0.016) &  0.271 (0.022) \\
\cline{2-9}
         & \multirow{3}{*}{$d=12$} & $\sigma=0.5$ &  0.006 (0.002) &  0.016 (0.002) &  0.039 (0.005) &  0.026 (0.003) &  0.032 (0.004) &  0.244 (0.019) \\
         &        & $\sigma=1$ &  0.022 (0.004) &  0.050 (0.006) &  0.073 (0.007) &  0.049 (0.005) &  0.063 (0.006) &  0.158 (0.014) \\
         &        & $\sigma=2$ &  0.073 (0.014) &  0.109 (0.012) &  0.177 (0.018) &  0.098 (0.007) &  0.159 (0.009) &  0.224 (0.020) \\
\cline{1-9}
\cline{2-9}
\multirow{6}{*}{$n=3000$} & \multirow{3}{*}{$d=6$} & $\sigma=0.5$ &  0.002 (0.000) &  0.012 (0.002) &  0.015 (0.002) &  0.015 (0.002) &  0.018 (0.003) &  0.182 (0.021) \\
         &        & $\sigma=1$ &  0.010 (0.002) &  0.021 (0.003) &  0.035 (0.005) &  0.027 (0.004) &  0.033 (0.004) &  0.203 (0.020) \\
         &        & $\sigma=2$ &  0.051 (0.010) &  0.063 (0.006) &  0.077 (0.006) &  0.068 (0.006) &  0.076 (0.008) &  0.185 (0.017) \\
\cline{2-9}
         & \multirow{3}{*}{$d=12$} & $\sigma=0.5$ &  0.003 (0.001) &  0.008 (0.001) &  0.013 (0.002) &  0.017 (0.003) &  0.018 (0.002) &  0.161 (0.019) \\
         &        & $\sigma=1$ &  0.008 (0.001) &  0.026 (0.004) &  0.044 (0.005) &  0.022 (0.003) &  0.040 (0.005) &  0.224 (0.022) \\
         &        & $\sigma=2$ &  0.039 (0.008) &  0.053 (0.006) &  0.091 (0.006) &  0.048 (0.005) &  0.076 (0.006) &  0.242 (0.022) \\
\bottomrule
\end{tabular}

\caption{Setup C. Conditional Average Treatment Effect (CATE) estimation. Mean-Squared-Error (MSE), averaged across $100$ experiments, with standard error.}
\label{fig:setupC-cate}
\end{figure}
 
\begin{figure}[H]
\scriptsize
\centering
\begin{tabular}{lllllllll}
\toprule
         &        &            &         oracle &            dml &     dml\_split &             dr &      dr\_split &       slearner \\
\midrule
\multirow{6}{*}{$n=500$} & \multirow{3}{*}{$d=6$} & $\sigma=0.5$ &  0.088 (0.003) &  0.117 (0.003) &  0.136 (0.009) &  0.120 (0.003) &  0.129 (0.004) &  0.135 (0.001) \\
         &        & $\sigma=1$ &  0.143 (0.005) &  0.156 (0.009) &  0.176 (0.013) &  0.145 (0.005) &  0.202 (0.017) &  0.140 (0.001) \\
         &        & $\sigma=2$ &  0.274 (0.022) &  0.240 (0.010) &  0.404 (0.038) &  0.295 (0.032) &  0.373 (0.027) &  0.152 (0.006) \\
\cline{2-9}
         & \multirow{3}{*}{$d=12$} & $\sigma=0.5$ &  0.088 (0.002) &  0.114 (0.002) &  0.126 (0.003) &  0.116 (0.003) &  0.129 (0.005) &  0.134 (0.001) \\
         &        & $\sigma=1$ &  0.161 (0.008) &  0.141 (0.004) &  0.178 (0.007) &  0.167 (0.010) &  0.212 (0.018) &  0.141 (0.001) \\
         &        & $\sigma=2$ &  0.246 (0.013) &  0.266 (0.029) &  0.366 (0.024) &  0.262 (0.016) &  0.375 (0.020) &  0.146 (0.004) \\
\cline{1-9}
\cline{2-9}
\multirow{6}{*}{$n=1000$} & \multirow{3}{*}{$d=6$} & $\sigma=0.5$ &  0.076 (0.002) &  0.111 (0.002) &  0.117 (0.003) &  0.115 (0.003) &  0.118 (0.003) &  0.132 (0.001) \\
         &        & $\sigma=1$ &  0.109 (0.004) &  0.132 (0.004) &  0.146 (0.005) &  0.138 (0.005) &  0.157 (0.009) &  0.137 (0.001) \\
         &        & $\sigma=2$ &  0.206 (0.018) &  0.182 (0.009) &  0.234 (0.011) &  0.183 (0.006) &  0.263 (0.026) &  0.142 (0.001) \\
\cline{2-9}
         & \multirow{3}{*}{$d=12$} & $\sigma=0.5$ &  0.079 (0.002) &  0.111 (0.002) &  0.115 (0.002) &  0.111 (0.002) &  0.119 (0.003) &  0.131 (0.001) \\
         &        & $\sigma=1$ &  0.113 (0.005) &  0.137 (0.004) &  0.149 (0.008) &  0.140 (0.005) &  0.152 (0.006) &  0.138 (0.001) \\
         &        & $\sigma=2$ &  0.211 (0.018) &  0.178 (0.006) &  0.245 (0.023) &  0.242 (0.032) &  0.244 (0.010) &  0.143 (0.001) \\
\cline{1-9}
\cline{2-9}
\multirow{6}{*}{$n=3000$} & \multirow{3}{*}{$d=6$} & $\sigma=0.5$ &  0.058 (0.002) &  0.107 (0.000) &  0.108 (0.000) &  0.107 (0.000) &  0.107 (0.000) &  0.127 (0.001) \\
         &        & $\sigma=1$ &  0.085 (0.002) &  0.114 (0.003) &  0.117 (0.003) &  0.112 (0.002) &  0.135 (0.005) &  0.132 (0.001) \\
         &        & $\sigma=2$ &  0.153 (0.010) &  0.148 (0.005) &  0.162 (0.005) &  0.162 (0.014) &  0.158 (0.005) &  0.137 (0.001) \\
\cline{2-9}
         & \multirow{3}{*}{$d=12$} & $\sigma=0.5$ &  0.059 (0.002) &  0.109 (0.001) &  0.108 (0.001) &  0.106 (0.000) &  0.108 (0.001) &  0.129 (0.001) \\
         &        & $\sigma=1$ &  0.091 (0.003) &  0.126 (0.004) &  0.126 (0.004) &  0.123 (0.004) &  0.127 (0.004) &  0.131 (0.001) \\
         &        & $\sigma=2$ &  0.143 (0.009) &  0.133 (0.004) &  0.148 (0.004) &  0.143 (0.004) &  0.157 (0.005) &  0.139 (0.000) \\
\bottomrule
\end{tabular}

\caption{Setup D. Conditional Average Treatment Effect (CATE) estimation. Mean-Squared-Error (MSE), averaged across $100$ experiments, with standard error.}
\label{fig:setupD-cate}
\end{figure}
 
\begin{figure}[H]
\scriptsize
\centering
\begin{tabular}{lllllllll}
\toprule
         &        &            &         oracle &            dml &     dml\_split &             dr &      dr\_split &       slearner \\
\midrule
\multirow{6}{*}{$n=500$} & \multirow{3}{*}{$d=6$} & $\sigma=0.5$ &  0.121 (0.004) &  0.548 (0.012) &  0.595 (0.013) &  0.568 (0.013) &  0.611 (0.014) &  0.978 (0.007) \\
         &        & $\sigma=1$ &  0.248 (0.014) &  0.567 (0.014) &  0.655 (0.017) &  0.583 (0.014) &  0.693 (0.019) &  0.954 (0.009) \\
         &        & $\sigma=2$ &  0.660 (0.033) &  0.730 (0.024) &  0.905 (0.038) &  0.757 (0.023) &  0.894 (0.023) &  0.984 (0.005) \\
\cline{2-9}
         & \multirow{3}{*}{$d=12$} & $\sigma=0.5$ &  0.138 (0.011) &  0.507 (0.012) &  0.564 (0.016) &  0.544 (0.012) &  0.599 (0.017) &  0.965 (0.011) \\
         &        & $\sigma=1$ &  0.271 (0.015) &  0.548 (0.012) &  0.672 (0.027) &  0.588 (0.014) &  0.644 (0.018) &  0.984 (0.004) \\
         &        & $\sigma=2$ &  0.785 (0.073) &  0.793 (0.042) &  0.897 (0.039) &  0.790 (0.032) &  0.926 (0.044) &  0.985 (0.010) \\
\cline{1-9}
\cline{2-9}
\multirow{6}{*}{$n=1000$} & \multirow{3}{*}{$d=6$} & $\sigma=0.5$ &  0.075 (0.005) &  0.450 (0.007) &  0.517 (0.012) &  0.460 (0.011) &  0.520 (0.011) &  0.951 (0.013) \\
         &        & $\sigma=1$ &  0.190 (0.013) &  0.522 (0.011) &  0.555 (0.012) &  0.517 (0.010) &  0.601 (0.018) &  0.969 (0.009) \\
         &        & $\sigma=2$ &  0.428 (0.030) &  0.558 (0.015) &  0.810 (0.046) &  0.588 (0.013) &  0.710 (0.019) &  0.985 (0.004) \\
\cline{2-9}
         & \multirow{3}{*}{$d=12$} & $\sigma=0.5$ &  0.076 (0.004) &  0.492 (0.007) &  0.522 (0.009) &  0.522 (0.005) &  0.548 (0.010) &  0.976 (0.008) \\
         &        & $\sigma=1$ &  0.167 (0.009) &  0.515 (0.009) &  0.554 (0.014) &  0.531 (0.007) &  0.580 (0.012) &  0.970 (0.008) \\
         &        & $\sigma=2$ &  0.451 (0.028) &  0.579 (0.018) &  0.740 (0.030) &  0.603 (0.014) &  0.784 (0.031) &  0.972 (0.008) \\
\cline{1-9}
\cline{2-9}
\multirow{6}{*}{$n=3000$} & \multirow{3}{*}{$d=6$} & $\sigma=0.5$ &  0.056 (0.008) &  0.475 (0.005) &  0.504 (0.007) &  0.510 (0.005) &  0.529 (0.008) &  0.905 (0.020) \\
         &        & $\sigma=1$ &  0.091 (0.004) &  0.475 (0.006) &  0.515 (0.009) &  0.522 (0.005) &  0.526 (0.009) &  0.915 (0.018) \\
         &        & $\sigma=2$ &  0.213 (0.014) &  0.510 (0.009) &  0.568 (0.013) &  0.561 (0.009) &  0.590 (0.013) &  0.960 (0.012) \\
\cline{2-9}
         & \multirow{3}{*}{$d=12$} & $\sigma=0.5$ &  0.036 (0.004) &  0.494 (0.006) &  0.502 (0.007) &  0.525 (0.005) &  0.532 (0.008) &  0.957 (0.012) \\
         &        & $\sigma=1$ &  0.119 (0.008) &  0.492 (0.007) &  0.514 (0.008) &  0.526 (0.006) &  0.532 (0.009) &  0.945 (0.015) \\
         &        & $\sigma=2$ &  0.215 (0.014) &  0.543 (0.010) &  0.576 (0.012) &  0.550 (0.008) &  0.575 (0.012) &  0.967 (0.009) \\
\bottomrule
\end{tabular}

\caption{Setup E. Conditional Average Treatment Effect (CATE) estimation. Mean-Squared-Error (MSE), averaged across $100$ experiments, with standard error.}
\label{fig:setupE-cate}
\end{figure}
 
\begin{figure}[H]
\scriptsize
\centering
\begin{tabular}{lllllllll}
\toprule
         &        &            &         oracle &            dml &     dml\_split &             dr &      dr\_split &       slearner \\
\midrule
\multirow{6}{*}{$n=500$} & \multirow{3}{*}{$d=6$} & $\sigma=0.5$ &  0.045 (0.002) &  0.164 (0.010) &  0.184 (0.012) &  0.169 (0.006) &  0.189 (0.008) &  0.534 (0.011) \\
         &        & $\sigma=1$ &  0.104 (0.005) &  0.180 (0.008) &  0.242 (0.018) &  0.196 (0.010) &  0.239 (0.014) &  0.547 (0.009) \\
         &        & $\sigma=2$ &  0.341 (0.050) &  0.251 (0.014) &  0.332 (0.019) &  0.249 (0.013) &  0.420 (0.048) &  0.522 (0.013) \\
\cline{2-9}
         & \multirow{3}{*}{$d=12$} & $\sigma=0.5$ &  0.050 (0.003) &  0.171 (0.008) &  0.189 (0.010) &  0.183 (0.008) &  0.216 (0.020) &  0.568 (0.007) \\
         &        & $\sigma=1$ &  0.148 (0.049) &  0.181 (0.009) &  0.231 (0.017) &  0.208 (0.013) &  0.228 (0.012) &  0.529 (0.012) \\
         &        & $\sigma=2$ &  0.322 (0.074) &  0.244 (0.013) &  0.378 (0.030) &  0.254 (0.013) &  0.463 (0.079) &  0.550 (0.010) \\
\cline{1-9}
\cline{2-9}
\multirow{6}{*}{$n=1000$} & \multirow{3}{*}{$d=6$} & $\sigma=0.5$ &  0.036 (0.002) &  0.107 (0.003) &  0.179 (0.017) &  0.130 (0.004) &  0.155 (0.006) &  0.519 (0.013) \\
         &        & $\sigma=1$ &  0.087 (0.007) &  0.135 (0.005) &  0.192 (0.011) &  0.157 (0.006) &  0.187 (0.012) &  0.536 (0.011) \\
         &        & $\sigma=2$ &  0.177 (0.022) &  0.178 (0.009) &  0.246 (0.012) &  0.178 (0.007) &  0.278 (0.017) &  0.526 (0.010) \\
\cline{2-9}
         & \multirow{3}{*}{$d=12$} & $\sigma=0.5$ &  0.035 (0.002) &  0.131 (0.004) &  0.161 (0.009) &  0.162 (0.005) &  0.173 (0.006) &  0.518 (0.012) \\
         &        & $\sigma=1$ &  0.091 (0.016) &  0.154 (0.011) &  0.174 (0.008) &  0.156 (0.008) &  0.181 (0.009) &  0.518 (0.013) \\
         &        & $\sigma=2$ &  0.196 (0.022) &  0.171 (0.007) &  0.230 (0.012) &  0.177 (0.007) &  0.310 (0.047) &  0.521 (0.012) \\
\cline{1-9}
\cline{2-9}
\multirow{6}{*}{$n=3000$} & \multirow{3}{*}{$d=6$} & $\sigma=0.5$ &  0.026 (0.001) &  0.164 (0.006) &  0.166 (0.006) &  0.185 (0.006) &  0.178 (0.007) &  0.546 (0.013) \\
         &        & $\sigma=1$ &  0.049 (0.005) &  0.145 (0.006) &  0.151 (0.006) &  0.150 (0.005) &  0.163 (0.006) &  0.544 (0.013) \\
         &        & $\sigma=2$ &  0.145 (0.018) &  0.134 (0.005) &  0.163 (0.008) &  0.165 (0.007) &  0.160 (0.006) &  0.509 (0.014) \\
\cline{2-9}
         & \multirow{3}{*}{$d=12$} & $\sigma=0.5$ &  0.025 (0.001) &  0.124 (0.005) &  0.143 (0.006) &  0.154 (0.005) &  0.157 (0.005) &  0.530 (0.013) \\
         &        & $\sigma=1$ &  0.043 (0.003) &  0.146 (0.006) &  0.163 (0.007) &  0.171 (0.006) &  0.185 (0.007) &  0.514 (0.016) \\
         &        & $\sigma=2$ &  0.118 (0.015) &  0.125 (0.004) &  0.156 (0.006) &  0.151 (0.004) &  0.184 (0.007) &  0.499 (0.015) \\
\bottomrule
\end{tabular}

\caption{Setup F. Conditional Average Treatment Effect (CATE) estimation. Mean-Squared-Error (MSE), averaged across $100$ experiments, with standard error.}
\label{fig:setupF-cate}
\end{figure}

\subsection{Policy Learning} Our second set of experiments concern the task of policy learning, where the goal is to minimize the loss $\poprisk(\theta) = -\E\left[\theta(X)\, (\tau_0(X) - c)\right]$ (or equivalently, maximize the reward $\E[\theta(X)\, (\tau_0(X) - c)]$), where $\theta: \cX\to \{0, 1\}$ is the treatment policy (the target parameter) and $c\in\bbR$ is a pre-defined treatment cost; for all experiments, we use the mean of the heterogeneous treatment effect as the cost (i.e. $c:=\E[\tau_0(X)]$). We implemented the orthogonal doubly-robust loss (denoted $\text{dr}$) described in \pref{sec:policy_learning_body} (Eq. \pref{eq:binary_treatment}):
\begin{align}
  \poprisk(\theta,g) = -\E\left[\theta(X)\, \left(f(1, X) - f(0, X) + \frac{e(X)-T}{e(X) (1 - e(X))}\, \left(Y - f(T, X)\right) - c\right)\right],
\end{align}
where $g=\crl{f,e}$ is the nuisance parameter, with $f_0(T,X)=\En\brk*{Y\mid{}T,X}$ and $e_0(X)\ldef{}\En\brk*{T\mid{}X}$. We consider two non-orthogonal losses as benchmarks. The first benchmark loss (denoted $\text{direct}$) is based solely on the regression model $f$:
\begin{align}
    \poprisk(\theta,\crl{f}) = -\E\left[\theta(X)\, \left(f(1, X) - f(0, X) - c\right)\right],
\end{align}
while the second benchmark loss (denoted $\text{ips}$), is based solely on the propensity model $e$:
\begin{align}
  \poprisk(\theta,\crl{e}) = -\E\left[\theta(X)\, \left(\frac{e(X) - T}{e(X) (1 - e(X))}\, Y - c\right)\right].
\end{align}
We consider the same data generating process setups (A-F) as in the CATE estimation setting (in particular, the true nuisance parameters $f_0$ and $e_0$ are chosen in the same fashion). As in the CATE estimation experiments, we used FLAML to fit the nuisance and target parameters. We performed hyperparameter tuning, model selection and model fitting on half of each dataset, and performed loss minimization over $\theta$ on the other half. For the target parameter (the treatment policy) we searched over binary decision trees of depth at most $2$, with a minimum leaf size of at least $20$ samples. These trees where constructed using a greedy method, where---starting at the root node---we recursively choose at each node a split that greedily leads to the largest improvement in the target criterion.

\paragraph{Results}
We present our results in Figures~\ref{fig:setupA-policy}, \ref{fig:setupB-policy}, \ref{fig:setupD-policy}, \ref{fig:setupE-policy}, \ref{fig:setupF-policy}, which display the mean reward (negative of loss) for the policies learned by the approaches above; we omit setup C because the treatment effect is constant, which renders the policy learning problem trivial.
Similar to our CATE results, we find that in most settings, the orthogonal loss approach gives comparable results to the non-orthogonal losses, but for setups $E$ and $F$, the orthogonal loss method significantly outperforms the non-orthogonal losses. As a secondary result, we compare to
\begin{itemize}
  \item The unrestricted optimal policy $\theta_{\mathrm{opt}}(X)=1\{\tau(X) - c>0\}$ (denoted $\text{opt}$).
  \item An oracle method (denoted as $\text{or}$) that uses the true nuisance functions and optimizes the oracle objective over the same space of target policies as in $\text{dr}$, $\text{direct}$, and $\text{ips}$.
\end{itemize}
We find that when the sample size is large and the variance of the outcome is relatively small, the learned policies perform comparably to the oracle policy.

\begin{figure}[H]
\scriptsize
\centering
\begin{tabular}{llllllll}
\toprule
         &        &            &            opt &             or &             dr &         direct &            ips \\
\midrule
\multirow{6}{*}{$n=500$} & \multirow{3}{*}{$d=6$} & $\sigma=0.5$ &  0.083 (0.000) &  0.077 (0.000) &  0.021 (0.003) &  0.028 (0.002) &  0.055 (0.002) \\
         &        & $\sigma=1$ &  0.083 (0.000) &  0.077 (0.000) &  0.005 (0.003) &  0.012 (0.002) &  0.043 (0.003) \\
         &        & $\sigma=2$ &  0.083 (0.000) &  0.077 (0.000) &  0.004 (0.002) &  0.001 (0.002) &  0.033 (0.003) \\
\cline{2-8}
         & \multirow{3}{*}{$d=12$} & $\sigma=0.5$ &  0.083 (0.000) &  0.077 (0.000) &  0.019 (0.002) &  0.034 (0.003) &  0.047 (0.003) \\
         &        & $\sigma=1$ &  0.083 (0.000) &  0.077 (0.000) &  0.005 (0.002) &  0.012 (0.002) &  0.041 (0.003) \\
         &        & $\sigma=2$ &  0.083 (0.000) &  0.077 (0.000) &  0.002 (0.002) &  0.004 (0.002) &  0.027 (0.002) \\
\cline{1-8}
\cline{2-8}
\multirow{6}{*}{$n=1000$} & \multirow{3}{*}{$d=6$} & $\sigma=0.5$ &  0.082 (0.000) &  0.077 (0.000) &  0.029 (0.002) &  0.038 (0.003) &  0.060 (0.002) \\
         &        & $\sigma=1$ &  0.082 (0.000) &  0.077 (0.000) &  0.013 (0.002) &  0.019 (0.002) &  0.053 (0.002) \\
         &        & $\sigma=2$ &  0.082 (0.000) &  0.077 (0.000) &  0.005 (0.002) &  0.007 (0.002) &  0.039 (0.003) \\
\cline{2-8}
         & \multirow{3}{*}{$d=12$} & $\sigma=0.5$ &  0.084 (0.000) &  0.078 (0.000) &  0.033 (0.003) &  0.032 (0.003) &  0.060 (0.002) \\
         &        & $\sigma=1$ &  0.084 (0.000) &  0.078 (0.000) &  0.009 (0.002) &  0.016 (0.002) &  0.053 (0.002) \\
         &        & $\sigma=2$ &  0.084 (0.000) &  0.078 (0.000) &  0.005 (0.002) &  0.004 (0.002) &  0.030 (0.003) \\
\cline{1-8}
\cline{2-8}
\multirow{6}{*}{$n=3000$} & \multirow{3}{*}{$d=6$} & $\sigma=0.5$ &  0.083 (0.000) &  0.078 (0.000) &  0.054 (0.002) &  0.040 (0.002) &  0.068 (0.002) \\
         &        & $\sigma=1$ &  0.083 (0.000) &  0.078 (0.000) &  0.032 (0.003) &  0.033 (0.003) &  0.069 (0.001) \\
         &        & $\sigma=2$ &  0.083 (0.000) &  0.078 (0.000) &  0.016 (0.002) &  0.015 (0.002) &  0.063 (0.001) \\
\cline{2-8}
         & \multirow{3}{*}{$d=12$} & $\sigma=0.5$ &  0.080 (0.000) &  0.075 (0.000) &  0.041 (0.003) &  0.028 (0.003) &  0.067 (0.001) \\
         &        & $\sigma=1$ &  0.080 (0.000) &  0.075 (0.000) &  0.017 (0.003) &  0.023 (0.003) &  0.066 (0.001) \\
         &        & $\sigma=2$ &  0.080 (0.000) &  0.075 (0.000) &  0.001 (0.002) &  0.013 (0.002) &  0.056 (0.002) \\
\bottomrule
\end{tabular}

\caption{Setup A. Policy Learning. Learned policy value, averaged across $100$ experiments, with standard error.}
\label{fig:setupA-policy}
\end{figure}
 
\begin{figure}[H]
\scriptsize
\centering
\begin{tabular}{llllllll}
\toprule
         &        &            &            opt &             or &             dr &         direct &            ips \\
\midrule
\multirow{6}{*}{$n=500$} & \multirow{3}{*}{$d=6$} & $\sigma=0.5$ &  0.135 (0.000) &  0.124 (0.000) &  0.078 (0.004) &  0.076 (0.004) &  0.042 (0.005) \\
         &        & $\sigma=1$ &  0.135 (0.000) &  0.124 (0.000) &  0.026 (0.005) &  0.037 (0.005) &  0.031 (0.005) \\
         &        & $\sigma=2$ &  0.135 (0.000) &  0.124 (0.000) &  0.011 (0.004) &  0.013 (0.003) &  0.005 (0.004) \\
\cline{2-8}
         & \multirow{3}{*}{$d=12$} & $\sigma=0.5$ &  0.135 (0.000) &  0.124 (0.000) &  0.058 (0.005) &  0.064 (0.005) &  0.043 (0.005) \\
         &        & $\sigma=1$ &  0.135 (0.000) &  0.124 (0.000) &  0.027 (0.004) &  0.032 (0.005) &  0.020 (0.004) \\
         &        & $\sigma=2$ &  0.135 (0.000) &  0.124 (0.000) &  0.008 (0.002) &  0.004 (0.002) &  0.008 (0.003) \\
\cline{1-8}
\cline{2-8}
\multirow{6}{*}{$n=1000$} & \multirow{3}{*}{$d=6$} & $\sigma=0.5$ &  0.135 (0.000) &  0.124 (0.000) &  0.104 (0.003) &  0.097 (0.002) &  0.081 (0.004) \\
         &        & $\sigma=1$ &  0.135 (0.000) &  0.124 (0.000) &  0.058 (0.005) &  0.052 (0.004) &  0.049 (0.005) \\
         &        & $\sigma=2$ &  0.135 (0.000) &  0.124 (0.000) &  0.015 (0.004) &  0.010 (0.002) &  0.014 (0.004) \\
\cline{2-8}
         & \multirow{3}{*}{$d=12$} & $\sigma=0.5$ &  0.136 (0.000) &  0.125 (0.000) &  0.094 (0.004) &  0.087 (0.004) &  0.068 (0.005) \\
         &        & $\sigma=1$ &  0.136 (0.000) &  0.125 (0.000) &  0.043 (0.005) &  0.051 (0.005) &  0.035 (0.004) \\
         &        & $\sigma=2$ &  0.136 (0.000) &  0.125 (0.000) &  0.007 (0.003) &  0.013 (0.003) &  0.014 (0.003) \\
\cline{1-8}
\cline{2-8}
\multirow{6}{*}{$n=3000$} & \multirow{3}{*}{$d=6$} & $\sigma=0.5$ &  0.136 (0.000) &  0.125 (0.000) &  0.116 (0.002) &  0.106 (0.002) &  0.108 (0.003) \\
         &        & $\sigma=1$ &  0.136 (0.000) &  0.126 (0.000) &  0.102 (0.003) &  0.096 (0.003) &  0.097 (0.004) \\
         &        & $\sigma=2$ &  0.136 (0.000) &  0.126 (0.000) &  0.036 (0.005) &  0.050 (0.004) &  0.045 (0.005) \\
\cline{2-8}
         & \multirow{3}{*}{$d=12$} & $\sigma=0.5$ &  0.131 (0.000) &  0.121 (0.000) &  0.113 (0.002) &  0.107 (0.001) &  0.106 (0.002) \\
         &        & $\sigma=1$ &  0.131 (0.000) &  0.121 (0.000) &  0.091 (0.004) &  0.085 (0.003) &  0.074 (0.005) \\
         &        & $\sigma=2$ &  0.131 (0.000) &  0.121 (0.000) &  0.028 (0.005) &  0.038 (0.004) &  0.032 (0.005) \\
\bottomrule
\end{tabular}

\caption{Setup B. Policy Learning. Learned policy value, averaged across $100$ experiments, with standard error.}
\label{fig:setupB-policy}
\end{figure}
 
\begin{figure}[H]
\scriptsize
\centering
\begin{tabular}{llllllll}
\toprule
         &        &            &            opt &             or &             dr &          direct &            ips \\
\midrule
\multirow{6}{*}{$n=500$} & \multirow{3}{*}{$d=6$} & $\sigma=0.5$ &  0.133 (0.000) &  0.079 (0.001) &  0.039 (0.003) &   0.033 (0.003) &  0.045 (0.003) \\
         &        & $\sigma=1$ &  0.133 (0.000) &  0.080 (0.001) &  0.015 (0.004) &   0.011 (0.002) &  0.023 (0.003) \\
         &        & $\sigma=2$ &  0.133 (0.000) &  0.079 (0.001) &  0.009 (0.003) &  -0.001 (0.002) &  0.011 (0.004) \\
\cline{2-8}
         & \multirow{3}{*}{$d=12$} & $\sigma=0.5$ &  0.133 (0.000) &  0.079 (0.001) &  0.022 (0.003) &   0.014 (0.002) &  0.026 (0.003) \\
         &        & $\sigma=1$ &  0.133 (0.000) &  0.080 (0.001) &  0.007 (0.003) &   0.001 (0.001) &  0.013 (0.003) \\
         &        & $\sigma=2$ &  0.133 (0.000) &  0.079 (0.001) &  0.005 (0.003) &   0.001 (0.001) &  0.005 (0.003) \\
\cline{1-8}
\cline{2-8}
\multirow{6}{*}{$n=1000$} & \multirow{3}{*}{$d=6$} & $\sigma=0.5$ &  0.133 (0.000) &  0.080 (0.001) &  0.056 (0.003) &   0.051 (0.003) &  0.050 (0.003) \\
         &        & $\sigma=1$ &  0.133 (0.000) &  0.081 (0.000) &  0.032 (0.003) &   0.017 (0.003) &  0.019 (0.003) \\
         &        & $\sigma=2$ &  0.133 (0.000) &  0.081 (0.000) &  0.012 (0.003) &   0.003 (0.002) &  0.011 (0.003) \\
\cline{2-8}
         & \multirow{3}{*}{$d=12$} & $\sigma=0.5$ &  0.136 (0.000) &  0.086 (0.000) &  0.051 (0.003) &   0.039 (0.003) &  0.050 (0.003) \\
         &        & $\sigma=1$ &  0.136 (0.000) &  0.085 (0.000) &  0.024 (0.003) &   0.011 (0.002) &  0.027 (0.003) \\
         &        & $\sigma=2$ &  0.136 (0.000) &  0.084 (0.001) &  0.008 (0.002) &   0.006 (0.002) &  0.009 (0.003) \\
\cline{1-8}
\cline{2-8}
\multirow{6}{*}{$n=3000$} & \multirow{3}{*}{$d=6$} & $\sigma=0.5$ &  0.135 (0.000) &  0.086 (0.000) &  0.072 (0.001) &   0.074 (0.001) &  0.072 (0.001) \\
         &        & $\sigma=1$ &  0.135 (0.000) &  0.086 (0.000) &  0.050 (0.003) &   0.051 (0.003) &  0.051 (0.003) \\
         &        & $\sigma=2$ &  0.135 (0.000) &  0.086 (0.000) &  0.025 (0.004) &   0.011 (0.002) &  0.023 (0.004) \\
\cline{2-8}
         & \multirow{3}{*}{$d=12$} & $\sigma=0.5$ &  0.133 (0.000) &  0.083 (0.000) &  0.067 (0.002) &   0.067 (0.002) &  0.062 (0.002) \\
         &        & $\sigma=1$ &  0.133 (0.000) &  0.082 (0.000) &  0.042 (0.003) &   0.038 (0.003) &  0.050 (0.003) \\
         &        & $\sigma=2$ &  0.133 (0.000) &  0.083 (0.000) &  0.008 (0.003) &   0.005 (0.002) &  0.013 (0.003) \\
\bottomrule
\end{tabular}

\caption{Setup D. Policy Learning. Learned policy value, averaged across $100$ experiments, with standard error.}
\label{fig:setupD-policy}
\end{figure}
 
\begin{figure}[H]
\scriptsize
\centering
\begin{tabular}{llllllll}
\toprule
         &        &            &            opt &             or &             dr &         direct &            ips \\
\midrule
\multirow{6}{*}{$n=500$} & \multirow{3}{*}{$d=6$} & $\sigma=0.5$ &  0.463 (0.000) &  0.454 (0.001) &  0.392 (0.009) &  0.185 (0.015) &  0.068 (0.011) \\
         &        & $\sigma=1$ &  0.463 (0.000) &  0.453 (0.001) &  0.243 (0.014) &  0.094 (0.012) &  0.041 (0.008) \\
         &        & $\sigma=2$ &  0.463 (0.000) &  0.454 (0.001) &  0.080 (0.013) &  0.054 (0.010) &  0.022 (0.008) \\
\cline{2-8}
         & \multirow{3}{*}{$d=12$} & $\sigma=0.5$ &  0.460 (0.000) &  0.451 (0.001) &  0.376 (0.010) &  0.137 (0.014) &  0.032 (0.009) \\
         &        & $\sigma=1$ &  0.460 (0.000) &  0.451 (0.001) &  0.227 (0.016) &  0.047 (0.009) &  0.030 (0.007) \\
         &        & $\sigma=2$ &  0.460 (0.000) &  0.453 (0.001) &  0.077 (0.012) &  0.031 (0.008) &  0.014 (0.007) \\
\cline{1-8}
\cline{2-8}
\multirow{6}{*}{$n=1000$} & \multirow{3}{*}{$d=6$} & $\sigma=0.5$ &  0.454 (0.000) &  0.449 (0.001) &  0.436 (0.002) &  0.284 (0.014) &  0.099 (0.012) \\
         &        & $\sigma=1$ &  0.454 (0.000) &  0.449 (0.001) &  0.360 (0.010) &  0.183 (0.015) &  0.065 (0.010) \\
         &        & $\sigma=2$ &  0.454 (0.000) &  0.449 (0.001) &  0.136 (0.015) &  0.048 (0.008) &  0.034 (0.007) \\
\cline{2-8}
         & \multirow{3}{*}{$d=12$} & $\sigma=0.5$ &  0.464 (0.000) &  0.459 (0.001) &  0.441 (0.003) &  0.217 (0.015) &  0.067 (0.010) \\
         &        & $\sigma=1$ &  0.464 (0.000) &  0.459 (0.001) &  0.361 (0.010) &  0.136 (0.013) &  0.038 (0.007) \\
         &        & $\sigma=2$ &  0.464 (0.000) &  0.459 (0.001) &  0.088 (0.011) &  0.053 (0.009) &  0.020 (0.007) \\
\cline{1-8}
\cline{2-8}
\multirow{6}{*}{$n=3000$} & \multirow{3}{*}{$d=6$} & $\sigma=0.5$ &  0.459 (0.000) &  0.457 (0.000) &  0.452 (0.001) &  0.356 (0.009) &  0.122 (0.013) \\
         &        & $\sigma=1$ &  0.459 (0.000) &  0.457 (0.000) &  0.439 (0.002) &  0.291 (0.013) &  0.120 (0.013) \\
         &        & $\sigma=2$ &  0.459 (0.000) &  0.458 (0.000) &  0.350 (0.010) &  0.193 (0.014) &  0.092 (0.012) \\
\cline{2-8}
         & \multirow{3}{*}{$d=12$} & $\sigma=0.5$ &  0.453 (0.000) &  0.451 (0.000) &  0.445 (0.001) &  0.347 (0.012) &  0.114 (0.013) \\
         &        & $\sigma=1$ &  0.453 (0.000) &  0.451 (0.000) &  0.429 (0.003) &  0.259 (0.014) &  0.112 (0.013) \\
         &        & $\sigma=2$ &  0.453 (0.000) &  0.452 (0.000) &  0.297 (0.014) &  0.102 (0.012) &  0.049 (0.009) \\
\bottomrule
\end{tabular}

\caption{Setup E. Policy Learning. Learned policy value, averaged across $100$ experiments, with standard error.}
\label{fig:setupE-policy}
\end{figure}
 
\begin{figure}[H]
\scriptsize
\centering
\begin{tabular}{llllllll}
\toprule
         &        &            &            opt &             or &             dr &         direct &             ips \\
\midrule
\multirow{6}{*}{$n=500$} & \multirow{3}{*}{$d=6$} & $\sigma=0.5$ &  0.135 (0.000) &  0.124 (0.000) &  0.040 (0.005) &  0.022 (0.004) &   0.007 (0.003) \\
         &        & $\sigma=1$ &  0.135 (0.000) &  0.124 (0.000) &  0.015 (0.004) &  0.013 (0.003) &  -0.000 (0.003) \\
         &        & $\sigma=2$ &  0.135 (0.000) &  0.123 (0.000) &  0.005 (0.003) &  0.007 (0.003) &   0.001 (0.004) \\
\cline{2-8}
         & \multirow{3}{*}{$d=12$} & $\sigma=0.5$ &  0.135 (0.000) &  0.124 (0.000) &  0.034 (0.005) &  0.010 (0.003) &   0.002 (0.002) \\
         &        & $\sigma=1$ &  0.135 (0.000) &  0.124 (0.000) &  0.009 (0.003) &  0.006 (0.002) &   0.003 (0.003) \\
         &        & $\sigma=2$ &  0.135 (0.000) &  0.124 (0.000) &  0.004 (0.003) &  0.003 (0.002) &   0.003 (0.003) \\
\cline{1-8}
\cline{2-8}
\multirow{6}{*}{$n=1000$} & \multirow{3}{*}{$d=6$} & $\sigma=0.5$ &  0.135 (0.000) &  0.124 (0.000) &  0.064 (0.004) &  0.032 (0.003) &   0.011 (0.003) \\
         &        & $\sigma=1$ &  0.135 (0.000) &  0.124 (0.000) &  0.030 (0.004) &  0.022 (0.003) &   0.007 (0.002) \\
         &        & $\sigma=2$ &  0.135 (0.000) &  0.124 (0.000) &  0.010 (0.004) &  0.008 (0.002) &   0.003 (0.003) \\
\cline{2-8}
         & \multirow{3}{*}{$d=12$} & $\sigma=0.5$ &  0.136 (0.000) &  0.125 (0.000) &  0.058 (0.005) &  0.026 (0.004) &   0.009 (0.002) \\
         &        & $\sigma=1$ &  0.136 (0.000) &  0.125 (0.000) &  0.017 (0.003) &  0.013 (0.003) &   0.002 (0.002) \\
         &        & $\sigma=2$ &  0.136 (0.000) &  0.125 (0.000) &  0.005 (0.002) &  0.008 (0.003) &  -0.003 (0.002) \\
\cline{1-8}
\cline{2-8}
\multirow{6}{*}{$n=3000$} & \multirow{3}{*}{$d=6$} & $\sigma=0.5$ &  0.136 (0.000) &  0.125 (0.000) &  0.108 (0.002) &  0.060 (0.004) &   0.016 (0.002) \\
         &        & $\sigma=1$ &  0.136 (0.000) &  0.126 (0.000) &  0.074 (0.005) &  0.051 (0.004) &   0.015 (0.002) \\
         &        & $\sigma=2$ &  0.136 (0.000) &  0.126 (0.000) &  0.023 (0.004) &  0.026 (0.003) &   0.004 (0.002) \\
\cline{2-8}
         & \multirow{3}{*}{$d=12$} & $\sigma=0.5$ &  0.131 (0.000) &  0.120 (0.000) &  0.102 (0.003) &  0.057 (0.004) &   0.012 (0.002) \\
         &        & $\sigma=1$ &  0.131 (0.000) &  0.121 (0.000) &  0.060 (0.005) &  0.036 (0.004) &   0.007 (0.002) \\
         &        & $\sigma=2$ &  0.131 (0.000) &  0.121 (0.000) &  0.009 (0.003) &  0.017 (0.003) &  -0.000 (0.001) \\
\bottomrule
\end{tabular}

\caption{Setup F. Policy Learning. Learned policy value, averaged across $100$ experiments, with standard error.}
\label{fig:setupF-policy}
\end{figure}

\part{Additional Results}
\label{part:additional}

\section{Additional Algorithms}
\label{app:additional_algorithms}

\pref{alg:variance_penalization} contains pseudocode for the variance-penalized plug-in empirical risk minimization method described in \pref{sec:variance_penalized}, which is omitted from the main body due to space constraints.

\pref{alg:cross_fitting} and \pref{alg:cross_fitting2} present variants of \pref{alg:sample_splitting} that employ cross-fitting rather than sample splitting, and serve as statistical learning counterparts to the DML1 and DML2 methods described in \cite{chernozhukov2016double}. \pref{alg:cross_fitting2} is specialized to M-estimation losses of the type considered in \pref{sec:erm}, in which $\poprisk(\theta,g)=\En\brk*{\ls(\theta(x),g(w)\midsem{}z)}$ for a point-wise loss function $\ls$. 

\begin{figure}[H]
\begin{framedalgorithm}[Plug-In ERM with Centered Second Moment Penalization]~\\
\textbf{Input}: Sample set $S=z_1,\ldots,z_n$.
\begin{itemize}
\item Split $S$ into subsets $\sampleone$, $\sampletwo$, and $\samplethree$ of equal size.
\item Let $\estone$ be the output of $\mathrm{Alg}(\nuisance,\sampleone)$.
  \item Let $\muhat = \inf_{\theta\in \target} L_{S_3}(\theta, \hat{g})$.
  \item Return $\esttwo = \arg\min_{\theta\in \target} L_{\sampletwo}(\theta, \hat{g}) + 36\delta_nR^{-1}\|\ell(\theta(\cdot),\estone(\cdot);\cdot)-\muhat\|_{L_2(\sampletwo)}$.
\end{itemize}
\label{alg:variance_penalization}
\end{framedalgorithm}
\end{figure}

\begin{figure}[H]
\begin{framedmetaalgorithm}[Two-Stage Estimation with Cross-Fitting (DML1)]~\\
\textbf{Input}: Sample set $S=z_1,\ldots,z_n$, Number of folds $K$.
\begin{itemize}
\item Let $S_1,\ldots,S_K$ be a random $K$-fold partition of $S$ such that each fold $S_i$ has size $n/K$.
  \item For $i=1,\ldots,K$:
    \begin{itemize}
    \item Let $\estone_i$ be the output of $\mathrm{Alg}(\nuisance,S_i^{c})$, where $S_i^{c}=S\setminus{}S_i$.
    \item Let $\esttwo_i$ be the output of $\mathrm{Alg}(\target,S_i\midsem\estone_i)$.
    \end{itemize}
  \item Return $\esttwo = \frac{1}{K}\sum_{i=1}^{K}\esttwo_i$.
\end{itemize}
\label{alg:cross_fitting}
\end{framedmetaalgorithm}
\end{figure}
\vspace{-10pt}
\begin{figure}[H]
\begin{framedmetaalgorithm}[Two-Stage Estimation with Cross-Fitting (DML2)]~\\
\textbf{Input}: Sample set $S=z_1,\ldots,z_n$, Number of folds $K$.
\begin{itemize}
\item Let $S_1,\ldots,S_K$ be a random $K$-fold partition of $S$ such that each fold $S_i$ has size $n/K$.
  \item For $i=1,\ldots,K$:
    \begin{itemize}
    \item Let $\estone_i$ be the output of $\mathrm{Alg}(\nuisance,S_i^{c})$, where $S_i^{c}=S\setminus{}S_i$.
    \end{itemize}
  \item Use any algorithm $\mathrm{Alg}(\target,\sampletwo\midsem \{\estone_i\}_{i=1}^K)$ that returns $\esttwo$ which achieves average plug-in excess risk:
  \begin{align}
    \frac{1}{K} \sum_{k=1}^K \left( L_D(\theta; \hat{g}^{(k)}) -L_D(\theta^*; \hat{g}^{(k)})\right) \leq \Rate(\target,  S, \delta\midsem{}g)
    \end{align}
\end{itemize}
\label{alg:cross_fitting2}
\end{framedmetaalgorithm}
\end{figure}

We note that most the natural instantiation of the DML2
  meta-algorithm is to use a second-stage algorithm that minimizes the
  average empirical loss across the folds. In particular, assuming the empirical loss takes the form
  $L_{S}(\theta,g)\ldef{}\sum_{i\in{}S}\ls(\theta(x_i),g(w_i); z_i)$,
  one can apply plug-in ERM to the average empirical risk across the folds
\begin{align}
  \esttwo = \argmin_{\theta\in\Theta}\frac{1}{K}\sum_{k=1}^{K}L_{S_k}(\theta, \estone\ind{k}).
\end{align}
The localized Rademacher complexity techniques we develop in this
paper can be adapted to this method to provide average excess risk
bounds of order $\delta_{n/K}\cdot\nrm{\esttwo -
  \besttwo}_{L_2(\ell_2,\cD)}+\delta_{n/K}^2$, where $\delta_n$ is as
described in \pref{thm:fast_erm}. This guarantee can then be combined
with our main theorems to achieve oracle excess risk bounds with
second order dependence. For example, the follow-up work of
\cite{dao2020knowledge} invokes such an analysis in the context of
knowledge distillation. \pref{alg:cross_fitting2} is stated in a more
general form to allow for second stage algorithms that go beyond
plug-in ERM (e.g., penalized ERM variants or aggregation methods).

%
%
%
%
%
%

%
%
%
%
 
\section{Orthogonal Statistical Learning: User-Friendly Tools}
\label{sec:user_friendly}
Our main results, \pref{thm:generic_strongly_convex} and \pref{thm:orthogonal_slow}, give excess risk bounds for \pref{alg:sample_splitting} (with generic nuisance and target estimators) under Neyman orthogonality. In this section we provide some additional consequences and variants of these results which will prove useful in deriving guarantees for specific estimators.

The first result gives a consequence of \pref{thm:generic_strongly_convex} for the case where the target estimator satisfies a certain self-bounding property. Such is the case for plug-in empirical risk minimization.
\begin{lemma}
  \label{lem:self_bounding}
  Suppose that the conditions of \pref{thm:generic_strongly_convex} hold, and that for all $g\in\nuisance$,
  \[
    \Rate(\target, n, \delta;\esttwo,g)
    \leq{}
    \veps_n(\delta)\cdot{}\nrm[\big]{\esttwo-\besttwo}_{\target}
    + \alpha_n(\delta),
  \]
  for functions $\veps_n(\delta)$ and $\alpha_n(\delta)$. Then the sample splitting meta-algorithm (\pref{alg:sample_splitting}) produces an estimate $\esttwo$ such that with probability at least $1-\delta$,
\begin{equation}
\label{eq:self_bounding1}
\nrm[\big]{\esttwo-\besttwo}_{\target}^{2} \leq{}
C_1^2\cdot{}\veps^2_{n/2}(\delta/2) +
2C_1\cdot\alpha_{n/2}(\delta/2) + 
2C_2\cdot{}\prn*{
\Rate(\nuisance,\sampleone,\delta/2)
}^{\frac{4}{1+r}},
\end{equation}
and
\begin{equation}
\label{eq:self_bounding2}
\begin{aligned}
L_{\cD}(\esttwo,\gtone) - L_{\cD}(\besttwo, \gtone)
\leq{}
2\beta_1C_1^2\cdot{}\veps^2_{n/2}(\delta/2) + 2\beta_1C_1\alpha_{n/2}(\delta/2)+
2\beta_1C_2\cdot{}\prn*{
\Rate(\nuisance,\sampleone,\delta/2)
}^{\frac{4}{1+r}},
\end{aligned}
\end{equation}
where $C_1$ and $C_2$ are defined as in \pref{thm:generic_strongly_convex}.
\end{lemma}

\begin{proof}[\pfref{lem:self_bounding}]
  \pref{thm:generic_strongly_convex} implies that with probability at least $1-\delta$,
  \begin{align*}
\nrm[\big]{\esttwo-\besttwo}_{\target}^{2} &\leq{}
                                             C_1\cdot\Rate(\target,\sampletwo,\delta/2\midsem{}\esttwo,\estone)
                                             +
C_2\cdot
\prn*{\Rate(\nuisance,\sampleone,\delta/2)
                                             }^{\frac{4}{1+r}},\\
                                           &\leq{} C_1\cdot\veps_{n/2}(\delta/2)\cdot{}\nrm[\big]{\esttwo-\besttwo}_{\target}+C_1\cdot\alpha_{n/2}(\delta/2)
+ C_2\cdot{}\prn*{
\Rate(\nuisance,\sampleone,\delta/2)
                                             }^{\frac{4}{1+r}},
                                             \end{align*}
where $C_1$ and $C_2$ are as in \pref{thm:generic_strongly_convex}. By the AM-GM inequality, we can upper bound this by
\begin{align*}
                                           &\leq{}
                                             \frac{C_1^2}{2}\veps^2_{n/2}(\delta/2)
                                             + C_1\alpha_{n/2}(\delta/2)
                                             + \frac{1}{2}\nrm[\big]{\esttwo-\besttwo}_{\target}^2+
C_2\cdot{}\prn*{
\Rate(\nuisance,\sampleone,\delta/2)
                                             }^{\frac{4}{1+r}}.
  \end{align*}
Rearranging yields \pref{eq:self_bounding1}. To prove
\pref{eq:self_bounding2}, we begin by applying,
\pref{ass:smooth_loss}, which gives that
\[
  L_{\cD}(\esttwo,\gtone) - L_{\cD}(\besttwo, \gtone)
  \leq D_{\theta}L_{\cD}(\besttwo,\gtone)[\esttwo-\besttwo]
  + \frac{\beta_1}{2}\nrm*{\theta-\besttwo}_{\target}^{2}
\]
Applying \pref{lem:first_order_bound}, we have
\begin{align*}
D_{\theta}L_{\cD}(\besttwo,\gtone)[\esttwo-\besttwo] \leq{}  
 \Rate(\target,\sampletwo,\delta/2\midsem{}\esttwo,\estone) +
 C \cdot{}\prn*{
\Rate(\nuisance,\sampleone,\delta/2)
}^{\frac{4}{1+r}}
 -\frac{\lambda}{4}\cdot{}\nrm[\big]{\esttwo-\besttwo}_{\target}^{2},
\end{align*}
 where $C\leq{}
 \frac{\lambda}{2}\prn[\bigg]{\prn[\bigg]{\frac{\beta_2}{\lambda}}^{\frac{2}{1+r}}
   + \frac{\kappa}{\lambda}}$. We further bound this by
   \begin{align*}
     D_{\theta}L_{\cD}(\besttwo,\gtone)[\esttwo-\besttwo]
     &\leq{}
           \veps_{n/2}(\delta/2)\cdot{}\nrm[\big]{\esttwo-\besttwo}_{\target}
    + \alpha_{n/2}(\delta/2)
+
 C \cdot{}\prn*{
\Rate(\nuisance,\sampleone,\delta/2)
}^{\frac{4}{1+r}}
       -\frac{\lambda}{4}\cdot{}\nrm[\big]{\esttwo-\besttwo}_{\target}^{2}, \\
            &\leq{}
              \frac{1}{\lambda}\veps^2_{n/2}(\delta/2)
    + \alpha_{n/2}(\delta/2)
+
 C \cdot{}\prn*{
\Rate(\nuisance,\sampleone,\delta/2)
}^{\frac{4}{1+r}},
   \end{align*}
              where we have applied the AM-GM inequality. It follows
              that
              \begin{align*}
                &L_{\cD}(\esttwo,\gtone) - L_{\cD}(\besttwo, \gtone) \\
                &\leq{} \prn*{\frac{\beta_1C_1^2}{2}+\frac{1}{\lambda}}\cdot{}\veps^2_{n/2}(\delta/2) + (1+\beta_1C_1)\alpha_{n/2}(\delta/2)+
(\beta_1C_2+C)\cdot{}\prn*{
\Rate(\nuisance,\sampleone,\delta/2)
                  }^{\frac{4}{1+r}}, \\
                                &\leq{} 2\beta_1C_1^2\cdot{}\veps^2_{n/2}(\delta/2) + 2\beta_1C_1\alpha_{n/2}(\delta/2)+
2\beta_1C_2\cdot{}\prn*{
\Rate(\nuisance,\sampleone,\delta/2)
}^{\frac{4}{1+r}},
              \end{align*}
              where we have applied \pref{eq:self_bounding1} and then used
              that $\lambda\leq{}\beta_1$ to simplify.
\end{proof}

The next lemma we provide is a variant of
\pref{thm:generic_strongly_convex} which gives a bound on the first
derivative of the oracle risk for \pref{alg:sample_splitting}.

\begin{lemma}[Variant of \pref{thm:generic_strongly_convex}]
\label{lem:first_order_bound}
Suppose there exists
$\besttwo\in\argmin_{\theta\in\target}\poprisk(\theta,\gtone)$ such
that
\pref{ass:orthogonal,ass:well_specified,ass:strong_convex_loss,ass:smooth_loss},
are satisfied. Then the sample splitting meta-algorithm
(\pref{alg:sample_splitting}) produces a \param $\esttwo$ such that
with probability at least $1-\delta$,
\begin{equation}
  \label{eq:first_order_bound}
D_{\theta}L_{\cD}(\besttwo,\gtone)[\esttwo-\besttwo] \leq{}  
 \Rate(\target,\sampletwo,\delta/2\midsem{}\esttwo,\estone) +
 C \cdot{}\prn*{
\Rate(\nuisance,\sampleone,\delta/2)
}^{\frac{4}{1+r}}
 -\frac{\lambda}{4}\cdot{}\nrm[\big]{\esttwo-\besttwo}_{\target}^{2},
\end{equation}
where $C\leq{} \frac{\lambda}{2}\prn[\bigg]{\prn[\bigg]{\frac{\beta_2}{\lambda}}^{\frac{2}{1+r}} + \frac{\kappa}{\lambda}}$.
\end{lemma}
\begin{proof}[\pfref{lem:first_order_bound}]
See \pref{eq:strongly_convex_quadratic} within the proof of \pref{thm:generic_strongly_convex}.
\end{proof}

The next lemma (\pref{lem:pseudo_risk}) uses orthogonality to
  provide a bound on the plug-in excess risk in terms of the target
  error and nuisance error. Recall that our main theorems give a bound
  on the \emph{oracle} excess risk as a function of the plug-in excess
  risk. \pref{lem:pseudo_risk} is---in some sense---a converse, upper
  bounding the plug-in excess risk with the estimation error
  $\nrm*{\theta-\gttwo}_{\target}$ under orthogonality. This type of
  guarantee can be useful as an intermediate result when analyzing the
  plug-in excess risk for second stage algorithms. In this paper, we
  use it in the context of Skeleton Aggregation (\pref{app:rates}), to control misspecification error for the pseudo-excess risk.

\begin{lemma}
  \label{lem:pseudo_risk}Suppose that \pref{ass:orthogonal} holds, that
  \[
    D_{\theta}\poprisk(\gttwo,\gtone)[\theta-\gttwo]= 0, \quad{}\forall{}\theta\in\starhull(\target,\gttwo),
  \]
  and that the following derivative bounds hold:
   \begin{itemize}
    \item $\forall{}\theta\in\target,\bar{\theta}\in\starhull(\target,\gttwo),g\in\nuisance$:
    \begin{align*}
      D^{2}_{\theta}\poprisk(\bar{\theta},g)[\theta-\gttwo,\theta-\gttwo] \leq{}\beta_1\cdot \nrm*{\theta-\gttwo}_{\target}^{2}
        + \kappa_1\cdot \nrm*{g-\gtone}_{\nuisance}^{4}
    \end{align*}
    \item $\forall{}\theta\in\target,g\in\nuisance,\bar{g}\in\starhull(\cG,\gtone)$:
    \begin{align*}
    \abs*{D^{2}_{g}D_{\theta}\poprisk(\gttwo,\bar{g})[\theta-\gttwo,g-\gtone,g-\gtone]} 
      \leq{}~ \beta_2\cdot \nrm*{\theta-\gttwo}_{\target}^2 +\kappa_2\cdot\nrm*{g-\gtone}_{\nuisance}^4
  \end{align*}
  \end{itemize}
  Then for all $\theta\in\target$,
  \begin{equation}
    \label{eq:pseudo_risk}
    \poprisk(\theta,\estone) - \poprisk(\gttwo,\estone)
    \leq{} \frac{1}{2}(\beta_1+\beta_2)\cdot \nrm*{\theta-\gttwo}_{\target}^{2}
      + \frac{1}{2}(\kappa_1+\kappa_2)\cdot \nrm*{g-\gtone}_{\nuisance}^{4}.
  \end{equation}
\end{lemma}

\begin{proof}[\pfref{lem:pseudo_risk}]
Let $\theta$ be fixed. Using a second-order Taylor expansion, there exists $\bar{\theta}\in\starhull(\target,\gttwo)$ such that
\begin{align*}
  \poprisk(\theta,\estone) - \poprisk(\gttwo,\estone)
  =
D_{\theta}\poprisk(\gttwo,\estone)[\theta-\gttwo] + \frac{1}{2}D^{2}_{\theta}\poprisk(\bar{\theta},\estone)[\theta-\gttwo,\theta-\gttwo].
\end{align*}
Using another second-order Taylor expansion, there exists $\bar{g}\in\starhull(\nuisance,\gtone)$ for which
\begin{align*}
D_{\theta}\poprisk(\gttwo,\estone)[\theta-\gttwo]
&= D_{\theta}\poprisk(\gttwo,\gtone)[\theta-\gttwo]
+ D_{g}D_{\theta}\poprisk(\gttwo,\gtone)[\theta-\gttwo,\estone-\gtone]\\
  &~~~~+ \frac{1}{2}D^{2}_{g}D_{\theta}\poprisk(\gttwo,\bar{g})[\theta-\gttwo,\estone-\gtone,\estone-\gtone]\\
  &=\frac{1}{2}D^{2}_{g}D_{\theta}\poprisk(\gttwo,\bar{g})[\theta-\gttwo,\estone-\gtone,\estone-\gtone],
\end{align*}
where the second equality uses the first-order condition and orthogonality. The result now follows from the assumed derivative bounds.
  
\end{proof}

\section{Construction of Orthogonal Losses}
\label{sec:orthogonal_loss}

While orthogonal losses are already known for many problem settings
and statistical models (treatment effect estimation, policy learning,
regression with missing/censored data, and so on), for new
problems we often begin with a loss which is \emph{not} necessarily
orthogonal. A natural question, which we address now, is whether one
can modify the loss to satisfy orthogonality so that our main theorems
can be applied.

Suppose we begin with a loss $\ls(\theta(x),g;z)$ such that the
nuisance and target parameter are specified by the moment equations
\begin{equation}
  \label{eq:non_orthogonal_moment}
  \begin{aligned}
    \En\brk*{\grad_{\zeta}\ls(\gttwo(x),\gtone;z)\mid{}x}=0,\\
    \En\brk*{u - \gtone(w)\mid{}w} = 0,
  \end{aligned}
\end{equation}
where $u\subseteq{}z$ is a random variable, $x\subseteq{} w$, and $\grad_{\zeta}$
denotes the derivative with respect to the first
argument. If $\poprisk(\theta,g)=\En_{z}\brk*{\ls(\theta(x),g(w);z)}$
is not orthogonal, we can construct an orthogonal loss using a generalization of a construction in
\cite{chernozhukov2018plugin}. For simplicity, we sketch the approach
for the special case where $\gttwo$ is scalar-valued.

To begin, assume that there exists a function $a_0$ such that
for all $x\in\cX$, we have
\begin{equation}\label{eqn:correction}
D_{g}\En\brk*{\grad_{\zeta}\ls(\gttwo(x),\gtone;z)\mid{} x}[g - \gtone] = \En\brk*{\tri*{a_0(w),g(w) - \gtone(w)} \mid{} x}.
\end{equation}
Under this assumption, we can expand our nuisance parameters to
include $a_0$---that is, define $\tilde{g}_0\ldef\crl*{g_0,a_0}$---and construct a new orthogonal loss:
\begin{equation}
  \label{eq:orthogonalized_loss}
  \tilde{\ls}(\theta(x),\tilde{g};z) \ldef{} \ls(\theta(x),g;z) + \tri*{a(w),u-g(w)}\cdot{}\theta(x).
\end{equation}
Letting $\tilde{L}_{\cD}(\theta,\tilde{g})=\En\brk[\big]{\tilde{\ls}(\theta(x),\tilde{g};z)}$
be the new population risk, we have the following claim.
\begin{lemma}
  \label{lem:orthogonal_loss}
The population risk $\tilde{L}_{\cD}(\theta,\tilde{g})$ satisfies \pref{ass:orthogonal}
and \pref{ass:well_specified}.
\end{lemma}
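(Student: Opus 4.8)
The plan is to verify both assumptions by directly computing the first and mixed second directional derivatives of $\tilde{L}_{\cD}$, using three ingredients: the first moment equation in \pref{eq:non_orthogonal_moment}, the mean-zero identity $\En\brk*{u-\gtone(w)\mid w}=0$, and the correction identity \pref{eqn:correction}. Throughout I take $\besttwo=\gttwo=\theta_0$ as the representative target parameter and $\tilde{g}_0=\crl*{\gtone,a_0}$ as the true nuisance; this choice of $\besttwo$ is legitimate once the first-order condition is checked.

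First I would compute the target derivative. Since $\tilde{\ls}(\theta(x),\tilde{g};z)=\ls(\theta(x),g(w);z)+\tri*{a(w),u-g(w)}\cdot\theta(x)$ and $\theta$ is scalar-valued, linearity of the derivative gives, for any direction $\nu$,
\[
D_\theta\tilde{L}_{\cD}(\theta,\tilde{g})[\nu]=\En\brk*{\prn*{\grad_\zeta\ls(\theta(x),g(w);z)+\tri*{a(w),u-g(w)}}\cdot\nu(x)}.
\]
Evaluating at $(\theta_0,\tilde{g}_0)$ and conditioning on $x$: the first summand vanishes by $\En\brk*{\grad_\zeta\ls(\theta_0(x),\gtone;z)\mid x}=0$; for the second, since $x\subseteq w$ and $a_0(w),\gtone(w)$ are $w$-measurable, the tower rule yields $\En\brk*{\tri*{a_0(w),u-\gtone(w)}\mid x}=\En\brk*{\tri*{a_0(w),\En\brk*{u\mid w}-\gtone(w)}\mid x}=0$. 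Hence $D_\theta\tilde{L}_{\cD}(\theta_0,\tilde{g}_0)[\nu]=0$ for every $\nu$; in particular $D_\theta\tilde{L}_{\cD}(\besttwo,\tilde{g}_0)[\theta-\besttwo]\ge 0$, which is \pref{ass:well_specified}.

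Next I would differentiate the displayed expression a second time with respect to $\tilde{g}=\crl*{g,a}$ in a direction $\crl*{\mu,\alpha}=\crl*{g-\gtone,a-a_0}$, at $(\theta_0,\tilde{g}_0)$. This splits into two pieces. The piece from the $g$-dependence of $\grad_\zeta\ls(\theta_0(x),g(w);z)$ equals, after conditioning on $x$ and applying \pref{eqn:correction} with the direction $\mu$, the quantity $\En\brk*{\tri*{a_0(w),\mu(w)}\cdot\nu(x)}$. The piece from $\tri*{a(w),u-g(w)}\cdot\theta_0(x)$ equals $\En\brk*{\prn*{\tri*{\alpha(w),u-\gtone(w)}-\tri*{a_0(w),\mu(w)}}\cdot\nu(x)}$; conditioning on $w$ kills the $\alpha$-term via $\En\brk*{u-\gtone(w)\mid w}=0$, leaving $-\En\brk*{\tri*{a_0(w),\mu(w)}\cdot\nu(x)}$. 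Adding the two pieces gives $D_{\tilde{g}}D_\theta\tilde{L}_{\cD}(\theta_0,\tilde{g}_0)[\nu,\crl*{\mu,\alpha}]=0$ for all $\nu,\mu,\alpha$, which is precisely \pref{ass:orthogonal}.

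The computation is routine; the only care needed is in the nesting of the conditioning $\sigma$-fields ($\sigma(x)\subseteq\sigma(w)\subseteq\sigma(z)$): one must note that $\nu(x)$ and $\theta_0(x)$ can be pulled in and out of conditional expectations given $w$, and that the added term $\tri*{a(w),u-g(w)}\cdot\theta(x)$ is by design mean-zero given $w$ at the truth—so it does not perturb the first-order condition—while its first-order sensitivity in $g$ is exactly $-\tri*{a_0(w),\cdot}$, cancelling the sensitivity that \pref{eqn:correction} isolates. The scalar-valued assumption on $\gttwo$ is cosmetic: for vector-valued $\gttwo$ one takes $a_0$ to be matrix/operator-valued and interprets $\tri*{\cdot,\cdot}$ accordingly, and the same three identities close the argument.
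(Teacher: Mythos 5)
Your proof is correct and follows essentially the same route as the paper's: establish \pref{ass:well_specified} from the two moment identities, then show the mixed derivative vanishes by pairing \pref{eqn:correction} against the $g$-sensitivity of the correction term and killing the $a$-direction with $\En\brk*{u-\gtone(w)\mid w}=0$. The only cosmetic difference is that you differentiate with respect to $\tilde{g}=\crl*{g,a}$ jointly, where the paper splits into $D_gD_\theta$ and $D_aD_\theta$ and handles them in separate displays.
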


\begin{proof}[\pfref{lem:orthogonal_loss}]
The first-order condition (\pref{ass:well_specified}) follows immediately by using that
$\En\brk*{\grad_{\zeta}\ls(\gttwo(x),\gtone;z)\mid{}x}=0$, and by the assumption that $\En\brk*{u\mid{}w}=g_0(w)$. To establish orthogonality with respect to the nuisance parameter $g$, we compute
\begin{align*}
  &D_{g}D_{\theta}\tilde{L}_{\cD}(\gttwo,\crl*{\gtone,a_0})[\theta-\theta_0,g-g_0]\\
  &=
\En\brk*{D_g\En\brk*{\grad_{\zeta}\ls(\gttwo(x),\gtone(w);z)\mid{}x}\brk{g-g_0}(\theta(x)-\theta_0(x))}\\
  &~~~~-\En\brk*{\tri*{a_0(w),g(w)-g_0(w)}(\theta(x)-\theta_0(x))}\\
    &=
    \En\brk*{\En\brk*{\tri*{a_0(w),g(w)-g_0(w)}\mid{}x}(\theta(x)-\theta_0(x))} - \En\brk*{\tri*{a_0(w),g(w)-g_0(w)}(\theta(x)-\theta_0(x))}\\
  &=0,
\end{align*}
where the second equality follows from the definition of $a_0$ and the final inequality is the law of total expectation. For
orthogonality with respect to $a$, we have
\begin{align*}
  D_{\theta}D_{a}\tilde{L}_{\cD}(\gttwo,\crl*{\gtone,a_0})[\theta-\theta_0,a-a_0]
  = \En\brk*{\tri*{a(w)-a_0(w),u-g_0(w)}(\theta(x)-\theta_0(x))}
  = 0,
\end{align*}
where the final inequality uses that $x\subseteq{}w$ and $\En\brk*{u\mid{}w}=g_0(w)$.
\end{proof}

As a first example, in the special case where the loss depends on $g_0$ only
through its evaluation at $w$ (i.e., \pref{eq:non_orthogonal_moment} simplifies to
$\En\brk*{\grad_{\zeta}\ls(\gttwo(x),\gtone(w); z)\mid{} x}=0$), then
we can take
\begin{equation}
a_0(w) = \En\brk*{\grad_{\gamma}\grad_{\zeta}\ls(\gttwo(x), \gtone(w);z)\mid{}w}.
\end{equation}
Of course, to make use of the lemma, we must be able to estimate the
new nuisance parameter $a_0$. This can be accomplished through an
additional plug-in estimation step based on sample splitting: Split
$S$ into folds $S_1$, $S_2$, $S_3$, and $S_4$ of equal size. Estimate
$\wh{g}$ on $S_1$, then obtain an initial estimate
$\wh{\theta}_{\mathrm{init}}$ for $\theta_0$ by solving
$\argmin_{\theta\in\Theta}L_{S_2}(\theta,\wh{g})$, where $L_{S_2}$
denotes the empirical loss over $S_2$. Next, use the
initial estimator to compute a plug-in
estimator $\wh{a}$ for $a_0$ by regressing onto the ``targets''
$\grad_{\zeta}\grad_{\gamma}\ls(\wh{\theta}_{\mathrm{init}}(x),\wh{g}(w);z)$
on $S_3$; \revoneedit{this requires access to an additional function class
$\Aclass$ containing $\gta$, and the resulting guarantee will have
(second-order) dependence on the complexity of this class.} Finally, produce the main estimator for the
target \param by solving
$\wh{\theta}=\argmin_{\theta\in\Theta}\wt{L}_{S_4}(\theta,\crl*{\wh{g},\wh{a}})$. \revoneedit{A
  full description is given in \pref{alg:automatic_debiasing}.}
\begin{figure}[H]
\begin{framedalgorithm}[Plug-In ERM with Automatic Debiasing]~\\
\textbf{Input}: Sample set $S=z_1,\ldots,z_n$, Function class $\cA$ containing $a_0$ (cf. \pref{eqn:correction})
\begin{itemize}
\item Split $S$ into subsets $\sampleone$, $\sampletwo$, $\samplethree$, and $\samplefour$ of equal size.
  \item Let $\estone$ be the output of $\mathrm{Alg}(\nuisance,\sampleone)$.
\item Let $\wh{\theta}_{\mathrm{init}}=\argmin_{\theta\in\Theta}L_{S_2}(\theta,\wh{g})$, where $L_{S_2}$
  denotes the empirical loss over $S_2$.
\item Estimate $a_0$ via $\esta=\argmin_{a\in\Aclass}\frac{1}{\abs{\samplethree}}\sum_{z\in\samplethree}\nrm[\big]{a(w) - \grad_{\zeta}\grad_{\gamma}\ls(\wh{\theta}_{\mathrm{init}}(x),\wh{g}(w);z)}_{2}^{2}$.
\item Return $\esttwo=\argmin_{\theta\in\Theta}\wt{L}_{S_4}(\theta,\crl*{\wh{g},\wh{a}})$, where $\tilde{\ls}$ is defined as in \pref{eq:orthogonalized_loss}.
\end{itemize}
\label{alg:automatic_debiasing}
\end{framedalgorithm}
\end{figure}
\vspace{-10pt}

The key idea behind this scheme is that the initial estimator
$\wh{\theta}_{\mathrm{init}}$ will not be able to take advantage of
orthogonality, but its estimation error will only enter the final
bound through the error of $\wh{a}$, and thus will only have
higher-order impact on the rate. \revoneedit{We omit details, which
  are beyond the scope of the present paper.}
%
%
%
%
%
%
%

This approach is applicable for the problem of estimating
utility functions in models of strategic competition, as used in
\cite{chernozhukov2018plugin}; see \pref{app:orthogonal_loss_examples}
for a detailed example. For some models---including utility function
estimation---$a_0$ is a known function of $\theta_0$ and $g_0$, so
that the extra regression to estimate $\wh{a}$ given the initial
estimators is not required.

A more general setting where the loss has the form in
\pref{eqn:correction} is as follows. Suppose that all functions
$g\in\cG$ are conditionally square-integrable in the sense that for
all $x$, $\En\brk{g^2(w)\mid{} x}<\infty$, and suppose there exist
functions $\beta_0$, $T_x$ such that we can
write
\[
	\En\brk*{\grad_{\zeta}\ls(\gttwo(x), g;z)\mid{}x} = \beta_0(x) + T_x(g),
\]
where $T_x(g)$ is a linear operator on $g$ with uniformly bounded operator norm:
\begin{equation}
\sup_{x}\|T_x(g)\|_{\text{op}}:= \sup_{x, g \neq 0} \frac{\En\brk*{\grad_{\zeta}\ls(\gttwo(x), g;z)\mid{}x}}{\sqrt{\En\brk*{g^2(w)\mid{}x}}}  < \infty.
\end{equation}
By the Riesz-Frechet representation theorem, we can express the
operator $T_x$ as
\begin{align}
T_x(g) = \En\brk*{a_0(w)\, g(w)\mid{} x},
\end{align}
where we have used that $x\subseteq w$ to simplify. Hence, we have
\begin{align}
\En\brk*{\grad_{\zeta}\ls(\gttwo(x), g;z)\mid{}x} = \beta_0(x) + \En\brk*{a_0(w)\, g(w)\mid{} x},
\end{align}
so that
\pref{eqn:correction} is satisfied for $a_0$ induced by the family of Riesz representers for operators $T_x$. This is a variant of the Riesz representer approach presented
in \cite{chernozhukov2018riesz}. In
\pref{app:orthogonal_loss_examples}, we show that this construction
recovers the treatment effect estimation example presented in the introduction.

We mention in passing that that another, perhaps more standard,
approach to constructing an orthogonal loss is to derive the influence function for
the risk function $\poprisk$ using standard calculations from
semiparametric theory \citep{van2003unifiedb,tsiatis2007semiparametric,kosorok2008introduction,kennedy2016semiparametric}.

\subsection{Orthogonal Loss Construction: Examples}
\label{app:orthogonal_loss_examples}
We now walk through concrete examples of
the orthogonal loss construction approach outlined in
\pref{sec:orthogonal_loss}. Both examples use the loss structure in
\pref{eq:non_orthogonal_moment}.
\paragraph{Estimating utility functions in models of strategic competition}
In this setting, we have
\[
\theta(x) = \begin{pmatrix} \psi(x)\\ \Delta\end{pmatrix},\quad x=w,\quad \text{and}\quad \grad_{\zeta}\ls(\theta(x), g;z)=\left({\cal L}(\psi(x) + \Delta\, g(x)) - y\right)\cdot \begin{pmatrix} 1\\ g(x)\end{pmatrix},
\] where ${\cal L}$ is the logistic function. The motivation of this problem stems from estimating games of incomplete information, where $y$ is the entry decision of one player, $u$ is the entry decision of the opponent, $x$ is a featurized state of the world, $\psi(x)$ is the non-strategic part of the utility of the player and $\Delta\, g$ is the competitive part of the utility, i.e. the effect of the opponent's entry decision on the player's utility. For this setting, we can take the auxiliary nuisance variable to be
\begin{equation}
a_0(x) = \En\brk*{\grad_{\gamma}\grad_{\zeta}\ls(\gttwo(x), \gtone(x);z)\mid{}x} = \Delta_0 {\cal L}'(\psi_0(x) + \Delta_0\, g_0(x))
\begin{pmatrix} 1, & g_0(x)
\end{pmatrix}
\end{equation}
Thus, $a_0(w)$ is a known function of $\theta_0$ and $g_0$, and to estimate $a_0$ it suffices to construct preliminary estimates for $\theta_0$ and $g_0$ (e.g., using plug-in estimation with the original non-orthogonal loss).
We can simplify the final orthogonal loss based our generic construction to
\begin{equation}
  \tilde{\ls}(\theta(x),\tilde{g};z) \ldef{} \ls(\theta(x),g;z) + (u-g(w))\cdot{} \tilde{\Delta} {\cal L}'(\tilde{\psi}(x) + \tilde{\Delta}\, g(x)) \cdot{} (\theta(x) + \Delta g(x)),
\end{equation}
where $\tilde{g}=\crl{g, \tilde{\theta}}$ and $\tilde{\theta}=\begin{pmatrix} \tilde{\psi}(x)\\ \tilde{\Delta}\end{pmatrix}$ is a preliminary estimate for $\theta_0$.

\paragraph{Treatment effect estimation}
In this setting, we denote the treatment as $d\in \{0, 1\}$ and $w=(d,
x, v)$, for some vector of extra control variables $v$.
In this setting, we take $z = (x,w,y)$ and $w=(x,v,d)$, where $v$ is a
vector of extra control variables and $d\in\crl*{0,1}$ is a binary
treatment. We begin from the moment equation
\begin{align}
\grad_{\zeta}\ls(\theta(x),g;z) :=~&\theta(x) - g(x, v,1) + g(x, v,0),
\end{align}
with $g_0$ identified by the local moment equation $\En\brk*{y -
  g_0(x, v, d) \mid{} x, v, d}=0$. In this case, the Riesz representer
takes the form $a_0(w) = \frac{d - (1 - d)}{\Pr\brk*{D=d\mid{} x,
    v}}$, and the resulting orthogonal loss created using the generic
construction takes the form
\begin{align}
\tilde{\ell}(\theta(x), \tilde{g}; z) =~& (\theta(x) - g(1, x, v) + g(0, x, v))^2 + (y - g(x, v, d))\cdot a_0(w)\cdot \theta(x),
\end{align}
where we recall $\tilde{g}=\crl{g,a}$. Minimizing this over $\theta$ is equivalent to minimizing the loss
\begin{align}
\tilde{\ell}(\theta(x), \tilde{g}; z) =~& (\theta(x) - g(1, x, v) + g(0, x, v) + (y - g(x, v, d))\cdot a_0(w))^2 ,
\end{align}
which is precisely the loss presented in the introduction of the paper.

\section{Sufficient Conditions for Theorems
  \journal{\ref{thm:generic_strongly_convex}} and
  \journal{\ref{thm:orthogonal_slow}}: Single Index Losses}
\label{sec:sufficient}

Our setup and main results in \pref{sec:orthogonal}
(\pref{thm:generic_strongly_convex} and \pref{thm:orthogonal_slow}) are stated at a high level of generality, with abstract assumptions on the structure of the population risk. In this section of the appendix we provide conditions under which these assumptions follow from concrete structural assumptions on the risk. We give sufficient guarantees for general families of loss functions. In particular, the conditions we give here suffice to derive guarantees for the applications considered in \pref{sec:applications}.

\subsection{Fast Rates}
\label{ssec:single_index}

In this section we give a broad class of losses under which the conditions for fast rates in \pref{ssec:orthogonal_strongly_convex} are satisfied. The population loss $\poprisk$ is defined as the expectation of a point-wise loss $\ls(\zeta,\gamma;z)$ acting on the predictions of the nuisance and target parameters. We assume existence of functions $\Phi$ and $\Lambda$ such that the loss has the structure
\[
\ls(\zeta,\gamma;z) = \Phi(\tri{\Lambda(\gamma,v),\zeta}, \gamma, z),
\]
so that
\begin{equation}\label{eqn:single_index}
\ls(\theta(\vartwo),g(\varone);z) = \Phi(
\tri{\Lambda(g(\varone), v), \theta(\vartwo)}, g(\varone), \varall), \quad\poprisk(\theta,g)=\En\brk{\ls(\theta(\vartwo),g(\varone); z)}.
\end{equation}
Here we recall from \pref{sec:setup} that $x$, $w$ are subsets of the data $z$, and let $v\subseteq{}z$ be an auxiliary subset of the data. We also assume existence of functions $\phi(\zeta)$ and $\Gamma(\gamma,z)$ such that the partial derivative of $\Phi$ may be written as
\begin{equation}
\frac{\partial }{\partial t} \Phi(t, \gamma, \varall) = \phi(t) - \Gamma(\gamma, z),
\end{equation}
where $\phi$ is non-decreasing and Lipschitz. A simple example is square loss regression, where $\Phi(t, \gamma, z) = \frac{1}{2}(t - \Gamma(\gamma, z))^2$. 

\begin{revone}
To provide fast rates of the type in \pref{sec:orthogonal} for losses with this structure, we let $p\geq{}1$ be given, define $\frac{1}{q}=1-\frac{1}{p}$, and consider the norms
\begin{align}
  \label{eq:single_index_target_norm}
  &\nrm*{\theta}_{\target} \ldef \prn[\Big]{\En_{z}\brk[\big]{
    \tri*{\Lambda(g_{0}(\varone),v), \theta(\vartwo)}^{2}}}^{1/2},\intertext{and}
  &\nrm*{g}_{\nuisance} \ldef \nrm*{g}_{L_{2p}(\ell_2, \cD)}.  \label{eq:single_index_nuisance_norm}
\end{align}
Taking $p$ large leads to a more stringent notion of distance for the nuisance parameter, but weakens the regularity conditions we consider, which depend on the dual parameter $q$. In particular, we make the following assumptions.
\end{revone}

\begin{assumption}[Sufficient Conditions for Fast Rates: Single Index Losses]
\label{ass:single_index}
The loss $\ls$ satisfies the following conditions for parameters $(\musi,\Tsi,\tausi,\Lsi, \gammasi, \rsi, \Rsi)$:
\begin{align}
\En\brk*{\nabla_{\gamma}\nabla_{\zeta} \ell(\besttwo(\vartwo), \gtone(\varone); z) \mid \varone} = 0. & &\text{($\Rightarrow$ \pref{ass:orthogonal})}\\
\En\brk*{\nabla_{\zeta} \ell(\besttwo(\vartwo), \gtone(\varone); z) \cdot (\theta(\vartwo) - \besttwo(\vartwo))} \geq 0,  & \quad\forall\theta\in\starhull(\targetemp,\besttwo).& \text{($\Rightarrow$ \pref{ass:well_specified})}\\
\nrm*{\En\brk*{\nabla_{\gamma\gamma}^2 \nabla_{\zeta_i} \ell(\besttwo(\vartwo), g(\varone);z) \mid \varone}}_{\sigma}\leq \musi, & \quad \forall i, \forall{}g\in\starhull(\nuisance,\gtone). & \text{($\Rightarrow$ 
\preprint{\savehyperref{ass:smooth_loss}{Assumption \ref*{ass:smooth_loss}}}
\journal{Assumption \savehyperref{ass:smooth_loss}{\ref*{ass:smooth_loss}}})}\\
\Tsi\geq \phi'(t)\geq \tausi. & & \text{($\Rightarrow$ \pref{ass:strong_convex_loss,ass:smooth_loss})}\\
\nrm*{\theta-\besttwo}_{\target}^{1-\rsi}
\geq \gammasi^{1/2}\nrm{\theta-\besttwo}_{L_q(\cD,\ls_2)},&\quad\forall\theta\in\targetemp.
                                                                                                                                 & \text{($\Rightarrow$ \pref{ass:strong_convex_loss,ass:smooth_loss})}\label{eq:single_index_eigenvalue}\\
\Lambda(\gamma, v)~\textnormal{is $\Lsi$-Lipschitz in $\gamma$  w.r.t $\nrm*{\cdot}_2$ a.s.} & & \text{($\Rightarrow$ \pref{ass:strong_convex_loss})}\\
\sup_{x\in \cX, \theta \in \target} \nrm*{\theta(x)}_2 \leq \Rsi. & & \text{($\Rightarrow$ \pref{ass:strong_convex_loss})}
\end{align}
\end{assumption}
Let us highlight the condition \pref{eq:single_index_eigenvalue}, which involves the dual exponent $q$. Taking $p$ large strengthens the notion of distance for the nuisance parameter, but weakens the condition \pref{eq:single_index_eigenvalue}. For example, with $\rsi=0$, when we take $p=2$, \pref{eq:single_index_eigenvalue} becomes an average-case eigenvalue-type condition, but we require $L_4$ distance on the nuisance parameter. On the other hand, with $p=1$, \pref{eq:single_index_eigenvalue} essentially requires that parameter error ($L_{\infty}$) and $L_2$ error are equivalent for the target, but leads to $L_2$ distance for the nuisance parameter, which is more permissive than $L_4$. As a concrete example, consider the logistic loss, where $\Phi(t, \gamma, z) = y\cdot \log(\loglink(t)) + (1-y)\cdot \log(1-\loglink(t))$, where the target class $y\in \{0,1\}$ is a subset of the data $z$ and $\loglink(t)\ldef{}1/(1+e^{-t})$ is the logistic function, so that $\frac{\partial }{\partial t} \Phi(t, \gamma, z)=\loglink(t) - y$. Observe that the gradient of the loss with respect to the target index value can be written as
\begin{align*}
\nabla_{\zeta} \ell(\zeta, \gamma; z) =~&  \prn*{\phi(\tri{\Lambda(\gamma, v), \zeta}) - \Gamma(\gamma, z)}\Lambda(\gamma, v).
\end{align*}
Moreover, whenever the arguments to the loss are bounded, the Hessian can be bounded above and below via
\begin{equation}\label{eqn:hessian_bounds_single_index}
\Tsi \cdot \Lambda(\gamma, v) \Lambda(\gamma, v)^{\trn}
\succeq
\nabla_{\zeta\zeta}^2 \ell(\zeta, \gamma; z) = \phi'(\tri{\Lambda(\gamma, v), \zeta}) \cdot  \Lambda(\gamma, v) \Lambda(\gamma, v)^{\trn}
\succeq \tausi \cdot \Lambda(\gamma, v) \Lambda(\gamma, v)^{\trn},
\end{equation}
and thus, when $p=2$,  the ratio condition in \pref{eq:single_index_eigenvalue} is implied by a minimum eigenvalue assumption on the conditional covariance matrix $\En\brk*{\Lambda(\gtone(\varone), v) \Lambda(\gtone(\varone), v)^{\trn} \mid \vartwo}$.

We now show that these conditions are sufficient to satisfy the assumptions of \pref{thm:generic_strongly_convex}, and thus guarantee higher order impact from the nuisance parameters. Our main result is as follows.
\begin{lemma}\label{lem:suff_single_index}
  If \pref{ass:single_index} holds, then the \pref{ass:orthogonal,ass:well_specified,ass:strong_convex_loss,ass:smooth_loss} required by \pref{thm:generic_strongly_convex} are satisfied with constants $r=\rsi$, $\lambda = \frac{\tausi}{4}$,  $\kappa=8\tausi(\Lsi^4\Rsi^2\lambdasi^{-1})^{\frac{1}{1+\rsi}}$, $\beta_1=\Tsi$ and $\beta_2= \frac{\musi \sqrt{\dimtwo}}{\sqrt{\gammasi}}$, and with respect to the norms $\nrm{\cdot}_{\target}$ and $\nrm{\cdot}_{\nuisance}$ defined in \pref{eq:single_index_target_norm,eq:single_index_nuisance_norm}.
\end{lemma}

Combining this lemma with the guarantee from \pref{thm:generic_strongly_convex} directly yields an oracle excess risk bound.
\begin{corollary}
\label{cor:single_index}
Suppose that there is some $\besttwo\in\argmin_{\theta\in\nuisance}\poprisk(\theta,\gtone)$ such that \pref{ass:single_index} is satisfied. The sample splitting meta-algorithm \pref{alg:sample_splitting} produces a predictor $\esttwo$ that guarantees that with probability at least $1-\delta$,
\begin{align*}
\poprisk(\esttwo,\gtone) - \poprisk(\besttwo,\gtone)
\leq{}& C_1\cdot\Rate(\target, \sampletwo, \delta/2\midsem{}\esttwo,\estone) 
        + C_2\cdot
        \,\prn*{\Rate(\nuisance,\sampleone,\delta/2)}^{\frac{4}{1+\rsi}},
\end{align*}
where $C_1=\frac{16}{\tausi}$ and $C_2=2\prn[\Big]{        \prn*{\frac{16\musi^2\dimtwo}{\tausi^2\gammasi}}^{\frac{1}{1+\rsi}}        + 32\prn*{\frac{\Lsi^4\Rsi^2}{\gammasi}}^{\frac{1}{1+\rsi}}     }$.
Furthermore, with $\nrm*{\cdot}_{\target}$ as in \pref{lem:suff_single_index}, the following prediction error guarantee is satisfied with probability at least $1-\delta$,
\begin{align*}
\nrm[\big]{\esttwo - \besttwo}_{\target}^{2}
\leq{}~&  \frac{\Tsi}{2}\cdot{}C_1\cdot\Rate(\target, \sampletwo, \delta/2\midsem{}\esttwo,\estone) 
         +\frac{\Tsi}{2}\cdot{}C_2\cdot
         \prn*{\Rate(\nuisance,\sampleone,\delta/2)}^{\frac{4}{1+\rsi}}.
\end{align*}
\end{corollary}
Observe that for both of the bounds in \pref{cor:single_index}, the only problem-dependent parameters that multiply the target estimation rate $\Rate(\target,\cdots)$ are $\Tsi$ and $\tausi^{-1}$. Importantly, this implies that if the more restrictive parameters $\gammasi,\Lsi,\musi, \Rsi,\dimtwo$ and so forth are held constant as $n$ grows, they have negligible impact asymptotically, so long as the nuisance parameter can be estimated quickly enough. For the square loss this is particularly desirable, since $\Tsi=\tausi=1$.

En route to proving \pref{lem:sufficient_slow}, we also prove the
following, slightly stronger, smoothness bound, which is used for
several results.
\begin{lemma}
\label{lem:single_index_smooth}
Suppose \pref{ass:single_index} holds with $\rsi=0$. Then for all functions $\theta\in\targetemp$, $\bar{\theta}\in\starhull(\targetemp,\besttwo)$, and $g\in\nuisance$, 
\begin{equation}
D^{2}_{\theta}\poprisk(\bar{\theta}, g)[\theta{}-\besttwo, \theta{}-\besttwo] \leq~ 3\Tsi \, \nrm*{\theta{}-\besttwo}_{\target}^2 + \frac{4\Tsi \Lsi^4  \Rsi^2}{\gammasi}\nrm*{g - \gtone}_{\nuisance}^4.
\end{equation}
\end{lemma}

\paragraph{Discussion: Estimation for the first stage}
When we work with $L_2$ error for the target parameter (that is, $p=2$), \pref{cor:single_index} (with $\rsi=0$) provides guarantees in terms of the $L_{4}$ estimation rate for the nuisance parameters, i.e.
\[
\Rate(\nuisance,\sampleone,\delta) = \nrm*{\estone-\gtone}_{L_{4}(\ls_{2},\cD)} =  \prn*{\En_{w}\nrm*{\estone(w)-\gtone(w)}_{2}^{4}}^{\frac{1}{4}}.
\]
Since $L_{4}$ error rates are somewhat less common than $L_{2}$ (i.e., square loss) estimation rates, let us briefly discuss conditions under which out-of-the box algorithms can be used to give guarantees on the $L_{4}$ error. 

First, for many nonparametric classes of interest, minimax $L_{p}$ error rates have been characterized and can be applied directly. This includes smooth classes \citep{stone1980optimal,stone1982optimal}, \Holder{} classes \citep{lepskii1992asymptotically,kerkyacharian2001nonlinear,kerkyacharian2008nonlinear}, Besov classes \citep{delyon1996minimax}, Sobolev classes \citep{tsybakov1998pointwise}, and convex regression \citep{guntuboyina2015global}

Second, whenever the $\nuisance$ is a linear class or more broadly a parametric class, classical statistical theory \citep{lehmann2006theory} guarantees parameter recovery. Up to problem-dependent constants, this implies a bound on the $L_{4}$ error as soon as the fourth moment is bounded. This approach also extends to the high-dimesional setting \citep{hastie2015statistical}.

Last, if the class $\nuisance$ has well-behaved moments in the sense that $\nrm*{g-\gtone}_{L_{4}(\ls_2,\cD)}\leq{} C\nrm*{g-\gtone}_{L_{2}(\ls_2,\cD)}$ for all $g\in\nuisance$, we can directly appeal to square loss regression algorithms for the first stage; this is the approach taken in \pref{sec:algsandrates}. This condition is related to the so-called ``subgaussian class'' assumption, and both have been explored in recent works \citep{lecue2013learning,mendelson2014learning,liang2015learning}.

Of course, whenever $L_{\infty}$ guarantees are available for the target class itself (e.g., for parametric models), we can instead take $p=1$, which permits the use of the more standard $L_2$ distance for the nuisance parameter.

\subsection{Slow Rates} 
In the single-index setup, assumptions much weaker than \pref{ass:single_index} suffice to obtain slow rates via \pref{thm:orthogonal_slow}. In particular, the following conditions are sufficient.
\begin{assumption}[Sufficient Slow Rate Conditions for Single Index Losses]\label{ass:gradient_conditions_slow}
\begin{align}
\En\brk*{\grad{}_{\zeta}\grad_{\gamma}\ell(\theta(\vartwo),g_0(\varone);z)\mid{}\varone}= 0, & \quad\quad \forall \theta\in\starhull(\targetemp,\besttwo) + \starhull(\targetemp-\besttwo,0) \tag{$\Rightarrow$ \pref{ass:universal_orthogonality}}.\\
\En\brk*{\nabla_{\gamma\gamma}^2 \ell(\theta(\vartwo), g(\varone); z) \mid \varone}\preceq \betasi I, & \quad\quad\forall\theta\in\starhull(\targetemp,\besttwo),\; g\in\starhull(\nuisance,\gtone). \tag{$\Rightarrow$ \pref{ass:smooth_loss_slow}}
\end{align}
\end{assumption}Compared to \pref{ass:single_index}, the most important difference is that since we require \emph{universal} orthogonality, the first condition is required to hold for all $\theta$, not just at $\theta_0$. \pref{ass:gradient_conditions_slow} has the following immediate consequences.
\begin{lemma}
\label{lem:sufficient_slow}
If \pref{ass:gradient_conditions_slow} holds, then \pref{ass:universal_orthogonality} is satisfied and \pref{ass:smooth_loss_slow} is satisfied with constant $\beta=\betasi$ and with respect to $\nrm*{\cdot}_{\nuisance} = \nrm*{\cdot}_{L_2(\ell_2, \cD)}$.
\end{lemma}
\begin{corollary}
  \label{cor:sufficient_slow}
Suppose \pref{ass:gradient_conditions_slow} holds. Then with probability at least $1-\delta$, the target predictor $\esttwo$ produced by \pref{alg:sample_splitting} enjoys the excess risk bound
\[
L_{\cD}(\esttwo,\gtone) - L_{\cD}(\besttwo, \gtone) 
\leq{}  \Rate(\target, \sampletwo, \delta/2\midsem{}\esttwo,\estone) + \betasi \cdot\prn*{\Rate(\nuisance,\sampleone,\delta/2)}^{2}.
\]
\end{corollary}

\subsection{Proofs}
\label{app:sufficient}

\begin{proof}[\pfref{lem:suff_single_index}] We prove that the
  assumptions required by \pref{thm:generic_strongly_convex} are implied by our conditions one by one.
~
\paragraph{\pref{ass:orthogonal}}From the definition of the
directional derivative, the law of iterated expectations, and the fact that
$\Vartwo \subseteq \Varone$, we have
\begin{align*}
D_{g}D_{\theta}\poprisk(\besttwo,\gtone)[\theta-\besttwo,g-\gtone] =~& \En\brk*{(\theta(\vartwo) - \besttwo(\vartwo))^{\trn}\cdot \nabla_{\gamma}\nabla_{\zeta} \ell(\besttwo(\vartwo), \gtone(\varone), z) (g(\varone) - \gtone(\varone))} \\ 
=~& \En\brk*{(\theta(\vartwo) - \besttwo(\vartwo))^{\trn}\cdot \En\brk*{\nabla_{\gamma}\nabla_{\zeta} \ell(\besttwo(\vartwo), \gtone(\varone), z) \mid \varone}  (g(\varone) - \gtone(\varone))}\\
 =~& 0.
\end{align*}

\paragraph{\pref{ass:well_specified}}  This likewise follows immediately by expanding the directional derivative and applying the law of iterated expectation:
\begin{align*}
D_{\theta}\poprisk(\besttwo,\gtone)[\theta-\besttwo] = \En\brk*{\nabla_{\zeta} \ell(\besttwo(\vartwo), \gtone(\varone), z) \cdot (\theta(\vartwo) - \besttwo(\vartwo))} \geq 0.
\end{align*}

We now argue about the remaining assumptions. We will repeatedly invoke the following expression for the second derivative of the population risk.
\begin{align}
D^{2}_{\theta}\poprisk(\bar{\theta},g)[\theta-\theta', \theta-\theta']
= \En_{z}\brk*{
\phi'\prn*{\tri*{\Lambda(g(\varone),v), \bar{\theta}(\vartwo)}}\cdot \tri*{\Lambda(g(\varone),v), \theta(\vartwo)-\theta'(\vartwo)}^{2}
}.\label{eqn:single_index_hessian}
\end{align}

Let us introduce some additional notation.
Let $g\in\nuisance$ and $\theta\in\target$ be the free variables in
the statements of \pref{ass:smooth_loss} and \pref{ass:strong_convex_loss}. We define the following vector- and matrix-valued random variables:
\begin{align}
W_0 =~& \Lambda(\gtone(\varone), v), &
W_n =~& \Lambda(g(\varone), v),\\
X_0 =~& \besttwo(\vartwo), & X_n =~& \theta(\vartwo), &
\\
V_0=~& \gtone(\varone), & V_n=~&g(\varone),\\
A_0 =~& \En\brk*{W_0 W_0^{\trn}\mid{}\vartwo}.
\end{align}

To prove the lemma, it suffices to verify that
\pref{ass:strong_convex_loss} and \pref{ass:smooth_loss} hold for the
norms $\nrm*{\theta
  -\theta'}_{\target}^2 = \En\brk[\big]{\tri*{W_0, \theta(\vartwo) -
    \theta'(\vartwo)}^2}$ and $\nrm*{g - g'}_{\nuisance} = \nrm*{g-g'}_{L_{2p}(\ell_2,
  \cD)}$.

\preprint{\newcommand{\afoura}{\savehyperref{ass:smooth_loss}{Assumption \ref*{ass:smooth_loss}(a)}}}
\journal{\newcommand{\afoura}{Assumption \savehyperref{ass:smooth_loss}{\ref*{ass:smooth_loss}(a)}}}
\preprint{\newcommand{\afourb}{\savehyperref{ass:smooth_loss}{Assumption \ref*{ass:smooth_loss}(b)}}}
\journal{\newcommand{\afourb}{Assumption \savehyperref{ass:smooth_loss}{\ref*{ass:smooth_loss}(b)}}}

\paragraph{\afoura} Using \pref{eqn:hessian_bounds_single_index} and \pref{eqn:single_index_hessian} we have:
\begin{align*}
D^{2}_{\theta}\poprisk(\bar{\theta}, g_0)[\theta{}-\besttwo, \theta{}-\besttwo] \leq~&  \Tsi \cdot \En_{z}\brk*{
\tri*{W_0, X_n - X_0}^{2}} \leq{} \Tsi \cdot \nrm*{\theta{}-\besttwo}_{\target}^2,
\end{align*}
since $\nrm*{\cdot}_{L_2(\cD)}\leq{}\nrm*{\cdot}_{L_p(\cD)}$ for all
$p\geq{}2$. It follows that \afoura{} is satisfied with $\beta_1 = \Tsi$.

\paragraph{\afourb} 
Define random vectors $X_0 = \besttwo(\vartwo)$, $X_n=\theta{}(\vartwo)$, $V_0=\gtone(\varone)$, $V_n=g(\varone)$ and $\Sigma(\varone) = \En\brk*{\nabla_{\gamma\gamma}^2 \nabla_{\zeta_i} \ell(\besttwo(\vartwo), \bar{g}(\varone), z) \mid \varone}$. Then, invoking the assumed structure for the loss function, we have
\begin{align*}
&\abs*{D^{2}_{g}D_{\theta}\poprisk(\besttwo,\bar{g})[\theta{}-\besttwo,g-\gtone,g-\gtone]} \\&=  \abs*{\sum_{i=1}^{\dimtwo{}}\En\brk*{(X_{ni} - X_{0i}) (V_n - V_0)^{\trn} \nabla_{\gamma\gamma}^2 \nabla_{\zeta_i} \ell(\besttwo(\vartwo), \bar{g}(\varone), z) (V_n - V_0)}}.
\end{align*}
Using that $x\subseteq{}w$, we have
\begin{align*}
=~&  \abs*{\sum_{i=1}^{\dimtwo{}}\En\brk*{(X_{ni} - X_{0i}) (V_n - V_0)^{\trn} \Sigma(\varone) (V_n - V_0)}}\\
\leq~& \sum_{i=1}^{\dimtwo{}}\En\brk*{\abs*{X_{ni} - X_{0i}}\cdot \abs*{(V_n - V_0)^{\trn} \Sigma(\varone) (V_n - V_0)}}\\
\leq~& \musi\sum_{i=1}^{\dimtwo{}}\En\brk*{\abs*{X_{ni} - X_{0i}}\cdot \nrm*{V_n - V_0}_2^2}\\
=~& \musi \En\brk*{\nrm*{X_{n} - X_{0}}_1 \cdot \nrm*{V_n - V_0}_2^2}.
\end{align*}
All that remains is to relate these norms to the norms appearing in
the lemma statement.
\begin{align*}
\leq~& \musi\sqrt{\dimtwo} \En\brk*{\nrm*{X_{n} - X_{0}}_2 \cdot \nrm*{V_n - V_0}_2^2}\\
\leq~& \musi\sqrt{\dimtwo}\cdot
       \nrm{X_n-X_0}_{L_q(\ls_2,\cD)}\cdot{}\nrm*{ \nrm*{V_n - V_0}_2^2}_{L_p(\cD)}
  \\
=~& \musi\sqrt{\dimtwo}\cdot \nrm*{\theta{}-\besttwo}_{L_q(\ell_2, \cD)} \cdot \nrm*{g-\gtone}_{\cG}^2\\
\leq~& \frac{\musi \sqrt{\dimtwo{}} }{\sqrt{\gammasi}} \nrm*{\theta{}-\besttwo}_{\target}^{1-\rsi} \cdot \nrm*{g-\gtone}_{\nuisance}^2.
\end{align*}
Thus, we have
\begin{equation}
\label{eq:sufficient_higher_order}
D^{2}_{\theta}\poprisk(\bar{\theta}, g_0)[\theta{}-\besttwo, \theta{}-\besttwo]
\leq{} \frac{\musi \sqrt{\dimtwo{}}}{\sqrt{\gammasi}} \nrm*{\theta{}-\besttwo}_{\target} \cdot \nrm*{g-\gtone}_{\nuisance}^2,
\end{equation}
so that \afourb{} is satisfied with $\beta_2= \frac{\musi
  \sqrt{\dimtwo{}} }{\sqrt{\gammasi}}$.

\paragraph{\pref{ass:strong_convex_loss}} By \pref{eqn:hessian_bounds_single_index} and \pref{eqn:single_index_hessian}, we have
\begin{align*}
D^{2}_{\theta}\poprisk(\bar{\theta}, g)[\theta{}-\besttwo, \theta{}-\besttwo] \geq~& \tausi \cdot \En_{z}\brk*{
\tri*{W_n, X_n - X_0}^{2}}\\
\geq~& \frac{\tausi}{2} \cdot \En_{z}\brk*{
\tri*{W_0, X_n - X_0}^{2}} - \tausi \cdot \En_{z}\brk*{
\tri*{W_0 - W_n, X_n - X_0}^{2}}\\
\geq~& \frac{\tausi}{2} \cdot \En_{z}\brk*{
\tri*{W_0, X_n - X_0}^{2}} - \tausi \cdot \En_{z}\brk*{
\nrm*{W_0 - W_n}_{2}^{2}\nrm*{X_n - X_0}^{2}_{2}}\\
\geq~& \frac{\tausi}{2} \cdot \nrm*{\theta{}-\besttwo}_{\target}^2 -
       \tausi\,\Lsi^2\, \En_{z}\brk*{\|V_n-V_0\|^2_{2} \|X_n -
       X_0\|_2^{2}}.
       \end{align*}
Using \Holder{}'s inequality, we have that
\begin{align*}
  \En_{z}\brk*{\|V_n-V_0\|^2_{2} \|X_n -
  X_0\|_2^{2}}
  &\leq{} \nrm*{ \nrm{V_n-V_0}_2^2}_{L_p(\cD)}\cdot{}\nrm{
  \nrm{X_n-X_0}^2_2}_{L_q(\cD)}\\
  &= \nrm*{g-g_0}_{\cG}^2\cdot{}\nrm{ \nrm{X_n-X_0}^2_2}_{L_q(\cD)}
  \\
  &\leq 2\Rsi\cdot{}\nrm*{g-g_0}_{\cG}^2\cdot{}\nrm{
    X_n-X_0}_{L_q(\ls_2,\cD)}  \\
  &\leq 2\Rsi\gammasi^{-1/2}\cdot{}\nrm*{g-\gtone}_{\cG}^2\cdot{}\nrm{ \theta-\besttwo}^{1-\rsi}_{\target}.
\end{align*}
Hence, by Young's inequality, we have that for any $\eta>0$
\begin{align*}
  D^{2}_{\theta}\poprisk(\bar{\theta}, g)[\theta{}-\besttwo,
  \theta{}-\besttwo]
  &\geq
                       \frac{\tausi}{2} \cdot \nrm*{\theta{}-\besttwo}_{\target}^2
    -2\tausi\Lsi^2\Rsi\gammasi^{-1/2} \cdot\prn*{
    \eta^{-\frac{1-\rsi}{1+\rsi}}\nrm*{g-\gtone}_{\cG}^{\frac{4}{1+\rsi}}
    +\frac{\eta}{2}\nrm*{\theta{}-\besttwo}_{\target}^2}.
\end{align*}
Choosing $\eta=(4\Lsi^2\Rsi\gammasi^{-1/2})^{-1}$ yields
\begin{align*}
  D^{2}_{\theta}\poprisk(\bar{\theta}, g)[\theta{}-\besttwo,
  \theta{}-\besttwo]  &\geq
                       \frac{\tausi}{4} \cdot \nrm*{\theta{}-\besttwo}_{\target}^2
                        -8\tausi(\Lsi^2\Rsi\lambdasi^{-1/2})^{\frac{2}{1+\rsi}}
                        \nrm*{g -
                       \gtone}_{\nuisance}^{\frac{4}{1+\rsi}} .
\end{align*}
Thus, \pref{ass:strong_convex_loss} is satisfied with $\lambda =
\frac{\tausi}{4}$ and $\kappa=8\tausi(\Lsi^2\Rsi\lambdasi^{-1/2})^{\frac{2}{1+\rsi}}$.

\end{proof}

\begin{proof}[\pfref{lem:single_index_smooth}]
We adopt the same notation as in the proof of \pref{lem:suff_single_index}. This proof is almost the same as the proof that \pref{ass:strong_convex_loss} is satisfied in that lemma.
\begin{align*}
D^{2}_{\theta}\poprisk(\bar{\theta}, g)[\theta{}-\besttwo, \theta{}-\besttwo] \leq~& \Tsi \cdot \En_{z}\brk*{
\tri*{W_n, X_n - X_0}^{2}}\\
\leq~& 2\Tsi \cdot \En_{z}\brk*{
\tri*{W_0, X_n - X_0}^{2}} + 2\,\Tsi \cdot \En_{z}\brk*{
\tri*{W_0 - W_n, X_n - X_0}^{2}}\\
\leq~& 2\Tsi \cdot \En_{z}\brk*{
\tri*{W_0, X_n - X_0}^{2}} + 2\,\Tsi \cdot \En_{z}\brk*{
\nrm*{W_0 - W_n}_{2}^{2}\nrm*{X_n - X_0}^{2}_{2}}\\
\leq~& 2\Tsi \cdot \nrm*{\theta{}-\besttwo}_{\target}^2 +
       2\,\Tsi\,\Lsi^2\, \En_{z}\brk*{\|V_n-V_0\|^2_{2} \|X_n -
       X_0\|_2^{2}} \\
                     \leq~& 2\Tsi \cdot \nrm*{\theta{}-\besttwo}_{\target}^2 +
                            4\,\Tsi\Rsi\,\Lsi^2\, \En_{z}\brk*{\|V_n-V_0\|^2_{2} \|X_n -
                            X_0\|_2} \\
                            \leq~& 2\Tsi \cdot \nrm*{\theta{}-\besttwo}_{\target}^2 +
                                   4\,\Tsi\Rsi\,\Lsi^2\,
                                   \nrm{g-\gtone}_{\nuisance}^2\cdot{}\nrm{\theta-\besttwo}_{L_q(\ls_2,\cD)}\\
                                                               \leq~& 2\Tsi \cdot \nrm*{\theta{}-\besttwo}_{\target}^2 +
                                                                      4\,\Tsi\Rsi\,\Lsi^2\gammasi^{-1/2}\, \nrm{g-\gtone}_{\nuisance}^2\cdot{}\nrm{\theta-\besttwo}_{\target}.
\end{align*}
Using the AM-GM inequality, this implies the inequality
\begin{align*}
D^{2}_{\theta}\poprisk(\bar{\theta}, g)[\theta{}-\besttwo, \theta{}-\besttwo] \leq~& 3\Tsi \, \nrm*{\theta{}-\besttwo}_{\target}^2 + \frac{4\Tsi \Lsi^4  \Rsi^2}{\gammasi}\nrm*{g - \gtone}_{\nuisance}^4.
\end{align*}
\end{proof}

\begin{proof}[\pfref{lem:sufficient_slow}] Immediate.
\end{proof}

\section{Sufficient Conditions for Oracle Rates: Further Results}
\label{sec:oracle_lipschitz}

The results in \pref{sec:oracle} provide sufficient conditions for fast oracle rates under the assumption of strong convexity and a well-specified model, which is not satisfied for all losses used in practice. For example, linear losses used in policy learning (cf. \pref{sec:policy_learning_body}) do not satisfy strong convexity property. In this section we extend the \revoneedit{sufficient conditions for oracle rates} given in \pref{sec:oracle}. First, we give guarantees for strongly convex losses in the presence of misspecification, and then give guarantees for any (potentially non-strongly convex) loss that is Lipschitz in the target prediction $\theta(x)$. We follow the same notation as in \pref{sec:oracle}.

\subsection{Oracle Rates for Square Losses with Misspecification}

Here we consider strongly convex losses with the same structure as in \pref{sec:oracle}, but consider the case in which the target parameter is \emph{misspecified}. This setting has been relatively unexplored in recent results on double machine learning \citep{chernozhukov2016double,mackey2017orthogonal,chernozhukov2018plugin}; this is perhaps not surprising since for many settings, assuming a well-specified model is critical to establish orthogonality. However, for certain settings including treatment effect estimation (\pref{sec:treatment_effect}), orthogonality can indeed hold even without model correctness. The following theorem shows that we can obtain oracle rates in the presence of both misspecification and nuisance parameters as long as the nuisance class has moderate complexity. 
\begin{theorem}[Oracle Rates, Misspecified Case]
  \label{thm:oracle_misspecified}
Suppose that the target class $\target$ is convex. Suppose that \pref{ass:orthogonal,ass:well_specified} and \pref{ass:square_oracle,ass:moment} hold. If the relationship
\begin{equation}
\pone < \max\crl*{2\ptwo+2,4\ptwo-2}
\end{equation}
holds, then for appropriate choice of sub-algorithms, the sample splitting meta-algorithm \pref{alg:sample_splitting} produces a predictor $\esttwo$ such that with probability at least $1-\delta$,
\begin{equation}
  \label{eq:oracle_misspecified}
\poprisk(\esttwo,\gtone) - \poprisk(\gttwo,\gtone)
\leq{}  \Ot\prn[\big]{n^{-\frac{2}{2+\ptwo}\wedge{}\frac{1}{\ptwo}}},
\end{equation}
\revoneedit{thereby matching the minimax rate in the absence of nuisance parameters. In particular, it suffices to use Skeleton Aggregation for the first stage and plug-in ERM for the second stage.}
\end{theorem}
\revoneedit{Like the previous result, \pref{thm:oracle_misspecified} is proven by combining the main theorem (\pref{thm:generic_strongly_convex}) with algorithm-specific upper bounds on $\Rate(\target,\cdots)$ and $\Rate(\nuisance,\cdots)$. \pref{fig:oracle_misspecified} summarizes sufficient conditions under which \pref{thm:oracle_misspecified} matches the oracle rate 
$\wt{\Theta}\prn{n^{-\frac{2}{2+\ptwo}\wedge{}\frac{1}{\ptwo}}}$ \citep{rakhlin2017empirical}.} Comparing to the well-specified case (\pref{thm:oracle_well_specified}/\pref{fig:oracle_well_specified}), we see that in the misspecified case, the condition on the metric entropy for the nuisance parameter is significantly more permissive when the target parameter class $\target$ is large (i.e. $\ptwo>2$). For example, if $\ptwo=5$ then we require $\pone<12$ for oracle rates in the well-specified case, but only require $\pone<18$ in the misspecified case. On the other hand, when $\ptwo\leq{}2$ the conditions on the nuisance metric entropy match the well-specified case. In particular, whenever $\target$ is a parametric class it again suffices to take $\pone<2$ so that the first stage has an $n^{-1/4}$-rate.

\begin{figure}[t]
  \begin{center}
  \includegraphics[width=.35\textwidth]{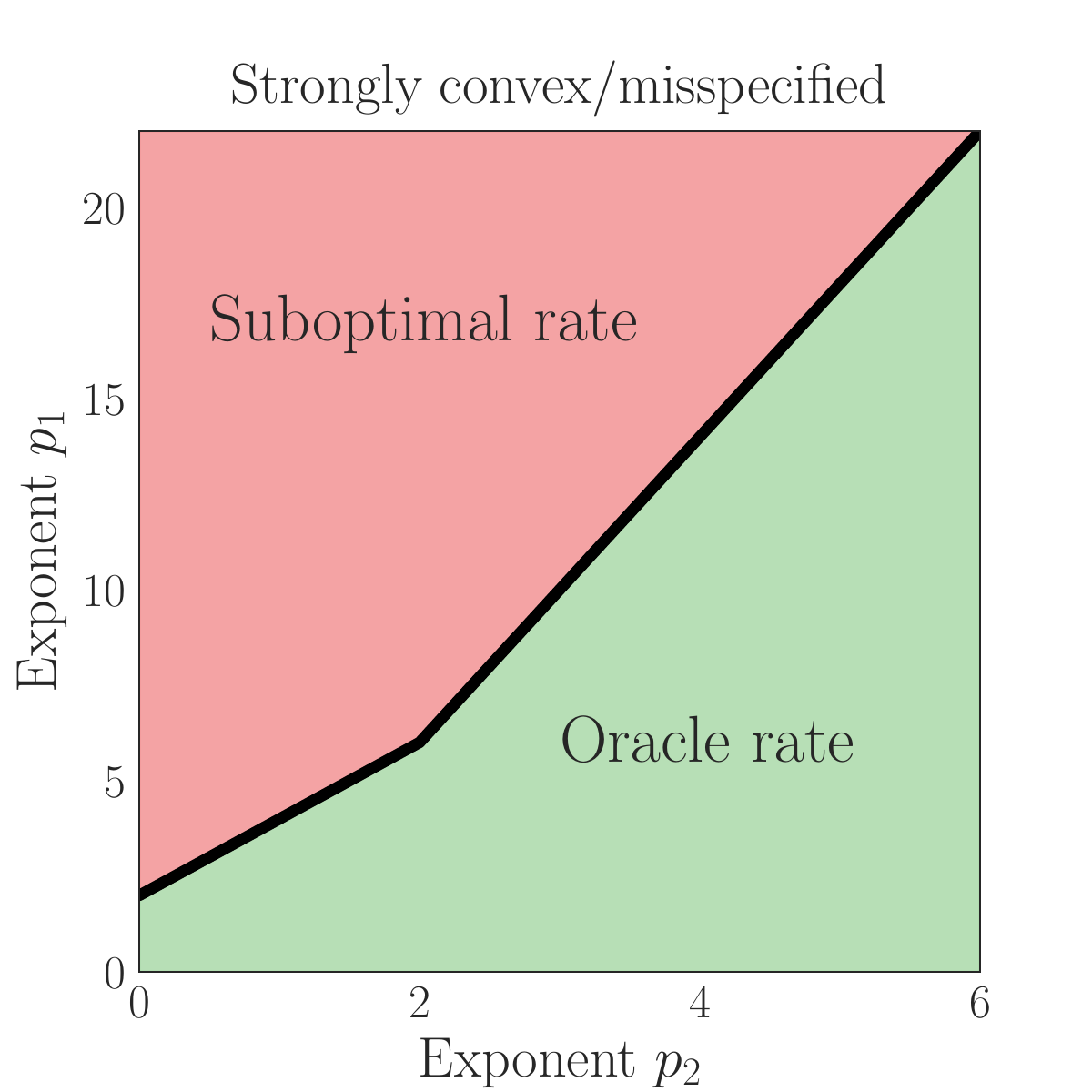}
\end{center}
\caption{Relationship between first and second stage for oracle rates; misspecified case.}
\label{fig:oracle_misspecified}
  \end{figure}

\subsection{Oracle Rates for Generic Lipschitz Losses}

For arbitrary Lipschitz losses (in particular, for the linear loss), the optimal rate in the absence of nuisance parameters is $\Ot\prn{n^{-\frac{1}{2}\wedge{}\frac{1}{\ptwo}}}$ under the metric entropy growth assumed in \pref{sec:algsandrates} (cf. Section 12.8/12.9 of \cite{StatNotes2012}). Our main theorem for this section shows that this rate is still obtained in the presence of nuisance parameters when the nuisance metric entropy parameter $p_1$ is not too much larger than $p_2$. We make the following regularity assumption on the loss.
\begin{assumption}
\label{ass:oracle_lipschitz}
The loss $\ls$ has absolute value bounded by $1$, the mapping $\zeta\mapsto\ls(\zeta,\gamma;z)$ is $O(1)$-Lipschitz with respect to $\ls_2$, and the mapping $\gamma\mapsto\ls(\zeta,\gamma;z)$ has $\bigoh(1)$-Lipschitz gradients with respect to $\ls_{2}$. \pref{ass:universal_orthogonality} holds for all $g\in\cG$ when $p_1\leq{}2$, and for all $g\in\nuisance+\starhull(\nuisance-\nuisance,0)$ if $p_1>2$.
\end{assumption}
Compared to the oracle rates for square losses, the assumptions required for our main Lipschitz loss theorem are relaxed significantly: We no longer require strong convexity, nor do we require any type of moment comparison for the nuisance class. On the other hand, we do require the additional universal orthogonality condition from \pref{ssec:orthogonal_slow_rate}.

\begin{theorem}
  \label{thm:oracle_slow}
Suppose that \pref{ass:oracle_lipschitz} is satisfied. If the relationship
\begin{equation}
\pone < \max\crl*{2,2\ptwo-2}
\end{equation}
holds, then for appropriate choice of sub-algorithms, the sample splitting meta-algorithm produces a predictor $\esttwo$ that guarantees that, with probability at least $1-\delta$,
\begin{equation}
  \label{eq:oracle_slow}
\poprisk(\esttwo,\gtone) - \poprisk(\gttwo,\gtone)
\leq{}  \Ot\prn[\big]{n^{-\frac{1}{2}\wedge{}\frac{1}{\ptwo}}},
\end{equation}
\revoneedit{thereby matching the minimax rate in the absence of nuisance parameters.}
\end{theorem}
\revoneedit{\pref{thm:oracle_slow} is proven as a corollary of \pref{thm:orthogonal_slow}, by proving upper bounds on $\Rate(\target,\dots)$ and $\Rate(\nuisance,\cdots)$ for plug-in empirical risk minimization and skeleton aggregation, respectively; see \pref{app:rates} for the proof.} The theorem is summarized in \pref{fig:oracle_slow}. In particular, \revoneedit{suppose that our aim is to match the minimax optimal rate in the absence of nuisance parameters, which is $\Ot\prn{n^{-\frac{1}{2}\wedge{}\frac{1}{\ptwo}}}$ \citep{StatNotes2012}.} Whenever $\target$ is a parametric class (i.e. $\cH_{2}(\target,\veps,n)\propto\dtwo\log(1/\veps)$), it suffices to take $\pone<2$, as in the well-specified and misspecified square loss setups in \pref{sec:square_oracle}, and as in standard semiparametric results \citep{newey1994asymptotic,van2000asymptotic,robins2008higher,zheng2010asymptotic,chernozhukov2016double}. That the condition matches is somewhat interesting given that the final rate in this case is slower than in the square loss setup. Note however that we cannot tolerate $\pone>2$ until $\ptwo>2$, as compared to our results strongly convex losses, where the admissible value of $p_{1}$ is growing for all values of $\ptwo$.
This result generalizes the condition given in \cite{athey2017efficient} for the specific case of policy learning, which applies only in the parametric setting, and only for a specific loss. 

\begin{figure}[t]
  \begin{center}
  \includegraphics[width=.35\textwidth]{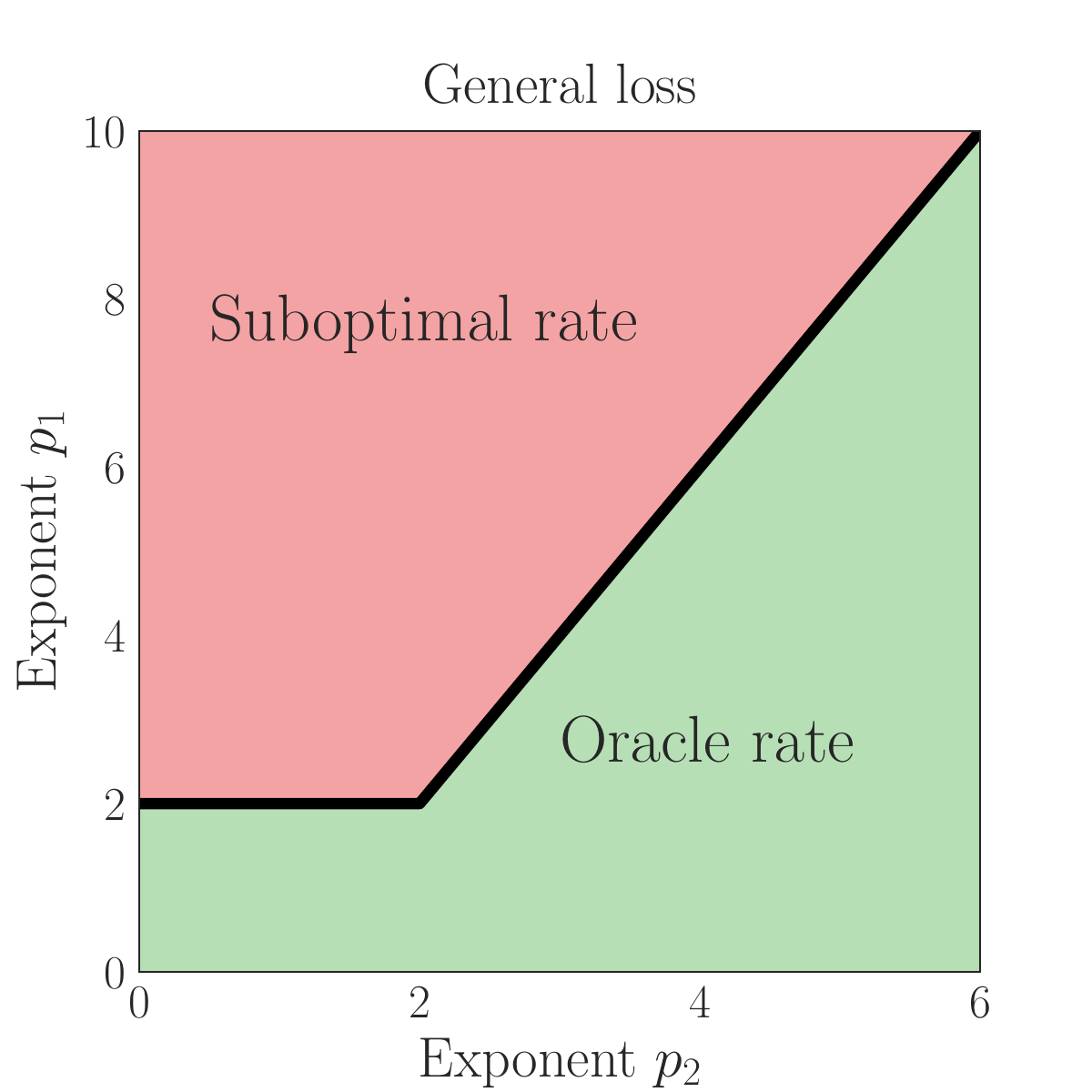}
\end{center}
\caption{Relationship between first and second stage for oracle rates; general loss case.}
\label{fig:oracle_slow}
  \end{figure}

\section{Additional Applications} 
\label{sec:applications}

In this section of the appendix we show how three additional families of applications---offline policy learning/optimization, domain adaptation/sample bias correction, and learning with missing data---fall into our general orthogonal statistical learning framework.  We also sketch some statistical consequences based on our general algorithmic tools, and show how these results generalize and extend previous work. 

\subsection{Policy Learning: Further Examples}\label{sec:policy}
In this section we show how to view some additional policy learning models in our framework. As in \pref{sec:policy_learning_body}, we consider the data-generating process in \pref{eq:treatment_control_dgp}, but here we go beyond binary treatments.

\paragraph{Multiple finite treatments} The binary setting above can easily be extended to the case of $N$ possible treatments, analyzed in \cite{zhou2018offline}. Formally, let $T\in \{\vec{e}_1, \ldots, \vec{e}_N\}$, where $\vec{e}_i\in \{0,1\}^N$ is the $i$-th standard basis vector. We still follow the data generating process \pref{eq:treatment_control_dgp}, but now $e_0:\cX\to\Delta_{N}$ and $f_0:\crl*{0,1}^{N}\times{}\cX\to\bbR$. To simplify notation, let $p_0(t, x) = \Pr\brk*{T=t \mid X=x}$ so that $p_0(t, x) = e_{0}(x)_t$. Then the following loss function is an unbiased estimate of the counterfactual loss:
\begin{equation}
\ell(t, Z; f_0, e_0) = f_{0}(t, X) + 1\{T=t\} \frac{(Y - f_0(t, X))}{p_0(t, X)}.
\end{equation}
 This formulation leads to the standard extension of the doubly-robust estimator to multiple outcomes \cite{dudik2011doubly,zhou2018offline}. Define an $N$-dimensional vector-valued function $\beta(f_0, e_0, Z)$ to have the $t$-th coordinate is equal to $\ell(t, Z; f_0, e_0)$. Then, as in the binary case, we can equivalently optimize a population risk that is linear in the target parameter:
\begin{equation}
L_{\cD}(\theta, \crl{f_0, e_0}) = \En\brk*{\tri*{\beta(f_0, e_0, Z), \theta(x)}}.
\end{equation}
This population risk is easily shown to satisfy universal orthogonality.

\paragraph{Counterfactual risk minimization and general continuous treatments} 
Counterfactual risk minimization (CRM) is a learning framework introduced by \cite{swaminathan2015counterfactual}. It is mathematically equivalent to the policy learning setup with arbitrary treatment and outcome spaces, but is motivated by a different set real-world learning scenarios and was developed in a parallel line of research in the machine learning literature. To highlight the relationship with policy learning and the applicability of our results to this setting we will present the CRM framework using the notation of policy learning. 

In counterfactual risk minimization we receive data $Z=(Y, T, X)$ from the policy learning data generating process \pref{eq:treatment_control_dgp}. The goal is to choose a hypothesis $\theta: \cX\rightarrow \Delta(\cT)$ (i.e., the policy takes as input covariates and returns a distribution over treatments) that minimizes the population risk:
\begin{equation}
\label{eq:crm_risk}
L_{\cD}^1(\theta, f_0) = \En_{Z} \En_{t\sim \theta(X)}\brk*{f_0(t, X)}.
\end{equation}
As in policy learning, we construct an unbiased estimate of this counterfactual loss via inverse propensity scoring. Let $p_0(t, X)$ denote the probability density of treatment $t$ conditional on covariates $X$ and (overloading notation) let $\theta(t,x)$ denote the density that hypothesis $\theta$ assigns to treatment $t$. Then we can formulate a new risk function that provides an unbiased estimate of the target risk \pref{eq:crm_risk}:
\begin{equation}
\label{eq:crm_risk2}
L_{\cD}^2(\theta, p_0) = \En\brk*{Y \frac{\theta(T, X)}{p_0(T,X)}}.
\end{equation}
In the CRM framework the propensity $p_0$ is assumed to be known \citep{swaminathan2015counterfactual,swaminathan2015self}. When the propensity is not known, we can treat it as a nuisance parameter to be estimated from data. However, the loss \pref{eq:crm_risk2} is not orthogonal to $p_0$. We can orthogonalize the population risk by also constructing an estimate of $f_0$ (see \pref{eq:treatment_control_dgp}) by regressing $Y$ on $(T,X)$. This leads to an analogue of the doubly robust formulation from the finite treatment setup:
\begin{equation}
\label{eq:crm_orthogonal}
L_{\cD}(\theta, \crl{f_0, p_0}) = \En\brk[\Bigg]{\underbrace{\prn*{f_0(T, X) + \frac{(Y - f_0(T,X))}{p_0(T,X)}}}_{\rdef\beta(f_0, p_0, Z)}\cdot \theta(T, X)}.
\end{equation}
For finite treatments this formulation is mathematically equivalent to the population risk for multiple finite treatments presented in the prequel.

For continuous treatments, the empirical version of the problem \pref{eq:crm_orthogonal} may be ill-posed, even if we assume that the propensity $p_0$ has density ower bounded by some constant (the analogue of the overlap condition). \cite{swaminathan2015counterfactual} proposed to regularize the empirical risk via variance penalization. A similar variance penalization approach is also proposed in recent work of \cite{bertsimas2018optimization}, who consider policy learning over arbitrary treatment spaces. The variance-penalized empirical risk minimization algorithm---in the context of \pref{alg:sample_splitting}---can be seen as a second stage algorithm that achieves a rate whose leading term scales with the variance of the optimal policy rather than some worst-case upper bound on the risk. Hence, it can be used in our framework to achieve variance-dependent excess risk bounds.

\cite{kallus2018policy} develop alternative algorithms for policy learning with continuous treatments via a kernel smoothing approach. This approach is equivalent to adding noise to a deterministic hypothesis space $\Pi$, e.g. $\theta(x) = \pi(x) + \zeta$ for each $\pi\in \Pi$, where $\zeta\sim \cN(0, \sigma^2)$. In our framework $\target$ is the space of density functions $\theta(t,x)$ induced by this construction. The value of a deterministic policy $\pi\in \Pi$ (or equivalently the value of its corresponding density $\theta\in \target$) is equal to
\begin{equation}
\En\brk*{\beta(f_0,p_0, Z) \cdot \cK_{\sigma}(T - \pi(X))},
\end{equation}
where $\cK_\sigma$ is the pdf of a normal distribution with standard deviation $\sigma$. This is equivalent to the formulation in \cite{kallus2018policy}, since the empirical version of this risk is the kernel-weighted loss: \begin{equation}
L_S(\pi, \crl{f_0, p_0}) = \frac{1}{n} \sum_{t=1}^n {\beta(f_0,p_0, Z_i) \cdot \cK_{\sigma}(T_i - \pi(X_i))},
\end{equation}
though we note that \cite{kallus2018policy} do not restrict themselves to only the gaussian kernel.

This idea falls into our framework by simply defining $\target$ to be this space of randomly perturbed policy functions. The resulting analysis in our framework is slightly different than that of \cite{kallus2018policy}, where kernel weighting is invoked to show consistency of the empirical risk, and subsequently optimization of the empirical risk over \emph{deterministic} policies is analyzed. With our framework, we directly calculate the regret with respect to \emph{randomized} policies by applying our general theorem. This implies that we enjoy robustness to errors in estimating $f_0$ and $p_0$. 

Observe that the rate for the second stage will depend on the amount of randomization $\sigma$, since the variance of the empirical risk is governed by $\sigma$. Consequently, if one is interested in regret against deterministic strategies, we can invoke Lipschitzness of the reward function $f_0$ to control the regret added by the extra randomness we are injecting, which would typically be of order $\sigma$. We can then choose an optimal $\sigma$ as a function of the number of samples to trade-off the \emph{bias} and \emph{variance} of the regret. If we wish to further optimize dependence on problem-dependent parameters in the resulting rates one can use variance penalization in the kernel-based framework to achieve a regret rate whose leading term scales with the variance of the optimal policy.

\subsection{Domain Adaptation and Sample Bias Correction}

Domain adaptation is a widely studied topic in machine learning \citep{daume2006domain,jiang2007instance,ben2007analysis,blitzer2008learning,mansour2009domain}. The goal is to choose a hypothesis that minimizes a given loss in expectation over a target data distribution, where the target distribution may be different from the distribution of data that is already collected. 

We consider a particular instance of domain adaptation called covariate shift, encountered in supervised learning \citep{shimodaira2000improving}. We assume that we have data $Z=(X, Y)$, where $X$ are co-variates drawn from some distribution $\cD_s$ with density $p_s$ and $Y$ are labels, drawn from some distribution $\cD_x$ conditional on $x$. Our goal is to choose a hypothesis $\theta$ from some hypothesis space $\target$, so as to minimize a loss function $\ell(\theta(x), y)$ in expectation over a different distribution of co-variates $\cD_t$ with density $p_t$. Both of the densities are unknown, and we solve this issue in the orthogonal statistical learning framework by treating their ratio as a nuisance parameter for an importance-weighted loss function. Let $f_0(x) = \frac{p_t(x)}{p_s(x)}$ and $g_0=\crl*{f_0}$, so that
\begin{equation}\label{eq:domain_adaptation}
\poprisk(\theta, g_0) \ldef \En_{\cD_s}\En_{y\mid{}x}\brk*{\ell(\theta(x), y)\cdot{}f_0(x)}=\En_{\cD_t}\En_{y\mid{}x}\brk*{\ell(\theta(x), y)}.
\end{equation}

We assume the hypothesis space satisfies a realizability condition, i.e. there exists $\theta_0\in \target$ such that:
\begin{equation}
\En\brk*{\nabla_{\zeta} \ell(\theta_0(x), y) \mid x}=0,
\end{equation}
where $\nabla_\zeta$ corresponds to the gradient with respect to the first input of $\ell$. For instance, for the case of the square loss $\ell(\zeta, y) = (\zeta - y)^2$, then this condition has the natural interpretation that there exists $\theta_0\in\target$ such that:
\begin{equation}
\En\brk*{y\mid x} = \theta_0(x).
\end{equation}
Observe that when we treat the density ratio $f_0(x)$ as a nuisance function, the loss function $\poprisk$ is orthogonal. Indeed,
\begin{align*}
D_f D_\theta \poprisk(\theta, \crl{f})[\theta - \theta_0, f - f_0] =~& \En\brk*{\nabla_{\zeta} \ell(\theta_0(x), y) \cdot (\theta(x) - \theta_0(x)) \cdot (f(x) - f_0(x))}\\
=~&\En\brk*{\En\brk{\nabla_{\zeta} \ell(\theta_0(x), y)\mid x} \cdot (\theta(x) - \theta_0(x)) \cdot (f(x) - f_0(x))} = 0.
\end{align*}
Also, note that this setup fits in to the single index structure from \pref{sec:sufficient} by writing $\poprisk$ as the expectation of a new loss $\tilde{\ls}(\theta(x),g(w),z)\ldef{}\ls(\theta(x),y)\cdot{}f(x)$. Focusing on the square loss $\ls(\zeta,y)=(\zeta-y)^{2}$ for concreteness, it is simple to show that all of the conditions of \pref{ass:single_index} are satisfied with $r=0$ as long as we have $g(x)\geq{}\eta>0$ for all $g\in\nuisance$. \pref{cor:single_index} then implies that with probability at least $1-\delta$, \pref{alg:sample_splitting} enjoys the bound
\[
\poprisk(\esttwo,\gtone) - \poprisk(\besttwo,\gtone)
\leq{} O\prn*{\Rate(\target, \sampletwo, \delta/2\midsem{}\esttwo,\estone) 
+ \mathrm{poly}(\eta^{-1})\cdot\prn*{\Rate(\nuisance,\sampleone,\delta/2)}^{4}
}.
\]
Note that whenever $\Rate(\nuisance,\sampleone,\delta/2)=o\prn*{\Rate(\target, \sampletwo, \delta/2\midsem{}\esttwo,\estone)^{1/4}}$ the dependence on $\eta^{-1}$ is negligible asymptotically. Of course, it is also important to develop algorithms for which the rate of the target class does not depend on $\eta^{-1}$. As one example, we can employ the variance-penalized ERM guarantee from \pref{thm:alexander}. When $\target$ has VC dimension $d$, and the variance of the loss at $(\theta_0,g_0)$ and the capacity function $\tau_0$ are bounded, this gives $\Rate(\nuisance,\sampleone,\delta/2)=O(\sqrt{d/n})$, with $\eta^{-1}$ entering only lower-order terms. The final result is that if $\Rate(\nuisance,\sampleone,\delta/2)=o(n^{-1/8})$, we get an excess risk bound for which the dominant term is $O(\sqrt{d/n})$, with no dependence on $\eta^{-1}$. 

\paragraph{Related work}
\cite{cortes2010learning} gave generalization error guarantees for the important weighted loss \pref{eq:domain_adaptation} in the case where the densities $p_{s}$ and $p_{t}$ are known. At the other extreme, \cite{ben2012hardness} showed strong impossibility results in the regime where the densities are unknown. Our results lie in the middle, and show that learning with unknown densities is possible in the regime where the weights belong to a nonparametric class that is not much more complex than the target predictor class $\target$. We remark in passing that algorithms based on discrepancy minimization \cite{ben2007analysis,mansour2009domain} offer another approach to domain adaptation that does not require importance weights, but these results are not directly comparable to our own.

\subsection{Missing Data}
\label{sec:missing_data}
As a final application, we apply our tools to the well-studied problem of regression with missing/censored data \citep{robins1995semiparametric,van2003unifiedb,rubin2005general,tsiatis2007semiparametric,wang2010nonparametric,graham2011efficiency}. In this setting we receive data is generated through standard regression model, but label/target variables are sometimes ``missing'' or unobserved. The learner observes whether or not the target is missing for each example, and the conditional probability that the target is missing is treated as an unknown nuisance parameter. As usual, the target is the unknown regression function.

To proceed, we formalize the setting through the following data-generating process for the observed variables $(X,W,T,\wt{Y})$:
\begin{equation}
\label{eq:missing_data_least_squares}
\begin{aligned}
&\wt{Y} = T\cdot{}Y,&&\\
&Y = \theta_0(X)+\veps_1,&&\En\brk*{\veps_1\mid{}X}=0,\\
&T=e_0(W) + \veps_2,&&\En\brk*{\veps_2\mid{}W}=0.
\end{aligned}
\end{equation}
Here $T\in\crl*{0,1}$ is an auxiliary variable (observed by the learner) that indicates whether the target variable is missing, and $e_0:\cX\to\brk*{0,1}$ is the unknown propensity for $T$. The parameter $\theta_0:\cX\to\bbR$ is the true regression function. We make the standard \emph{unobserved confounders} assumption that $X\subseteq{}W$ and $T\indep{}Y\mid{}W$.
We define $h_0(w)=-2\frac{(\En\brk*{Y\mid{}W=w}-\theta_0(x))}{e_0(w)}$, take $g_0=\crl*{h_0,e_0}$, and use the loss
\begin{equation}
\ls(\theta,\crl*{h,e},z) = \frac{T\prn[\big]{\wt{Y}-\theta(X)}^{2}}{e(W)} - \theta(X)h(W)\prn*{T-e(W)}.
\end{equation}
Observe that this loss has the property that
\[
\poprisk(\theta,g_0) = \En\prn*{Y-\theta(X)}^{2},
\]
so that the excess risk relative to $\theta_0$ precisely corresponds to prediction accuracy whenever the true nuisance parameter is plugged in.
\begin{proposition}
  \label{prop:missing_data}
This model satisfies \pref{ass:orthogonal} and \pref{ass:well_specified} whenever $\gttwo\in\target$, i.e. we have $D_{\theta}\poprisk(\gttwo,\crl*{h,e_0})[\theta-\theta_{0}]=0$, $D_{e}D_{\theta}\poprisk(\gttwo,\crl*{h_0,e_0})[\theta-\theta_{0}, e-e_0]=0$, and $D_{h}D_{\theta}\poprisk(\gttwo,\crl*{h_0,e_0})[\theta-\theta_{0}, h-h_0]=0$.
\end{proposition}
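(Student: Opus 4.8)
The plan is to verify all three identities by computing the relevant directional derivatives directly from the closed form of $\ls$ and then invoking the conditional-moment structure of the data-generating process \pref{eq:missing_data_least_squares}. Fix a target direction $v \ldef \theta - \theta_0$. Differentiating $\poprisk(\theta_0 + tv, \crl*{h,e})$ in $t$ at $t=0$ gives
\[
D_{\theta}\poprisk(\theta_0,\crl*{h,e})[v] = \En\brk*{-\frac{2T\prn*{\wt{Y}-\theta_0(X)}v(X)}{e(W)} - v(X)h(W)\prn*{T-e(W)}},
\]
and the two cross-derivatives are obtained by differentiating this expression once more in $e$ (resp.\ $h$) along $e-e_0$ (resp.\ $h-h_0$). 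Two elementary facts drive everything: first, since $T\in\crl*{0,1}$ we have $T\wt{Y}=T^2Y=TY$, so $T\prn*{\wt{Y}-\theta_0(X)}=T\prn*{Y-\theta_0(X)}$ identically; second, the conditional-mean assumptions, together with $T\indep Y\mid W$ and $X\subseteq W$, give $\En\brk*{T-e_0(W)\mid W}=0$ and $\En\brk*{T\prn*{Y-\theta_0(X)}\mid W}=e_0(W)\prn*{\En\brk*{Y\mid W}-\theta_0(X)}$.

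For the first-order condition, set $e=e_0$: the $h$-dependent term in $D_{\theta}\poprisk(\theta_0,\crl*{h,e_0})[v]$ vanishes after conditioning on $W$ (it holds for \emph{any} $h$), and the remaining term reduces, after conditioning on $W$, to $-2\,\En\brk*{\prn*{\En\brk*{Y\mid W}-\theta_0(X)}v(X)}$; conditioning now on $X$ and using the tower property $\En\brk*{\En\brk*{Y\mid W}\mid X}=\En\brk*{Y\mid X}=\theta_0(X)$ makes this zero as well, so $D_{\theta}\poprisk(\theta_0,\crl*{h,e_0})[\theta-\theta_0]=0$. For $h$-orthogonality, differentiating $D_{\theta}\poprisk(\theta_0,\crl*{h,e_0})[v]$ in $h$ kills the first term and leaves $-\En\brk*{v(X)\prn*{h-h_0}(W)\prn*{T-e_0(W)}}$, which vanishes by $\En\brk*{T-e_0(W)\mid W}=0$; notably this needs no property of $h_0$ at all. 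Since the loss is convex in $\theta$ (the first term has nonnegative curvature and the second is linear in $\theta$), the assumption $\theta_0\in\target$ ensures $\theta_0$ is itself a population-risk minimizer, so it is legitimate to take $\besttwo=\theta_0$ in \pref{ass:orthogonal} and \pref{ass:well_specified}.

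The substantive case is $e$-orthogonality. Differentiating $D_{\theta}\poprisk(\theta_0,\crl*{h_0,e})[v]$ in $e$ at $e=e_0$ produces
\[
\En\brk*{\frac{2T\prn*{\wt{Y}-\theta_0(X)}v(X)\prn*{e-e_0}(W)}{e_0(W)^2}} + \En\brk*{v(X)h_0(W)\prn*{e-e_0}(W)}.
\]
Conditioning the first expectation on $W$ and applying both facts above rewrites it as $\En\brk*{\tfrac{2\prn*{\En\brk*{Y\mid W}-\theta_0(X)}}{e_0(W)}\,v(X)\prn*{e-e_0}(W)}$, and by the definition $h_0(w)=-2\prn*{\En\brk*{Y\mid W=w}-\theta_0(x)}/e_0(w)$ the prefactor $2\prn*{\En\brk*{Y\mid W}-\theta_0(X)}/e_0(W)$ equals $-h_0(W)$, so the two expectations cancel exactly. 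Thus the only real obstacle is bookkeeping: performing the conditioning in the correct order (on $W$ first, and on $X$ afterward for the first-order condition) and seeing that $h_0$ is engineered precisely to annihilate the $e$-derivative of the inverse-propensity term. Once the reductions $T\prn*{\wt{Y}-\theta_0(X)}=T\prn*{Y-\theta_0(X)}$ and $2\prn*{\En\brk*{Y\mid W}-\theta_0(X)}/e_0(W)=-h_0(W)$ are in place, all three claims follow by inspection.
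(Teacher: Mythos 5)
Your proof is correct and follows essentially the same route as the paper's: expand each directional derivative from the closed form of $\ls$, condition on $W$ (and then on $X$ for the first-order term), and invoke $T\wt{Y}=TY$, $T\indep Y\mid W$, $\En\brk*{T\mid W}=e_0(W)$, and the construction of $h_0$ to cancel the $e$-derivative of the inverse-propensity term. The only cosmetic differences are that you isolate the two driving identities up front and add a (correct, not strictly required) convexity remark justifying $\besttwo=\theta_0$.
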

Note that the extra nuisance parameter $h$ is only required here because we consider the general setting in which $W\neq{}X$. Whenever $W=X$ this is unnecessary (and indeed $h_0=0$). This parameter can generally be estimated at a rate no worse than the rate for $e_0$ and $\theta_0$ (absent nuisance parameters); see \cite{chernozhukov2018plugin} for discussion.

As to rates and algorithms, the situation here is essentially the same as that of the domain adaptation example, so we discuss it only briefly. The setup has the single index structure from \pref{sec:sufficient}, and all of the sufficient conditions for fast rates from \pref{ass:single_index} are satisfied with $r=0$ as long as we have $e(W)\geq{}\eta>0$ for all $e$ in the nuisance class. Thus, with probability at least $1-\delta$, \pref{alg:sample_splitting} enjoys the bound
\[
\poprisk(\esttwo,\gtone) - \poprisk(\besttwo,\gtone)
\leq{} O\prn*{\Rate(\target, \sampletwo, \delta/2\midsem{}\esttwo,\estone) 
+ \mathrm{poly}(\eta^{-1})\cdot\prn*{\Rate(\nuisance,\sampleone,\delta/2)}^{4}
}.
\]
As with the previous example, the variance-penalized ERM guarantees from \pref{sec:erm} can be applied here to provide bounds on $\Rate(\target,\ldots)$ for which the dominant term in the excess risk does not scale with the inverse propensity range.

\begin{proof}[\pfref{prop:missing_data}]
We first show that the gradient vanishes in the sense of \pref{ass:well_specified} when evaluated at $\gttwo$. In particular, for any choice of $h$ we have
\begin{align*}
&D_{\theta}\poprisk(\gttwo,\crl*{h,e_0})[\theta-\theta_{0}] \\
&= \En\brk*{
\prn*{ -2T\frac{(\wt{Y}-\theta_0(X))}{e_{0}(W)}
- h(W)(T-e_{0}(W))
}(\theta(X)-\theta_0(X))
}. \\
\intertext{Using that $\En\brk*{T\mid{}W}=e_0(W)$ and $X\subseteq{}W$:}
&= \En\brk*{
\prn*{ -2T\frac{(\wt{Y}-\theta_0(X))}{e_{0}(W)}
}(\theta(X)-\theta_0(X))
}.
\intertext{Using that $T\in\crl*{0,1}$:}
&= \En\brk*{
\prn*{ -2T\frac{(Y-\theta_0(X))}{e_{0}(W)}
}(\theta(X)-\theta_0(X))
}.
\intertext{Using that $T\perp{}Y\mid{}W$ and that $X\subseteq{}W$:}
&= \En_{W}\brk*{
\prn*{ -2\En\brk*{T\mid{}W}\frac{\En\brk*{(Y-\theta_0(X))\mid{}W}}{e_{0}(W)}
}(\theta(X)-\theta_0(X))
} \\
&= -2\En\brk*{
(Y-\theta_0(X))
(\theta(X)-\theta_0(X))
}.\\
\intertext{Using that $\En\brk*{Y\mid{}X}=\theta_0(X)$:}
&=0.
\end{align*}
To establish orthogonality with respect to $e$, we have
\begin{align*}
&D_{e}D_{\theta}\poprisk(\gttwo,\crl*{h_0,e_0})[\theta-\theta_{0}, e-e_0] \\
&= \En\brk*{
\prn*{ 2T\frac{(Y-\theta_0(X))}{e_{0}^{2}(W)}
+ h_{0}(W)
}(\theta(X)-\theta_0(X))(e(W)-e_0(W))
} .
\intertext{Using that $X\subseteq{}W$:}
&= \En_{W}\brk*{
\prn*{ 2\frac{(\En\brk*{Y\mid{}W}-\theta_0(X))}{e_{0}(W)}
+ h_{0}(W)
}(\theta(X)-\theta_0(X))(e(W)-e_0(W))
}.
\intertext{The result follows immediately, using that $h_0(w) = -2\frac{(\En\brk*{Y\mid{}W=w}-\theta_0(x))}{e_{0}(w)}$.}
&=0.
\end{align*}
To establish orthogonality with respect to $h$, we have
\begin{align*}
&D_{h}D_{\theta}\poprisk(\gttwo,\crl*{h_0,e_0})[\theta-\theta_{0}, h-h_0] \\
&= \En\brk*{
(T-e_0(W))(\theta(X)-\theta_0(X))(h(W)-h_0(W))
}  \\
&= \En\brk*{
\veps_{2}\cdot{}(\theta(X)-\theta_0(X))(h(W)-h_0(W))
}.
\end{align*}
Using that $\En\brk*{\veps_2\mid{}W}=0$ and $X\subseteq{}W$, the expression above is seen to be equal to zero.
\end{proof}

\section{Plug-in Empirical Risk Minimization: Further Results}
\label{app:plugin}

\subsection{Plug-in Empirical Risk Minimization: Examples}
\label{sec:specific_classes}
In this section of the appendix, we instantiate the general plug-in
ERM framework from \pref{sec:erm} to give concrete guarantees for some
concrete classes of interest. In all examples we use $\wt{O}$ to hide dependence on problem-dependent constants, $\log{}n$ factors, and $\log(\delta^{-1})$ factors.

\paragraph{High-dimensional linear classes}
For our first set of examples, we focus on high-dimensional linear predictors. \cite{chernozhukov2018plugin} gave orthogonal/debiased estimation guarantees for high-dimensional predictors using Lasso-type algorithms. Our first example shows how to recover the type of guarantee they gave, and our second example shows that we can give similar guarantees under weaker assumptions by exploiting that we work in the excess risk / statistical learning (rather than parameter estimation) framework.
\begin{example}[High-Dimensional Linear Predictors with $\ls_{1}$ Constraint]
\label{ex:lasso}
Suppose that $\besttwo\in\bbR^{d}$ is an $s$-sparse linear function with support set $T\subset\brk*{d}$ and that $\nrm*{\besttwo}_{1}\leq{}1$ and $\nrm*{x}_{\infty}\leq{}1$ almost surely under $\cD$. Define the target class via
\[
\target=\crl*{x\mapsto\tri*{\theta,x}\mid\theta\in\bbR^{d},\,\,\nrm*{\theta}_{1}\leq{}\nrm*{\theta^{\star}}_{1}}.
\]
Given $\sampletwo=\xr[n]$, define the restricted eigenvalue for the target class as
\[
\gamma_{\mathrm{re}} = \inf_{\Delta: \nrm*{\Delta_{T^{c}}}_{1}\leq{}\nrm*{\Delta_{T}}_{1}}\frac{\frac{1}{n}\nrm*{X\Delta}_{2}^{2}}{\nrm*{\Delta}_{2}^{2}},
\]
where $X\in\bbR^{n\times{}d}$ has $\xr[n]$ as rows. Then under the assumptions of \pref{thm:fast_erm}, the empirical risk minimizer guarantees that with probability at least $1-\delta$,
\[
\poprisk(\esttwo,\gtone) - \poprisk(\besttwo,\gtone)
\leq{} \Ot\prn*{\frac{s\log{}d}{n\cdot\gamma_{\mathrm{re}}} + \prn*{\Rate(\nuisance,\sampleone,\delta/2)}^{\frac{4}{1+r}}}.
\]

\end{example}
For parameter estimation it is well known that restricted eigenvalue or related conditions are required to ensure parameter consistency. For prediction however, such as assumptions are not needed if we are willing to consider inefficient algorithms. The next example shows that ERM over predictors with a hard sparsity constraint obtains the optimal high-dimensional rate for prediction in the presence of nuisance parameters with \emph{no restricted eigenvalue assumption}.
\begin{example}[High-Dimensional Linear Predictors with Hard Sparsity]
\label{ex:hard_sparse}
Suppose that $\target$ is a class of high-dimensional linear predictors obeying exact or ``hard'' sparsity:
\[
\target = \crl*{x\mapsto\tri*{\theta,x}\mid\theta\in\bbR^{d},\,\,\nrm*{\theta}_{0}\leq{}s,\,\,\nrm*{\theta}_{1}\leq{}1},
\]
and suppose $\nrm*{x}_{\infty}\leq{}1$ almost surely under $\cD$. Then under the assumptions of \pref{thm:fast_erm}, the empirical risk minimizer guarantees that with probability at least $1-\delta$,
\[
\poprisk(\esttwo,\gtone) - \poprisk(\besttwo,\gtone)
\leq{} \Ot\prn*{\frac{s\log(d/s)}{n} + \prn*{\Rate(\nuisance,\sampleone,\delta/2)}^{\frac{4}{1+r}}}.
\]
\end{example}

\paragraph{Neural networks}
We now move beyond the classical linear setting to the case to the case where the target parameters belong to a class of neural networks, a considerably more expressive class of models. Let $\loglink(t)=(1+e^{-t})^{-1}$ be the logistic link function and let $\sigma_{\mathrm{relu}}(t)=\max\crl*{t,0}$ be the ReLU function.\footnoteorinline{For vector-valued inputs $x$ we overload $\loglink(t)$ and $\sigma_{\mathrm{relu}}(t)$ to denote element-wise application.}

Our first neural network example is inspired by \cite{chen1999improved,farrell2018deep}, who analyzed neural networks for nuisance parameter estimation with parametric target \params. We depart from their approach by using neural networks to estimate \emph{target parameters}.
\begin{example}
\label{ex:nn_fast}
Suppose that the target parameters are a class of neural networks $\target=\sigma_{\mathrm{log}}\circ{}\cF$, where
\begin{equation}
\label{eq:nn_slow}
\cF = \crl*{
f(x)\ldef{}A_{L}\cdot\sigma_{\mathrm{relu}}(A_{L-1}\cdot\sigma_{\mathrm{relu}}(A_{L-2}\cdot\sigma_{\mathrm{relu}}(A_{1}x)\ldots))
\mid{} A_{i}\in\bbR^{d_{i}\times{}d_{i-1}}, \nrm*{f}_{L_{\infty}(\cD)}\leq{}M
}, 
\end{equation}
and $d_{0}=d$ and $d_{L}=1$. Let $W=\sum_{i=1}^{L}d_{i}d_{i-1}$ denote the total number of weights in the network. Under the assumptions of \pref{thm:fast_erm}, the empirical risk minimizer guarantees that with probability at least $1-\delta$,
\[
\poprisk(\esttwo,\gtone) - \poprisk(\besttwo,\gtone)
\leq{} \Ot\prn*{\frac{WL\log{}W\log{}M}{n} + \prn*{\Rate(\nuisance,\sampleone,\delta/2)}^{\frac{4}{1+r}}}.
\]
\end{example}
The target class in this example is well-suited to estimation of binary treatment effects. Note that in this example our only quantitative assumption on the network weights is that they guarantee boundedness of the output. However, the bound scales linearly with the number of parameters $W$, and thus may be vacuous for modern \emph{overparameterized} neural networks. Our next example, which is based on neural networks covering bounds from \cite{bartlett2017spectrally}, shows that by making stronger assumptions on the weight matrices we can obtain weaker dependence on the number of parameters. This comes at the price of a slower rate---$n^{-\frac{1}{2}}$ vs. $n^{-1}$.
\begin{example}
\label{ex:nn_slow}
Suppose that the target parameters are a class of neural networks $\target=\sigma_{\mathrm{log}}\circ{}\cF$, where
\[
\cF = \crl*{
f(x)\ldef{}A_{L}\cdot\sigma_{\mathrm{relu}}(A_{L-1}\cdot\sigma_{\mathrm{relu}}(A_{L-2}\cdot\sigma_{\mathrm{relu}}(A_{1}x)\ldots))
\mid{} A_{i}\in\bbR^{d_{i}\times{}d_{i-1}}, \nrm*{A_i}_{\sigma}\leq{}s_i,\nrm*{A_i}_{2,1}\leq{}b_i
},
\]
and $\nrm*{A}_{2,1}$ denotes the sum of row-wise $\ls_{2}$-norms. Suppose $\nrm*{x}_{2}\leq{}1$ almost surely under $\cD$. Under the assumptions of \pref{thm:fast_erm}, the empirical risk minimizer guarantees that with probability at least $1-\delta$,
\[
\poprisk(\esttwo,\gtone) - \poprisk(\besttwo,\gtone)
\leq{} \Ot\prn*{\frac{\prod_{i=1}^{L}s_i\cdot\prn*{\sum_{i=1}^{L}\prn*{\nicefrac{b_i}{s_i}}^{2/3}}^{3/2}}{\sqrt{n}} + \prn*{\Rate(\nuisance,\sampleone,\delta/2)}^{\frac{4}{1+r}}}.
\]
\end{example}

Let us give a concrete example where the neural network guarantees above enable oracle rates for the target class while using a more flexible \param class for the nuisance. Suppose the target parameters belong to the class in \pref{ex:nn_fast} with $L_{2}$ layers and $W_{2}$ weights, and suppose the nuisance parameters also belong to a neural network class, but with $L_{1}$ layers and $W_{1}$ weights. In the next section we will show that under certain assumptions one can guarantee $\prn*{\Rate(\nuisance,\sampleone,\delta/2)}^{4}=\Ot((W_{1}L_{1}/n)^{2})$ for such a class. In this case, \pref{ex:nn_slow} shows that we obtain oracle rates whenever $W_{1}L_{1}=\littleo(\sqrt{W_{2}L_{2}n})$, meaning the number of parameters in the nuisance network can be significantly larger than for the target network. Similar guarantees can be derived for \pref{ex:nn_fast}.

Deriving tight generalization bounds for neural networks is an active area of research and there are many more results that can be used as-is to give guarantees for the second stage in our general framework \citep{golowich2017size,arora2019fine}.

\paragraph{Kernels} For our final example, we give rates for some basic kernel classes. These examples were chosen only for concreteness, and the machinery in this section and the subsequent sections can be invoked to give guarantees for more rich and general nonparametric classes.
\begin{example}[Gaussian Kernels]
\label{ex:gaussian}
Suppose that $\target\subset(\brk*{0,1}\to\bbR)$ is unit ball in the reproducing kernel Hilbert space with the Gaussian kernel $\cK(x,x')=e^{-\frac{1}{2}(x-x')^{2}}$. Suppose $x$ is drawn from the uniform distribution over $\brk*{0,1}$. Under the assumptions of \pref{thm:fast_erm}, the empirical risk minimizer guarantees that with probability at least $1-\delta$,
\[
\poprisk(\esttwo,\gtone) - \poprisk(\besttwo,\gtone)
\leq{} \Ot\prn*{\frac{1}{n} + \prn*{\Rate(\nuisance,\sampleone,\delta/2)}^{\frac{4}{1+r}}}.
\]
\end{example}

\begin{example}[Sobolev Spaces]
\label{ex:sobolev}
Suppose the target class is the Sobolev space
\[
\target = \crl*{\theta:\brk*{0,1}\to\bbR\mid{}\theta(0)=0,\;\textnormal{$f$ is absolutely continuous with $\theta'\in{}L_{2}[0,1]$}},
\]
and suppose that $x$ is drawn from the uniform distribution on $\brk*{0,1}$.
Under the assumptions of \pref{thm:fast_erm}, the empirical risk minimizer guarantees that with probability at least $1-\delta$,
\[
\poprisk(\esttwo,\gtone) - \poprisk(\besttwo,\gtone)
\leq{} \Ot\prn*{\frac{1}{n^{2/3}} + \prn*{\Rate(\nuisance,\sampleone,\delta/2)}^{\frac{4}{1+r}}}.
\]
\end{example}

\subsubsection{Proofs}\label{app:specific}
Throughout this section we adopt the shorthand
$\nrm*{\cdot}_{n,2}=\nrm*{\cdot}_{L_2(\zr[n])}$. We first recall some
basic technical lemmas which will be used in the proofs for the examples.
\begin{lemma}[\cite{mendelson2002improving}, Lemma 4.5]
\label{lem:star_entropy}
For any real-valued function class $\cF$ with $\nrm*{f}_{n,2}\leq{}1$ for all $f\in\cF$ and any $f^{\star}$ with $\nrm*{f^{\star}}_{n,2}\leq{}1$,
\begin{equation}
\cH_2(\starhull(\cF,f^{\star}),\veps,\zr[n]) \leq{} \cH_2(\cF,\veps/2,\zr[n]) + \log(4/\veps).
\end{equation}
\end{lemma}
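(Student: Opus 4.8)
The plan is a routine product-cover construction for the star hull. Recall that by definition $\starhull(\cF,f^\star)=\crl*{tf+(1-t)f^\star : f\in\cF,\ t\in[0,1]}$, so every element $h$ of the star hull is indexed by a pair $(t,f)\in[0,1]\times\cF$, and it suffices to cover $[0,1]$ and $\cF$ separately and take products. First I would fix a minimal $\veps/2$-cover $\cF'$ of $\cF$ in $\nrm*{\cdot}_{n,2}$, so that $\log\abs*{\cF'}=\cH_2(\cF,\veps/2,\zr[n])$, together with a minimal $(\veps/4)$-cover $G\subset[0,1]$ of the unit interval in $\abs*{\cdot}$, which satisfies $\abs*{G}\leq 4/\veps$ (for $\veps\leq 2$; for larger $\veps$ the star hull is covered by a single point, so the bound is trivial). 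The candidate cover is $\cN\ldef\crl*{t'f'+(1-t')f^\star : t'\in G,\ f'\in\cF'}$, whose cardinality is at most $\abs*{\cF'}\cdot\abs*{G}$.

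The one computation to carry out is to verify that $\cN$ is an $\veps$-cover of $\starhull(\cF,f^\star)$ in $\nrm*{\cdot}_{n,2}$. Given $h=tf+(1-t)f^\star$, choose $f'\in\cF'$ with $\nrm*{f-f'}_{n,2}\leq\veps/2$ and $t'\in G$ with $\abs*{t-t'}\leq\veps/4$. Writing $h-(t'f'+(1-t')f^\star)=(t-t')(f-f^\star)-t'(f'-f)$ and applying the triangle inequality gives
\[
\nrm*{h-\prn*{t'f'+(1-t')f^\star}}_{n,2}\leq\abs*{t-t'}\cdot\nrm*{f-f^\star}_{n,2}+t'\cdot\nrm*{f-f'}_{n,2}.
\]
Now $\nrm*{f-f^\star}_{n,2}\leq\nrm*{f}_{n,2}+\nrm*{f^\star}_{n,2}\leq 2$ by the hypothesis that every function in $\cF\cup\crl*{f^\star}$ has $\nrm*{\cdot}_{n,2}\leq 1$, and $t'\leq 1$, so the right-hand side is at most $2\cdot(\veps/4)+\veps/2=\veps$. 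Hence $\cN$ is an $\veps$-cover, and taking logarithms yields $\cH_2(\starhull(\cF,f^\star),\veps,\zr[n])\leq\log\abs*{\cF'}+\log\abs*{G}\leq\cH_2(\cF,\veps/2,\zr[n])+\log(4/\veps)$.

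There is no genuinely hard step; the two things to be careful about are (i) the norm bound must be invoked for $f^\star$ as well as for $\cF$ in order to get $\nrm*{f-f^\star}_{n,2}\leq 2$, which is precisely what forces the $\veps/2$ (rather than $\veps$) inside the entropy on the right-hand side, and (ii) the constant bookkeeping: choosing the grid resolution $\veps/4$ so that $2\abs*{t-t'}\leq\veps/2$, and checking that a $(\veps/4)$-net of $[0,1]$ can be taken of size at most $4/\veps$, so that the extra one-dimensional degree of freedom contributes only the additive $\log(4/\veps)$.
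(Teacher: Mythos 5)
The paper does not prove this lemma; it cites it directly as Lemma~4.5 of \cite{mendelson2002improving}, so there is no in-paper argument to compare against. Your product-cover argument is correct and is essentially the standard proof of this fact: decompose a star-hull element as a pair $(t,f)$, cover $[0,1]$ at scale $\veps/4$ and $\cF$ at scale $\veps/2$, and control the error via the split $(t-t')(f-f^{\star})+t'(f-f')$ together with $\nrm{f-f^{\star}}_{n,2}\leq 2$. The bookkeeping (grid of size $\leq 4/\veps$ for $\veps\leq 2$, triviality otherwise) is handled correctly.
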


\begin{lemma}[\cite{wainwright2019}, Proposition 14.1]
\label{lem:fixed_point_empirical}
Let $\delta_{n}$ be the minimal solution to
\[
\Rad(\cF,\delta)\leq{} \delta^{2},
\]
where $\cF\subseteq{}(\cZ\to\bbR)$ is a star-shaped set with $\sup_{f\in\cF}\sup_{z\in\cZ}\abs*{f(z)}\leq{}1$. Then with probability at least $1-\exp\prn{-cn\delta^{2}_n}$ over the draw of data $\zr[n]$, %
\[
\delta_{n}\leq{}34\wh{\delta}_{n},
\]
where $\wh{\delta}_{n}$ is the minimal solution to
\begin{equation}
\label{eq:empirical_fixed_point}
\Rad(\cF,\delta,\zr[n])\ldef{}\En_{\eps}\brk*{\sup_{f\in\cF:\nrm*{f}_{n,2}\leq{}\delta}\abs*{\frac{1}{n}\sum_{i=1}^{n}\eps_if(z_i)}} \leq{} \delta^{2}.
\end{equation}

\end{lemma}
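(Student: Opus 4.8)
The plan is to deduce this standard population-vs-empirical comparison from two ingredients: the monotonicity of the normalized Rademacher complexity that comes for free from star-shapedness, and a Talagrand-type concentration bound for the localized empirical Rademacher complexity around its mean. First I would record the structural consequence of star-shapedness of $\cF$ about $0$: both $\delta\mapsto\Rad(\cF,\delta)/\delta$ and $\delta\mapsto\Rad(\cF,\delta,\zr[n])/\delta$ are non-increasing on $(0,\infty)$, since if $0<\delta\le\delta'$ and $f\in\cF$ has the relevant $L_2$ norm at most $\delta'$, then $(\delta/\delta')f\in\cF$ has norm at most $\delta$, and homogeneity of the Rademacher average gives $\Rad(\cF,\delta')\le(\delta'/\delta)\Rad(\cF,\delta)$, and likewise empirically. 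Two consequences: (i) the solution set of each critical inequality is a half-line, so $\delta_n$ and $\hat\delta_n$ are well defined and by continuity $\Rad(\cF,\delta_n)=\delta_n^2$; (ii) $\delta_n\le34\hat\delta_n$ is equivalent to ruling out $\hat\delta_n<\delta_n/34$, on which bad event $\delta_n/34$ would itself solve the empirical inequality, i.e. $\Rad(\cF,\delta_n/34,\zr[n])\le(\delta_n/34)^2$. By monotonicity $\Rad(\cF,\delta_n/34)\ge\frac{1}{34}\Rad(\cF,\delta_n)=\delta_n^2/34$, which exceeds the target $(\delta_n/34)^2$ by a large constant factor, so it suffices to show that with probability at least $1-\exp(-cn\delta_n^2)$ the empirical complexity at scale $\delta_n/34$ is not much smaller than the population one.

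That last step is the technical core. For it I would invoke Talagrand's concentration inequality for suprema of empirical processes applied to $\Rad(\cF,\delta,\zr[n])=\En_{\eps}[\sup_{f\in\cF,\,\|f\|_{n,2}\le\delta}|\frac{1}{n}\sum_i\eps_if(z_i)|]$ viewed as a function of $\zr[n]$: each sample contributes $O(1/n)$ to the constrained supremum because $\|f\|_\infty\le1$, and the process variance is of order $\delta^2/n$, so deviations from the mean $\En_{\zr[n]}[\Rad(\cF,\delta,\zr[n])]=\Rad(\cF,\delta)$ are controlled at the $\delta_n^2$ scale on an event of probability $1-\exp(-cn\delta_n^2)$ when $\delta$ is of order $\delta_n$; a bounded-differences bound alone would only give the weaker exponent $n\delta_n^4$, which is why the variance-aware inequality is needed. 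The one genuine subtlety is that the localization constraint $\|f\|_{n,2}\le\delta$ is random, so the supremum ranges over a data-dependent set; I would handle this either by checking directly that replacing one sample moves the constrained supremum by only $O(1/n)$ (the constraint set itself moves by $O(1/n)$ in the relevant norm), or by first establishing the uniform law $\bigl|\|f\|_{n,2}^2-\|f\|_{L_2(\cD)}^2\bigr|\le\frac12\|f\|_{L_2(\cD)}^2+\frac12\delta_n^2$ for all $f\in\cF$ — proven by peeling over dyadic $L_2(\cD)$-shells and applying Talagrand shell by shell, where the per-shell expected supremum is bounded, after contracting $t\mapsto t^2$ on $[-1,1]$, by a Rademacher complexity that monotonicity and the definition of $\delta_n$ pin at order $2^k\delta_n^2$, so that the union bound over shells converges to $\exp(-cn\delta_n^2)$ — which lets one replace the data-dependent ball by a fixed population ball up to constants. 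Combining this concentration bound with the monotonicity facts and tracking constants yields $\delta_n\le34\hat\delta_n$.

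I expect the main obstacle to be precisely this concentration step: getting the sharp tail exponent of order $n\delta_n^2$ while correctly handling the random localization set, which forces either the direct bounded-differences verification or the dyadic peeling device above, together with careful bookkeeping of absolute constants so that the factor $34$ (rather than some larger number) comes out. Everything else — the star-shapedness monotonicity and the deterministic rearrangement in terms of half-lines — is routine.
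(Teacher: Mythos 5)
The paper does not prove this lemma; it is cited verbatim from Wainwright's book, so there is no in-paper argument to compare against. Your sketch correctly identifies the ingredients that do appear in the cited proof: star-shapedness gives that $\delta\mapsto\Rad(\cF,\delta)/\delta$ and $\delta\mapsto\Rad(\cF,\delta,\zr[n])/\delta$ are non-increasing, a uniform $L_2$-norm comparison is needed to pass between empirical and population balls, and a variance-aware concentration bound is needed to reach the $\exp(-cn\delta_n^2)$ tail; you also correctly flag the random-localization subtlety. But the way you close the probabilistic step has two linked errors. First, $\En_{\zr[n]}\Rad(\cF,\delta,\zr[n])\neq\Rad(\cF,\delta)$: the left side takes a supremum over the random ball $\{\|f\|_{n,2}\le\delta\}$, the right over the fixed ball $\{\|f\|_{L_2(\cD)}\le\delta\}$, and these are different functionals. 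Talagrand applied to the empirical-constraint complexity concentrates it around its own (different) mean, not the quantity you want. The right object to concentrate is the \emph{fixed}-constraint process $\zr[n]\mapsto\En_{\eps}\sup_{\|f\|_{L_2(\cD)}\le\rho}\bigl|n^{-1}\sum_i\eps_if(z_i)\bigr|$, whose $\zr[n]$-mean is $\Rad(\cF,\rho)$; one then transfers to $\Rad(\cF,\cdot,\zr[n])$ by ball inclusion on the norm-comparison event.

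Second, that transfer cannot be carried out at scale $\delta_n/34$. The uniform law $\bigl|\|f\|_{n,2}^2-\|f\|^2\bigr|\le\tfrac12\|f\|^2+\tfrac12\delta_n^2$ carries an additive $\tfrac12\delta_n^2$ slack which swamps any localization radius $\ll\delta_n$: the inclusion $\{\|f\|\le\rho\}\subseteq\{\|f\|_{n,2}\le\delta\}$ on that event requires $\tfrac32\rho^2+\tfrac12\delta_n^2\le\delta^2$ and hence $\delta\ge\delta_n/\sqrt2$, and at $\delta=\delta_n/34$ the empirical ball may, consistently with the norm law, contain only the zero function, so no lower bound on $\Rad(\cF,\delta_n/34,\zr[n])$ of the kind you want can be extracted there. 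The correct order reverses your last two steps: perform the inclusion and Talagrand step at the larger scale $\delta=\delta_n$ (with, say, $\rho=\delta_n/\sqrt3$, which on the norm-law event lies inside the empirical $\delta_n$-ball), obtaining a high-probability bound $\Rad(\cF,\delta_n,\zr[n])\ge c\,\delta_n^2$ for an absolute $c>0$, and only then invoke monotonicity of $\Rad(\cF,\cdot,\zr[n])/(\cdot)$: for any $\delta'<c\,\delta_n$, $\Rad(\cF,\delta',\zr[n])\ge\delta'\cdot c\,\delta_n>(\delta')^2$, whence $\wh{\delta}_n\ge c\,\delta_n$. This ``concentrate at scale $\delta_n$, apply monotonicity last'' ordering is how Wainwright structures the proof (the Proposition rests on the uniform norm-comparison theorem) and is where the absolute constant comes from; your plan applies monotonicity first and then tries to concentrate at the small scale $\delta_n/34$, and in that order the argument does not close.
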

The following result is an immediate consequence of \pref{lem:dudley}.
\begin{lemma}
\label{prop:dudley_fixed_point}
Define
\begin{equation}
\cF(\delta,\zr[n]) = \crl[\big]{f\in\cF: \nrm*{f}_{n,2}\leq{}\delta}.
\end{equation}
Then any minimal solution to
\begin{equation}
\label{eq:dudley_fixed_point}
\int_{\frac{\delta^{2}}{8}}^{\delta} \sqrt{\frac{\cH_2(\cF(\delta,\zr[n]),\veps,\zr[n])}{n}} d\veps \leq{} \frac{\delta^{2}}{20}.
\end{equation}
is an upper bound for the fixed point $\wh{\delta}_{n}$ for \pref{eq:empirical_fixed_point}.
\end{lemma}

\begin{proof}[\text{Proof for \pref{ex:lasso}}]
Let data $\xr[n]$ be fixed. Let $\cG = \crl*{x\mapsto{}\tri*{\theta,x}\mid{}\nrm*{\theta}_{1}\leq{}b}$. Under our assumption that $\nrm*{x_t}_{\infty}\leq{}1$, \cite{zhang2002covering}, Theorem 3, implies that
\[
\cH_2(\cG,\veps,\xr[n]) \leq{} O\prn*{\frac{b^{2}\log(d)}{\veps^{2}}}.
\]
We will now establish that \[\target(\delta,\xr[n])\ldef\crl*{x\mapsto\tri*{\theta-\besttwo,x}\mid{}\theta\in\target,\nrm*{\theta-\besttwo}_{n,2}\leq{}\delta}\subseteq{}\cG\] for an appropriate choice of $b$ using the restricted eigenvalue bound. 
Let \[\cC=\crl*{\Delta\in\bbR^{d}\mid{}\nrm*{\Delta_{T^{c}}}_{1}\leq{}\nrm*{\Delta_{T}}_{1}}.\] We first claim $\target-\besttwo\subset{}\cC$. Indeed, fix $\theta\in\target$ and let $\Delta=\theta-\besttwo$. Then we have
\[
\nrm*{\besttwo}_{1}\geq{}\nrm*{\theta}_{1} = \nrm*{\besttwo + \Delta}_1 = \nrm*{\besttwo + \Delta_{T}}_1 + \nrm*{\Delta_{T^{c}}}_1
\geq \nrm*{\besttwo}_{1}- \nrm*{\Delta_{T}}_1 + \nrm*{\Delta_{T^{c}}}_1.
\]
Rearranging, we get $\nrm*{\Delta_{T^{c}}}_{1}\leq{}\nrm*{\Delta_{T}}_{1}$ as desired. Now observe that for any $\Delta\in\cC$, we have
\[
\nrm*{\Delta}_{1}\leq{}\nrm*{\Delta_{T}}_{1} + \nrm*{\Delta_{T^{c}}}_{1}
\leq{}2\nrm*{\Delta_{T}}_{1} \leq{}2\sqrt{s}\nrm*{\Delta}_{2}\leq{}\frac{2\sqrt{s}}{\sqrt{\gamma_{\mathrm{re}}}}\cdot\frac{1}{\sqrt{n}}\nrm*{X\Delta}_{2}.
\]
This implies that $\target(\delta,\xr[n])\subseteq\cG_{b}$ for $b=\frac{2\sqrt{s}}{\sqrt{\gamma_{\mathrm{re}}}}\delta$, and as a consequence
\[
\cH_2(\target(\delta,\xr[n]),\veps,\xr[n]) \leq{} O\prn*{\frac{s\log(d)}{\gamma_{\mathrm{re}}\cdot\veps^{2}}\delta^{2}}.
\]
We plug this bound into \pref{eq:dudley_fixed_point} and derive an upper bound of
\begin{align*}
\int_{\frac{\delta^{2}}{8}}^{\delta} \sqrt{\frac{\cH_2(\starhull(\target-\besttwo,0),\veps,\xr[n])}{n}} d\veps &\leq{} 
O\prn*{\delta\cdot{}\sqrt{\frac{s\log(d)}{\gamma_{\mathrm{re}}n}}
\cdot\int_{\frac{\delta^{2}}{8}}^{\delta}\frac{1}{\veps}d\veps}\\
&\leq{} 
O\prn*{\delta\log(1/\delta)\cdot{}\sqrt{\frac{s\log(d)}{\gamma_{\mathrm{re}}n}}}.
\end{align*}
where we have used that $\starhull(\target-\besttwo,0)=\target-\besttwo$.
Using \pref{prop:dudley_fixed_point}, we may now take $\delta_{n}\leq{}O\prn*{\sqrt{\frac{s\log(d/s)}{n}}\log{}n}$ in \pref{eqn:critical_radius}, then combine with \pref{thm:fast_erm} and \pref{thm:generic_strongly_convex} to get the result.

\end{proof}

\begin{proof}[\text{Proof for \pref{ex:hard_sparse}}]
Since $\nrm*{\theta}_{1}\leq{}1$ and $\nrm*{x}_{\infty}\leq{}1$, the standard covering number bound for linear classes states that the covering number at scale $\veps$ for any fixed sparsity pattern is at most $C\cdot{}s\log(1/\veps)$. We take the union over all such covers for all ${ d\choose s}\leq{}\prn*{\frac{ed}{s}}^{s}$ sparsity patterns, which implies $\cH_2(\target-\besttwo,\veps,\xr[n])\propto{}s\prn*{\log(d/s) + \log(1/\veps)}$. \pref{lem:star_entropy} further implies that 
\[
\cH_2(\starhull(\target-\besttwo,0),\veps,\xr[n])\propto{}s\prn*{\log(d/s) + \log(1/\veps)}.
\]
It is now a standard calculation to show that 
\[
\int_{\frac{\delta^{2}}{8}}^{\delta} \sqrt{\frac{\cH_2(\starhull(\target-\besttwo,0),\veps,\xr[n])}{n}} d\veps \leq{} 
O\prn*{\delta\sqrt{\log(1/\delta)}\cdot{}\sqrt{\frac{s\log(d/s)}{n}}
}.
\]
Thus, via \pref{lem:fixed_point_empirical} and \pref{prop:dudley_fixed_point}, we may take $\delta_{n}\leq{}O\prn*{\sqrt{\frac{s\log(d/s)\log{}n}{n}}}$ in \pref{eqn:critical_radius}. The final result follows by combining \pref{thm:fast_erm} and \pref{thm:generic_strongly_convex}.
  
\end{proof}

\begin{proof}[\text{Proof for \pref{ex:nn_fast}}]
Since our the target class is bounded, \pref{lem:star_entropy} implies
\[
\cH_2(\starhull(\target-\besttwo),2\veps,\xr[n])
\leq{} \cH_2(\target-\besttwo,\veps,\xr[n]) + \log(2/\veps)
= \cH_2(\target,\veps,\xr[n]) + \log(2/\veps).
\]
Recall
\[
\cF = \crl*{
f(x)\ldef{}A_{L}\cdot\sigma_{\mathrm{relu}}(A_{L-1}\cdot\sigma_{\mathrm{relu}}(A_{L-2}\cdot\sigma_{\mathrm{relu}}(A_{1}x)\ldots))
\mid{} A_{i}\in\bbR^{d_{i}\times{}d_{i-1}},\nrm*{f}_{L_{\infty}(\cD)}\leq{}M
}.
\]
Then, since $\sigma_{\mathrm{relu}}$ is $1$-Lipschitz and positive-homogeneous, we have $\cH_2(\target,\veps,\xr[n])\leq{}\cH_2(\cF,\veps,\xr[n])$. Recall that for $\brk*{0,M}$-valued classes of regressors we can relate the empirical $L_{2}$ metric to an empirical $L_{1}$ metric for a closely related VC class as follows. Let $Y\sim{}\mathrm{unif}([0,M])$, let $f,f'\in\cG$, and write
\begin{align*}
\bbP_{n}(f(X)-f'(X))^{2} &= M^{2}\bbP_{n}(\bbP_{Y}(Y\leq{}f(X))-\bbP_{Y}(Y\leq{}f'(X)))^{2}\\
&\leq{} M^{2}(\bbP_{n}\times{}\bbP_{Y})(\ind\crl*{Y\leq{}f(X)}-\ind\crl*{Y\leq{}f'(X)}).
\end{align*}
Consequently, we see that the $L_{2}$ covering number for $\cF$ on the distribution $\bbP_{n}$ at scale $\veps$, is at most the size of the $L_{1}$ cover of the class $\cF'= \crl*{(x,y)\mapsto{}\crl*{y\leq{}f(x)}\mid{}f\in\cF}$ on distribution $\bbP_n\times{}\bbP_{Y}$ at scale $\veps^{2}/M$. Thus, invoking Haussler's $L_{1}$ covering number bound for VC classes \citep{haussler1995sphere}, we have
\begin{align*}
\cH_2(\target,\veps,\xr[n]) \leq{} 2\cdot\mathrm{vc}(\cF')\log\prn*{\frac{CM}{\veps}} =  2\cdot{}\mathrm{pdim}(\cF)\log\prn*{\frac{CM}{\veps}},
\end{align*}
where $\mathrm{vc}(\cdot)$ denotes the VC dimension and $\mathrm{pdim}(\cdot)$ denotes the pseudodimension. Using Theorem 14.1 from \cite{anthony1999neural} and Theorem 6 from \cite{bartlett2017nearly}, we have
\[
\mathrm{pdim}(\cF)\leq{}O(LW\log(W)).
\]
With this bound on the metric entropy we have
\begin{align*}
\int_{\frac{\delta^{2}}{8}}^{\delta} \sqrt{\frac{\cH_2(\starhull(\target-\besttwo,0),\veps,\xr[n])}{n}} d\veps &\leq{} 
O\prn*{
\sqrt{\frac{LW\log{}W\log{}M}{n}}
}\cdot{}
\int_{\frac{\delta^{2}}{8}}^{\delta}\sqrt{\log(1/\veps)}d\veps \\
&\leq{}
O\prn*{\sqrt{\frac{LW\log{}W\log{}M}{n}}\cdot\delta\sqrt{\log(1/\delta)}}.
\end{align*}
Thus, it suffices to take $\delta_{n}\leq{}O\prn*{
\sqrt{\frac{LW\log{}W\log{}M\log{}n}{n}}
}$ in \pref{eqn:critical_radius} and appeal to \pref{thm:fast_erm} and \pref{thm:generic_strongly_convex}.
\end{proof}

\begin{proof}[\text{Proof for \pref{ex:nn_slow}}]
As in \pref{ex:nn_fast}, we have $\cH_2(\starhull(\target-\besttwo),\veps,\xr[n])\leq{}\cH_2(\cF,\veps,\xr[n])$. Theorem 3.3 of \cite{bartlett2017spectrally} implies that under our assumptions,
\[
\cH_2(\cF,\veps,\xr[n]) \leq{} O\prn*{
\frac{n\log{}W}{\veps^{2}}\prod_{i=1}^{K}s_i^{2}\cdot\prn*{\sum_{i=1}^{L}\prn*{\nicefrac{b_i}{s_i}}^{2/3}}^{3}
}.
\]
The result follows by plugging this bound into \pref{prop:dudley_fixed_point} and proceeding exactly as in the previous examples.
\end{proof}
\begin{proof}[\text{Proof for \pref{ex:gaussian} and \pref{ex:sobolev}}]
Note that each target class $\target$ has range bounded by $1$. By examples 14.4 and 14.3 in \cite{wainwright2019}, we may take $\delta_{n}=c{\sqrt{\log{}n/n}}$ and $\delta_{n}=cn^{-1/3}$ in \pref{eqn:critical_radius} for the gaussian and Sobolev classes respectively.
We combine with this with \pref{thm:fast_erm} and \pref{thm:generic_strongly_convex}.
\end{proof}

\subsection{Plug-in Empirical Risk Minimization: Refined Guarantees for VC Classes}
\label{app:vc}
In this section of the appendix we use the general tools developed in \pref{sec:erm} to provide efficient/variance-dependent oracle rates for VC classes with general Lipschitz losses. Our main result shows that for VC classes with dimension $d$, the excess risk enjoyed by variance penalization grows exactly as $O(\sqrt{V^{\star}d/n})$ (where $V^{\star}$, as before, is the variance of the loss at the pair $(\besttwo,\gtone)$) so long as the nuisance estimator converges at a rate of $o(n^{-1/4})$. The key to our approach is to assume boundedness of the so-called \emph{Alexander capacity function}, a classical quantity that arises in the study of ratio type empirical processes \citep{gine2006concentration}.

To be more precise, for this example we assume that $\target$ is a class of binary predictors with VC dimension $d$, and let $\ls$ have the following policy learning structure:
\[
\ls(\theta,g;z) = \Gamma(g,z)\cdot\theta(x),\quad\quad\poprisk(\theta,g)=\En\brk{\ls(\theta,g;z)},
\]
where $\Gamma$ is a known function. Our goal is to derive a bound for which the leading term only scales with $V^{\star}$ rather than the loss range. Our results depend on a variant of the Alexander capacity function \citep{gine2006concentration,hanneke2014theory}. Letting \[\targeteps=\crl*{\theta\in\target:\En\brk*{\Gamma^2(\gtone,z)(\theta(x)-\besttwo(x))^{2}}\leq{}\veps^{2}},\] the capacity function is defined as
\begin{equation}
\tau^{2}(\veps) = \frac{\En\brk{\sup_{\theta\in\targeteps}\Gamma^{2}(\gtone,z)(\theta(x)-\besttwo(x))^{2}}}{\veps^{2}}.
\end{equation}
When $\Gamma$ is the unweighted classification loss, this definition recovers the classical definition of the capacity function \citep{gine2006concentration,hanneke2014theory}. Beyond boundedness of the capacity function, we make the following assumption.
\begin{assumption}
\label{ass:alexander}
\pref{ass:universal_orthogonality} holds along, with the following bounds:
\begin{itemize}
\item $\abs*{\Gamma(g,z)}\leq{}R$ almost surely for all $g\in\cG$, for some $R\geq{}1$.
\item $\En\brk*{\Gamma^{2}(g_0,z)\mid{}x}\geq{}\gamma$ almost surely.
\item $\prn*{\En(\Gamma(g,z)-\Gamma(g_0,z))^{4}}^{1/4}\leq{}L\nrm*{g-g_0}_{\nuisance}$ for all $g\in\nuisance$, for some seminorm $\nrm*{\cdot}_{\nuisance}$.
\item The first stage algorithm provides an estimation error bound with respect to $\nrm*{\cdot}_{\nuisance}$, i.e.
  $\nrm*{\estone-\gtone}_{\nuisance}\leq{}\Rate(\nuisance,S,\delta)$ with probability at least $1-\delta$ over the draw of $S$.
\item \pref{ass:smooth_loss_slow} holds with respect to $\nrm*{\cdot}_{\nuisance}$ with constant $\beta$.
\end{itemize}
\end{assumption}

\begin{theorem}
\label{thm:alexander}
Suppose that \pref{ass:alexander} holds, and define $\tau_0\ldef{}\sup_{\veps\geq{}\sqrt{\nicefrac{d}{n}}}\crl{\tau(\veps)}$.
Then variance-penalized empirical risk minimizer guarantees that with probability at least $1-\delta$,
\begin{align*}
\label{eq:policy_alexander}
&\poprisk(\esttwo,\gtone) - \poprisk(\besttwo,\gtone)\\
&\leq{} \Ot\prn*{\sqrt{\frac{V^{\star}d\log\tau_0}{n}} + \frac{(R+L)d\log\tau_0}{n} + (\beta + (R+L)^{2}\gamma^{-1/2})\cdot\prn*{\Rate(\nuisance,\sampleone,\delta/2)}^{2}}.
\end{align*}
\end{theorem}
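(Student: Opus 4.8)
The plan is to chain three results that are already in place. First, condition on the first-stage success event, on which $\nrm*{\estone-\gtone}_{\nuisance}\le\Rate(\nuisance,\sampleone,\delta/2)$; by sample splitting this event is independent of the data used by the second stage. On it, \pref{thm:orthogonal_slow} applies---its two hypotheses, \pref{ass:universal_orthogonality} and \pref{ass:smooth_loss_slow}, are exactly what \pref{ass:alexander} supplies---and reduces the quantity of interest to $\poprisk(\esttwo,\gtone)-\poprisk(\besttwo,\gtone)\le\Rate(\target,\sampletwo,\delta/2\midsem\estone)+\beta\,\Rate(\nuisance,\sampleone,\delta/2)^2$. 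It therefore remains to take the second stage to be the centered second-moment-penalized plug-in ERM of \pref{thm:variance_penalized} (with $\mu^\star$ estimated on a held-out part of $\sampletwo$) and to upgrade the abstract critical-radius bound of \pref{thm:variance_penalized} into the explicit $d\log\tau_0$ rate claimed here.

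The heart of the proof is estimating the critical radius $\delta_n$ of \pref{eq:variance_critical} for the weighted loss class $\cF-\fstar=\crl*{\Gamma(\estone,\cdot)(\theta(\cdot)-\besttwo(\cdot)):\theta\in\target}$, and likewise the non-localized complexity $\cR_n(\ls\circ\target)$. Since $\theta(x)-\besttwo(x)\in\crl*{-1,0,1}$, the squared $L_2(\cD)$-norm of a member of $\cF-\fstar$ equals $\En\brk*{\Gamma^2(\estone,z)\ind\crl*{\theta\neq\besttwo}}$. The first step is to trade the $\estone$-weighting for the $\gtone$-weighting that defines $\targeteps$: bounding $\abs*{\Gamma^2(\estone,\cdot)-\Gamma^2(\gtone,\cdot)}\le 2R\abs*{\Gamma(\estone,\cdot)-\Gamma(\gtone,\cdot)}$, applying Cauchy--Schwarz, invoking the $2\to 4$ bound $\prn*{\En(\Gamma(\estone,z)-\Gamma(\gtone,z))^4}^{1/4}\le L\nrm*{\estone-\gtone}_{\nuisance}$, and using overlap, $\En\brk*{\Gamma^2(\gtone,z)\mid x}\ge\gamma$ (hence $\bbP\brk*{\theta\neq\besttwo}\le\gamma^{-1}\En\brk*{\Gamma^2(\gtone,z)\ind\crl*{\theta\neq\besttwo}}$), one shows that membership in the $\delta$-ball of $\cF-\fstar$ forces $\theta\in\targeteps$ with $\veps\lesssim\delta+RL\gamma^{-1/2}\,\Rate(\nuisance,\sampleone,\delta/2)$. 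On $\targeteps$ the class $\crl*{\Gamma(\gtone,\cdot)(\theta-\besttwo)}$ is VC-subgraph of dimension $O(d)$ with envelope $F_\veps$ satisfying $\En\brk*{F_\veps^2}=\tau^2(\veps)\,\veps^2$ by the definition of $\tau$ and with $L_2(\cD)$-diameter at most $\veps$; the standard covering bound $N(\eta\nrm*{F_\veps}_{L_2(\bbP_n)},\cdot,L_2(\bbP_n))\lesssim\eta^{-2d}$ fed into Dudley's entropy integral then yields a localized Rademacher complexity of order $\veps\sqrt{d\log\tau(\veps)/n}+Rd\log\tau(\veps)/n$---the logarithmic factor being the log of the envelope-to-diameter ratio, i.e.\ $\log\tau(\veps)\le\log\tau_0$ for $\veps\ge\sqrt{d/n}$, instead of the crude $\log(1/\veps)$ one incurs with the envelope $R$. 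Adding the sub-dominant complexity of the perturbation class $\crl*{(\Gamma(\estone,\cdot)-\Gamma(\gtone,\cdot))(\theta-\besttwo)}$ and solving $\Radexp(\sh(\cF-\fstar),\delta)\le\delta^2/R$ gives $\delta_n^2\lesssim R^2 d\log\tau_0/n+R^2L^2\gamma^{-1}\Rate(\nuisance,\sampleone,\delta/2)^2$ up to negligible terms, and an analogous unlocalized computation gives $\cR_n(\ls\circ\target)\lesssim R\sqrt{d\log\tau_0/n}$.

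Substituting these into the conclusion of \pref{thm:variance_penalized} (applied conditionally on $\estone$), the leading term $\sqrt{V^\star}\prn*{\delta_n/R+\sqrt{\log(1/\delta)/n}}$ becomes $\sqrt{V^\star d\log\tau_0/n}$; the terms $R^{-1}\prn*{\delta_n^2+\cR_n^2(\ls\circ\target)}+R\log(1/\delta)/n$ become $(R+L)\,d\log\tau_0/n$ after absorbing logarithmic factors into $\Ot$; and the residual nuisance-dependent pieces---$R^{-1}L^2\nrm*{\estone-\gtone}_{\nuisance}^2$ from \pref{thm:variance_penalized} together with the $R^2L^2\gamma^{-1}$ correction from the reweighting---collapse into $(R+L)^2\gamma^{-1/2}\,\Rate(\nuisance,\sampleone,\delta/2)^2$. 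Adding the $\beta\,\Rate(\nuisance,\sampleone,\delta/2)^2$ term contributed by \pref{thm:orthogonal_slow} and union bounding over the two $\delta/2$ failure events yields the stated inequality.

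The main obstacle is the ratio-type empirical process step: showing that the localized complexity of the $\Gamma$-weighted VC class is driven by $\log\tau_0$ rather than $\log n$. This entails handling a self-referential localization---the envelope $F_\veps$ depends on the radius $\veps$, which is itself a fixed point---concentrating the empirical envelope norm $\nrm*{F_\veps}_{L_2(\bbP_n)}$ around $\tau(\veps)\,\veps$, and checking that peeling over dyadic scales of $\veps$ costs only logarithmically; this essentially transplants the Gin\'{e}--Koltchinskii analysis of ratio-type processes into the localized-Rademacher formalism of \pref{sec:erm}. A secondary burden is the bookkeeping: verifying that the several lower-order nuisance corrections (from the reweighting, from $\muhat$ versus $\mu^\star$, and from $\ls(\cdot,\estone)$ versus $\ls(\cdot,\gtone)$) genuinely collapse into the single term $(R+L)^2\gamma^{-1/2}\Rate(\nuisance,\sampleone,\delta/2)^2$.
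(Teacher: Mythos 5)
Your proposal tracks the paper's proof almost exactly: it applies \pref{thm:orthogonal_slow} to reduce to a plug-in excess risk bound, invokes \pref{thm:variance_penalized} for the second stage, transfers the $\estone$-weighted localization to $\gtone$-weighted localization (yielding the cutoff $\veps_0\asymp\gamma^{-1/2}\nrm{\estone-\gtone}_{\nuisance}^2$), and then bounds the critical radius by the capacity function. The one tactical difference is in the covering-number step: you feed a standard Pollard-style envelope-weighted VC covering bound into Dudley's integral, while the paper reduces to a \emph{Hamming} cover on an auxiliary dataset in which each sample $x_i$ is replicated $\ceil{\sup_{\theta}\Gamma_i^2(\theta(x_i)-\besttwo(x_i))^2/\delta^2}$ times and then invokes Haussler's Hamming-error bound---a device that makes the empirical envelope $u_n^2$ appear explicitly as the (normalized) auxiliary sample size; the paper also handles your ``self-referential localization'' concern not by peeling over dyadic scales but at a single scale $\delta$, via a Talagrand-type bound on the empirical diameter $v_n$ together with a Markov-inequality tail bound on $u_n^2$ that is then integrated out. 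Both routes land on $\Radexp(\sh(\cF-\fstar),\delta)\lesssim\delta\sqrt{d\log\tau_0/n}+d\log\tau_0/n$ for $\delta\geq\veps_0$, and the remaining bookkeeping (absorbing $\veps_0^2$, $\cR_n^2(\ls\circ\target)$, and the $L^2\nrm{\estone-\gtone}^2$ residual into the single $(R+L)^2\gamma^{-1/2}\Rate^2$ term) agrees with the paper.
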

Note that whenever $\Rate(\nuisance,\sampleone,\delta/2)=\littleo(n^{-1/4})$ the asymptotic rate depends only on the variance at $\besttwo$ and $\gtone$, not on the problem-dependent parameters $L/R/\beta\gamma$. Furthermore, whenever the capacity function is constant the asymptotic rate is exactly $O(\sqrt{V^{\star}d/n})$.

Variance-dependent bounds that obtain the efficient $O(\sqrt{V^{\star}d/n})$ rate have been the subject of much recent investigation, and there is much interest in understanding when the $O(\sqrt{V^{\star}d\log{}n/n})$ rate obtained by naive approaches can be improved. To give a brief survey, the seminal empirical variance bound due to \cite{maurer2009empirical} when applied directly to this setting gives a suboptimal $O(\sqrt{V^{\star}d\log{}n/n})$ rate. Recent work of \cite{athey2017efficient} shows that for a specific loss and nuisance parameter setup arising in policy learning, the $\log{}n$ can be replaced with a certain worst-case variance parameter. Our result, \pref{thm:alexander} is complementary, and shows that the $\log{}n$ can be replaced by the capacity function for \emph{general losses}. It appears unlikely that the $\log{}n$ factor can be removed without at least some type of assumption. Indeed the results in \cite{rakhlin2017empirical} imply that there are indeed VC classes for which the critical radius grows as $\sqrt{d\log{}n/n}$ in the worst case.

The proof of \pref{thm:alexander} can be broken into three parts: First, we apply the previous results of this section to show that the excess risk obtained by variance penalization depends on the critical radius of the class $\ls\circ\target$. Second, we show that in the absence of first-stage estimation error, the capacity function controls the critical radius. Finally, we show that the impact of nuisance estimation error on the capacity function is of second order.

\subsubsection{Proof of Theorem \ref*{thm:alexander}}
\newcommand{\plugincap}{\wh{\tau}}

Define the function class $\cF = \{z\mapsto{}\ell(\theta(x), \estone(w);z): \theta \in \target\}$. We assume for now that $\nrm*{f}_{\infty}\leq{}1$ for all $f\in\cF$, that $\Gamma$ is $1$-Lipschitz (i.e. $L=R=1$ in the theorem statement) and that $\En\brk*{\Gamma^{2}(g_0,z)\mid{}x}\geq{}\gamma$; the general case will be handled by rescaling at the end of the proof.. 

Our starting point is to appeal to \pref{thm:moment_penalized} . In particular, let $\delta_n \geq 0$ be any solution to the inequality:
\begin{equation}
\Radexp(\sh(\cF - f^{\star}),r) \leq \delta^2,
\end{equation}
where $f^{\star}(z) \ldef \ell(\besttwo(x), \estone(w);z)$. Then if $\esttwo$ is the outcome of variance-penalized ERM, we have that by \pref{thm:variance_penalized}, with probability at least $1-\delta$,
\begin{align*}
\journal{&}\poprisk(\esttwo, \gtone) - \poprisk(\besttwo, \gtone)  \journal{\\&}= O\left(\sqrt{V^{\star}} \left(\delta_n+\sqrt{\frac{\log(1/\delta)}{n}}\right) + \delta_n^2+\frac{\log(1/\delta)}{n}
+ \beta\cdot{}\prn*{\Rate(\nuisance,\sampleone,\delta/2)}^{2} + \Radexp^{2}(\cF)\right).
\end{align*}

\paragraph{Moving to capacity function at $g_0$}
Per the discussion in the prequel, we focus on bounding the critical radius in the case where $\cF$ is bounded by $1$ and $\Gamma$ is $1$-Lipschitz. We wish to make use of the capacity function, which is defined at $g_0$, but the local Rademacher complexity we need to bound is that of $\cF$, which evaluates the weight function $\Gamma$ at $\estone$. To make progress, we show how to use the capacity function defined in the theorem statement to bound the following ``plug-in'' variant:
\[
\plugincap^{2}(\veps) = \frac{\En\sup_{\theta:\En\brk*{\Gamma(\estone,z)^{2}(\theta(x)-\besttwo(x))^{2}}\leq{}\veps^{2}}\Gamma^{2}(\estone,z)(\theta(x)-\besttwo(x))^{2}}{\veps^{2}}.
\]

We first show how to relate the $L_2$ norm at $\estone$ to the $L_2$ norm at $\gtone$. 
Define $\nrm*{\theta}_{\target}=\sqrt{\En\Gamma(\estone,z)^{2}(\theta(x)-\besttwo(x))^{2}}$.  Then for any $\theta\in\target$ we have
\begin{align*}
\En\brk*{\Gamma(\estone,z)^{2}(\theta(x)-\besttwo(x))^{2}}
&\geq{} \frac{1}{2}\En\Gamma(\gtone,z)^{2}(\theta(x)-\besttwo(x))^{2} - \En(\Gamma(\estone,z)-\Gamma(\gtone,z))^{2}(\theta(x)-\besttwo(x))^{2}\\
  &= \frac{1}{2}\nrm*{\theta-\besttwo}_{\target}^{2} - \En(\Gamma(\estone,z)-\Gamma(\gtone,z))^{2}(\theta(x)-\besttwo(x))^{2}.
    \end{align*}
    Using AM-GM and boundedness of $\theta$, for any $\eta>0$ this is lower bounded by
    \begin{align*}
       \frac{1}{2}\nrm*{\theta-\besttwo}_{\target}^{2} - \frac{1}{2\eta}\En(\Gamma(\estone,z)-\Gamma(\gtone,z))^{4} - \frac{\eta}{2}\En(\theta(x)-\besttwo(x))^{2}.
        \end{align*}
        Using the Lipschitz assumption and conditional lower bound on $\Gamma$, we further lower bound by
        \begin{align*}
\geq \frac{1}{2}\nrm*{\theta-\besttwo}_{\target}^{2} - \frac{1}{2\eta}\nrm*{\estone-\gtone}_{\nuisance}^{4} - \frac{\eta}{2\gamma}\nrm*{\theta-\besttwo}_{\target}^{2}.
\end{align*}
Hence, by choosing $\eta=\gamma/2$ and rearranging, we get
\begin{equation}
\label{eq:l2_alexander}
\nrm*{\theta-\besttwo}_{\target}^{2} \leq{} 4\En\brk*{\Gamma(\estone,z)^{2}(\theta(x)-\besttwo(x))^{2}} + \frac{4}{\gamma}\nrm*{\estone-\gtone}_{\nuisance}^{4}.
\end{equation}
We proceed to bound the capacity function $\plugincap$. Let $\veps_0=\frac{2}{\gamma^{1/2}}\nrm*{\estone-\gtone}_{\nuisance}^{2}$. Let $\veps\geq{}\veps_0$ be fixed and let $\theta\in\target$ be any policy with $\En\brk*{\Gamma(\estone,z)^{2}(\theta(x)-\besttwo(x))^{2}}\leq{}\veps^{2}$. Then equation \pref{eq:l2_alexander} implies that that $\nrm*{\theta-\besttwo}_{\target}^{2}\leq{}5\veps^{2}$, and so
\[
\plugincap^{2}(\veps) \leq{} \frac{\En\sup_{\theta:\nrm*{\theta-\besttwo}_{\target}^{2}\leq{}5\veps^{2}}\Gamma^{2}(\estone,z)(\theta(x)-\besttwo(x))^{2}}{\veps^{2}}.
\]
To handle the term in the numerator we proceed similar to the proof of \pref{eq:l2_alexander}. We have
\begin{align*}
&\En\sup_{\theta:\nrm*{\theta-\besttwo}_{\target}^{2}\leq{}5\veps^{2}}\Gamma^{2}(\estone,z)(\theta(x)-\besttwo(x))^{2}\\
&\leq{} 2\En\sup_{\theta:\nrm*{\theta-\besttwo}_{\target}^{2}\leq{}5\veps^{2}}\Gamma^{2}(\gtone,z)(\theta(x)-\besttwo(x))^{2}
+ 2\En\sup_{\theta:\nrm*{\theta-\besttwo}_{\target}^{2}\leq{}5\veps^{2}}(\Gamma(\estone,z)-\Gamma(\gtone,z))^{2}(\theta(x)-\besttwo(x))^{2}.
\end{align*}
Fix any $\eta>0$. We use AM-GM and boundedness of policies to upper bound the second term as
\begin{align*}
&\En\sup_{\theta:\nrm*{\theta-\besttwo}_{\target}^{2}\leq{}5\veps^{2}}(\Gamma(\estone,z)-\Gamma(\gtone,z))^{2}(\theta(x)-\besttwo(x))^{2} \\
&\leq{}\frac{1}{\eta}\En(\Gamma(\estone,z)-\Gamma(\gtone,z))^{4}+ \eta\En_{x}\sup_{\theta:\nrm*{\theta-\besttwo}_{\target}^{2}\leq{}5\veps^{2}}(\theta(x)-\besttwo(x))^{2} \\
&\leq{}\frac{1}{\eta}\nrm*{\estone-\gtone}_{\nuisance}^{4}+ \frac{\eta}{\gamma}\En_{x}\sup_{\theta:\nrm*{\theta-\besttwo}_{\target}^{2}\leq{}5\veps^{2}}\En\brk*{\Gamma^{2}(\gtone,z)\mid{}x}(\theta(x)-\besttwo(x))^{2} \\
&\leq{}\frac{1}{\eta}\nrm*{\estone-\gtone}_{\nuisance}^{4}+ \frac{\eta}{\gamma}\En\sup_{\theta:\nrm*{\theta-\besttwo}_{\target}^{2}\leq{}5\veps^{2}}\Gamma^{2}(\gtone,z)(\theta(x)-\besttwo(x))^{2}
\intertext{We choose $\eta=\gamma$ and recall the definition of $\veps_0$, which gives}
&\leq{}\veps_{0}^{2}/4+ \En\sup_{\theta:\nrm*{\theta-\besttwo}_{\target}^{2}\leq{}5\veps^{2}}\Gamma^{2}(\gtone,z)(\theta(x)-\besttwo(x))^{2}.
\end{align*}
Putting everything together, we get
\[
\plugincap^{2}(\veps) \leq{} \frac{4\cdot{}\En\sup_{\theta:\nrm*{\theta-\besttwo}_{\target}^{2}\leq{}5\veps^{2}}\Gamma^{2}(\gtone,z)(\theta(x)-\besttwo(x))^{2} + \veps_0^{2}/2}{\veps^{2}}.
\]
Thus, for all $\veps\geq{}\veps_0$ we have 
\begin{equation}
\label{eq:alexander_nuisance}
\plugincap^{2}(\veps) \leq{} 20\cdot\tau^{2}(5\veps) + 3.
\end{equation}

\paragraph{Bounding the critical radius}
For any class $\cF$ we define $\cF_{\delta}=\crl*{f\in\cF:\nrm*{f}_{2}\leq\delta}$.
We work with the following empirical version of the local Rademacher complexity
\begin{equation}
\Rad(\cF,\delta,\zr[n]) = \En_{\epsilon}\brk*{\sup_{f\in \cF_\delta} \left| \frac{1}{n}\sum_{i=1}^n \epsilon_i f(z_i) \right| },
\end{equation}
which has $\Radexp(\cF,\delta) = \En_{\zr[n]}\brk*{\Rademp(\cF,\delta,\zr[n])}$. Let the draw of $\zr[n]$ be fixed. Invoking \pref{lem:dudley}, we have
\begin{align*}
\Rad(\starhull(\cF-f^{\star}),\delta,\zr[n]) \leq~& \inf_{\alpha\geq 0}\brk*{ 4\alpha + 10\int_{\alpha}^{\sup_{h\in \sh(\cF-\fstar)_\delta} \|h\|_n} \sqrt{\frac{\cH_2(\sh(\cF-\fstar)_\delta,\veps, z_{1:n})}{n}} d\veps}.
\end{align*}
Using that any $h\in\sh(\cF-\fstar)_{\delta}$ can be written as $r\cdot(f-f^{\star})$, where $\nrm*{f-f^{\star}}_{2}\leq{}\delta$ and $r\in\brk*{0,1}$, a simple discretization argument (cf. proof of \pref{lem:star_entropy}) shows that
\[
\cH_2(\sh(\cF-\fstar)_\delta, \veps,z_{1:n})\leq{}\cH_2((\cF-f^{\star})_\delta, \veps/2, z_{1:n}) + \log\prn[\bigg]{2\sup_{h\in(\cF-\fstar)_{\delta}}\nrm*{h}_{n}/\veps}.
\]
Let us adopt the shorthand $v_n=\sup_{h\in(\cF-\fstar)_{\delta}}\nrm*{h}_{n}$. It follows from the usual symmetrization argument that $\En{}v_n^{2}\leq{}\delta^{2} + 2\Radexp(\cF-f^{\star},\delta)$. Letting $\alpha=0$ be fixed, we can summarize our argument so far as
\[
\Rad(\starhull(\cF-f^{\star}),\delta,\zr[n]) \leq   10\int_{0}^{v_n} \sqrt{\frac{\cH_2((\cF-f^{\star})_{\delta}, \veps/2 ,z_{1:n})}{n}} d\veps
+ 10\int_{0}^{v_n}\sqrt{\log(2v_n/\veps)/n}d\veps.
\]
Furthermore, using a change of variables we have
\[
\int_{0}^{v_n}\sqrt{\log(2v_n/\veps)/n}d\veps
\leq{}v_n\int_{0}^{1}\sqrt{\log(2/\veps)/n}d\veps \leq{} C\cdot{}\frac{v_n}{\sqrt{n}}.
\]
We now handle the covering integral for $(\cF-f^{\star})_{\delta}$. 
Let \[
\target(\delta)=\crl*{\theta\in\target\mid{}\En\brk*{\Gamma^2(\estone,z)(\theta(x)-\besttwo(x))^{2}}\leq\delta^{2}}.
\]
Our approach is to upper bound the empirical $L_2$ covering number for $(\cF-f^{\star})_{\delta}$ by the covering number of the class $\target(\delta)$ with respect to Hamming error. Let the Hamming error on a set $S'=\crl*{x_1,\ldots,x_M}$ be defined via
\[
d_{H,S'}(\theta,\theta')=\frac{1}{M}\sum_{i=1}^{M}\indic\crl*{\theta(x'_i)\neq{}\theta'(x'_i)}.
\]
We claim that there is a choice for the dataset $S'=x'_1,\ldots,x'_M$ such that for all $g,g'\in(\cF-f^{\star})_{\delta}$, the empirical $L_2$ error on $S_2$ is upper bounded by the empirical Hamming error of associated policies $\theta,\theta'$ on $S'$.

Let $h=(f-f^{\star})$ and $h'=(f'-f^{\star})$ be fixed elements of $(\cF-f^{\star})_{\delta}$, and let $\theta,\theta'\in\target(\delta)$ be such that $f(z)=\Gamma(\estone,z)(\theta(x)-\besttwo(x))$ and likewise for $f'$ and $\theta'$. Define $\Gamma_i = \Gamma(\estone,z_i)$, and take $S'$ to contain of $m_{i}\ldef\ceil*{\frac{\sup_{\theta\in\target(\delta)}\Gamma_i^{2}(\theta(x_i)-\theta^{\star}(x_i))^{2}}{\delta^{2}}}$ copies of example $x_i$ for each $i$. With this choice, we have
\begin{align*}
d_{H,S'}(\theta,\theta')&=\frac{1}{M}\sum_{i=1}^{M}\indic\crl*{\theta(x'_i)\neq{}\theta'(x'_i)} \\
&=\frac{1}{M}\sum_{i=1}^{n}\ceil*{\frac{\sup_{\wt{\theta}\in\target(\delta)}\Gamma_i^{2}(\wt{\theta}(x_i)-\theta^{\star}(x_i))^{2}}{\delta^{2}}}\indic\crl*{\theta(x_i)\neq{}\theta'(x_i)} \\
&\geq\frac{1}{2M}\sum_{i=1}^{n}\frac{\Gamma_i^{2}(\theta(x_i)-\theta^{\star}(x_i))^{2}+ \Gamma_i^{2}(\theta'(x_i)-\theta^{\star}(x_i))^{2}}{\delta^{2}}\indic\crl*{\theta(x_i)\neq{}\theta'(x_i)} \\
&\geq\frac{1}{4M}\sum_{i=1}^{n}\frac{\Gamma_i^{2}(\theta(x_i)-\theta'(x_i))^{2}}{\delta^{2}}\indic\crl*{\theta(x_i)\neq{}\theta'(x_i)} \\
&=\frac{1}{4M}\sum_{i=1}^{n}\frac{\Gamma_i^{2}(\theta(x_i)-\theta'(x_i))^{2}}{\delta^{2}}\\
                        &=\frac{n}{4M\delta^{2}}\nrm*{h-h'}^{2}_{n,2}
\end{align*}
Thus, if we let $\veps'=\frac{n}{4M\delta^{2}}\veps^{2}$, then any $\veps'$-cover in Hamming error is an $\veps$-cover in $L_2$. Now define \[
  u_{n}^{2}=\frac{1}{n}\sum_{i=1}^{n}\sup_{\theta\in\target(\delta)}\Gamma_{i}^{2}(\theta(x_i)-\besttwo(x_i))^{2},\]
and note that $u_n^{2}\geq{}v_{n}^{2}$ by definition. We invoke the following facts.
\begin{itemize}
\item $M\leq{}n+\sum_{i=1}^{n}\frac{\sup_{\theta\in\target(\delta)}\Gamma_i^{2}(\theta(x_i)-\theta^{\star}(x_i))^{2}}{\delta^{2}}=n(1+ u_n^{2}/\delta^{2})$.
\item Haussler's bound \citep{haussler1995sphere} implies that any class with VC dimension $d$ admits a $\veps$--Hamming error cover of size $e(d+1)\prn*{\frac{2e}{\veps}}^{d}$.
\end{itemize}
Putting everything together, we have
\begin{align*}
\int_{0}^{v_n} \sqrt{\frac{\cH_2((\cF-f^{\star})_{\delta}, \veps/2 , z_{1:n})}{n}} d\veps
\leq{}
\int_{0}^{v_n} \sqrt{\frac{d\log(2e(\delta^{2}+u_{n}^{2})/\veps^{2})}{n}} d\veps
+ C\cdot{}v_n\sqrt{\log{}d/n}.
\end{align*}
It follows from the usual symmetrization argument that $\En{}\brk{v_n^{2}}\leq{}\delta^{2} + 2\Radexp(\cF-f^{\star},\delta)$. Furthermore, using the concentration bound in \pref{eq:talagrand1} (we use the assumed boundedness of elements of $\cF$ to simplify \pref{eq:talagrand1} to the form that appears on this page), there exists a constant $C\geq{}1$ such that for any $s>0$, with probability at least $1-e^{-s}$ over the draw of $\zr[n]$,
\[
v_{n}^{2} \leq{} C\prn*{\delta^{2} + \Radexp(\cF-f^{\star},\delta) + \frac{s}{n}} \rdef{} \tilde{\delta}^{2}.
\]
Thus, conditioning on this event, we have
\begin{align*}
\int_{0}^{v_n} \sqrt{\frac{d\log(2e(\delta^{2}+u_{n}^{2})/\veps^{2})}{n}} d\veps 
&\leq{}\int_{0}^{\tilde{\delta}} \sqrt{\frac{d\log(2e(\delta^{2}+u_{n}^{2})/\veps^{2})}{n}} d\veps\\
&\leq{}\tilde{\delta}\int_{0}^{1} \sqrt{\frac{d\log(2e(1+u_{n}^{2}/\delta^{2})/\veps^{2})}{n}} d\veps\\
&\leq{}C\cdot{}\tilde{\delta}\sqrt{\frac{d\log(2e(1+u_{n}^{2}/\delta^{2}))}{n}},
\end{align*}
where the second inequality uses a change of variables and that $\delta\leq{}\tilde{\delta}$. 

Now, to summarize our developments so far, we have shown that with probability at least $1-e^{-s}$,
\begin{align*}
&\Rad(\starhull(\cF-f^{\star}),\delta,\zr[n])\\
&\leq{}C\prn*{
\tilde{\delta}\sqrt{\frac{d\log(2e(1+u_{n}^{2}/\delta^{2}))}{n}}
+
\tilde{\delta}\sqrt{\log{}d/n}
} \\
&\leq{}C\prn*{
\tilde{\delta}\sqrt{\frac{d\log(2e(1+u_{n}^{2}/\delta^{2}))}{n}}
} \\
&=C\prn*{
(\delta+\sqrt{\Radexp(\cF-f^{\star},\delta)}+\sqrt{s/n})\sqrt{\frac{d\log(2e(1+u_{n}^{2}/\delta^{2}))}{n}}
}.
\end{align*}
Using Markov's inequality, we have that with probability at least $1-e^{-s}$, $u_{n}^{2}\leq{}\En{}\brk{u_n^{2}}\cdot{}e^{s}$. Thus, by union bound, with probability at least $1-2e^{-s}$,
\begin{align*}
&\Rad(\starhull(\cF-f^{\star}),\delta,\zr[n])\\
&\leq{}C\prn*{
\sqrt{s}(\delta+\sqrt{\Radexp(\cF-f^{\star},\delta)}+\sqrt{s/n})\sqrt{\frac{d\log(2e^{2}(1+\En_{\zr[n]}\brk{u_{n}^{2}}/\delta^{2}))}{n}}
}.
\end{align*}
Integrating out this tail bound, we get that
\begin{align*}
&\Radexp(\starhull(\cF-f^{\star}),\delta)\\
&\leq{}C\prn*{
(\delta+\sqrt{\Radexp(\cF-f^{\star},\delta)}+\sqrt{1/n})\sqrt{\frac{d\log(2e^{2}(1+\En_{\zr[n]}\brk{u_{n}^{2}}/\delta^{2}))}{n}}
}.
\end{align*}
Using AM-GM and that $\Radexp(\cF-f^{\star},\delta)\leq\Radexp(\starhull(\cF-f^{\star}),\delta)$, then rearranging, this implies
\begin{align*}
\Radexp(\starhull(\cF-f^{\star}),\delta) \leq{}
C\prn*{
\delta\sqrt{\frac{d\log(2e^{2}(1+\En_{\zr[n]}\brk{u_{n}^{2}}/\delta^{2}))}{n}}
+
\frac{d\log(2e(1+\En_{\zr[n]}\brk{u_{n}^{2}}/\delta^{2}))}{n}}.
\end{align*}
We now bound the ratio $\En_{\zr[n]}\brk*{u_{n}^{2}}/\delta^{2}$. We have
\begin{align*}
\En_{\zr[n]}\brk{u_{n}^{2}}=\frac{1}{n}\sum_{i=1}^{n}\En_{z_i}\sup_{\theta\in\target(\delta)}\Gamma^{2}(\estone,z_i)(\theta(x_i)-\theta^{\star}(x_i))^{2}
&=\En_{z}\sup_{\theta\in\target(\delta)}\Gamma^{2}(\estone,z)(\theta(x)-\theta^{\star}(x))^{2}
\\&\leq{}\plugincap^{2}(\delta)\delta^{2}.
\end{align*}
Thus, using the relationship between $\tau$ and $\plugincap$ established in the previous section of the proof, if $\delta>\veps_0$ we have
\[
\En_{\zr[n]}\brk*{u_{n}^{2}}/\delta^{2}\leq{}20\cdot\tau^{2}(5\delta) + 3,
\]
and so for all $\delta>\veps_0$,
\[
\Radexp(\starhull(\cF-f^{\star}),\delta) \leq{}
C\prn*{
\delta\sqrt{\frac{d\log\tau(5\delta)}{n}}
+
\frac{d\log\tau(5\delta)}{n}}.
\]
In particular, we can see from this expression that taking
\[
\delta_n\propto\sqrt{\frac{d\log\tau_0}{n}}+\veps_0
\]
yields a valid upper bound on the critical radius.
\paragraph{Final bound}
Putting together the excess risk bound and the critical radius bound, we have
\begin{align*}
&\poprisk(\esttwo, \gtone) - \poprisk(\besttwo, \gtone) \\
& = O\left(\sqrt{V^{\star}} \left(\delta_n+\sqrt{\frac{\log(1/\delta)}{n}}\right) + \delta_n^2+\frac{\log(1/\delta)}{n}
+ (1+\beta)\prn*{\Rate(\nuisance,\sampleone,\delta/2)}^{2} +\Radexp^{2}(\cF)\right) \\
&\leq{} O\prn*{
\sqrt{\frac{V^{\star}d\log(\tau_0/\delta)}{n}}
+ \frac{d\log(\tau_0/\delta)}{n} 
+ (\gamma^{-1/2}+\beta)\prn*{\Rate(\nuisance,\sampleone,\delta/2)}^{2}+\Radexp^{2}(\cF)
}.
\end{align*}
To handle the square Rademacher complexity term, we recall that since $\abs*{\Gamma}\leq{}1$, the main result of \cite{haussler1995sphere} implies that $\Radexp(\cF)\leq{}\sqrt{\frac{d}{n}}$; since this term is squared in the final bound, its contribution is of lower order.

To deduce the final bound in the general $R$-bounded $L$-Lipschitz case we divide the class by $(L+R)$, then rescale the final bound (observing that $\beta$, $\gamma$, and $V^{\star}$ all vary appropriately under the rescaling).

\qed

\part{Proofs for Main Results}
\label{part:proofs}

\section{Preliminaries}
\label{app:preliminaries}

We invoke the following version of Taylor's theorem and its
directional derivative generalization repeatedly.
\begin{proposition}[Taylor expansion]
\label{prop:taylor}
Let $a\leq{}b$ be fixed and let $f:I\to\bbR$, where $I\subseteq{}\bbR$
is an open interval containing $a,b$. If $f$ is $(k+1)$-times differentiable, then there exists $c\in\brk*{a,b}$ such that
\[
f(a) = f(b) + \sum_{i=1}^{k}\frac{1}{i!}f^{(i)}(b)(a-b)^{i} + \frac{1}{(k+1)!}f^{(k+1)}(c)(a-b)^{k+1}.
\]
Let $F:\cF\to\bbR$, where $\cF$ is a vector space of functions. For
any $g,g'\in\cF$, if $t\mapsto{}F(t\cdot{}g+(1-t)\cdot{}g')$ is
$(k+1)$-times differentiable over an open interval containing $\brk*{0,1}$, then there exists $\bar{g}\in\conv(\crl*{g,g'})$ such that
\[
F(g') = F(g) + \sum_{i=1}^{k}\frac{1}{i!}D_{g}^{i}F(g)[\underbrace{g'-g,\ldots,g'-g}_{\text{$i$ times}}] + \frac{1}{(k+1)!}D_{g}^{k+1}F(\bar{g})[\underbrace{g'-g,\ldots,g'-g}_{\text{$k+1$ times}}].
\]
\end{proposition}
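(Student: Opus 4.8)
The plan is to reduce everything to the one‑variable statement and prove that first.

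\textbf{Step 1: the scalar case.} This is the classical Taylor theorem with Lagrange remainder, and I would prove it by the standard auxiliary‑function argument. Letting $R$ be the real number that makes the displayed identity hold with $f^{(k+1)}(c)$ replaced by $R$ (this pins down $R$ uniquely when $a \neq b$; the case $a = b$ is trivial), I would set
\[
\phi(x) = f(a) - f(x) - \sum_{i=1}^{k}\frac{1}{i!}f^{(i)}(x)(a-x)^{i} - \frac{R}{(k+1)!}(a-x)^{k+1},
\]
observe that $\phi(a) = \phi(b) = 0$, apply Rolle's theorem to obtain $c$ strictly between $a$ and $b$ with $\phi'(c) = 0$, and note that the sum telescopes under differentiation so that $\phi'(x) = \frac{(a-x)^{k}}{k!}\bigl(R - f^{(k+1)}(x)\bigr)$; since $c \neq a$, this forces $R = f^{(k+1)}(c)$. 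Alternatively one could simply cite a standard analysis reference for this part.

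\textbf{Step 2: reduce the functional case to Step 1.} Given $g, g' \in \cF$, I would introduce the one‑variable function $h(t) \ldef F\bigl((1-t)g + t g'\bigr)$, defined on an open interval containing $[0,1]$; the hypothesis that $t \mapsto F(t g + (1-t) g')$ is $(k+1)$‑times differentiable is exactly the hypothesis that $h$ is $(k+1)$‑times differentiable, after the affine reparametrization $t \leftrightarrow 1 - t$. The crux is the identity
\[
h^{(i)}(t) = D_{g}^{i} F\bigl((1-t)g + t g'\bigr)\bigl[\,\underbrace{g' - g,\,\ldots,\,g'-g}_{i\ \text{times}}\,\bigr], \qquad i = 0, 1, \ldots, k+1,
\]
which I would establish by induction on $i$ directly from the definition of the directional derivatives $D_g$ and $D_g^{i}$: since $h(t+s) = F\bigl((1-t)g + t g' + s(g'-g)\bigr)$, differentiating in $s$ at $s = 0$ gives the case $i = 1$, and iterating (the $i$‑th mixed partial with all directions equal to $g'-g$ collapses to the $i$‑th ordinary derivative of $h$ along the segment) gives the inductive step. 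With this in hand, I would apply Step 1 to $h$ on $[0,1]$ with the roles $a = 1$, $b = 0$, obtaining $c \in [0,1]$ with
\[
h(1) = h(0) + \sum_{i=1}^{k} \frac{1}{i!} h^{(i)}(0)(1-0)^{i} + \frac{1}{(k+1)!} h^{(k+1)}(c)(1-0)^{k+1},
\]
and substitute the identity above together with $\bar g \ldef (1-c)g + c g' \in \conv(\{g, g'\})$ to recover exactly the claimed expansion for $F$.

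\textbf{Expected main obstacle.} The only point requiring genuine care is the identity in Step 2 relating $h^{(i)}$ to $D_g^{i} F$ with repeated directions. The definition of $D_g^{i} F(f)[h_1,\ldots,h_i]$ is phrased as a mixed partial derivative in \emph{independent} scalar parameters, whereas $h^{(i)}(t)$ is an ordinary $i$‑th derivative of a single‑variable restriction; identifying the two is a chain‑rule statement that must be handled purely via pointwise differentiability along the fixed line through $g$ and $g'$, since $\cF$ is an abstract (possibly infinite‑dimensional) space carrying no topology and the hypothesis supplies no continuity of the derivatives — only their existence along that segment. Once this identity is granted, the rest of the argument is bookkeeping.
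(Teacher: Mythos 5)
The paper states this proposition in its preliminaries without proof, treating it as a standard fact, so there is no paper proof to compare against. Your argument is correct and complete: the scalar statement follows from the Rolle auxiliary-function argument (your $\phi$ does telescope to $\phi'(x) = \frac{(a-x)^k}{k!}\bigl(R - f^{(k+1)}(x)\bigr)$, and Rolle places $c$ strictly between $a$ and $b$ so $c \neq a$), and the functional statement follows by restricting $F$ to the segment through $g$ and $g'$. The one subtle point, which you correctly flag as the crux, is the identity $h^{(i)}(t) = D_g^i F(\bar{f}(t))[g'-g,\ldots,g'-g]$ where $\bar{f}(t) = (1-t)g + tg'$; it holds because the $i$-parameter map $(t_1,\ldots,t_i)\mapsto F\bigl(\bar{f}(t) + (t_1+\cdots+t_i)(g'-g)\bigr)$ factors through the scalar $s = t_1+\cdots+t_i$, so the iterated partial derivatives defining $D_g^i F$ are literally the ordinary derivatives of $s\mapsto h(t+s)$ at $s=0$. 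Existence and equality therefore follow from the assumed $(k+1)$-fold differentiability of $h$ along the segment alone; no continuity of the higher derivatives, Schwarz-type symmetry lemma, or topology on $\cF$ is needed, which answers your concern. The only cosmetic mismatch is invoking the scalar case with $a=1 > b=0$, contradicting the nominal requirement $a \leq b$ in the statement; this is harmless since your Rolle-based proof of the scalar case is symmetric in $a$ and $b$.
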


\section{Proofs from Section \journal{\ref{sec:orthogonal}}\preprint{\ref*{sec:orthogonal}}}
\label{app:orthogonal}

\subsection{Omitted Proofs for Main Results}

\begin{proof}[\pfref{thm:orthogonal_slow}]
To begin, we use the guarantee for the second stage from \pref{def:algorithms} and perform straightforward manipulation to show
\[
L_{\cD}(\esttwo,\gtone) - L_{\cD}(\besttwo, \gtone) 
\leq{} (L_{\cD}(\esttwo,\gtone) - L_{\cD}(\esttwo, \estone)) + (L_{\cD}(\besttwo, \estone) - L_{\cD}(\besttwo, \gtone)) 
+ \Rate(\target, \sampletwo, \delta/2\midsem\esttwo,\estone).
\]
Using continuity guaranteed by \pref{ass:smooth_loss_slow}, we perform a second-order Taylor expansion with respect to $g$ for each pair of loss terms in the preceding expression to conclude that there exist $g,g'\in\starhull(\nuisance,\gtone)$ such that
\begin{align*}
&(L_{\cD}(\esttwo,\gtone) - L_{\cD}(\esttwo, \estone)) + (L_{\cD}(\besttwo, \estone) - L_{\cD}(\besttwo, \gtone)) \\
&=
-D_{g}L_{\cD}(\esttwo,\gtone)[\estone-\gtone] - \frac{1}{2}\cdot{}D^{2}_{g}L_{\cD}(\esttwo,g)[\estone-\gtone,\estone-\gtone]\\
&~~~~~+ D_{g}L_{\cD}(\besttwo,\gtone)[\estone-\gtone] + \frac{1}{2}\cdot{}D^{2}_{g}L_{\cD}(\besttwo,g')[\estone-\gtone,\estone-\gtone].
\intertext{Using the smoothness promised by \pref{ass:smooth_loss_slow}:}
&\leq{}
-D_{g}L_{\cD}(\esttwo,\gtone)[\estone-\gtone]
+ D_{g}L_{\cD}(\besttwo,\gtone)[\estone-\gtone] + \beta{}\nrm*{\estone-\gtone}_{\nuisance}^{2}.
\end{align*}
To relate the two derivative terms, we apply another second-order Taylor expansion (which is possible due to \pref{ass:smooth_loss_slow}), this time with respect to the target predictor.
\begin{align*}
&D_{g}L_{\cD}(\esttwo,\gtone)[\estone-\gtone] \\
&= D_{g}L_{\cD}(\besttwo,\gtone)[\estone-\gtone]
+ D_{\theta}D_{g}L_{\cD}(\besttwo,\gtone)[\estone-\gtone,\esttwo-\besttwo]
+ \frac{1}{2}\cdot{}D_{\theta}^{2}D_{g}L_{\cD}(\bar{\theta},\gtone)[\estone-\gtone,\esttwo-\besttwo, \esttwo-\besttwo],
\end{align*}
where $\bar{\theta}\in\conv(\crl{\esttwo,\besttwo})$. Universal orthogonality immediately implies that
\[
 D_{\theta}D_{g}L_{\cD}(\besttwo,\gtone)[\estone-\gtone,\esttwo-\besttwo] = 0.
\]
Furthermore, observe that
\begin{align*}
&D_{\theta}^{2}D_{g}L_{\cD}(\bar{\theta},\gtone)[\estone-\gtone,\esttwo-\besttwo, \esttwo-\besttwo] \\
&= \lim_{t\to{}0}\frac{D_{\theta}D_{g}L_{\cD}(\bar{\theta} + t(\esttwo-\besttwo),\gtone)[\estone-\gtone,\esttwo-\besttwo] - D_{\theta}D_{g}L_{\cD}(\bar{\theta},\gtone)[\estone-\gtone,\esttwo-\besttwo]}{t}.
\end{align*}
Since $\bar{\theta} + t(\esttwo-\besttwo)\in\starhull(\targetemp,\besttwo) + \starhull(\targetemp-\besttwo,0)$ for all $t\in\brk*{0,1}$, including $t=0$, universal orthogonality (\pref{ass:universal_orthogonality}) implies that both terms in the numerator are zero, and hence $D_{\theta}^{2}D_{g}L_{\cD}(\bar{\theta},\gtone)[\estone-\gtone,\esttwo-\besttwo, \esttwo-\besttwo]=0$. We conclude that $D_{g}L_{\cD}(\esttwo,\gtone)[\estone-\gtone]
= D_{g}L_{\cD}(\besttwo,\gtone)[\estone-\gtone]$. Using this identity in the excess risk upper bound, we arrive at
\begin{align*}
&L_{\cD}(\esttwo,\gtone) - L_{\cD}(\besttwo, \gtone) \\
&\leq{}  \Rate(\target, \sampletwo, \delta/2\midsem\esttwo,\estone) + \beta{}\cdot{}\nrm*{\estone-\gtone}_{\nuisance}^{2}\\
&\leq{}  \Rate(\target, \sampletwo, \delta/2\midsem\esttwo,\estone) + \beta{}\cdot{}\prn*{\Rate(\nuisance,\sampleone,\delta/2)}^{2}.
\end{align*}
\end{proof}

\subsection{Proofs for Examples}
\label{app:example_proofs}

\begin{proof}[\pfref{prop:treatment_example}]
  We first verify that the conditions of
  \pref{thm:generic_strongly_convex} are
  satisfied.
  To establish orthogonality (\pref{ass:orthogonal}) for the propensities $e$, let $\theta,\theta'$ be fixed. Then we have
\begin{align*}
&D_{e}D_{\theta}\poprisk(\theta,\crl*{m_0,e_0})[\theta'-\theta,e-e_{0}] \\ 
&= 2\cdot{}\En\brk*{
\prn*{(Y-m_{0}(X,W)) - (T-e_{0}(W))\theta(X)}(\theta'(X)-\theta(X))(e(W)-e_{0}(W))
}\\
&~~~~-2\cdot\En\brk*{
\prn*{\theta(X)(T-e_{0}(W))(\theta'(X)-\theta(X))(e(W)-e_{0}(W))
}
}.
\end{align*}
To handle the first term, we use that for any $x,w$,
\begin{align*}
&\En\brk*{\prn*{(Y-m_{0}(X,W)) - (T-e_{0}(W))\theta(X)}\mid{}X=x,W=w}\\
&= \En\brk*{\veps_{1} + \veps_{2}\cdot(\theta_{0}(X)-\theta(X))\mid{}X=x,W=w}=0.
\end{align*}
Similarly, the second term is handled by using that
\[
\En\brk*{\theta(X)(T-e_0(W))\mid{}X=x,W=w} = \En\brk*{\theta(X)\cdot\veps_2\mid{}X=x,W=w}=0.
\]
To establish orthogonality for the expected value parameter $m$, for any $\theta,\theta'$ we have
\begin{align*}
&D_{m}D_{\theta}\poprisk(\theta,\crl*{m_0,e_0})[\theta'-\theta,m-m_{0}] \\ 
&= 2\cdot{}\En\brk*{
(T-e_{0}(W))(\theta'(X)-\theta(X))(m(X,W)-m_{0}(X,W))
} \\
&= 2\cdot{}\En\brk*{
\veps_2\cdot{}(\theta'(X)-\theta(X))(m(X,W)-m_{0}(X,W))
}\\
&=0,
\end{align*}
which follows from the assumption $\En\brk*{\veps_2\mid{}X,W}=0$. Note
that both of these orthogonality proofs held for any choice of
$\theta$, not just $\theta_0$, and hence \pref{ass:orthogonal} is
satisfied for all $\besttwo$.

Next, we show that \pref{ass:well_specified} holds whenever the second
stage is well-specified (i.e. $\gttwo\in\target$); the fact that
\pref{ass:well_specified} whenever $\target$ is convex is immediate. We have
\begin{align*}
D_{\theta}\poprisk(\gttwo,\gtone)[\theta-\theta_{0}] = -2\cdot\En\brk*{
\prn*{(Y-m_{0}(X,W)) - (T-e_{0}(W))\theta_{0}(X)}(T-e_{0}(W))(\theta(X)-\theta_{0}(X))
}.
\end{align*}
In particular, for any $x$ we have
\begin{align*}
&\En\brk*{(Y-m_{0}(X,W)) - (T-e_{0}(W))\theta_{0}(X))(T-e_{0}(W))\mid{}X=x} \\
&= \En\brk*{(T\theta_{0}(W)+f_{0}(W)+\veps_1-e_{0}(X)\theta_0(W)-f_{0}(W)) - (T-e_{0}(W))\theta_{0}(X))(T-e_{0}(W))\mid{}X=x} \\
&= \En\brk*{\veps_1\cdot{}\veps_2\mid{}X=x} = \En\brk*{\En\brk*{\veps_1\mid{}X=x,\veps_2}\cdot{}\veps_2\mid{}X=x}=0,
\end{align*}
and so $D_{\theta}\poprisk(\gttwo,\gtone)[\theta-\theta_{0}]=0$.

It remains to verify \pref{ass:smooth_loss,ass:strong_convex_loss}. We do so appealing to \pref{lem:suff_single_index}. Following the notation of \pref{sec:sufficient}, we set $\Lambda(g(w),w)=(T-e(W))$, $\Gamma(g(w),z) = (Y-m(X,W))$, and $\phi(\zeta)=\zeta$. With this choice, we can take $\Tsi=\tausi=\Lsi=\Rsi=1$, and $\gammasi=\lambdare$. To bound the parameter $\musi$, for $\zeta\in\bbR$, $\gamma\in\bbR^2$, and $z=(X,W,T)$, we write
  \[
    \ls(\zeta,\gamma;z) = \prn*{(Y-\gamma_1) - (T-\gamma_2)\zeta}^2,
  \]
  which has
  \[
    \grad^2_{\gamma}\grad_{\zeta}\ls(\zeta,\gamma;z) = \left(
      \begin{array}{lr}
        0 & -2\\
        -2 & 2\zeta
      \end{array}
      \right).
    \]
    It follows that $\nrm{\nabla_{\gamma\gamma}^2 \nabla_{\zeta_i} \ell(\besttwo(\vartwo), g(\varone);z)}_{\sigma}\leq{}4\rdef\musi$ whenever $\abs{\besttwo(x)}\leq{}1$. As a result \pref{lem:suff_single_index} implies that \pref{ass:strong_convex_loss,ass:smooth_loss} are satisfied with constants $\lambda = \frac{1}{4}$,  $\kappa=4\lambdare^{-1}$, $\beta_1=1$ and $\beta_2= 4\lambdare^{-1/2}$. The result now follows from \pref{thm:generic_strongly_convex}.
    
\end{proof}

\begin{proof}[Proof and Details for \pref{ex:r_learner}]%
%
Formally, we consider the following setup \citep{nie2017quasi}. Let $\cH$ is an RKHS with norm $\nrm{\cdot}_{\cH}$ and kernel $\cK$. We assume that $\cX\subseteq\bbR^{d}$ is a compact metric space, and that $\cK$ is a kernel with respect to $\cD$. We define $T_{\cK}:L_2(\cD)\to{}L_2(\cD)$ via
\[
  \brk{\Tk{}f}(y) = \En\brk{\cK(X,y)f(X)}.
\]
Using Mercer's theorem \citep{cucker2002mathematical}, there exist eigenfunctions $(\psi_j)_{j=1}^{\infty}$ and eigenvalues $(\sigma_j)_{j=1}^{\infty}$ such that
\[
\cK(x,y) = \sum_{j=1}^{\infty}\sigma_j\psi_j(x)\psi_j(y).
\]
The main assumptions are as follows.
\begin{enumerate}
\item \emph{Eigenvalue decay.} There exists $p\in{}(0,1)$ such that
  \[
    G\ldef{}\sup_{j\geq{}1}j^{1/p}\sigma_j<\infty.
  \]
\item \emph{Approximation.} There exists $\alpha\in{}(0,1/2)$ such
  that the function $\theta_0$ has
  \[
    R\ldef{}\nrm*{\brk{\Tk^{\alpha}\theta_0}}_{\cH}<\infty.
  \]
\end{enumerate}
In addition, we assume that $\cK(x,y)\leq{}1$, that $\nrm{\psi_j}_{L_{\infty}(\cD)}\leq{}A$ (note that $\nrm{\psi_j}_{L_{2}(\cD)}=1$), and that $\abs{Y}\leq{}1$ almost surely. We also assume overlap, i.e. $\eta\leq{}e_0(X)\leq{}1-\eta$.
Throughout the proof, we use $\bigoht(\cdot)$ to suppress dependence on $G$, $R$, $A$, $\eta^{-1}(1-\eta)^{-1}$, and $\log(n)$. Recall that the estimator $\esttwo$ is obtained via plug-in empirical risk minimization with respect to the constrained function class
\[
\target = \crl*{\theta\in\cH\mid{}\nrm{\theta}_{\cH}\leq{}c, \nrm{\theta}_{L_{\infty}(\cD)}\leq{}1}
\]
for a parameter $c\geq{}1$. As noted in Eq. (19) of \cite{nie2017quasi}, if we define $\besttwo\ldef\argmin_{\theta\in\Theta}\poprisk(\theta,\gtone)$ (note that $\besttwo\neq\gttwo$, since $\Theta$ is constrained), we have
\begin{align}
  \poprisk(\besttwo,\gtone) - \poprisk(\gttwo,\gtone)
  \leq{} c^{2-\frac{1}{\alpha}}\nrm*{\brk{\Tk^{\alpha}\theta_0}}_{\cH}^{1/\alpha}.
  \label{eq:rlearner_approx}
\end{align}
In particular, by choosing $c\propto{}n^{\alpha/(p+(1-2\alpha))}$, the approximation error scales as $\bigoh(n^{-\frac{1-2\alpha}{p+(1-2\alpha)}})$ to $\gttwo$. In light of this approximation result, is remains to derive an oracle excess risk bound against $\besttwo$ for fixed $c$.

To proceed, we verify that the assumptions required by \pref{thm:generic_strongly_convex} (in particular, smoothness and strong convexity) are satisfied. We do so appealing to \pref{lem:suff_single_index}. Following the notation of \pref{sec:sufficient}, we set $\Lambda(g(w),w)=(T-e(W))$, $\Gamma(g(w),z) = (Y-m(X,W))$, and $\phi(\zeta)=\zeta$. With this choice, we can take $\Tsi=\tausi=\Lsi=\Rsi=1$. It remains to bound $\musi$ and $\gammasi$.
\begin{itemize}
  \item To bound the parameter $\musi$, for $\zeta\in\bbR$, $\gamma\in\bbR^2$, and $z=(X,W,T)$, we write
  \[
    \ls(\zeta,\gamma;z) = \prn*{(Y-\gamma_1) - (T-\gamma_2)\zeta}^2,
  \]
  which has
  \[
    \grad^2_{\gamma}\grad_{\zeta}\ls(\zeta,\gamma;z) = \left(
      \begin{array}{lr}
        0 & -2\\
        -2 & 2\zeta
      \end{array}
      \right).
    \]
    It follows that $\nrm{\nabla_{\gamma\gamma}^2 \nabla_{\zeta_i} \ell(\besttwo(\vartwo), g(\varone);z)}_{\sigma}\leq{}4\rdef\musi$ whenever $\abs{\besttwo(x)}\leq{}1$.
  \item To bound $\lambdasi$, we recall Lemma 5.1 of \cite{mendelson2010regularization}, which states that for all $\theta\in\cH$,
\[
  \nrm{\theta}_{L_{\infty}(\cD)}\leq{} C_{p}\nrm{\theta}_{\cH}^{p}  \nrm{\theta}_{L_{2}(\cD)}^{1-p},
\]
where $C_p$ is a constant depending on $p$, $G$, and $A$. In addition, we have $\nrm{\theta}_{L_{2}(\cD)}^2\leq{}\eta^{-1}(1-\eta)^{-1}\nrm*{\theta}_{\Theta}^2$ and $\nrm{\theta}_{\cH}\leq{}c$. Hence, we may take $\lambdasi^{-1}=\bigoht(c^{2p})$ and $\rsi=p$.
  \end{itemize}
  As a result, \pref{lem:suff_single_index} implies that \pref{ass:strong_convex_loss,ass:smooth_loss} are satisfied with constants $r=p$, $\lambda^{-1},\beta_1=\bigoht(1)$, $\kappa=\bigoht(c^{2p})$, and $\beta_2=\bigoht(c^p)$.
%
%
%
%
\pref{thm:fast_erm} now implies that with probability at least $1-\delta$, the plug-in empirical risk minimizer satisfies
\begin{align*}
  \poprisk(\esttwo,\gtone) - \poprisk(\besttwo,\gtone)
  &\leq \bigoht\prn*{\delta_n^{2} + \frac{\log(\delta^{-1})}{n} + c^{\frac{2p}{1+p}}\nrm*{\estone-\gtone}_{L_{2}(\ls_2,\cD)}^{\frac{4}{1+p}}},
\end{align*}
where $\delta_n$ is the solution to the fixed point equation in \pref{eqn:critical_radius}. From Lemma 5 of \cite{nie2017quasi}, we have
\[
\cR_n(\target,\delta)\leq{} \bigoh(\log(n))\cdot{} \frac{c^{p}\delta^{1-p}}{\sqrt{n}}.
\]
This implies that $\delta_n^2 = \bigoht\prn*{c^{\frac{2p}{1+p}}n^{-\frac{1}{(1+p)}}}$ is a valid fixed point, so we have
\begin{align*}
  \poprisk(\esttwo,\gtone) - \poprisk(\besttwo,\gtone)
  &\leq \bigoht\prn*{c^{\frac{2p}{1+p}}n^{-\frac{1}{(1+p)}} + \frac{\log(\delta^{-1})}{n} + c^{\frac{2p}{1+p}}\nrm*{\estone-\gtone}_{L_{2}(\ls_2,\cD)}^{\frac{4}{1+p}}}.
\end{align*}
Whenever $\nrm*{\estone-\gtone}_{L_{2}(\ls_2,\cD)}\leq{}\wt{o}(n^{-1/4})$, this simplifies to
\begin{align*}
  \poprisk(\esttwo,\gtone) - \poprisk(\besttwo,\gtone)
  &\leq \bigoht\prn*{c^{\frac{2p}{1+p}}n^{-\frac{1}{(1+p)}} + \frac{\log(\delta^{-1})}{n}},
\end{align*}
and combining with the approximation result in \pref{eq:rlearner_approx} yields
\begin{align*}
  \poprisk(\esttwo,\gtone) - \poprisk(\besttwo,\gtone)
  &\leq \bigoht\prn*{c^{\frac{2p}{1+p}}n^{-\frac{1}{(1+p)}} + c^{2-\frac{1}{\alpha}} + \frac{\log(\delta^{-1})}{n}}.
\end{align*}
Choosing $c=n^{\alpha/(p+(1-2\alpha))}$ gives excess risk $\bigoh(n^{-\frac{1-2\alpha}{p+(1-2\alpha)}})$.

%

%

%

%

\end{proof}

\begin{proof}[\pfref{prop:doubly_robust}]
    We verify that the conditions of
  \pref{thm:generic_strongly_convex} are
  satisfied. We do so by showing that \pref{ass:single_index} is satisfied and appealing to \pref{lem:suff_single_index}. In particular, following the notation of \pref{sec:sufficient}, we observe that the doubly-robust loss has the structure required by \pref{ass:single_index}, since we may take $\Lambda(g(w),w)=1$, $\Gamma(g,z) = \vphi(f,e;z)$, and $\phi(\zeta)=\zeta$. Furthermore,
  it can be written as a square loss of the form
  \begin{align*}
    \ls(\zeta,\gamma;z) = \prn*{\zeta - \vphi(\gamma;z)}^2,
  \end{align*}
  where, for $\gamma = (a_0,a_1,b)\in\bbR^{3}$, we overload notation and write
  \[
    \vphi(\gamma;z) = \frac{T-b}{b(1-b)}(Y-a_T) +a_1-a_0.
  \]

  To establish orthogonality (\pref{ass:orthogonal}), it suffices to show that
  \begin{align*}
    \En\brk*{\grad_{\gamma}\grad_{\zeta}\ls(\theta_0(X),g_0(X);z)\mid{}X}=0.
  \end{align*}
  We have
  \begin{align*}
    \En\brk*{\grad_{\gamma}\grad_{\zeta}\ls(\theta_0(X),g_0(X);z)\mid{}X}
    = -2 \En\brk*{\theta_0(X)\grad_{\gamma}\vphi(f_0,e_0;z)\mid{}X}, 
  \end{align*}
  and it is straightforward to verify that $\En\brk*{\grad_{\gamma}\vphi(f_0,e_0;z)\mid{}X}=0$ as a consequence of the double robustness property

  Next, since $\poprisk(\theta,g_0)=\En\brk*{(\theta(X)-\theta_0(X))^2} + \mathrm{Var}(\vphi(f_0,e_0;z))$, it is immediate that \pref{ass:well_specified} is satisfied.
  
  Finally, we verify the regularity conditions required to establish \pref{ass:smooth_loss,ass:strong_convex_loss}. With $p=2$, it is immediate that we may take $\Tsi=\tausi=\Rsi=\gammasi=1$, $\Lsi=0$, and $\rsi=0$ for \pref{ass:single_index} whenever $\abs{\theta(X)}\leq{}1$. To bound the parameter $\musi$, for $\zeta\in\bbR$, $\gamma\in\bbR^3$ and $z=(X,W,T)$ we observe that
  \[
    \grad^{2}_{\gamma}\grad_{\zeta}\ls(\zeta,\gamma;z) = -2\zeta\grad^{2}_{\gamma}\vphi(\gamma;z).
  \]
  Since $\abs{\zeta}\leq{}1$ by assumption, it suffices to bound $\nrm*{\grad^{2}_{\gamma}\vphi(\gamma;z)}_{\sigma}\leq{}3 \nrm*{\grad^{2}_{\gamma}\vphi(\gamma;z)}_{\infty}$. It is straightforward to verify that whenever $\abs{f(0,X)},\abs{f(1,X)}\leq{}1$, $\abs{Y}\leq{}1$, and $\eta\leq{}e(X)\leq{}1-\eta$, we have $\nrm*{\grad^{2}_{\gamma}\vphi(\gamma;z)}_{\infty}\leq{}4\eta^{-3}$, so we may take $\musi=24\eta^{-3}$. As a result, \pref{lem:suff_single_index} implies that \pref{ass:strong_convex_loss,ass:smooth_loss} are satisfied with constants $r=0$, $\lambda = \frac{1}{4}$,  $\kappa=0$, $\beta_1=1$ and $\beta_2= 24\eta^{-3}$. The result now follows from \pref{thm:generic_strongly_convex}.
\end{proof}

  \begin{proof}[\pfref{prop:policy_learning}]
    We first verify that \pref{ass:universal_orthogonality} is satisfied. Let $g$, $\theta$, and $\bar{\theta}$ be arbitrary. Abbreviate $f\ind{t}(x)=f(t,x)$. We have
    \begin{align*}
      &D_{f\ind{0}}D_{\theta}\poprisk(\thetabar,\crl{f_0\ind{0},f_0\ind{1},e_0})[\theta-\besttwo,f\ind{0}-f\ind{0}_0]\\
      &= \En\brk*{
      \prn*{\frac{1-T}{1-e_0(X)} -1}
      (\theta(X)-\besttwo(X))(f(0,X)-f_0(0,X))
        }= 0,
        \end{align*}
        and
        \begin{align*}
    &    D_{f\ind{1}}D_{\theta}\poprisk(\thetabar,\crl{f_0\ind{0},f_0\ind{1},e_0})[\theta-\besttwo,f\ind{1}-f\ind{1}_0] \\
            &=\En\brk*{
      \prn*{1 - \frac{T}{e_0(X)}}
      (\theta(X)-\besttwo(X))(f(1,X)-f_0(1,X))
              } =0.
    \end{align*}
    Similarly, we have
        \begin{align*}
      &D_eD_{\theta}\poprisk(\thetabar,\crl{f_0\ind{0},f_1\ind{0},e_0})[\theta-\besttwo,e-e_0]\\
      &= -\En\brk*{
      \prn*{T\frac{Y-f_0(1,X)}{e_0^2(X)} + (1-T)\frac{Y-f_0(0,X)}{(1-e_0(X))^2}}
      (\theta(X)-\besttwo(X))(e(X)-e_0(X))
        }\\
      &=0.
        \end{align*}
        This establishes the universal orthogonality property.

        We now verify that \pref{ass:smooth_loss_slow} is satisfied with $\nrm{\cdot}_{\cG}=\nrm*{\cdot}_{L_2(\ls_2,\cD)}$. Consider the function $B_Z:\bbR^{3}\to\bbR$ given by
        \[
          B_Z(v) = \prn*{v_2 -v_1 + T\frac{Y-v_2}{v_3} - (1-T)\frac{Y-v_1}{1-v_3}}.
        \]
        \pref{ass:smooth_loss_slow} holds whenever $\nrm*{\grad^{2}_v{}B_Z(v)}_{\sigma}$ is bounded for all $Z$ and all $v$ in the range of $\cG$. In particular, one can verify by inspection that $\nrm{\grad^{2}_v{}B_Z(v)}_{\sigma}\leq{}\bigoh(\eta^{-3})$ whenever $\abs{Y}\leq{}1$, all $g=\crl{f(0,\cdot),f(1,\cdot),e}\in\cG$ have $\abs{f(t,X)}\leq{}1$ and $e(X)\in\brk{\eta,1-\eta}$.

With both assumptions satisfied, the result now follows from \pref{thm:orthogonal_slow}.
  \end{proof}

\section{Technical Lemmas for Constrained $M$-Estimators} 
\label{app:general_m}

In this section of the appendix we give self-contained technical
results for $M$-estimation over general function classes, in the
absence of nuisance parameters. The results here serve as a building
block for the results of \pref{sec:erm}.

Let $\cF:\cX \rightarrow \R^d$ be the function class, and let
$\ell: \R^d\times \cZ\rightarrow \R$ be the loss. We receive a sample
set $S=z_1,\ldots,z_n$ drawn from distribution $\cD$ independently.
Let $\cL_{f}$ denote the random variable $\ell(f(\vartwo), \varall)$ and let
\begin{align}
\Pn \cL_{f} = \En\brk*{\ell(f(\vartwo), \varall)},\quad\text{and}\quad
\Pn_n \cL_{f} = \frac{1}{n} \sum_{i=1}^n \ell(f(\vartwo_i), \varall_i)
\end{align}
denote the population risk and empirical risk over $n$ samples. The
constrained empirical risk minimizer is given by
\begin{equation}
\hat{f} = \arg\min_{f \in \cF} \Pn_n\, \cL_{f}.
\end{equation}
\paragraph{Additional Notation}
To keep notation compact, we adopt the abbreviations
$\nrm*{f}_{p,q}\ldef{}\nrm*{f}_{L_p(\ls_q,\cD)}$ and
$\nrm*{f}_{n,p,q}\ldef{}\nrm*{f}_{L_p(\ls_q,\zr[n])}$. We drop the
subscript $q$ for real-valued function classes. We recall that
$\cF\rbar_t$ denotes the $t$th coordinate projection of $\cF$ and,
likewise, for any $f\in\cF$, the $t$th coordinate projection is $f_t\in\cF\rbar_t$.

The following lemmas provide a vector-valued extension of the
analysis of constrained ERM based on local Rademacher complexities
given in \cite{wainwright2019}. The key idea behind the extension is to invoke a vector-valued contraction theorem for
Rademacher complexity due to \cite{maurer2016vector}. For completeness
we give a include proofs for these lemmas, even though some parts are straightforward adaptations of lemmas in \cite{wainwright2019}.
\begin{lemma}\label{lem:prob_localized} Consider a function class $\cF: \cX\rightarrow \R^d$, with $\sup_{f\in \cF} \|f\|_{\infty,2}\leq 1$ and pick any $f^{\star}\in \cF$.
Assume that the loss $\ell$ is $L$-Lipschitz in its first argument with respect to the $\ell_2$-norm and let
\begin{equation}
Z_n(r) = \sup_{f\, \in\, \cF: \|f-f^{\star}\|_{2,2}\leq r} \left| \Pn_n (\cL_{f} - \cL_{f^{\star}}) - \Pn (\cL_{f} - \cL_{f^{\star}}) \right|.
\end{equation}
Then there are universal constants $c_1, c_2>0$ such that
\begin{equation}
\label{eq:talagrand1}
\Pr\prn*{Z_n(r)\geq 16\, L\, \sum_{t=1}^d \cR(\cF\rbar_t-f_t^{\star},r) + u} \leq c_1 \exp\prn*{-c_2\frac{n u^2}{L^2 r^2 + L u}}.
\end{equation}
Moreover, if $\delta_{n}$ is any solution to the inequalities
\begin{equation}
\forall t\in \{1,\ldots, d\}: \cR(\sh(\cF\rbar_t - f_t^{\star}),\delta) \leq \delta^2,
\end{equation}
then for each $r\geq \delta_n$,
\begin{equation}
\Pr[Z_n(r)\geq 16\, L\, d\, r\, \delta_n + u] \leq c_1 \exp\prn*{-\frac{c_2 n u^2}{L^2 r^2 + 2 L u}}.
\end{equation}
\end{lemma}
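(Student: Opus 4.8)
The plan is to follow the standard template for a localized empirical process (as in \cite{wainwright2019}): first bound $\En Z_n(r)$ by a coordinate‑wise Rademacher complexity of $\cF-f^\star$, then upgrade this to a high‑probability bound via a Talagrand‑type concentration inequality, and finally specialize using the star‑shaped fixed point to get the second assertion. Throughout, the vector‑valued structure will be handled by the contraction inequality of \cite{maurer2016vector}.

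For the expectation bound I would pass to the loss class $\cG_r=\crl{\cL_f-\cL_{f^\star}\;:\;f\in\cF,\ \nrm{f-f^\star}_{2,2}\leq r}$ and record its two relevant parameters: since $\zeta\mapsto\ell(\zeta,z)$ is $L$‑Lipschitz in $\ls_2$ and $\nrm{f}_{\infty,2}\leq 1$, every $h\in\cG_r$ satisfies $\nrm{h}_{\infty}\leq 2L$ and $\En[h^2]\leq L^2\nrm{f-f^\star}_{2,2}^2\leq L^2r^2$. Symmetrization (using $f^\star\in\cF$, so $0\in\cG_r$) gives $\En Z_n(r)\lesssim \En_{z,\eps}\sup_{h\in\cG_r}\abs{\tfrac1n\sum_i\eps_i h(z_i)}$, and then the vector contraction inequality of \cite{maurer2016vector} peels off the Lipschitz loss at the cost of an absolute constant, leaving a Rademacher complexity of $\cF-f^\star$ indexed by both sample and coordinate. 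Because $\nrm{f-f^\star}_{2,2}^2=\sum_{t=1}^d\nrm{f_t-f_t^\star}_{2,2}^2\leq r^2$ forces $\nrm{f_t-f_t^\star}_{2,2}\leq r$ for every $t$, and $\sup_{f\in\cF}\sum_t(\cdot)_t\leq\sum_t\sup_{f_t\in\cF\rbar_t}(\cdot)_t$, this telescopes into $\En Z_n(r)\leq 16\,L\sum_{t=1}^d\cR(\cF\rbar_t-f_t^\star,r)$; the constant $16$ is deliberately generous (the honest constant is $O(1)$) to leave slack for the concentration step.

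Next I would invoke a functional Bernstein / Talagrand inequality for $Z_n(r)$ with envelope $2L$ and variance proxy $L^2r^2$, obtaining a tail of the form $c_1\exp\bigl(-c_2 n u^2/(L^2r^2+2Lu)\bigr)$ around $\En Z_n(r)$, and then substitute the expectation bound to get \pref{eq:talagrand1}. For the refinement, I would use that for a class star‑shaped about the origin the ratio $\rho\mapsto\cR(\cdot,\rho)/\rho$ is non‑increasing (rescaling a function by a factor $\leq 1$ keeps it in the star hull and scales the empirical average linearly), so for $r\geq\delta_n$ and each $t$,
\[
\cR(\cF\rbar_t-f_t^\star,r)\leq\cR(\sh(\cF\rbar_t-f_t^\star),r)\leq\frac{r}{\delta_n}\,\cR(\sh(\cF\rbar_t-f_t^\star),\delta_n)\leq r\,\delta_n .
\]
Summing over $t$ bounds $\sum_t\cR(\cF\rbar_t-f_t^\star,r)\leq d\,r\,\delta_n$, and plugging into \pref{eq:talagrand1} gives the claimed bound with threshold $16\,L\,d\,r\,\delta_n+u$.

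The step I expect to be the main obstacle is the bookkeeping in the concentration inequality: Talagrand's bound produces a term proportional to $\En Z_n(r)$ inside the Bernstein variance proxy, and one must verify that this is absorbed into the stated denominator $L^2r^2+2Lu$ — e.g. by noting that the bound is only of interest when $u$ is not dominated by $\En Z_n(r)$, so that $L\,\En Z_n(r)$ is comparable either to the variance term $L^2r^2$ or to the linear term $Lu$, at the price of enlarging the universal constants $c_1,c_2$. The remaining subtleties — the $\sqrt2$ factor and the passage from the multi‑index Rademacher complexity to $\sum_t\cR(\cF\rbar_t-f_t^\star,r)$ in Maurer's contraction, and the two‑sided (absolute value) symmetrization — are routine once the envelope and variance have been pinned down, and all fit comfortably under the constant $16$.
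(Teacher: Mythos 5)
Your proposal follows essentially the same approach as the paper's proof: record the envelope $2L$ and variance proxy $L^2r^2$ for the centered loss differences, symmetrize (using $f^\star\in\cF$ so that $0$ is available), apply the vector contraction inequality of Maurer to reduce to per-coordinate Rademacher complexities of $\cF\rbar_t-f_t^\star$, invoke a Talagrand-type concentration inequality, and use the monotonicity of $\rho\mapsto\cR(\cdot,\rho)/\rho$ for star-shaped classes to derive the fixed-point refinement. The one place you flag as a potential obstacle --- the possible appearance of $\En Z_n(r)$ inside the Bernstein variance proxy --- the paper sidesteps by citing the form of Talagrand's inequality centered at $2\En[Z_n(r)]$ (Theorem 3.8 of Wainwright and the surrounding discussion), which already absorbs that term into the doubled centering constant; combined with the expectation bound $\En[Z_n(r)]\leq 8L\sum_t\cR(\cF\rbar_t-f_t^\star,r)$, this gives the constant $16$ cleanly, so the bookkeeping you were worried about does not arise.
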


\begin{lemma}\label{lem:peeling} Consider a vector valued function
  class $\cF: \cX\rightarrow \R^d$ with $\sup_{f\in \cF}
  \|f\|_{\infty,2}\leq 1$, and pick any $f^{\star}\in \cF$. Let
  $\delta_n^2\geq \frac{4\,d\,\log(41\log(2c_2 n))}{c_2 n}$  be any
  solution to the system of inequalities
\begin{equation}
\forall t\in \{1,\ldots, d\}: \cR(\sh(\cF\rbar_t - f_t^{\star}),\delta) \leq \delta^2.
\end{equation}
Moreover, assume that the loss $\ell$ is $L$-Lipschitz in its first argument with respect to the $\ell_2$ norm. Consider the following event:
\begin{equation}
{\cal E}_1 = \left\{\exists f \in \cF: \|f - f^{\star} \|_{2,2} \geq
  \delta_n \text{ and } \left| \Pn_n (\cL_{f} - \cL_{f^{\star}}) - \Pn
    (\cL_{f} - \cL_{f^{\star}}) \right| \geq 18 L\,d\, \delta_n\, \|f -
  f^{\star}\|_{2,2}\right\}.
\end{equation}
Then for some universal constants $c_3, c_4$, $\Pr[{\cal E}_1] \leq c_3 \exp\prn{-c_4 n \delta_n^2}$.
\end{lemma}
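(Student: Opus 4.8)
The statement is a standard ``peeling'' (or slicing) argument built on top of the tail bound in \pref{lem:prob_localized}. The plan is to decompose the event $\cE_1$ according to dyadic shells in the radius $\nrm*{f-f^{\star}}_{2,2}$, apply the one-shell tail bound from \pref{lem:prob_localized} on each shell, and then sum the resulting probabilities, using the choice of $\delta_n$ to control the number of shells.

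First I would fix an integer $m$ and for $j=0,1,\dots,m$ define the shells $\cS_j=\crl{f\in\cF:2^{j}\delta_n\leq\nrm*{f-f^{\star}}_{2,2}\leq 2^{j+1}\delta_n}$; since $\sup_{f\in\cF}\nrm*{f}_{\infty,2}\leq 1$ we have $\nrm*{f-f^\star}_{2,2}\leq 2$, so it suffices to take $m=\ceil{\log_2(2/\delta_n)}$ shells, and $m\leq\log_2(2/\delta_n)+1\leq 41\log(2c_2 n)$-ish bounds will come up here (the exact constant $41$ in the lower bound on $\delta_n^2$ is engineered so that $m$ times the per-shell failure probability is still of order $\exp(-c_4 n\delta_n^2)$). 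On a fixed shell $\cS_j$, if $f\in\cS_j$ realizes the bad event in $\cE_1$ then $\left|\Pn_n(\cL_f-\cL_{f^\star})-\Pn(\cL_f-\cL_{f^\star})\right|\geq 18L\,d\,\delta_n\nrm*{f-f^\star}_{2,2}\geq 18L\,d\,\delta_n\cdot 2^{j}\delta_n$, while $Z_n(r)$ with $r=2^{j+1}\delta_n$ upper-bounds the left-hand side uniformly over $\cS_j$. Applying the second display of \pref{lem:prob_localized} with this $r\geq\delta_n$ and with $u=18L\,d\,\delta_n 2^{j}\delta_n - 16L\,d\,r\delta_n = 16L\,d\,\delta_n 2^{j}\delta_n$ (choosing the constant $18$ rather than $16$ precisely so that a positive slack $u$ of the same order remains), the per-shell probability is at most $c_1\exp\prn*{-\frac{c_2 n u^{2}}{L^{2}r^{2}+2Lu}}$; substituting $r=2^{j+1}\delta_n$, $u=16L\,d\,2^{j}\delta_n^{2}$ and simplifying, the exponent is bounded below (up to absolute constants) by $c_2 n d^{2}\delta_n^{2}\geq c_2 n\delta_n^{2}$, uniformly in $j$.

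Then I would take a union bound over the $m+1$ shells:
\[
\Pr[\cE_1]\leq (m+1)\,c_1\exp\prn*{-c'\,c_2\,n\,\delta_n^{2}}.
\]
Using the lower bound $\delta_n^{2}\geq \frac{4d\log(41\log(2c_2 n))}{c_2 n}$ to bound $m+1\leq O(\log(1/\delta_n))\leq O(\log n)$ against a factor $\exp(-\tfrac12 c' c_2 n\delta_n^2)$ (the logarithmic prefactor is absorbed because $n\delta_n^2\gtrsim d\log\log n$), this collapses to $c_3\exp(-c_4 n\delta_n^{2})$ for suitable universal constants $c_3,c_4$, which is the claim.

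The main obstacle is purely bookkeeping rather than conceptual: tracking the absolute constants through the choice of $u$ on each shell so that (i) $u$ stays a fixed positive fraction of $16L\,d\,r\delta_n$ (this is where the gap between $18$ and $16$ is used), (ii) the resulting exponent $\tfrac{c_2 n u^2}{L^2 r^2+2Lu}$ is bounded below by a universal constant times $n\delta_n^2$ \emph{uniformly over all shells} $j$ (one must check both the ``$L^2r^2$ dominates'' and ``$2Lu$ dominates'' regimes in the denominator), and (iii) the logarithmic factor $m+1$ coming from the number of shells is dominated by the exponential, which is exactly what the $41\log(2c_2 n)$ term in the hypothesis on $\delta_n^2$ guarantees. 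None of these steps is hard, but they must be done carefully to land on clean universal constants $c_3,c_4$.
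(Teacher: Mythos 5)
There is a genuine gap in your shell decomposition. You peel in dyadic shells $\cS_j = \crl{f : 2^{j}\delta_n \le \nrm*{f-f^{\star}}_{2,2} \le 2^{j+1}\delta_n}$ and apply \pref{lem:prob_localized} with $r = 2^{j+1}\delta_n$, so the Rademacher centering term is $16 L\, d\, r\, \delta_n = 32 L\, d\, 2^{j}\delta_n^{2}$. But the deviation guaranteed by the bad event on this shell is only $18 L\, d\, \delta_n \nrm*{f-f^{\star}}_{2,2} \ge 18 L\, d\, 2^{j}\delta_n^{2}$, so the slack $u$ you would feed into the tail bound is $u = 18 L d\, 2^{j}\delta_n^{2} - 32 L d\, 2^{j}\delta_n^{2} = -14 L d\, 2^{j}\delta_n^{2} < 0$. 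Your line ``$u = 18 L d \delta_n 2^{j}\delta_n - 16 L d\, r\, \delta_n = 16 L d \delta_n 2^{j}\delta_n$'' is an arithmetic error masking this: with $r = 2^{j+1}\delta_n$ the true value is negative, and even if you (incorrectly) took $r = 2^{j}\delta_n$ the difference would be $2 L d\, 2^{j}\delta_n^{2}$, not $16 L d\, 2^{j}\delta_n^{2}$. With a negative $u$ the per-shell bound from \pref{lem:prob_localized} simply does not apply, so the union bound never gets off the ground.

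The fix is to peel with a much finer ratio than $2$. The constant $18$ in the lemma leaves a multiplicative slack of $18/16$ over the Rademacher term, so the shell ratio $\alpha$ must satisfy $18/\alpha \ge 16$, i.e.\ $\alpha \le 18/16$. The paper's proof takes $\alpha = 18/17$, so that on the shell $\crl{\alpha^{m-1}\delta_n \le \nrm*{\cdot}_{2,2} \le \alpha^{m}\delta_n}$ the deviation is at least $18 L d \delta_n \alpha^{m-1}\delta_n = 17 L d \delta_n r_m$ with $r_m = \alpha^m\delta_n$, leaving a strictly positive slack $u = L d\, r_m \delta_n$ above the centering term $16 L d\, r_m\delta_n$. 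The price is that the number of shells grows to $M \lesssim \log(2/\delta_n)/\log(\alpha) \le 41 \log(2/\delta_n)$, which is exactly where the constant $41$ in the hypothesis $\delta_n^{2} \ge 4d\log(41\log(2c_2 n))/(c_2 n)$ comes from; the cost of $\log M$ is absorbed into half the exponent. Your worry about ``bookkeeping'' is well placed, but here the geometry of the shell ratio is not bookkeeping---it is the step that makes the argument close, and the factor-$2$ choice fails outright.
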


\begin{lemma}\label{lem:tighter_peeling} Consider a vector valued function class $\cF: \cX\rightarrow \R^d$ and pick any $f^{\star}\in \cF$. Let $\delta_n\geq 0$  be any solution to the inequalities
\begin{equation}
\forall t\in \{1,\ldots, d\}: \cR(\sh(\cF\rbar_t - f_t^{\star}),\delta) \leq \delta^2.
\end{equation}
Suppose $\sup_{f\in \cF} \|f\|_{\infty,2}\leq 1$. Moreover, assume that the loss $\ell$ is $L$-Lipschitz in its first argument with respect to the $\ell_2$ norm and also linear, i.e. $\cL_{f + f'}=\cL_{f} + \cL_{f'}$ and $\cL_{\alpha f}=\alpha\cL_{f}$. Consider the following event:
\begin{equation}
{\cal E}_1 = \left\{\exists f \in \cF: \|f - f^{\star} \|_{2,2} \geq
  \delta_n \text{ and } \left| \Pn_n (\cL_{f} - \cL_{f^{\star}}) - \Pn
    (\cL_{f} - \cL_{f^{\star}}) \right| \geq 17 L\,d\, \delta_n\, \|f -
  f^{\star}\|_{2,2}\right\}.
\end{equation}
Then for some universal constants $c_3, c_4$, $\Pr[{\cal E}_1] \leq c_3 \exp\prn{-c_4 n \delta_n^2}$.
\end{lemma}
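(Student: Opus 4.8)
The plan is to exploit the additional \emph{linearity} hypothesis on the loss to replace the dyadic peeling used to prove \pref{lem:peeling} by a single application of the fixed-scale concentration bound of \pref{lem:prob_localized}; this is precisely what removes the lower bound on $\delta_n$ and shaves the constant from $18$ to $17$. Write $G(h)\ldef\Pn_n\cL_h - \Pn\cL_h$. Since $\cL$ is linear in its argument, $G$ is a linear functional of $h$, and $\Pn_n(\cL_f - \cL_{f^{\star}}) - \Pn(\cL_f - \cL_{f^{\star}}) = G(f - f^{\star})$ for every $f$.

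First I would carry out a rescaling step. Suppose ${\cal E}_1$ occurs, witnessed by some $f\in\cF$ with $\nrm*{f - f^{\star}}_{2,2}\geq\delta_n$, and set $t\ldef\delta_n/\nrm*{f - f^{\star}}_{2,2}\in(0,1]$ and $h\ldef t\,(f - f^{\star})$. Then $h\in\sh(\cF - f^{\star})$, $\nrm*{h}_{2,2}=\delta_n$, and, by linearity of $G$, $\abs*{G(h)} = t\,\abs*{G(f - f^{\star})} \geq t\cdot 17Ld\,\delta_n\nrm*{f - f^{\star}}_{2,2} = 17Ld\,\delta_n^2$. Hence ${\cal E}_1$ is contained, pointwise on the sample, in the single fixed-scale deviation event $\bigl\{\,\sup_{h\in\sh(\cF - f^{\star}):\,\nrm*{h}_{2,2}\leq\delta_n}\abs*{G(h)}\geq 17Ld\,\delta_n^2\,\bigr\}$, with no union over dyadic shells.

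Next I would bound this last probability exactly as in the proof of \pref{lem:prob_localized}, but applied to the bounded, star-shaped (around $0$) class $\sh(\cF - f^{\star})$ localized at radius $\delta_n$. Symmetrization together with the vector contraction inequality of \cite{maurer2016vector} bounds the mean of the localized supremum by $16L\sum_{t=1}^d\cR(\sh(\cF\rbar_t - f_t^{\star}),\delta_n)$, using that the coordinate-$t$ projection of $\{h\in\sh(\cF - f^{\star}):\nrm*{h}_{2,2}\leq\delta_n\}$ lies inside $\{g\in\sh(\cF\rbar_t - f_t^{\star}):\nrm*{g}_{L_2(\cD)}\leq\delta_n\}$; Bousquet's inequality then gives the tail $c_1\exp\bigl(-c_2 n u^2/(L^2\delta_n^2 + 2Lu)\bigr)$, with variance proxy $L^2\delta_n^2$ (from $L$-Lipschitzness and the localization $\nrm*{h}_{2,2}\leq\delta_n$) and envelope $O(L)$. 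Finally, the hypothesis on $\delta_n$ gives $\cR(\sh(\cF\rbar_t - f_t^{\star}),\delta_n)\leq\delta_n^2$ for every $t$, so choosing $u = Ld\,\delta_n^2$ makes the threshold $16Ld\,\delta_n^2 + Ld\,\delta_n^2 = 17Ld\,\delta_n^2$ and the exponent $-c_2 n d^2\delta_n^2/(1+2d)\leq -(c_2/3)\,n\delta_n^2$ (since $d\geq 1$), yielding $\Pr[{\cal E}_1]\leq c_1\exp(-(c_2/3)\,n\delta_n^2)$, i.e.\ the claim with $c_3 = c_1$ and $c_4 = c_2/3$.

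The one step I would be careful about --- and the crux of why this ``tighter'' version holds --- is the rescaling: one must rescale \emph{inward}, landing exactly at radius $\delta_n$ inside the star-hull $\sh(\cF - f^{\star})$, since only there is the local Rademacher complexity controlled by the assumed fixed point $\cR(\sh(\cF\rbar_t - f_t^{\star}),\delta_n)\leq\delta_n^2$. Rescaling outward into $\cF - f^{\star}$ would instead force use of $\cR$ at a radius as large as $2$, destroying the localization --- exactly the obstruction that forces genuine peeling (and the $\log\log n$ slack) in the non-linear \pref{lem:peeling}. With linearity that obstruction disappears, and everything else is a verbatim re-run of the proof of \pref{lem:prob_localized} (applied, if one prefers, to $\cF$ replaced by $\sh(\cF, f^{\star})$, whose $\nrm*{\cdot}_{\infty,2}$-envelope is at most $3$, affecting only universal constants).
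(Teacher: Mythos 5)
Your proposal is correct and takes essentially the same approach as the paper: rescale the witness inward along the star hull to land exactly at radius $\delta_n$ (which is valid because linearity of $\cL$ makes the deviation functional homogeneous), thereby reducing ${\cal E}_1$ to the single fixed-scale event $\{Z_n(\delta_n)\geq 17Ld\,\delta_n^2\}$, and then invoke \pref{lem:prob_localized} with $r=\delta_n$, $u=Ld\,\delta_n^2$. The only cosmetic difference is that you re-derive parts of \pref{lem:prob_localized} inline, whereas the paper simply cites it.
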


\begin{lemma}\label{lem:lipschitz_wainwright}
Consider a function class $\cF$, with $\sup_{f\in \cF} \|f\|_{\infty}\leq 1$, and pick any $f^{\star}\in \cF$. Let $\delta_n^2\geq \frac{4\,d\,\log(41\log(2c_2 n))}{c_2 n}$ be any solution to the inequalities:
\begin{equation}
\forall t\in \{1,\ldots, d\}: \cR(\sh(\cF\rbar_t - f_t^{\star}),\delta) \leq \delta^2.
\end{equation}
Moreover, assume that the loss $\ell$ is $L$-Lipschitz in its first argument with respect to the $\ell_2$ norm. Then for some universal constants $c_5, c_6$, with probability $1-c_5 \exp\prn{c_6 n \delta_n^2}$,
\begin{align}\label{eqn:lipschitz_loss_gen}
\left| \Pn_n (\cL_{f} - \cL_{f^{\star}}) - \Pn (\cL_{f} - \cL_{f^{\star}}) \right| \leq~& 18 L\, d\, \delta_n \{ \|f - f^{\star}\|_{2,2} + \delta_n\},\quad \forall f \in \cF.
\end{align}
Hence, the outcome $\hat{f}$ of constrained ERM satisfies that with the same probability,
\begin{align}
\Pn (\cL_{\hat{f}} - \cL_{f^{\star}}) \leq~& 18 L\,d\, \delta_n \{ \|\hat{f} - f^{\star}\|_{2,2} + \delta_n\}.
\end{align}
If the loss $\cL_{f}$ is also linear in $f$, i.e. $\cL_{f + f'}=\cL_{f} + \cL_{f'}$ and $\cL_{\alpha f}=\alpha\cL_{f}$, then the lower bound on $\delta_n^2$ is not required.
\end{lemma}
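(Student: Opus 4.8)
The plan is to combine the localized concentration bound of \pref{lem:prob_localized} with the peeling bound of \pref{lem:peeling} (or \pref{lem:tighter_peeling} in the linear case) through a union bound, and then read off the ERM guarantee from the basic inequality. No new probabilistic estimate is needed beyond what those lemmas already provide.

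First I would prove the uniform deviation bound \pref{eqn:lipschitz_loss_gen} by splitting $\cF$ into two regimes according to whether $\nrm*{f-f^{\star}}_{2,2}$ is at least $\delta_n$ or smaller. On the complement of the event $\mathcal{E}_1$ in \pref{lem:peeling}, which holds with probability at least $1-c_3\exp(-c_4 n\delta_n^2)$ under the assumed lower bound $\delta_n^2\geq\frac{4d\log(41\log(2c_2 n))}{c_2 n}$, every $f$ with $\nrm*{f-f^{\star}}_{2,2}\geq\delta_n$ already satisfies $|\Pn_n(\cL_{f}-\cL_{f^{\star}})-\Pn(\cL_{f}-\cL_{f^{\star}})|\leq 18 L\,d\,\delta_n\nrm*{f-f^{\star}}_{2,2}\leq 18 L\,d\,\delta_n\{\nrm*{f-f^{\star}}_{2,2}+\delta_n\}$. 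For the remaining $f$, with $\nrm*{f-f^{\star}}_{2,2}<\delta_n$, I would apply \pref{lem:prob_localized} with radius $r=\delta_n$ and deviation level $u=L\,d\,\delta_n^2$: any such $f$ lies inside the ball defining $Z_n(\delta_n)$, so its deviation is at most $Z_n(\delta_n)$, which the lemma bounds by $16 L\,d\,\delta_n^2+u=17 L\,d\,\delta_n^2\leq 18 L\,d\,\delta_n\{\nrm*{f-f^{\star}}_{2,2}+\delta_n\}$ outside an event of probability $c_1\exp\big(-\tfrac{c_2 n u^2}{L^2\delta_n^2+2Lu}\big)$; with $u=L\,d\,\delta_n^2$ and $d\geq1$ this exponent is at most $-\tfrac{c_2}{3}n\delta_n^2$. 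A union bound over the two exceptional events then yields \pref{eqn:lipschitz_loss_gen} with probability $1-c_5\exp(-c_6 n\delta_n^2)$.

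The ERM consequence is then immediate: since $\hat f$ minimizes $\Pn_n\cL_{\cdot}$ over $\cF$ and $f^{\star}\in\cF$, we have $\Pn_n(\cL_{\hat f}-\cL_{f^{\star}})\leq 0$, so on the good event
\[
\Pn(\cL_{\hat f}-\cL_{f^{\star}})
\leq \big|\Pn_n(\cL_{\hat f}-\cL_{f^{\star}})-\Pn(\cL_{\hat f}-\cL_{f^{\star}})\big| + \Pn_n(\cL_{\hat f}-\cL_{f^{\star}})
\leq 18 L\,d\,\delta_n\{\nrm*{\hat f-f^{\star}}_{2,2}+\delta_n\}.
\]
For the linear case I would simply replace \pref{lem:peeling} by \pref{lem:tighter_peeling}, whose conclusion holds without any lower bound on $\delta_n^2$; since the small-norm argument via \pref{lem:prob_localized} never uses that lower bound either, the whole statement goes through for $\delta_n$ equal to any solution of the fixed-point inequalities.

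This argument is essentially bookkeeping, and I do not anticipate a real obstacle. The only mildly delicate point is choosing the deviation level $u$ in the small-norm regime so that the total constant stays at $18$ while the failure probability remains of the form $\exp(-cn\delta_n^2)$; taking $u\asymp L\,d\,\delta_n^2$ achieves both because $u^2/(L^2\delta_n^2+Lu)\asymp d^2\delta_n^2/(1+d)\gtrsim\delta_n^2$. The genuinely nontrivial content---Talagrand's inequality, the dyadic peeling over shells, and the vector-valued Rademacher contraction---has already been discharged in \pref{lem:prob_localized}--\pref{lem:tighter_peeling}.
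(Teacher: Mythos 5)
Your proof follows the paper's argument exactly: split according to whether $\nrm*{f-f^{\star}}_{2,2}\geq\delta_n$, control the large-norm regime via the peeling event from \pref{lem:peeling} (or \pref{lem:tighter_peeling} in the linear case), control the small-norm regime via \pref{lem:prob_localized} with $r=\delta_n$ and $u=L\,d\,\delta_n^2$ to get $Z_n(\delta_n)\leq 17Ld\delta_n^2$, take a union bound, and read off the ERM consequence from the basic inequality. The only cosmetic difference is that you compute the failure exponent for the small-ball event a bit more explicitly; the decomposition and the lemmas invoked are the same.
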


\preprint{\subsection{Proofs of Lemmas for Constrained $M$-Estimators}}
\journal{\subsection{Proofs}}

\begin{proof}[Proof of \pref{lem:prob_localized}]
By the Lipschitz condition on the loss and the boundedness of the functions, we have $\|\cL_f - \cL_{f^{\star}}\|_{\infty} \leq L\|f-f^{\star}\|_{\infty, 2}\leq 2L$. Moreover,
\begin{equation*}
\var(\cL_f - \cL_{f^{\star}}) \leq \cP (\cL_f - \cL_{f^{\star}})^2 \leq L^2 \|f-f^{\star}\|_{2,2}^2 \leq L^2 r^2 .
\end{equation*}
\dfedit{Thus, by Bousquet's concentration inequality (see Theorem 7.3 of
  \citet{bousquet2003concentration} or Theorem 12.5 of
  \citet{boucheron2013concentration}) we have that for all $u>0$,
  }
\begin{equation*}
\Pr\brk*{Z_n(r) \geq 2\En\brk*{Z_n(r)} + u} \leq c_1 \exp\prn*{-c_2 \frac{n u^2}{L^2 r^2 + L u}}
\end{equation*}
for absolute constants $c_1,c_2>0$. Moreover, by a standard symmetrization argument,
\begin{align*}
\En\brk*{Z_n(r)} \leq~& 2\, \En\brk*{\sup_{\|f-f^{\star}\|_{2,2}\leq r} \left| \frac{1}{n} \sum_{i=1}^n \epsilon_i \left\{\ell(f(\vartwo_i),\varall_i) - \ell(f^{\star}(\vartwo_i), \varall_i)\right\}\right|}\\
=~& 2\, \En\brk*{\sup_{\|f-f^{\star}\|_{2,2}\leq r,\, \delta\in \{-1, 1\}}  \delta \frac{1}{n} \sum_{i=1}^n \epsilon_i \left\{\ell(f(\vartwo_i),\varall_i) - \ell(f^{\star}(\vartwo_i),\varall_i)\right\}}\\
\leq~& 2\, \sum_{\delta\in \{-1, 1\}} \En\brk*{\sup_{\|f-f^{\star}\|_{2,2}\leq r}  \delta \frac{1}{n} \sum_{i=1}^n \epsilon_i \left\{\ell(f(\vartwo_i),\varall_i) - \ell(f^{\star}(\vartwo_i),\varall_i)\right\}}\\
\leq~& 4\, \En\brk*{\sup_{\|f-f^{\star}\|_{2,2}\leq r}  \frac{1}{n} \sum_{i=1}^n \epsilon_i \left\{\ell(f(\vartwo_i),\varall_i) - \ell(f^{\star}(\vartwo_i),\varall_i)\right\}},
\end{align*}
where the second inequality follows from the fact that each summand is
non-negative, since we can always choose $f=f^{\star}$. By invoking
the multivariate contraction inequality of \cite{maurer2016vector},
letting $\crl*{\eps_{i,t}}_{1\leq{}i\leq{}n, 1\leq{}t\leq{}d}$ be
independent Rademacher random variables, we have
\begin{align*}
\leq~& 8\,L\, \En\brk*{\sup_{\|f-f^{\star}\|_{2,2}\leq r} \frac{1}{n} \sum_{i=1}^n \sum_{t=1}^d \epsilon_{i,t} \left(f_t(\vartwo_i) - f_t^{\star}(\vartwo_i)\right)}\\
\leq~& 8\,L\, \sum_{t=1}^d \En\brk*{\sup_{\|f_t-f_t^{\star}\|_{2}\leq r} \left| \frac{1}{n} \sum_{i=1}^n \epsilon_{i,t} \left(f_t(\vartwo_i) - f_t^{\star}(\vartwo_i)\right)\right|}\\
=~& 8\, L\, \sum_{t=1}^d \cR(\cF\rbar_t-f_t^{\star},r),
\end{align*}
where we also used the fact that for any fixed $f^{\star}$, $\En\brk*{\epsilon_i \ell(f^{\star}(x_i),z_i)} = \En\brk*{\epsilon_{i, t} f_t^{\star}(x_i)}=0$. This completes the proof of the first part of the lemma. For the second part, observe that: $\cR(\cF\rbar_t-f_t^{\star},r)\leq \cR(\sh(\cF\rbar_t - f_t^{\star}),r)$. Moreover, for any star shaped function class $\cG$, the function $r \rightarrow \frac{\cR(\cG,r)}{r}$ is monotone non-increasing. Thus for any $r\geq \delta_n$,
\begin{align*}
\frac{\cR(\sh(\cF\rbar_t - f_t^{\star}),r)}{r} \leq \frac{\cR(\sh(\cF\rbar_t - f_t^{\star}),\delta_n)}{\delta_n} \leq \delta_n.
\end{align*}
Rearranging yields that $\cR(\sh(\cF\rbar_t - f_t^{\star}),r)\leq r \delta_n$. Hence, $\En\brk*{Z_n(r)}\leq 8\, L \, d \, r\, \delta_n$. This completes the proof of the second part of the lemma.
\end{proof}

\begin{proof}[Proof of \pref{lem:peeling}]
We invoke a peeling argument. Consider the events
\begin{equation*}
  {\cal S}_m = \{f \in \cF: \alpha^{m-1}\delta_n \leq \|f - f^{\star}\|_{2,2} \leq \alpha^m \delta_n\},
\end{equation*}
for $\alpha=18/17$. Since $\sup_{f \in \cF} \|f-f^{\star}\|_{2,2} \leq 2\sup_{f \in \cF} \|f\|_{\infty,2} \leq 2$, it must be that any $f \in \cF$ with $\|f-f^{\star}\|_{2,2}\geq \delta_n$ belongs to some ${\cal S}_m$ for $m\in \{1, 2, \ldots, M\}$, where $M\leq \frac{\log(2/\delta_n)}{\log_{\alpha}(e)}\leq 41 \log(2/\delta_n)$. Thus by a union bound we have
\begin{equation*}
\Pr[{\cal E}_1] \leq \sum_{m=1}^M \Pr[{\cal E}_1 \cap {\cal S}_m].
\end{equation*}
Moreover, observe that if the event ${\cal E}_1\cap {\cal S}_m$ occurs then there exists a $f\in \cF$ with $\|f - f^{\star}\|_{2,2} \leq \alpha^m \delta_n = r_m$, such that
\begin{align*}
\left| \Pn_n (\cL_{f} - \cL_{f^{\star}}) - \Pn (\cL_{f} - \cL_{f^{\star}}) \right|  \geq 18 L\,d\, \delta_n\, \|f - f^{\star}\|_2 &\geq 18 L\,d\, \delta_n\, \alpha^{m-1}\, \delta_n \\&= 17 L\, d\, \delta_n\, \alpha^{m}\, \delta_n = 17 L\, d\, \delta_n\, r_m.
\end{align*}
Thus, by the definition of $Z_n(r)$, we have
\begin{align*}
\Pr[{\cal E}_1 \cap {\cal S}_m] \leq \Pr[Z_n(r_m) \geq 17 L d \delta_n r_m].
\end{align*}
Applying \pref{lem:prob_localized} with $r=r_m$ and $u=L\, d\, r_m\,\delta_n$, yields that the latter probability is at most $c_1 \exp\prn{ - c_2 n \frac{L^2 r_m^2 \delta_n^2}{L^2 r_m^2 + L^2 d r_m \delta_n}}\leq c_1 \exp\prn{ - c_2 n \frac{\delta_n^2}{2d}}$, where we used the fact that $\delta_n \leq r_m$ in the last inequality.  Subsequently, taking a union bound over the $M$ events, we have
\begin{equation}
\Pr[{\cal E}_1] \leq c_1 M \exp\prn*{-c_2 n \frac{\delta_n^2}{2}} = c_1 \exp\prn*{-c_2 n \frac{\delta_n^2}{2d} + \log(M)}.
\end{equation}
Since, by assumption on the lower bound on $\delta_n$ we have $\log(M)\leq \log(41 \log(2/\delta_n)) \leq \log(41 \log(2 c_2 n)) \leq \frac{c_2 n \delta_n^2}{4d}$, we get
\begin{equation}
\Pr[{\cal E}_1] \leq  c_1 \exp\prn*{-c_2 n \frac{\delta_n^2}{4d}}.
\end{equation}
\end{proof}

\begin{proof}[Proof of \pref{lem:tighter_peeling}]
For simplicity, let $\|\cdot\|=\|\cdot\|_{2,2}$. Suppose that there
exists a function $f\in \cF$, with $\|f - f^{\star}\| \geq \delta_n$, such that
$$\left| \Pn_n \left(\cL_{f} - \cL_{f^{\star}}\right) - \Pn \left(\cL_{f} - \cL_{f^{\star}}\right)\right| \geq 17 L\,d\, \delta_n \|f - f^{\star}\|.$$
Then we will show that there exists a function $f'\in \sh(\cF - f^{\star})$, with $\|f' - f^{\star}\|=\delta_n$, such that
$$\left| \Pn_n \left(\cL_{f} - \cL_{f^{\star}}\right) - \Pn \left(\cL_{f} - \cL_{f^{\star}}\right)\right| \geq 17 L\,d\, \delta_n^2.$$
To do so, we simply choose $f'$ to satisfy
\begin{equation*}
f' - f^{\star} = \frac{\delta_n}{\|f - f^{\star}\|}\left(f-f^{\star}\right).
\end{equation*}
Since $\frac{\delta_n}{\|f-f^{\star}\|}\leq 1$ and by the definition
of the star hull, we know that $f'\in \sh(\cF - f^{\star})$. Moreover,
by the definition of $\theta'$, we also have that $\|f'-f^{\star}\|_n = \delta_n$. Moreover, by the linearity of the loss $\cL^{\star}_{f}$ with respect to $f$, we have:
\begin{align*}
\left| \Pn_n \left(\cL_{f'} - \cL_{f^{\star}}\right) - \Pn \left(\cL_{f'} - \cL_{f^{\star}}\right)\right| =~& \left| \Pn_n \cL_{f'-f^{\star}} - \Pn \cL_{f'-f^{\star}}\right|\\
=~& \frac{\delta_n}{\|f-f^{\star}\|}\left| \Pn_n \cL_{f-f^{\star}} - \Pn \cL_{f-f^{\star}}\right|\\
=~& \frac{\delta_n}{\|f-f^{\star}\|}\left| \Pn_n \left(\cL_{f} - \cL_{f^{\star}}\right) - \Pn \left(\cL_{f} - \cL_{f^{\star}}\right)\right|\\
\geq~& \frac{\delta_n}{\|f-f^{\star}\|} 17 L\,d\, \delta_n \|f-f^{\star}\| = 17 L\,d\, \delta_n^2.
\end{align*}
Thus we have that the probability of event ${\cal E}_1$ is upper bounded by the probability of the event
\begin{align*}
{\cal E}_1' &= \left\{\sup_{f\,\in\, \sh(\cF - f^{\star}): \|f - f^{\star}\|\leq \delta_n} \left| \Pn_n \left(\cL_{f} - \cL_{f^{\star}}\right) - \Pn \left(\cL_{f} - \cL_{f^{\star}}\right)\right| \geq 17 L\,d\, \delta_n^2\right\} \\&= \left\{Z_n(\delta_n) \geq 17 L\,d\, \delta_n^2\right\}.
\end{align*}
Invoking \pref{lem:prob_localized} with $r=\delta_n$ and $u=L\,d\,\delta_n^2$, we conclude that the probability of the second event is also at most $\mu_1' \exp\prn{-\mu_2' n \delta_n^2}$, for some universal constants $\mu_1',\mu_2'$. 
\end{proof}

\begin{proof}[Proof of \pref{lem:lipschitz_wainwright}]
Consider the events:
\begin{align*}
{\cal E}_0 =~& \{Z_n(\delta_n) \geq 17 L\,d\, \delta_n^2\},\\
{\cal E}_1 =~& \left\{\exists f \in \cF: \|f - f^{\star} \|_{2,2} \geq \delta_n \text{ and } \left| \Pn_n (\cL_{f} - \cL_{f^{\star}}) - \Pn (\cL_{f} - \cL_{f^{\star}}) \right| \geq 18 L\, d\, \delta_n \|f - f^{\star}\|_{2,2}\right\},
\end{align*}
with $Z_n(r)$ as defined in \pref{lem:prob_localized}. Observe that if \pref{eqn:lipschitz_loss_gen} is violated, then one of these events must occur. 
Applying \pref{lem:prob_localized} with $r = \delta_n$ and $u=L\,d\,\delta_n^2$ yields, that event ${\cal E}_0$ happens with probability at most $c_1\exp\prn{c_2' n \delta_n^2}$, where $c_2'=c_2/(L^2 + L\,d)$. Moreover, applying \pref{lem:peeling} we get that $\Pr[{\cal E}_1]\leq c_3 \exp\prn{-c_4 n \delta_n^2}$. Thus by a union bound with probability $1-c_5 \exp\prn{c_6 n \delta_n^2}$, neither events occur. If the loss $\cL_{f}$ is linear then we apply \pref{lem:tighter_peeling} instead of \pref{lem:peeling}, which does not require a lower bound on $\delta_n^2$.
\end{proof}

\section{Proofs from Section \journal{\ref{sec:erm}}\preprint{\ref*{sec:erm}}}
\label{app:constrained}
Let $\cL_{\theta,g}$ be shorthand for the random variable $\ell(\theta(\vartwo), g(\varone); \varall)$, and recall that
\begin{align}
\Pn \cL_{\theta, g} =\En\brk*{\ell(\theta(\vartwo), g(\varone); \varall)},\quad\text{and}\quad
\Pn_n \cL_{\theta, g} = \frac{1}{n} \sum_{i=1}^n \ell(\theta(\vartwo_i), g(\varone_i); \varall_i),
\end{align}
denote, respectively, the population risk and empirical risk over the $n$ samples in $\sampletwo$. We consider the two-stage plugin ERM algorithm
\begin{equation}
\esttwo = \arg\min_{\theta \in \target} \Pn_n \cL_{\theta, \estone}.
\end{equation}
As in \pref{app:general_m}, we adopt the abbreviations
$\nrm*{f}_{p,q}\ldef{}\nrm*{f}_{L_p(\ls_q,\cD)}$ and
$\nrm*{f}_{n,p,q}\ldef{}\nrm*{f}_{L_p(\ls_q,\sampletwo)}$; we drop the
subscript $q$ for real-valued function classes.

Throughout this section, we repeatedly make use of the following fact:
If $\delta_n$ solves a fixed point equation such as
\pref{eqn:critical_radius}, then $\delta'_n\ldef{}\delta_n +
C\sqrt{\frac{\log(1/\delta)}{n}}$ does as well. By expanding the
radius in this fashion, we can replace success probabilities of the
form $1-e^{-cn\delta_n^{2}}$ (e.g., in
\pref{thm:fast_erm} %
below) by  $1-\delta$, as long as $\delta_n$ is replaced by $\delta_n'$ in the final excess risk bound.

\subsection{Proof of Theorem~\ref*{thm:fast_erm}}
\label{sec:fast_erm_proof}

We prove \pref{thm:fast_erm} for the case where $R=1$. The general case follows by observing that ERM over $\Theta$ is equivalent to ERM over the class $\Theta/R$ with the loss $\tilde{\ls}(\zeta,g(w);z) \ldef{} \ls(R\cdot{}\zeta,g(w);z)$, with the problem parameters remapped as $L\mapsto{}LR$, $\beta_1\mapsto{}\beta_1R^2$, $\beta_2\mapsto\beta_2R$, $\kappa\mapsto\kappa$, $\lambda\mapsto\lambda{}R^{2}$, and $\delta_n\mapsto\delta_n/R$.

\begin{proof}[Proof of \pref{thm:fast_erm}]
Since $\esttwo$ is the outcome of the Plug-In ERM and since $\besttwo\in \target$, we have
\begin{equation*}
\Pn_n \left(\cL_{\esttwo, \estone} - \cL_{\besttwo, \estone}\right) \leq 0.
\end{equation*}
Applying \pref{lem:peeling}, with $\cF=\target$, $f^{\star}=\besttwo$
and $\cL_{\cdot} = \cL_{\cdot, \estone}$, we know that with
probability at least $1-c_1 \exp\prn{-c_2 n \delta_n^2}$, where
$c_1,c_2>0$ are numerical constants,
\begin{equation*}
\Pn (\cL_{\esttwo,\estone} - \cL_{\besttwo, \estone}) \leq 18
L\,\dimtwo\, \delta_n \|\esttwo - \besttwo\|_{2,2}
+18 L\,\dimtwo\, \delta_n^2.
\end{equation*}
Since
\pref{ass:orthogonal,ass:well_specified,ass:strong_convex_loss,ass:smooth_loss}
are satisfied, \pref{lem:self_bounding} (a corollary of
\pref{thm:generic_strongly_convex}) with $\veps_n(\delta)\ldef{}18
L\,\dimtwo\, \delta_n$ and $\alpha_n(\delta) \ldef 18 L\,\dimtwo\,
\delta_n^2$ implies that
\begin{align*}
  \nrm[\big]{\esttwo-\besttwo}_{\target}^{2} &\leq{}
C_1^2\cdot{}\veps^2_{n/2}(\delta/2) +
2C_1\cdot\alpha_{n/2}(\delta/2) + 
2C_2\cdot{}\prn*{
\Rate(\nuisance,\sampleone,\delta/2)
                                               }^{\frac{4}{1+r}}\\
                                               &\leq{}
3C_1^2\cdot{}\veps^2_{n/2}(\delta/2) +
2C_2\cdot{}\prn*{
\Rate(\nuisance,\sampleone,\delta/2)
}^{\frac{4}{1+r}},
\end{align*}
and 
\begin{align*}
\Pn (\cL_{\esttwo,\gtone} - \cL_{\besttwo, \gtone}) &\leq 
                                                      2\beta_1C_1^2\cdot{}\veps^2_{n/2}(\delta/2) +
                                                      2\beta_1C_1\cdot\alpha_{n/2}(\delta/2)
  + 
2\beta_1C_2\cdot{}\prn*{
\Rate(\nuisance,\sampleone,\delta/2)
  }^{\frac{1}{1+r}},\\
  &\leq 
    4\beta_1C_1^2\cdot{}\veps^2_{n/2}(\delta/2)
  + 
2\beta_1C_2\cdot{}\prn*{
\Rate(\nuisance,\sampleone,\delta/2)
}^{\frac{1}{1+r}},
\end{align*}
where $C_1\leq\frac{4}{\lambda}$ and
$C_2\leq2\prn[\Big]{\prn[\big]{\frac{\beta_2}{\lambda}}^{\frac{2}{1+r}}
  + \frac{\kappa}{\lambda}}$ are as in
\pref{thm:generic_strongly_convex}; note that we have used that
$\alpha_n(\delta)\leq\veps^2_n(\delta)$ and $C_1\geq{}1$ to simplify. This establishes the result.

\end{proof}

\subsection{Proof of Theorem~\ref*{thm:variance_penalized}}

We first prove a theorem about non-centered $\ell_2$-moment penalization, and then show that this implies \pref{thm:variance_penalized}.

\begin{theorem}[Moment-Penalized Plug-In ERM]\label{thm:moment_penalized}
Consider the function class $\cF = \{\ell(\theta(\cdot), \estone(\cdot); \cdot): \theta \in \target\}$, with $R\ldef\sup_{f\in \cF}\|f\|_{L_{\infty}(\cD)}$ and $\fstar\ldef\ell(\besttwo(\cdot), \estone(\cdot); \cdot)$. Let $\delta_n^2 \geq 0$ be any solution to the inequality
\begin{equation}
  \label{eq:moment_critical}
\Radexp(\sh(\cF - f^\star),\delta) \leq \frac{\delta^2}{R},
\end{equation}
and $\esttwo$ be the second moment-penalized empirical risk minimizer,
\begin{equation*}
\esttwo = \arg\min_{\theta\in \target} L_{\sampletwo}(\theta, \estone) + 36 \delta_n \|\ell(\theta(\cdot),\estone(\cdot);\cdot)\|_{L_2(\sampletwo)}.
\end{equation*}
Then with probability at least $1-\delta$, 
\begin{align*}
\poprisk(\esttwo, \estone) - \poprisk(\besttwo, \estone) =~& O\left(\sqrt{\En\brk*{\ell(\besttwo(\cdot), \estone(\cdot); \cdot)^2}}\cdot\left(\frac{\delta_n}{R}+\sqrt{\frac{\log(1/\delta)}{n}}\right) + \left(\frac{\delta_n^2}{R}+R\frac{\log(1/\delta)}{n}\right)\right).
\end{align*}
\end{theorem}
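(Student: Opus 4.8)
The plan is to prove \pref{thm:moment_penalized} as a pure localized $M$-estimation statement, treating the first-stage output as fixed: condition on the first-stage sample $S_1$ (hence on $\estone$), so that $\cF=\crl*{\ell(\theta(\cdot),\estone(\cdot);\cdot):\theta\in\target}$ is a \emph{fixed} real-valued class with envelope $R$, and $\esttwo$ is the penalized empirical risk minimizer over the fresh sample $\sampletwo$. Write $f_\theta\ldef\ell(\theta(\cdot),\estone(\cdot);\cdot)$, $f^\star\ldef f_{\besttwo}$ (which lies in $\cF$ since $\besttwo\in\target$), $\Pn f\ldef\En\brk{f}$, $\Pn_n f\ldef L_{\sampletwo}(\cdot)$; the object to bound is $\Pn(f_{\esttwo}-f^\star)=L_{\cD}(\esttwo,\estone)-L_{\cD}(\besttwo,\estone)$. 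The first step is the basic inequality from optimality: since $\esttwo$ minimizes $\Pn_n f_\theta + 36\delta_n R^{-1}\nrm{f_\theta}_{L_2(\sampletwo)}$ over $\target$ and $\besttwo$ is feasible, the reverse triangle inequality gives
\[
\Pn_n(f_{\esttwo}-f^\star) \le 36\delta_n R^{-1}\prn*{\nrm{f^\star}_{L_2(\sampletwo)}-\nrm{f_{\esttwo}}_{L_2(\sampletwo)}} \le 36\delta_n R^{-1}\nrm{f_{\esttwo}-f^\star}_{L_2(\sampletwo)}.
\]

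The second step is to control the empirical process $(\Pn-\Pn_n)(f_{\esttwo}-f^\star)$. Since $f_{\esttwo}-f^\star\in\cF-f^\star\subseteq\sh(\cF-f^\star)$ and the latter is star-shaped with critical radius $\delta_n$ in the scaling of \pref{eq:moment_critical}, I would rescale by $R$ and invoke the constrained-$M$-estimator machinery of \pref{app:general_m} with the identity loss $\cL_f=f$ --- chiefly \pref{lem:lipschitz_wainwright} (which, for a linear loss, needs no $\log\log n$ lower bound on $\delta_n$), built on \pref{lem:prob_localized} and \pref{lem:peeling}. Carrying through the rescaling and inflating the radius to $\delta_n'\ldef\delta_n+CR\sqrt{\log(1/\delta)/n}$ so that the event holds with probability at least $1-\delta$ (the extra factor $R$ in the inflation is essential, see below), I obtain, with $\rho_n\ldef\delta_n'/R\asymp \delta_n/R+\sqrt{\log(1/\delta)/n}$, that with probability at least $1-\delta$, simultaneously for all $\theta\in\target$,
\[
\abs{(\Pn-\Pn_n)(f_\theta-f^\star)} \le c\,\rho_n\nrm{f_\theta-f^\star}_{L_2(\cD)} + c\,R\rho_n^2,\qquad \nrm{f_\theta-f^\star}_{L_2(\cD)} \le 2\nrm{f_\theta-f^\star}_{L_2(\sampletwo)} + c\,\delta_n',
\]
the second inequality being the standard empirical-to-population $L_2$ comparison on $\sh(\cF-f^\star)$ (see \cite{wainwright2019}). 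Separately, Bernstein's inequality applied to the single fixed function $f^\star$ gives $\nrm{f^\star}_{L_2(\sampletwo)}\le c\prn*{\nrm{f^\star}_{L_2(\cD)}+R\sqrt{\log(1/\delta)/n}}$.

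The third step is algebra. Combining the displays and using $\nrm{f_{\esttwo}-f^\star}_{L_2(\sampletwo)}\le\nrm{f_{\esttwo}}_{L_2(\sampletwo)}+\nrm{f^\star}_{L_2(\sampletwo)}$,
\[
\Pn(f_{\esttwo}-f^\star) \le \prn*{36\delta_n R^{-1}+2c\rho_n}\nrm{f^\star}_{L_2(\sampletwo)} - \prn*{36\delta_n R^{-1}-2c\rho_n}\nrm{f_{\esttwo}}_{L_2(\sampletwo)} + O\prn*{R\rho_n^2}.
\]
The penalty constant $36$ is chosen so that $36\delta_nR^{-1}\ge 2c\rho_n$ for the universal $c$ from \pref{lem:lipschitz_wainwright} (using $\rho_n=\delta_n'/R$ and $\delta_n'\ge\delta_n$; this is exactly why the inflation carries the factor $R$ --- with $\delta_n'=\delta_n+C\sqrt{\log(1/\delta)/n}$ the penalty would fail to dominate the $\sqrt{\log(1/\delta)/n}$ part of the fluctuation for large $R$), so the coefficient of $\nrm{f_{\esttwo}}_{L_2(\sampletwo)}$ is non-positive and that term is dropped. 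What remains is $O\prn*{\rho_n\nrm{f^\star}_{L_2(\sampletwo)}+R\rho_n^2}$; substituting the Bernstein bound for $\nrm{f^\star}_{L_2(\sampletwo)}$, expanding $\rho_n\asymp\delta_n/R+\sqrt{\log(1/\delta)/n}$ and $R\rho_n^2\asymp \delta_n^2/R+R\log(1/\delta)/n$, and using $ab\le\tfrac12 a^2+\tfrac12 b^2$ on the cross term $\delta_n\sqrt{\log(1/\delta)/n}$ collapses everything to
\[
\Pn(f_{\esttwo}-f^\star) = O\prn*{\sqrt{\En\brk{(f^\star)^2}}\prn*{\tfrac{\delta_n}{R}+\sqrt{\tfrac{\log(1/\delta)}{n}}} + \tfrac{\delta_n^2}{R}+R\tfrac{\log(1/\delta)}{n}},
\]
which is the claim, since $\En\brk{(f^\star)^2}=\En\brk{\ell(\besttwo(\cdot),\estone(\cdot);\cdot)^2}$.

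The step I expect to be the main obstacle is precisely this last bookkeeping: calibrating the radius inflation at scale $R\sqrt{\log(1/\delta)/n}$ (not $\sqrt{\log(1/\delta)/n}$) so that the moment penalty still dominates the empirical-process fluctuations for arbitrary $R\ge 1$, while simultaneously keeping the surviving leading term proportional to $\sqrt{\En[(f^\star)^2]}\,(\delta_n/R+\sqrt{\log(1/\delta)/n})$ rather than letting it degrade to $R\sqrt{\log(1/\delta)/n}$. A secondary subtlety is that the empirical/population $L_2$ comparison must be applied on $\sh(\cF-f^\star)$, where the critical-radius hypothesis \pref{eq:moment_critical} lives, rather than on $\cF$ itself --- which is why only differences $f_\theta-f^\star$ are ever compared in $L_2(\sampletwo)$ versus $L_2(\cD)$, and why $f^\star$ is handled separately via Bernstein.
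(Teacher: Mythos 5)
Your proof follows the paper's argument essentially step by step: the basic inequality from penalized optimality, the uniform localized fluctuation bound from Lemma~\ref{lem:lipschitz_wainwright} applied to the identity (linear) loss, the empirical-to-population $L_2$ comparison on $\sh(\cF-f^\star)$, and a Bernstein bound on $\|f^\star\|_{L_2(\sampletwo)}$, all after reducing to $R=1$ by normalization. Your observation that the confidence inflation must be $\delta_n'=\delta_n+CR\sqrt{\log(1/\delta)/n}$ in un-normalized units (rather than without the factor $R$) is correct and makes explicit a point the paper's footnote on radius inflation leaves implicit. Be aware, though, that with the penalty coefficient hard-wired to $36\delta_nR^{-1}$ while the uniform bound is driven by $\delta_n'$, the cancellation of the $\|f_{\esttwo}\|_{L_2(\sampletwo)}$ term only goes through cleanly when $\delta_n$ is already at least of order $R\sqrt{\log(1/\delta)/n}$ --- an implicit requirement shared by the paper's proof, where the probability is stated as $1-c_9\exp(-c_{10}n\delta_n^2)$ before being rewritten as $1-\delta$.
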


\begin{proof}[Proof of \pref{thm:moment_penalized}]
  We first consider the case where $R=1$.  We apply
  \pref{lem:lipschitz_wainwright} with $\cL_f\ldef{}f(x)$ (i.e., the
  identity loss). Observe that $\|f-f^{\star}\|_{2} \leq 2
  \|f-f^{\star}\|_{n,2}+\delta_n$ with probability $1-c_7\exp\prn{c_8
    n \delta_n^2}$ (via Theorem 4.1 of \cite{wainwright2019}), and
  that $\cL_{f}$ is linear. As a result, we have that for any $\delta_n\geq 0$ that satisfies the conditions of \pref{thm:moment_penalized}, with probability $1-c_9 \exp\prn{c_{10} n \delta_n^2}$,
\begin{align*}
\left| \Pn_n (\cL_{f} - \cL_{f^{\star}}) - \Pn (\cL_{f}- \cL_{f^{\star}}) \right| \leq~& 18 \delta_n \{\|f-f^{\star}\|_{2} + \delta_n\}\leq 36 \delta_n \{\|f-f^{\star}\|_{n, 2} + \delta_n\} & \forall f \in \cF.
\end{align*}
This implies that with probability at least $1-c_9 \exp\prn{c_{10} n \delta_n^2}$, 
\begin{align*}
\Pn (\cL_{\hat{f}}- \cL_{f^{\star}}) \leq~& \Pn_n (\cL_{\hat{f}} - \cL_{f^{\star}}) + 36 \delta_n \|\hat{f}-f^{\star}\|_{n, 2} + 36\delta_n^2\\
\leq~& \Pn_n (\cL_{\hat{f}} - \cL_{f^{\star}}) + 36 \delta_n \|\hat{f}\| + 36\delta_n\|f^{\star}\|_{n, 2} + 36\delta_n^2\\
\leq~& 72 \delta_n \|f^{\star}\|_{n, 2} + 36 \delta_n^2,
\end{align*}
where the second inequality follows by the definition of the moment-penalized algorithm, since
\begin{equation}
  \label{eq:moment_penalized_rescaled}
\Pn_n \cL_{f} + 36 \delta_n \|\hat{f}\|_{n, 2} \leq  \Pn_n \cL_{f^{\star}} + 36 \delta_n \|f^{\star}\|_{n, 2}.
\end{equation}
Now, for general values of $R$, we may apply the reasoning above to the normalized class $\cF/R$. In particular, the condition \pref{eq:moment_critical} implies that
\[
  \Radexp(\sh(\cF-\fstar)/R,\delta_n/R)
  = \frac{1}{R}\Radexp(\sh(\cF-\fstar),\delta_n)\leq{} \frac{\delta_n^{2}}{R^{2}},
\]
so we may take $\delta_n'\ldef{}\delta_n/R$ as the critical radius for
the normalized class. %
This, combined with \pref{eq:moment_penalized_rescaled}, implies that probability at least $1-\delta$,
\begin{align*}
&\frac{1}{R}\prn*{\poprisk(\esttwo, \estone) - \poprisk(\besttwo, \estone)} \\&\leq O\left(\frac{1}{R}\sqrt{\En\brk*{\ell(\besttwo(\cdot), \estone(\cdot); \cdot)^2}}\cdot\left(\frac{\delta_n}{R}+\sqrt{\frac{\log(1/\delta)}{n}}\right) + \left(\frac{\delta_n^2}{R^{2}}+\frac{\log(1/\delta)}{n}\right)\right),
\end{align*}
which establishes the result.
\end{proof}

\begin{proof}[\pfref{thm:variance_penalized}]
  We first sketch the intuition behind how to deduce the
  variance-based bound from \pref{thm:variance_penalized} from the
  second moment-based bound from \pref{thm:moment_penalized}, then
  give a formal proof. Observe that if the optimal loss $\mu^\star\ldef{}\poprisk(\besttwo,\gtone)$ is zero then the two are equivalent. This motivates the following approach: if one has access to a good preliminary estimate $\muhat$ of the value $\mu^\star$, then using moment penalization one can always attain a bound that depends on $\|f^\star - \muhat\|_{L_2(\cD)} = \sqrt{\var(f^\star)} + O(\abs{\muhat - \mu^\star})$. The latter is achieved by simply redefining the function class $\cF$ in \pref{thm:moment_penalized} to be the centered class of losses $\{\ell(\theta(\cdot), \estone(\cdot), \cdot)-\muhat: \theta \in \target\}$. This leads to the algorithm in the theorem statement, which penalizes the centered second moment:
\begin{equation}
\hat{\theta} = \arg\min_{\theta\in \target} L_{\sampletwo}(\theta, \hat{g}) + 36 \delta_n \|\ell(\theta(\cdot),\estone(\cdot);\cdot)-\muhat\|_{L_2(\sampletwo)}.
\end{equation}
As long as the error in the preliminary estimate is vanishing,
i.e. $\abs{\muhat - \mu^\star}\rdef\veps_n\rightarrow 0$, then the
impact of this error on the regret is only of second order, since the
final regret bound takes the form.
\begin{equation}
\label{eq:variance_penalized}
\poprisk(\esttwo, \estone) - \poprisk(\besttwo, \estone) = O\left( \delta_n \sqrt{\var(f^\star)} + \delta_n\veps_n + \delta_n^2\right) = O\left( \delta_n \sqrt{\var(f^\star)}+\veps_n^{2}+\delta_n^{2}\right).
\end{equation}

In more detail, using the bound from \pref{thm:moment_penalized}, we have
\begin{align*}
\journal{&}\poprisk(\esttwo, \estone) - \poprisk(\besttwo, \estone) \journal{\\&}= O\left(\prn*{\sqrt{\var\brk*{\ell(\besttwo(\cdot), \estone(\cdot); \cdot)}}+\veps_n}\cdot\left(\frac{\delta_n}{R}+\sqrt{\frac{\log(1/\delta)}{n}}\right) + \left(\frac{\delta_n^2}{R}+R\frac{\log(1/\delta)}{n}\right)\right),
\end{align*}
which, using the AM-GM inequality, we can bound by
\begin{align*}
\journal{&}\poprisk(\esttwo, \estone) - \poprisk(\besttwo, \estone) \journal{\\&}= O\left(\prn*{\sqrt{\var\brk*{\ell(\besttwo(\cdot), \estone(\cdot); \cdot)}}}\cdot\left(\frac{\delta_n}{R}+\sqrt{\frac{\log(1/\delta)}{n}}\right) + \left(\frac{\delta_n^2}{R}+R\frac{\log(1/\delta)}{n}\right) + \frac{\veps_n^{2}}{R}\right).
\end{align*}
We are nearly ready to apply \pref{thm:orthogonal_slow}, but first we
must bound the error term $\veps_n$ and relate the variance on the
right-hand side above to the true variance $\var\brk*{\ell(\besttwo(\cdot),
      \gtone(\cdot);
      \cdot)}$. We bound the error $\veps_n$ for the estimate $\muhat$ using vanilla (non-localized) Rademacher complexity and two-sided uniform convergence arguments over the function class $\cF$. In particular, using standard arguments, we can guarantee that with probability at least $1-\delta$ over the draw of $S_3$, we have
\[
\abs[\big]{\muhat - \inf_\theta L_{\cD}(\theta, \estone)} \leq{} \bigoh\prn[\bigg]{\Radexp(\ls\circ\target) + R\sqrt{\frac{\log(1/\delta)}{n}}}.
\]
Finally, we observe that since
$\gamma\mapsto{}\ls(\theta(z),\gamma;z)$ is $L$-Lipschitz for all
$\theta$ and $z$, we have $|L_{\cD}(\theta, \estone)-L_{\cD}(\theta,
\gtone)| =  O(L\|\gtone-\estone\|_{\nuisance})$ for all $\theta\in
\target$. Hence, we have $|L_{\cD}(\besttwo, \gtone) - \inf_{\theta}
L_{\cD}(\theta, \hat{g})| =  O(L\|\gtone-\estone\|_{\nuisance})$, and
\[
\veps_n = \bigoh(L\nrm{\gtone-\estone}_{\nuisance} + \Radexp(\ls\circ\target) + R\sqrt{\frac{\log(1/\delta)}{n}}.
\]
Finally, we observe that
\[
  \var\brk*{\ell(\besttwo(\cdot),
      \estone(\cdot); \cdot)}\leq 2\var\brk*{\ell(\besttwo(\cdot),
      \gtone(\cdot);
      \cdot)} + \bigoh(L^2\|\gtone-\estone\|^2_{\nuisance}).
  \] After another application of the AM-GM inequality, this yields
  \begin{align}
  &L_{\cD}(\esttwo, \estone) - L_{\cD}(\besttwo, \estone) \notag\\&= O\prn*{ \sqrt{V^{\star}}\left(\frac{\delta_n}{R}  + \sqrt{\frac{\log(1/\delta)}{n}}\right) + \frac{1}{R}\prn*{\delta_n^{2} + \cR_n^{2}(\ls\circ\target) + L^2\nrm*{\estone-\gtone}_{\nuisance}^{2}} + R\frac{\log(1/\delta)}{n}
    }.
    \label{eq:variance_penalized_pseudo}
  \end{align}
  Note that we have not used orthogonality up to this point. Applying
  \pref{thm:orthogonal_slow} (using \pref{ass:universal_orthogonality}
  and \pref{ass:smooth_loss_slow}) gives the final result.
\end{proof}

\section{Proofs from Section
  \journal{\ref{sec:oracle}}\preprint{\ref*{sec:oracle}}
and Appendix \journal{\ref{sec:oracle_lipschitz}}\preprint{\ref*{sec:oracle_lipschitz}}}
\label{app:rates}

\subsection{Notation}
We state the guarantees in this section using slightly more refined notation than in \pref{sec:algsandrates} and \pref{sec:oracle_lipschitz}. In particular, we consider both the case where the target and nuisance classes are parametric and where they are nonparametric. For the nuisance parameter class $\nuisance$, the two cases we consider are:
\begin{enumerate}[label=\textnormal{\textbf{\alph*)}}]
\item \emph{Parametric case.} There exists a constant $\done$ such that 
\[
\cH_{2}(\cG\rbar_t,\veps,n) \leq \bigoh(\done\log(1/\veps))\quad\forall{}t.%
\]
\item \emph{Nonparametric case.} There exists a constant $\pone$ such that
\[
\cH_{2}(\cG\rbar_t,\veps,n) \leq \bigoh(\veps^{-\pone})\quad\forall{}t.%
\]
\end{enumerate}
Likewise, for the target class, the two cases we consider are:
 \begin{enumerate}[label=\textnormal{\textbf{\alph*)}}]
 \item \emph{Parametric case.} There exists a constant $\dtwo$ such that 
 \[
 \cH_{2}(\target,\veps,n) \leq{} \bigoh(\dtwo\log(1/\veps)).%
 \]
 \item \emph{Nonparametric case.} There exists a constant $\ptwo$ such that
 \[
 \cH_{2}(\target,\veps,n) \leq{} \bigoh(\veps^{-\ptwo}).%
 \]
 \end{enumerate}

\subsection{Preliminaries}
\begin{definition}[Empirical Rademacher Complexity]
For a real-valued function class $\cF$ and sample set $S=z_1,\ldots,z_n$, the Rademacher complexity is defined via 
\begin{equation}
\Rad(\cF,S) = \En_{\eps}\sup_{f\in\cF}\brk*{\frac{1}{n}\sum_{i=1}^{n}\eps_if(z_i)},
\end{equation}
where $\eps=\eps_1,\ldots,\eps_n$ are i.i.d. Rademacher random variables. We define $\Rad(\cF)=\sup_{S\in\cZ^{n}}\Rad(\cF,S)$.
\end{definition}
We require the following technical lemmas. First is the Dudley entropy integral bound; we use the following form from \cite{srebro2010smoothness} (Lemma A.3). In all results that follow, we use $C>0$ to denote an absolute constant whose value may change from line to line.
\begin{lemma}
\label{lem:dudley}
For any real-valued function class $\cF\subseteq{}(\cZ\to\bbR)$, we have
\begin{equation}
\label{eq:dudley}
\Rad(\cF,S) \leq{} \inf_{\alpha>0}\crl*{
4\alpha + 10\int_{\alpha}^{\sup_{f\in\cF}\sqrt{\En_{n}f^{2}(z)}}
\sqrt{\frac{\cH_2(\cF,\veps,S)}{n}}d\veps
}.
\end{equation}
As a consequence, whenever $\cF$ takes values in $\unitrange$ the following bounds hold:
\begin{itemize}
\item If $\cH_2(\cF,\veps,S)\leq{}\bigoh(\veps^{-p})$, then $\Rad(\cF,S)\leq{}r_{n,p}$, where $r_{n,p}$ satisfies
\[
r_{n,p} \leq{} C_{p}\cdot\left\{\begin{array}{ll}
n^{-\frac{1}{2}},\quad&p<2.\\
n^{-\frac{1}{2}}\cdot\log{}n,\quad&p=2.\\
n^{-\frac{1}{p}},\quad&p>2.
\end{array}\right.
\]
\item If $\cH_2(\cF,\veps,S)\leq{}\bigoh(d\log(1/\veps))$, then $\Rad(\cF,S)\leq{}C\cdot\sqrt{d/n}$.
\end{itemize}
\end{lemma}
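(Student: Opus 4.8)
The plan is to establish the entropy integral bound~\pref{eq:dudley} by the classical Dudley chaining argument, and then to read off the three consequences by evaluating the resulting integral in each metric-entropy regime. (Since \pref{lem:dudley} is quoted verbatim from \cite{srebro2010smoothness}, one could simply cite it; the self-contained argument below is short.)

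\textbf{Chaining.} Fix the sample $S=z_1,\dots,z_n$, write $\nrm*{f}_n=\sqrt{\En_n f^2(z)}$ for the empirical $L_2$ norm and $r=\sup_{f\in\cF}\nrm*{f}_n$. Given $\alpha>0$, set $\alpha_j=r\cdot 2^{-j}$ and let $j^\star$ be the largest index with $\alpha_{j^\star}\geq\alpha$. For each $j\leq j^\star$ pick a minimal $\alpha_j$-cover of $\cF$ in $\nrm*{\cdot}_n$ (its log-cardinality is $\cH_2(\cF,\alpha_j,S)$, up to the constant-factor cost of requiring the cover to lie in $\cF$), with the $j=0$ cover a single point $f_0$, and let $\pi_j(f)$ denote a nearest element. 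Telescoping,
\[
f = f_0 + \sum_{j=1}^{j^\star}\prn*{\pi_j(f)-\pi_{j-1}(f)} + \prn*{f-\pi_{j^\star}(f)},
\]
so by subadditivity of $\Rad(\cdot,S)$ we bound $\Rad(\cF,S)$ by the sum over $j$ of the Rademacher complexities of the finite ``link'' classes $\crl*{\pi_j(f)-\pi_{j-1}(f):f\in\cF}$ plus that of the residual class $\crl*{f-\pi_{j^\star}(f):f\in\cF}$. Each link class has at most $\exp\prn*{2\cH_2(\cF,\alpha_j,S)}$ elements of $\nrm*{\cdot}_n$-norm $\leq 3\alpha_j$, so Massart's finite-class lemma gives a contribution of order $\alpha_j\sqrt{\cH_2(\cF,\alpha_j,S)/n}$; the residual class has norm $\leq\alpha$, so Cauchy--Schwarz bounds its complexity by $\alpha$. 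Comparing the geometric sum $\sum_j \alpha_j\sqrt{\cH_2(\cF,\alpha_j,S)/n}$ with $\int_\alpha^r\sqrt{\cH_2(\cF,\veps,S)/n}\,d\veps$ (using monotonicity of $\cH_2(\cF,\cdot,S)$) yields~\pref{eq:dudley} after adjusting numerical constants; taking $\inf_{\alpha>0}$ finishes the first part.

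\textbf{Evaluating the integral.} In the $\unitrange$-valued case $r\leq 1$. If $\cH_2(\cF,\veps,S)\leq\bigoh(\veps^{-p})$, then $\int_\alpha^1\sqrt{\veps^{-p}/n}\,d\veps = n^{-1/2}\int_\alpha^1\veps^{-p/2}\,d\veps$. For $p<2$ this is at most the convergent $n^{-1/2}\int_0^1\veps^{-p/2}\,d\veps$, so $\alpha\downarrow 0$ gives $\Rad(\cF,S)\leq C_p n^{-1/2}$. For $p=2$ the integral equals $n^{-1/2}\log(1/\alpha)$, and $\alpha=n^{-1/2}$ balances the two terms to give $\Rad(\cF,S)\leq Cn^{-1/2}\log n$. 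For $p>2$ the integral is of order $n^{-1/2}\alpha^{1-p/2}$ (the lower endpoint dominates), so minimizing $4\alpha+C_p n^{-1/2}\alpha^{1-p/2}$, achieved at $\alpha\asymp n^{-1/p}$, gives $\Rad(\cF,S)\leq C_p n^{-1/p}$. If instead $\cH_2(\cF,\veps,S)\leq\bigoh(d\log(1/\veps))$, then $\int_\alpha^1\sqrt{d\log(1/\veps)/n}\,d\veps\leq\sqrt{d/n}\int_0^1\sqrt{\log(1/\veps)}\,d\veps$, and since the last integral is a finite absolute constant, $\alpha\downarrow 0$ gives $\Rad(\cF,S)\leq C\sqrt{d/n}$.

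\textbf{Main difficulty.} All the content sits in the chaining step; the delicate points there are (i) setting up the covers so the link classes and the residual class are controlled with the correct empirical norms — in particular deciding whether covering elements must lie in $\cF$, which only costs constant factors — and (ii) tracking constants through Massart's lemma and the sum-to-integral comparison so the final bound matches the stated $4\alpha+10\int$ form. The integral evaluations are then routine once the endpoint behaviour in each of the regimes $p<2$, $p=2$, $p>2$ is identified.
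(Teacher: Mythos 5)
Your proposal is correct: the chaining argument and the integral evaluations are the standard route, and the minor loosenesses you flag (tracking constants, allowing the cover to be external vs. internal, and the residual norm being $\leq 2\alpha$ rather than $\leq\alpha$) are indeed absorbed by the $4\alpha + 10\int$ form after reparameterizing $\alpha$. The paper does not prove this lemma — it cites it directly as Lemma A.3 of \cite{srebro2010smoothness}, which uses the same Dudley chaining proof — so your self-contained argument matches the cited source's approach and the consequences are routine integral computations as you describe.
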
We also require the following lemma, which controls the rate at which the empirical $L_2$ metric converges to the population $L_2$ metric in terms of metric entropy behavior.

\begin{lemma}
\label{lem:l2_concentration}
Let $\cF\subseteq(\cZ\to\unitrange)$, and let $S=\zr[n]$ be a collection of samples in $\cZ$ drawn i.i.d. from $\cD$.
\begin{itemize}
\item If $\cH_2(\cF,\veps,n)\leq\bigoh(\veps^{-p})$ for some $p$, then with probability at least $1-\delta$, for all $f,f'\in\cF$,
\[
\nrm*{f-f'}_{L_2(\cD)}^{2}
\leq{} 2d_{S}^{2}(f,f') + R_{n,p} + C\frac{\log(\log{}n/\delta)}{n}, 
\]
where
\[
R_{n,p} \leq{} C_{p}\cdot\left\{\begin{array}{ll}
n^{-1}\log^{3}n,\quad&p<2.\\
n^{-1}\log^{4}n,\quad&p=2.\\
n^{-\frac{2}{p}}\log^{3}n,\quad&p>2.
\end{array}\right.
\]
\item
If $\cH_2(\cF,\veps,n)\leq{}\bigoh(d\log(1/\veps))$, then with probability at least $1-\delta$, for all $f,f'\in\cF$,
\[
\nrm*{f-f'}_{L_2(\cD)}^{2}
\leq{} 2d_{S}^{2}(f,f') + C\frac{d\log(en/d) + \log(\log{}n/\delta)}{n}.
\]
\end{itemize}
\end{lemma}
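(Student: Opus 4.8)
The plan is to treat this as a uniform ratio-type (one-sided isomorphic) comparison of the empirical and population $L_2$ seminorms on the difference class $\cF-\cF\ldef\crl*{f-f':f,f'\in\cF}$, and then reduce everything to a single critical-radius estimate read off from the metric-entropy hypotheses. Writing $h=f-f'$ (a function bounded by $2$ in sup norm, with $d_S(f,f')=\nrm*{h}_{L_2(S)}$), the claim $\nrm*{f-f'}_{L_2(\cD)}^2\le 2d_S^2(f,f')+R$ is, after rearranging and using $\nrm*{h}_{L_2(S)}^2\ge 0$, equivalent to the uniform bound $\nrm*{h}_{L_2(\cD)}^2-\nrm*{h}_{L_2(S)}^2\le\tfrac12\nrm*{h}_{L_2(\cD)}^2+R$ over $h\in\cF-\cF$. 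This is precisely a one-sided deviation statement for the squared class $\crl*{h^2:h\in\cF-\cF}$, whose elements lie in $[0,4]$ and satisfy the Bernstein-type bound $\var(h^2)\le\En\brk*{h^4}\le 4\nrm*{h}_{L_2(\cD)}^2$ — the regime in which Theorem~4.1 and Chapter~14 of \cite{wainwright2019} (already invoked in \pref{lem:improved_fast_erm}) apply.

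First I would apply Talagrand's concentration inequality in Bousquet's one-sided form together with a peeling argument over dyadic scales of $\nrm*{h}_{L_2(\cD)}$. At scale $r$, the localized Rademacher complexity of $\crl*{h^2:\nrm*{h}_{L_2(\cD)}\le r}$ is reduced, via the contraction inequality for $t\mapsto t^2$ on $[-2,2]$, to that of $\crl*{h:\nrm*{h}_{L_2(\cD)}\le r}\subseteq\starhull(\cF-\cF)$; feeding in the variance bound then gives, with probability at least $1-\delta$ and uniformly over $h\in\cF-\cF$,
\[
\abs*{\nrm*{h}_{L_2(\cD)}^2-\nrm*{h}_{L_2(S)}^2}\le\tfrac12\nrm*{h}_{L_2(\cD)}^2+c\,\delta_n^2+c\,\frac{\log(1/\delta)}{n},
\]
where $\delta_n$ is, up to constants and the sup-norm scaling, the smallest solution of $\Radexp(\starhull(\cF-\cF),\delta)\le\delta^2$. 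The union bound over the $O(\log n)$ peeling shells is what turns $\log(1/\delta)$ into $\log(\log n/\delta)$ in the final additive term, yielding $\nrm*{h}_{L_2(\cD)}^2\le 2\nrm*{h}_{L_2(S)}^2+O\prn*{\delta_n^2+\log(\log n/\delta)/n}$.

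Next I would read off $\delta_n^2$ from the entropy assumptions. By \pref{lem:star_entropy} together with the elementary estimate $\cH_2(\cF-\cF,\veps,n)\le 2\cH_2(\cF,\veps/2,n)$, the class $\starhull(\cF-\cF)$ inherits the metric-entropy growth of $\cF$ up to an additive $\log(1/\veps)$, so I can plug this into Dudley's entropy integral (\pref{lem:dudley}) and the fixed-point characterization (\pref{prop:dudley_fixed_point}, \pref{lem:fixed_point_empirical}). In the parametric case $\cH_2(\cF,\veps,n)\le O(d\log(1/\veps))$, solving $\int_{\delta^2/8}^{\delta}\sqrt{d\log(1/\veps)/n}\,d\veps\lesssim\delta^2$ gives $\delta_n^2=O(d\log(en/d)/n)$, which is the second bullet. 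In the nonparametric case $\cH_2(\cF,\veps,n)\le O(\veps^{-p})$, the entropy integral is convergent at the origin for $p<2$, logarithmically divergent for $p=2$, and power-divergent for $p>2$ (so it must be truncated at the optimal lower endpoint); solving the fixed-point inequality in each of these regimes produces $\delta_n^2\le R_{n,p}$ with the three stated values, the various powers of $\log n$ coming from the integral's endpoint behavior and from the peeling.

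The hard part is not conceptual but the bookkeeping in these last two steps: running the peeling with the right number of shells so that only $\log\log n$ (rather than $\log n$) appears in the $1/n$ term, verifying that the contraction and symmetrization reductions go through cleanly for the localized \emph{squared} class, and — in the regimes $p\ge 2$ — choosing the truncation level inside Dudley's integral so as to extract the stated power of $n$ together with the correct power of $\log n$. The probabilistic input (Talagrand's inequality, symmetrization, vector/scalar contraction) is entirely standard and can be quoted.
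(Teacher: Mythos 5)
Your reduction to a uniform comparison of $\nrm*{h}_{L_2(\cD)}^2$ and $\nrm*{h}_{L_2(S)}^2$ over $h\in\cF-\cF$, and the qualitative observation that this is a one-sided/ratio-type statement, are both on point. The fatal problem is quantitative: the additive term you would get from the Talagrand/peeling/localized-Rademacher route does \emph{not} match the stated $R_{n,p}$. Wainwright's Theorem~4.1 (and the peeling argument you sketch) produces an additive term on the order of the critical radius $\delta_n^2$ of $\starhull(\cF-\cF)$, and for a class with metric entropy $\cH_2(\cF,\veps,n)\propto\veps^{-p}$ the fixed-point equation $\Radexp(\cG,\delta)\asymp\delta^2$ solves to
\[
\delta_n^2 \asymp
\begin{cases}
n^{-\frac{2}{2+p}}, & p<2,\\
n^{-\frac12}\log n, & p=2,\\
n^{-\frac1p}, & p>2.
\end{cases}
\]
These are all strictly larger (for $n$ large and $p>0$) than the corresponding stated values $n^{-1}\log^3 n$, $n^{-1}\log^4 n$, and $n^{-\frac2p}\log^3 n$. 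For example, for $p\in(0,2)$ you would obtain an additive error $n^{-\frac{2}{2+p}}$, which is polynomially worse than the claimed $n^{-1}\mathrm{polylog}(n)$. So the sentence ``solving the fixed-point inequality in each of these regimes produces $\delta_n^2\le R_{n,p}$'' is false, and your approach proves a strictly weaker lemma than what is asserted.

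The reason the lemma can claim the stronger bound is that the paper does not reprove the norm comparison from scratch; it invokes Lemmas~8 and~9 of \cite{rakhlin2017empirical}, which give a one-sided (lower-isomorphism) comparison of empirical and population $L_2$ metrics with an additive term of order $\Rad^2(\cF)\log^3 n$ rather than of order the critical radius. Since $\Rad(\cF)\asymp n^{-1/2}$ for $p<2$ and $\Rad(\cF)\asymp n^{-1/p}$ for $p>2$ by \pref{lem:dudley}, squaring gives exactly the stated $R_{n,p}$ (up to polylog factors). This squared-Rademacher scaling is a genuinely sharper phenomenon than what Talagrand-plus-localization yields; it is specific to the lower-tail of $\nrm*{h}_{L_2(S)}$ and comes out of a chaining/net argument that exploits the Bernstein variance bound multiplicatively at every scale (not merely at the final localization scale). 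Your sketch, by applying a two-sided deviation bound $\abs{\nrm*{h}_n^2-\nrm*{h}^2}\le\tfrac12\nrm*{h}^2+c\delta_n^2+\ldots$, cannot reach below the critical radius. If you want a self-contained argument, you would need to reproduce the Rakhlin--Sridharan--Tsybakov chaining estimate (or equivalently a Mendelson-style small-ball/lower-isomorphy bound) rather than the Wainwright Chapter~14 machinery.
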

\begin{proof}[\pfref{lem:l2_concentration}]
Using Lemma 8 and Lemma 9 from \cite{rakhlin2017empirical}, it also holds that with probability at least $1-4\delta$, for all $f,f'\in\cF$, we have
\[
\nrm*{f-f'}_{L_2(\cD)}^{2}
\leq{} 2d_{S}^{2}(f,f') + C(r^{\star} + \beta), 
\]
where $\beta=(\log(1/\delta)+\log\log{}n)/n$, and where $r^{\star}\leq{}\dtwo\log(en/\dtwo)$ in the parametric case, and $r^{\star}\leq{}\Rad^{2}(\cF)\log^{3}(n)$ in the general case.  The final result follows by applying the Rademacher complexity bounds from \pref{lem:dudley}.

\end{proof}
\begin{remark}
\label{rem:shifted}
Technically, the result in \cite{rakhlin2017empirical} we appeal to in the proof above is stated for $\brk*{0,1}$-valued classes, but it may be applied to our $\brk*{-1,+1}$-valued setting by shifting and rescaling the class $\cF$ (i.e., invoking with $\cF'\ldef(\cF+1)/2$). We appeal to the same reasoning throughout this section, shifting regression targets in the same fashion when necessary.
\end{remark}

\subsection{Overview of Proofs}
We now sketch the high-level approach behind the main results in \pref{sec:algsandrates} and \pref{sec:oracle_lipschitz}. The idea is to use out-of-the-box learning algorithms for both the nuisance and target stage. However, which algorithm gives an optimal rate will depend on the complexity of $\nuisance$ and $\target$. Moreover, some of the algorithms we employ for the target class require new analyses based on orthogonality to bound the error due to nuisance parameter estimation.

\paragraph{First stage}
Base on our assumptions on the metric entropy, we can obtain rates by appealing to the following generic algorithms.
\begin{itemize}
\item \emph{Global ERM}: For each $t$, select
\[
\estone_{t}\in\argmin_{g\in\nuisance\rbar_t}\sum_{i=1}^{n}((u_t)_{i}-g(\varone_i))^{2}.
\]
\item \emph{Skeleton Aggregation/Aggregation of $\veps$-nets} (\cite{yang1999information,rakhlin2017empirical}; see \pref{sec:skeleton} for a formal description): For each $t$, run the Skeleton Aggregation algorithm with the class $\nuisance\rbar_t$ on the dataset of instance-target pairs $(\varone_1,(u_t)_1),\ldots,(\varone_n,(u_t)_n)$. Let $(\estone)_{t}$ be the result.
\end{itemize}
\vspace{.5em}
\begin{proposition}[Rates for first stage, informal]
\label{prop:stageone_rates}
Suppose that \pref{ass:square_oracle} or \pref{ass:oracle_lipschitz} holds. Then 
Global ERM guarantees that with probability at least $1-\delta$,
\begin{align*}
\nrm*{\estone-\gtone}_{L_{2}(\ls_2,\cD)}^{2} \leq \left\{
\begin{array}{ll}
\Ot\prn*{\dimone\done\log(en/\done)\cdot{}n^{-1}},\quad&\text{Parametric case.}\\~\\
\Ot\prn[\big]{\dimone{}n^{-\frac{2}{2+\pone}\wedge\frac{1}{\pone}}},\quad&\text{Nonparametric case.}
\end{array}
\right.
\end{align*}
Skeleton Aggregation guarantees that with probability at least $1-\delta$,
\begin{align*}
\nrm*{\estone-\gtone}_{L_{2}(\ls_2,\cD)}^{2} \leq \left\{
\begin{array}{ll}
\Ot\prn*{\dimone\done\log(en/\done)\cdot{}n^{-1}},\quad&\text{Parametric case.}\\~\\
\Ot\prn[\big]{\dimone{}n^{-\frac{2}{2+\pone}}},\quad&\text{Nonparametric case.}
\end{array}
\right.
\end{align*}
Here the $\Ot$ notation suppresses $\log{}n$, $\log(\dimone)$, and $\log(\delta^{-1})$ factors.
\end{proposition}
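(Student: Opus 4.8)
The plan is to reduce the vector-valued first-stage problem to $\dimone$ scalar, well-specified least-squares regression problems, to control each coordinate with an off-the-shelf guarantee---the local Rademacher analysis of \pref{app:general_m} (equivalently \cite{koltchinskii2000rademacher,bartlett2005local,wainwright2019}) for Global ERM, and the aggregation results of \cite{yang1999information} with the random-design analysis of \cite{rakhlin2017empirical} for Skeleton Aggregation---and then to recombine. By definition of the norm,
\[
\nrm*{\estone-\gtone}_{L_2(\ls_2,\cD)}^{2} = \sum_{t=1}^{\dimone}\nrm*{(\estone)_t - g_{0,t}}_{L_2(\cD)}^{2},
\]
so it suffices to bound each coordinate with failure probability $\delta/\dimone$ and union bound. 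Fix $t$. Since the nuisance parameters are defined through regression, $g_{0,t}(\varone)=\En\brk*{u_t\mid\varone}$, so $g_{0,t}$ minimizes the population square loss $g\mapsto\En\brk*{(g(\varone)-u_t)^{2}}$; consequently the excess square-loss risk of any estimator equals its squared $L_2(\cD)$-error to $g_{0,t}$, and a standard regression bound on the former is exactly what we need. Under \pref{ass:square_oracle} (resp.\ \pref{ass:oracle_lipschitz}) we have $\abs*{u_t}\leq1$ almost surely and every $g\in\cG\rbar_t$ is bounded by $1$, and we use that the regression is well-specified, $g_{0,t}\in\cG\rbar_t$.

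For Global ERM: in the parametric case $\cH_2(\cG\rbar_t,\veps,n)\leq\bigoh(\done\log(1/\veps))$, solving the empirical Dudley fixed-point inequality \pref{eq:dudley_fixed_point} (or directly invoking \pref{lem:improved_fast_erm} with no nuisance component, since $(g(\varone)-u_t)^2$ is convex, Lipschitz on the bounded domain, and smooth) gives critical radius $\delta_n^{2}\asymp\done\log(en/\done)/n$, which is the ERM excess-risk bound. In the nonparametric case $\cH_2(\cG\rbar_t,\veps,n)\leq\bigoh(\veps^{-\pone})$, the same fixed-point calculation gives $\delta_n^{2}\asymp n^{-2/(2+\pone)}$ when $\pone<2$, a $\log n$-corrected version when $\pone=2$, and---because the localized empirical entropy of the population $L_2$-ball no longer decays fast enough to beat the $\delta^{2}/8$ lower cutoff in \pref{eq:dudley_fixed_point}---only $\delta_n^{2}\asymp n^{-1/\pone}$ when $\pone>2$, a rate moreover unimprovable for proper least squares with random design (\cite{rakhlin2017empirical}). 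Converting the empirical to the population $L_2$-metric via \pref{lem:l2_concentration}, summing over $t$, and absorbing $\log n$, $\log\dimone$, and $\log(\delta^{-1})$ factors into $\Ot$ yields $\Ot(\dimone\, n^{-\frac{2}{2+\pone}\wedge\frac{1}{\pone}})$.

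For Skeleton Aggregation: running the aggregation-of-$\veps$-nets procedure of \cite{yang1999information} (with the random-design analysis of \cite{rakhlin2017empirical}) on the pairs $\crl*{(\varone_i,(u_t)_i)}_{i\le n}$ over $\cG\rbar_t$, one discretizes $\cG\rbar_t$ at the scale $\veps$ balancing the squared approximation error $\veps^{2}$ (here we use well-specifiedness, $g_{0,t}\in\cG\rbar_t$) against the per-example aggregation cost $\cH_2(\cG\rbar_t,\veps,n)/n$ (the log-cardinality of the net), then aggregates the net via exponential weights / progressive mixture. Since the responses $u_t$ and all net elements are bounded by $1$, the aggregation bound applies and gives excess square-loss risk---hence squared $L_2(\cD)$-error---of order $\done\log(en/\done)/n$ in the parametric case and $n^{-2/(2+\pone)}$ in the nonparametric case \emph{for every} $\pone$; this is precisely the point of aggregation, namely that it removes the $\pone>2$ deficiency of ERM. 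Summing over the $\dimone$ coordinates and union bounding gives the stated Skeleton Aggregation guarantees.

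The main obstacle is bookkeeping rather than conceptual: one must check that the hypotheses of the external ERM and aggregation theorems transfer verbatim---boundedness of $u_t$ and of all predictors (including the aggregated skeleton predictor), well-specifiedness $g_{0,t}\in\cG\rbar_t$, and the fact that our assumptions are already stated for worst-case \emph{empirical} covering numbers $\cH_2(\cG\rbar_t,\veps,n)$, so that no separate empirical-to-population entropy conversion is needed beyond \pref{lem:l2_concentration}; since these entropy bounds and the bound $\abs*{u_t}\leq1$ are uniform over samples, the resulting rates are already worst-case over $S$, as required by \pref{def:algorithms}. The most delicate point is tracking the $\pone<2$ versus $\pone>2$ dichotomy, which is exactly where ERM and Skeleton Aggregation produce different rates and is the reason both algorithms are listed.
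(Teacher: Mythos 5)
Your proposal is correct and mirrors the paper's own proof of \pref{lem:stageone_rates}: decompose into $\dimone$ coordinatewise well-specified square-loss regressions, bound Global ERM via the local Rademacher/critical-radius machinery (Koltchinskii/Wainwright for $\pone\leq 2$, plain non-localized Rademacher bounds for $\pone>2$), bound Skeleton Aggregation via \cite{rakhlin2017empirical}, and union bound over coordinates. One small imprecision worth noting: Skeleton Aggregation uses Audibert's star aggregate rather than ``exponential weights / progressive mixture'' (the latter is deviation-suboptimal and would not deliver the high-probability guarantee), and the empirical-to-population conversion for ERM is already baked into the localized critical-radius bound rather than requiring a separate appeal to \pref{lem:l2_concentration}; neither affects correctness.
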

A precise version of \pref{prop:stageone_rates} and detailed description of the algorithms are given in \pref{app:specific_rates}. Note that the minimax rate is $\Omega(n^{-\frac{2}{2+\pone}})$ \citep{yang1999information}, and so Skeleton Aggregation is optimal for all values of $\pone$, while Global ERM is optimal only for $\pone\leq{}2$. While these are not the only algorithms in the literature for which we have generic guarantees based on metric entropy (other choices include Star Aggregation \citep{liang2015learning} and Aggregation-of-Leaders \citep{rakhlin2017empirical}), they suffice for our goal in this section, which is to characterize the spectrum of admissible rates.

In all applications we study, the dimension $\dimone$ is constant. Nevertheless, studying procedures that jointly learn all output dimensions of $\nuisance$ and, in particular, deriving the correct statistical complexity when $\dimone$ is large is an interesting direction for future research and may be practically useful.

\paragraph{Second stage}
The idea behind the second-stage rates we provide is that the problem of obtaining a target predictor for the second stage can be solved by reducing to the classical square loss regression setting. We map our setting onto square loss regression by defining auxiliary variables $X = \varone$ and $Y=\Gamma(\gtone(\varone),\varone)$, and by defining auxiliary predictor classes
\begin{align*}
\gtF &= \crl*{X\mapsto{}\tri*{\Lambda(\gtone(\varone),\varone),\theta(\vartwo)}\mid{}\theta\in\target},\\
\estF &= \crl*{X\mapsto{}\tri*{\Lambda(\estone(\varone),\varone),\theta(\vartwo)}\mid{}\theta\in\target}.
\end{align*}
With these definitions, our goal to bound the excess risk in, e.g., \pref{thm:oracle_well_specified} can be equivalently stated as producing a predictor $\wh{f}\in\cF_0$ that enjoys a bound on
\begin{equation}
\label{eq:excess_risk_square}
\En\prn[\big]{\wh{f}(X)-Y}^{2} -\inf_{f\in\gtF}\En\prn*{f(X)-Y}^{2},
\end{equation}
which is the standard notion of square loss excess risk used in, e.g., \cite{liang2015learning,rakhlin2017empirical}. Defining, $\wt{Y}=\Gamma(\estone(\varone),\varone)$, we can apply any standard algorithm for the class $\cF$ to the dataset $(X_1,\wt{Y}_1),\ldots,(X_n,\wt{Y}_n)$. Note however that, due to the use of $\estone$ as a plug-in estimate, predictors produced via \pref{alg:sample_splitting} will---invoking \pref{def:algorithms}---give a guarantee of the form
\begin{equation}
\label{eq:bad_excess_risk_square}
\En\prn[\big]{\wh{f}(X)-\wt{Y}}^{2} -\inf_{f\in\estF}\En\prn[\big]{f(X)-\wt{Y}}^{2} \leq{} \Rate(\target,\sampletwo,\delta/2\midsem{}\esttwo,\estone),
\end{equation}
where $\wh{f}$ and the benchmark $f$ belong to $\cF$ instead of
$\cF_0$. The machinery developed in \pref{sec:orthogonal} relates the
left-hand-side of this expression to the oracle excess risk
\pref{eq:excess_risk_square}. Depending on the setting, more work is
required to show that the right-hand-side of
\pref{eq:bad_excess_risk_square} is controlled. This challenge is only
present in the well-specified setting. The difficulty is that while
the original problem \pref{eq:excess_risk_square} is well-specified in
this case, the presence of the plug-in estimator $\estone$ in
\pref{eq:bad_excess_risk_square} introduces additional
``misspecification''. We show for global ERM and Skeleton Aggregation
the right-hand-side of the expression is controlled as well, meaning
that $\Rate(\target,\sampletwo,\delta/2\midsem{}\esttwo,\estone)$ is
not much larger than the rate
$\Rate(\target,\sampletwo,\delta/2\midsem{}\esttwo,\gtone)$ that would
have been achieved if the true value for the nuisance parameter was
known. This achieved by exploiting orthogonality once again.

In the misspecified setting, we can simply upper bound the right-hand
side of \pref{eq:bad_excess_risk_square} by the worst-case bound
$\sup_{g\in\nuisance}\Rate(\target,\sampletwo,\delta/2\midsem{}\esttwo,
g)$ and get the desired growth. Since the model is misspecified to begin with, any extra misspecification introduced by using the plugin estimate here is irrelevant.
To be precise, the algorithm configuration is as follows.
\begin{itemize}
\item For stage one, use Skeleton Aggregation \citep{yang1999information,rakhlin2017empirical}. If $\pone\leq{}2$, global ERM can be used instead.
\item For stage two, in the misspecified setting with $\target$ convex, use global ERM.
\item For stage two, in the well-specified setting, we use Skeleton Aggregation, with a new analysis to account for the small amount of ``model misspecification'' introduced by the plug-in nuisance estimate $\estone$. If $\pone\leq{}2$, global ERM can be used instead; this is because skeleton ERM and global ERM are both optimal for $\pone\leq{}2$, even in the presence of nuisance parameters. 
\end{itemize}

\subsection{Skeleton Aggregation}
\label{sec:skeleton}
Here we briefly describe the Skeleton Aggregation meta-algorithm for real-valued regression \citep{yang1999information,rakhlin2017empirical}. The setting is as follows: we receive $n$ examples $S=(X_1,Y_1),\ldots,(X_n,Y_n)\in(\cX\times{}\bbR)^{n}$  i.i.d. from a distribution $\cD$. For a function class $\cF\subseteq{}(\cX\to\bbR)$, we define $L_{\cD}(f) = \En_{\cD}\prn*{f(X)-Y}^{2}$. Our goal is to produce a predictor $\wh{f}_S$ for which the excess risk $L_{\cD}(\wh{f}) - \inf_{f\in\cF}L_{\cD}(f)$
is small.

We call a \emph{sharp model selection aggregate} any algorithm that, given a finite collection of $M$ functions $f_1,\ldots,f_M$ and $n$ i.i.d. samples, returns a convex combination $\wh{f}=\sum_{i=1}^{M}\nu_if_i$ for which
\begin{equation}
\label{eq:ms_aggregate}
L(\wh{f})\leq{} \min_{i\in\brk*{M}}L(f_i) + C\frac{\log(M/\delta)}{n}
\end{equation}
with probability at least $1-\delta$. One such model selection aggregate is the \emph{star aggregation} algorithm of \cite{audibert2008progressive}, which produces a $2$-sparse convex combination $\wh{f}$ with the property \pref{eq:ms_aggregate} whenever $\abs*{Y}\leq1$ almost surely and the functions in $f_1,\ldots,f_M$ take values in $\unitrange$.

We use the following variant of skeleton aggregation, following \cite{rakhlin2017empirical}. Given a dataset $S=(X_1,Y_1),\ldots,(X_n,Y_n)$, we split it into two equal-sized parts $S'$ and $S''$.
\begin{itemize}
\item Fix a scale $\veps>0$, and let $N=\cH(\cF,\veps,S')$. Let $\crl[\big]{\wh{f}_i}_{i\in\brk*{N}}$ be a collection of functions that realize the cover, and assume the cover is proper without loss of generality.\footnoteorhide{Any improper $\veps$-cover can be made into a proper $2\veps$-cover.} 
\item Let $\wh{f}$ be the output of the star aggregation algorithm run with the collection $\crl[\big]{\wh{f}_i}_{i\in\brk*{N}}$ on the dataset $S''$.
\end{itemize}
For a simple analysis of this algorithm, see Section 6 of \cite{rakhlin2017empirical}. In general, the algorithm is optimal only in the well-specified setting in which $\En\brk*{Y\mid{}X}=f^{\star}(X)$ for some $f^{\star}\in\cF$. We give a more refined analysis in the presence of nuisance parameters in the sequel. As final remark, note that since we use a proper cover and the star aggregate is $2$-sparse, the final predictor $\wh{f}$ lies in the class $\cF'\ldef\cF + \starhull(\cF-\cF,0)$

\subsection{Rates for Specific Algorithms}
\label{app:specific_rates}
Given an example $\varall\in\cZ$, we define an auxiliary example $(\wt{X},\wt{Y})$ via $\wt{X}(z)=\varone$, $\wt{Y}(z)=
\Gamma(\estone(w), z)$. For the remainder of this section we make use of the auxiliary second-stage dataset $\wt{S}$ defined via
\begin{equation}
\label{eq:auxS}
\wt{S} = \crl[\big]{(\wt{X}(z),\wt{Y}(z))}_{z\in\sampletwo}.
\end{equation}
We make use of the following auxiliary predictor classes:
\begin{align}
\gtF &= \crl*{\wt{X}\mapsto{}\tri*{\Lambda(\gtone(\varone),\varone),\theta(\vartwo)}\mid{}\theta\in\target}\label{eq:gtF},\\
\estF &= \crl*{\wt{X}\mapsto{}\tri*{\Lambda(\estone(\varone),\varone),\theta(\vartwo)}\mid{}\theta\in\target}\label{eq:empF}.%
\end{align}
Finally, we define $\wt{\ls}(\yh,y) = \prn*{\yh-y}^{2}$ and $\wt{L}(f) = \En_{\wt{X},\wt{Y}}\wt{\ls}(f(\wt{X}),\wt{Y})$, where $(\wt{X},\wt{Y})$ are sampled from the distribution introduced by drawing $z\sim{}\cD$, and taking $(\auxX(z),\auxY(z))$. With these definitions, observe that for any $f\in\wh{F}$ of the form $\wt{X}\mapsto{}\tri*{\Lambda(\estone(\varone),\varone),\theta(\vartwo)}$, we have
\begin{equation}
\label{eq:risk_relation}
\auxL(f) = \poprisk(\theta,\estone)\text{~~~~~and~~~~~}\inf_{f\in\cF}\auxL(f) \leq{} \poprisk(\besttwo,\estone).
\end{equation}
We relate the metric entropy of the auxiliary class $\wh{\cF}$ to that of $\target$ as follows.
\begin{proposition}
\label{prop:entropy_aux}
Under \pref{ass:square_oracle}, it holds that
\begin{equation}
\label{eq:entropy_aux}
\cH_2(\wh{\cF},\veps,n)\leq{}\cH_{2}(\targetemp,\veps,n).
\end{equation}

\end{proposition}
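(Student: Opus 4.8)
The plan is to realize $\wh{\cF}$ as the image of $\targetemp$ under a norm-nonexpansive map between the corresponding empirical $L_2$ spaces over $\sampletwo$, and then push an $\veps$-cover forward. Concretely, for $\theta\in\target$ I regard the associated element of $\wh{\cF}$ as the function $f_\theta:z\mapsto\tri*{\Lambda(\estone(\varone),\varone),\theta(\vartwo)}$ on $\cZ$; this is the same class as in \pref{eq:empF}, since the auxiliary covariate $\wt{X}$ in \pref{eq:auxS} is itself a function of $z$, and $\vartwo$ together with the variables feeding $\Lambda$ are recoverable from $z$. It therefore suffices to show that on any fixed $\sampletwo$, an $\veps$-cover of $\targetemp$ in the empirical $L_2(\ls_2)$ metric induces an $\veps$-cover of $\wh{\cF}$ in the empirical $L_2$ metric.

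The core step is a one-line pointwise contraction. Fix $\theta,\theta'\in\targetemp$ and $z_i\in\sampletwo$, and set $\Lambda_i\ldef\Lambda(\estone(\varone_i),\varone_i)$. Part (c) of \pref{ass:square_oracle} gives $\Lambda_i\Lambda_i^{\trn}\preceq I$, equivalently $\nrm*{\Lambda_i}_2\leq1$, so by Cauchy--Schwarz
\[
\prn*{f_\theta(z_i)-f_{\theta'}(z_i)}^{2}
=\prn*{\tri*{\Lambda_i,\,\theta(\vartwo_i)-\theta'(\vartwo_i)}}^{2}
\leq\nrm*{\Lambda_i}_2^{2}\,\nrm*{\theta(\vartwo_i)-\theta'(\vartwo_i)}_2^{2}
\leq\nrm*{\theta(\vartwo_i)-\theta'(\vartwo_i)}_2^{2}.
\]
Averaging over $i\in\crl*{1,\ldots,n}$ yields $\nrm*{f_\theta-f_{\theta'}}_{L_2(\sampletwo)}\leq\nrm*{\theta-\theta'}_{L_2(\ls_2,\sampletwo)}$, i.e. $\theta\mapsto f_\theta$ is $1$-Lipschitz from $(\targetemp,\nrm*{\cdot}_{L_2(\ls_2,\sampletwo)})$ into $(\wh{\cF},\nrm*{\cdot}_{L_2(\sampletwo)})$.

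Given the contraction, the argument finishes in the standard way. Let $\crl*{\theta_1,\ldots,\theta_N}$ be a minimal $\veps$-cover of $\targetemp$ in $\nrm*{\cdot}_{L_2(\ls_2,\sampletwo)}$, so that $\log N=\cH_2(\targetemp,\veps,\sampletwo)$. For any $\theta\in\target\subseteq\targetemp$, choose $\theta_j$ with $\nrm*{\theta-\theta_j}_{L_2(\ls_2,\sampletwo)}\leq\veps$; the display above then gives $\nrm*{f_\theta-f_{\theta_j}}_{L_2(\sampletwo)}\leq\veps$, so $\crl*{f_{\theta_1},\ldots,f_{\theta_N}}$ is an $\veps$-cover of $\wh{\cF}$ on $\sampletwo$. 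Hence the empirical $\veps$-covering number of $\wh{\cF}$ on $\sampletwo$ is at most $N$; taking logarithms and then the supremum over all size-$n$ datasets $\sampletwo$ gives $\cH_2(\wh{\cF},\veps,n)\leq\cH_2(\targetemp,\veps,n)$, which is \pref{eq:entropy_aux}.

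There is no serious obstacle: the proof is Cauchy--Schwarz plus the elementary cover-pushforward lemma. The only points needing a little care are the bookkeeping ones above — that $z$ (equivalently $\wt{X}$) determines everything needed to evaluate $\theta$ and $\Lambda$, so the reduction is exact — and confirming that the vector-valued entropy $\cH_2(\targetemp,\cdot,\cdot)$ on the right is the one taken with respect to the $\ls_2$-based empirical norm that appears in the contraction, consistent with the conventions of \pref{sec:algsandrates}. The identical computation also yields $\cH_2(\gtF,\veps,n)\leq\cH_2(\targetemp,\veps,n)$ for the class $\gtF$ in \pref{eq:gtF}.
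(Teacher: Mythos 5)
Your proof is correct, and since the paper states \pref{prop:entropy_aux} without an explicit proof, the Cauchy--Schwarz contraction from \savehyperref{ass:square_oracle}{Assumption \ref*{ass:square_oracle}(c)} (namely $\Lambda\Lambda^{\trn}\preceq I$, i.e.\ $\nrm*{\Lambda}_2\le 1$) followed by a cover-pushforward is exactly the argument the authors leave implicit. Your caveat about the vector-valued metric on $\targetemp$ is the right one to flag; with the $L_2(\ls_2,\sampletwo)$ empirical metric on the target class (consistent with how the paper uses these entropies and with $\dimtwo=O(1)$), the reduction is tight and there is nothing else to check.
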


\begin{lemma}[Rates for first stage]
\label{lem:stageone_rates}
Global ERM guarantees that with probability at least $1-\delta$,

\begin{align*}
\nrm*{\estone-\gtone}_{L_{2}(\ls_2,\cD)}^{2} \leq \dimone\cdot\left\{
\begin{array}{ll}
C\cdot{}\prn*{\done\log(en/\done) + \log(\delta^{-1})}n^{-1},\quad&\text{Parametric case,}\\
C_{\pone}\cdot{}n^{-\frac{2}{2+\pone}} + \log(\delta^{-1})n^{-1},\quad&\text{Nonparametric case, $\pone<2$,}\\
C\sqrt{(\log{}n + \log(\delta^{-1}))/n},\quad&\text{Nonparametric case, $\pone=2$,}\\
C_{\pone}\cdot{}n^{-\frac{1}{\pone}} + \sqrt{\log(\delta^{-1})/n},\quad&\text{Nonparametric case, $\pone>2$,}
\end{array}
\right.
\end{align*}
where $C_{\pone}$ is a constant that depends only on $\pone$. Skeleton Aggregation guarantees that with probability at least $1-\delta$,
\begin{align*}
\nrm*{\estone-\gtone}_{L_{2}(\ls_2,\cD)}^{2} \leq \dimone\cdot\left\{
\begin{array}{ll}
C\cdot{}\prn*{\done\log(en/\done) + \log(\delta^{-1})}n^{-1},\quad&\text{Parametric case.}\\
C_{\pone}\cdot{}n^{-\frac{2}{2+\pone}} + \log(\delta^{-1})n^{-1},\quad&\text{Nonparametric case.}
\end{array}
\right.
\end{align*}
\end{lemma}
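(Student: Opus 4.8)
The plan is to reduce the first‑stage guarantee to a collection of \emph{well‑specified square‑loss regression} problems --- one per output coordinate of $\nuisance$ --- and then quote off‑the‑shelf risk bounds for Global ERM and for Skeleton Aggregation. First I would decompose $\nrm*{\estone-\gtone}_{L_2(\ls_2,\cD)}^{2}=\sum_{t=1}^{\dimone}\nrm*{\wh{g}_t-g_{0,t}}_{L_2(\cD)}^{2}$ and argue coordinatewise, union‑bounding over $t$ at confidence level $\delta/\dimone$ and absorbing the resulting $\log\dimone$ into the displayed constants. For a fixed $t$: because $g_{0,t}(\varone)=\En\brk*{u_t\mid\varone}$ with $g_{0,t}\in\cG\rbar_t$ (the nuisance‑regression structure imposed in \pref{sec:algsandrates} and maintained here), both sub‑algorithms run on i.i.d.\ instance--target pairs $\crl*{(\varone_i,(u_t)_i)}$ and perform least squares over $\cG\rbar_t\subseteq(\Varone\to\unitrange)$, with boundedness of predictions and targets coming from \pref{ass:square_oracle} (resp.\ the analogous assumptions underlying \pref{sec:oraclelipschitz}). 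The key identity I would record is that, since $g_{0,t}$ is the conditional mean, $L_{\cD}(g_t)-\inf_{g_t'\in\cG\rbar_t}L_{\cD}(g_t')=\nrm*{g_t-g_{0,t}}_{L_2(\cD)}^{2}$ for $L_{\cD}(g_t)\ldef\En\brk*{\prn*{g_t(\varone)-u_t}^{2}}$, so that any square‑loss \emph{excess‑risk} bound for the sub‑algorithm is automatically the desired $L_2(\cD)$ estimation bound.

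For Global ERM I would invoke the standard localized‑Rademacher (equivalently offset‑Rademacher) excess‑risk bound for square‑loss ERM, e.g.\ \citep{bartlett2005local,rakhlin2017empirical}. The one point worth stressing is that no convexity of $\cG\rbar_t$ is needed: well‑specifiedness automatically supplies the Bernstein‑type variance bound $\var\prn*{(g_t(\varone)-u_t)^{2}-(g_{0,t}(\varone)-u_t)^{2}}\leq C\nrm*{g_t-g_{0,t}}_{L_2(\cD)}^{2}=C\prn*{L_{\cD}(g_t)-L_{\cD}(g_{0,t})}$, which is exactly what the localization argument consumes. This yields $\nrm*{\wh{g}_t-g_{0,t}}_{L_2(\cD)}^{2}\lesssim\wh\delta_n^{2}+\log(\dimone/\delta)/n$, where $\wh\delta_n$ is the empirical critical radius of $\sh(\cG\rbar_t-g_{0,t})$, and I would then read $\wh\delta_n$ off Dudley's integral (\pref{lem:dudley}): $\wh\delta_n^{2}\asymp\done\log(en/\done)/n$ in the parametric case, and in the nonparametric case $\wh\delta_n^{2}\asymp n^{-\frac{2}{2+\pone}}$ for $\pone<2$, $\wh\delta_n^{2}=\Ot(n^{-1/2})$ for $\pone=2$, and $\wh\delta_n^{2}\asymp n^{-1/\pone}$ for $\pone>2$. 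Plugging into the reduction and summing over $t$ gives the Global ERM half of the lemma.

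For Skeleton Aggregation I would unwind the algorithm of \pref{sec:skeleton}: split the sample in half, build a proper $\veps$‑net of $\cG\rbar_t$ of cardinality $\exp\prn*{\cH_2(\cG\rbar_t,\veps,S')}$ on the first half, and run star aggregation \citep{audibert2008progressive} on the second half. The sharp oracle inequality \pref{eq:ms_aggregate} bounds the excess risk by the best net point's excess risk plus $C\prn*{\cH_2(\cG\rbar_t,\veps,n)+\log(\dimone/\delta)}/n$; choosing the net point nearest $g_{0,t}$ in empirical $L_2$ and using well‑specifiedness together with \pref{lem:l2_concentration} to pass from $\nrm*{\cdot}_{L_2(S')}$ to $\nrm*{\cdot}_{L_2(\cD)}$, that excess risk is at most $C\veps^{2}$ plus the lower‑order remainder furnished by \pref{lem:l2_concentration}. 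Optimizing $\veps$ --- $\veps\asymp\sqrt{\done/n}$ in the parametric case, and $\veps^{2+\pone}\asymp1/n$ (so $\veps^{2}\asymp n^{-\frac{2}{2+\pone}}$) in the nonparametric case, where one checks that the \pref{lem:l2_concentration} remainder is dominated by $n^{-\frac{2}{2+\pone}}$ --- and applying the coordinate union bound yields the stated rates.

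I expect the main obstacle to be the Global ERM bound when $\pone>2$: one must treat the truncation in the Dudley integral carefully to see that the local Rademacher complexity of $\sh(\cG\rbar_t-g_{0,t})$ levels off at scale $n^{-1/\pone}$ for \emph{every} localization radius, so that the critical radius --- and hence ERM's rate --- is only $n^{-1/\pone}$ rather than the minimax $n^{-\frac{2}{2+\pone}}$ achieved by Skeleton Aggregation; this is precisely the known sub‑optimality of ERM on high‑entropy classes, and it is why both algorithms appear in the lemma. A secondary, purely technical nuisance is the transfer of the empirical $\veps$‑net approximation error to the population metric in the Skeleton analysis, which is why \pref{lem:l2_concentration} (with its slightly worse log powers) enters the remainder rather than a clean $\veps^{2}$; this is harmless since those terms are of lower order than the leading rate.
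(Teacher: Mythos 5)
Your proposal matches the paper's proof in structure and substance: both decompose coordinatewise and union-bound over $t$, reduce to well-specified square-loss regression with $g_{0,t}=\En[u_t\mid w]$, and then quote standard localized-Rademacher/entropy-integral ERM risk bounds for the Global ERM half and the sharp-aggregation-over-$\veps$-net argument (Rakhlin--Sridharan--Tsybakov, Section~6, via \pref{lem:l2_concentration}) for the Skeleton half. The only cosmetic difference is that for $p_1\geq 2$ the paper switches to the non-localized Rademacher bound with Lipschitz contraction while you keep the localized fixed-point analysis and observe that the critical radius levels off at $n^{-1/p_1}$; both routes deliver the same rates.
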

\begin{proof}[\pfref{lem:stageone_rates}] 
In what follows we analyze the algorithms under consideration for the class $\nuisance|_i$ for a fixed coordinate $i$. The final result follows by union bounding over coordinates and summing the coordinate-wise error bounds we establish.\\
\noindent\emph{Global ERM.}~~ When we are either in the parametric case or the nonparametric case with $\pone<2$, the result is given by Theorem 5.2 of \cite{koltchinskii2011introduction}. See Example 3 and Example 4 that follow the theorem for precise calculations under these assumtions. See also \pref{rem:shifted}.

On the other hand, when $\pone\geq{}2$ we apply to the standard Rademacher complexity bound for ERM (e.g. \cite{shalev2014understanding}), which states that with probability at least $1-\delta$,
\[
\En\prn[\big]{\estone_i(w)-(g_0)_i(w)}^{2} \leq{} 2\cdot\mathfrak{R}_{n/2}(\ell_{\mathrm{square}}\circ\nuisance|_i,\sampleone) + \sqrt{\frac{\log(1/\delta)}{n}},
\]
where $\ell_{\mathrm{square}}(g_i, u_i) = \prn*{g_i(w) - u_i}^{2}$. The result follows by applying Lipschitz contraction to the Rademacher complexity (using that the class is bounded) and appealing to the Rademacher complexity bound from \pref{lem:dudley}.

\noindent\emph{Skeleton Aggregation.}~~We appeal to Section 6 of \cite{rakhlin2017empirical}. See \pref{rem:shifted}.

\end{proof}

\begin{lemma}
\label{lem:erm}
Consider the plug-in ERM algorithm for the setting in \pref{sec:square_oracle}, i.e.
\[
\esttwo \in \argmin_{\theta\in\target}\sum_{z\in\sampletwo}\ls(\theta,\estone\midsem{}z).
\]
Under the assumptions of \pref{thm:oracle_well_specified} and \pref{thm:oracle_misspecified}, global ERM guarantees
\[
\poprisk(\esttwo,\estone)-\poprisk(\besttwo,\estone) \leq C\cdot\left\{
\begin{array}{ll}
\prn*{\dtwo\log(en/\dtwo) + \log(\delta^{-1})}n^{-1},\quad&\text{Parametric case.}\\
C_{\ptwo}\cdot{}n^{-\frac{2}{2+\ptwo}} + \log(\delta^{-1})n^{-1},\quad&\text{Nonparametric case, $\ptwo<2$.}\\
\sqrt{(\log{}n + \log(\delta^{-1}))/n},\quad&\text{Nonparametric case, $\ptwo=2$.}\\
C_{\ptwo}\cdot{}n^{-\frac{1}{\ptwo}} + \sqrt{\log(\delta^{-1})/n},\quad&\text{Nonparametric case, $\ptwo>2$.}
\end{array}
\right.
\]

\end{lemma}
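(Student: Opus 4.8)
The plan is to recognize plug-in global ERM as ordinary (misspecified) square-loss empirical risk minimization over the auxiliary class $\estF$ from \pref{eq:empF} on the auxiliary dataset $\auxS$ from \pref{eq:auxS}, and then invoke classical square-loss ERM excess-risk bounds. Writing $\auxX=\varone$, $\auxY=\Gamma(\estone(\varone),\varall)$ and $f_\theta(\auxX)=\tri{\Lambda(\estone(\varone),v),\theta(\vartwo)}$, the pointwise loss satisfies $\ls(\theta,\estone;z)=(f_\theta(\auxX)-\auxY)^2=\auxls(f_\theta(\auxX),\auxY)$, so the minimizer $\esttwo$ of $\sum_{z\in\sampletwo}\ls(\theta,\estone;z)$ corresponds exactly to the square-loss ERM $\wh f=f_{\esttwo}$ over $\estF$ on $\auxS$. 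By the identity \pref{eq:risk_relation}, $\auxL(f_\theta)=\poprisk(\theta,\estone)$, hence $\inf_{f\in\estF}\auxL(f)=\inf_{\theta\in\target}\poprisk(\theta,\estone)\leq\poprisk(\besttwo,\estone)$, and therefore
\[
\poprisk(\esttwo,\estone)-\poprisk(\besttwo,\estone)\;\leq\;\auxL(\wh f)-\inf_{f\in\estF}\auxL(f),
\]
so it suffices to bound the in-class excess risk of square-loss ERM over $\estF$. Passing to this in-class benchmark is essential: even in the well-specified setting of \pref{thm:oracle_well_specified}, the plug-in estimate $\estone$ makes the auxiliary regression problem (regressing $\auxY$ on $\auxX$ over $\estF$) misspecified, so well-specified ERM rates cannot be invoked directly and we only control excess risk relative to $\inf_{\theta}\poprisk(\theta,\estone)$.

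Next I would assemble the structural ingredients. By \pref{ass:square_oracle}(a)--(b), every $f\in\estF$ is $\unitrange$-valued and $\auxY\in\unitrange$, so the square loss is bounded and $O(1)$-Lipschitz in its first argument on the relevant domain; by \pref{prop:entropy_aux}, $\cH_2(\estF,\veps,n)\leq\cH_2(\targetemp,\veps,n)$, so $\estF$ inherits the complexity of $\target$ (parametric dimension $\dtwo=\bigoh(1)$, or exponent $\ptwo$). Finally, under the hypotheses of \pref{thm:oracle_misspecified} the target class $\target$ is convex, hence $\estF$ is convex (each $f_\theta$ is affine in $\theta$); consequently the square loss obeys the standard variance/Bernstein inequality $\var\prn*{\auxls(f(\auxX),\auxY)-\auxls(f^{\star}(\auxX),\auxY)}\lesssim\auxL(f)-\auxL(f^{\star})$ relative to the in-class minimizer $f^{\star}=\argmin_{f\in\estF}\auxL(f)$, which is what enables localization and fast rates.

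With these in hand the four cases follow by citing standard results, essentially mirroring the proof of \pref{lem:stageone_rates}. For the parametric case and the nonparametric case $\ptwo<2$, the Bernstein condition reduces the rate to a constant times the squared critical radius of $\estF$; solving the entropy-integral fixed point with $\cH_2(\estF,\veps,n)\lesssim\dtwo\log(1/\veps)$ (respectively $\veps^{-\ptwo}$) yields $\bigoh(\dtwo\log(en/\dtwo)\cdot n^{-1})$ (respectively $\bigoh(n^{-2/(2+\ptwo)})$), with the $\log(\delta^{-1})n^{-1}$ confidence term coming from the concentration step---this is Theorem~5.2 together with Examples~3--4 of \cite{koltchinskii2011introduction}, or the localization machinery of Chapter~14 of \cite{wainwright2019}. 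For $\ptwo=2$ and $\ptwo>2$, localization does not improve on the global rate, so I would instead use the classical uniform-convergence bound for ERM: symmetrization plus Lipschitz contraction give $\auxL(\wh f)-\auxL(f^{\star})=\bigoh\prn*{\Rad(\estF)+\sqrt{\log(\delta^{-1})/n}}$ (the deviation term via bounded differences), and \pref{lem:dudley} bounds $\Rad(\estF)$ by $\bigoh(n^{-1/2}\log n)$ when $\ptwo=2$ and $\bigoh(n^{-1/\ptwo})$ when $\ptwo>2$. The only bookkeeping subtlety is the shift from $\unitrange$- to $[0,1]$-valued classes required by some cited statements, handled exactly as in \pref{rem:shifted}. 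There is no genuine obstacle here---the entire content is matching plug-in ERM against off-the-shelf square-loss ERM guarantees---and the one point requiring care, already noted, is that we must not attempt to use well-specified rates for the $\estone$-plugged-in auxiliary problem, which is precisely why the bound matches the \emph{misspecified} minimax rate $n^{-\frac{2}{2+\ptwo}\wedge\frac{1}{\ptwo}}$ rather than the faster well-specified rate $n^{-\frac{2}{2+\ptwo}}$.
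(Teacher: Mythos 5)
Your proposal is essentially the same argument as the paper's: reduce plug-in ERM to square-loss ERM over the auxiliary class $\estF$ on the dataset $\auxS$, use \pref{eq:risk_relation} together with $\inf_{f\in\estF}\auxL(f)\leq\poprisk(\besttwo,\estone)$ to pass to in-class excess risk, then invoke \pref{prop:entropy_aux} and convexity of $\estF$ to cite localization-based ERM bounds when $\ptwo<2$ and the Rademacher-plus-Dudley bound when $\ptwo\geq 2$. The only slip is a citation: the misspecified/convex-class localization result you want is Theorem~5.1 of \cite{koltchinskii2011introduction} (which the paper cites here), not Theorem~5.2, which the paper reserves for the well-specified first-stage analysis in \pref{lem:stageone_rates}.
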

\begin{proof}[\pfref{lem:erm}] To begin, let
  $\wh{f}(\auxX)\ldef{}\tri[\big]{\Lambda(\estone(w),w),\esttwo(x)}$
  and observe that we can write $\wh{f}$ as the global ERM for the auxiliary dataset $\wt{S}$:
\[
\wt{f}\in\argmin_{f\in\wh{F}}\sum_{(\auxX,\auxY)\in\auxS}\wt{\ls}(f(\auxX),\auxY).
\]
\noindent\textbf{Case $\ptwo<2$.}~~ In the misspecified case we appeal to Theorem 5.1 in \cite{koltchinskii2011introduction}, using that $\wh{f}$ is the global ERM for the class $\wh{\cF}$. To invoke the theorem, we verify that a) $\wh{\cF}$ takes values in $\unitrange$ under \pref{ass:square_oracle} (see \pref{rem:shifted}), b) $\wh{\cF}$ inherits convexity from $\target$, and c) $\cH_2(\wh{\cF},\veps,n)\leq{}\cH_{2}(\target,\veps,n)$, following \pref{prop:entropy_aux}. The theorem (see also the following discussion in Example 3 and Example 4) therefore guarantees that with probability at least $1-\delta$,
\begin{align*}
\auxL(\wh{f}) -\inf_{f\in\wh{\cF}}\auxL(f) \leq C\cdot\left\{
\begin{array}{ll}
n^{-1}\prn*{\dtwo\log(en/\dtwo) + \log(\delta^{-1})},\quad&\text{Parametric case.}\\
C_{\ptwo}\cdot{}n^{-\frac{2}{2+\ptwo}} + n^{-1}\log(\delta^{-1}),\quad&\text{Nonparametric case, $\pone<2$.}
\end{array}
\right.
\end{align*}
The result now follows from \pref{eq:risk_relation}, in particular that $\auxL(\wh{f})=\poprisk(\esttwo,\estone)$.\\

\noindent\textbf{Case $\ptwo\geq2$.}~~We apply the standard Rademacher complexity bound for ERM (e.g. \cite{shalev2014understanding}), which states that with probability at least $1-\delta$,
\begin{align*}
\auxL(\wh{f}) -\inf_{f\in\wh{\cF}}\auxL(f) &\leq{} 2\cdot\mathfrak{R}_{n/2}(\wt{\ls}\circ\wh{\cF}) + C\sqrt{\frac{\log(1/\delta)}{n}}\\
&\leq{} C'\cdot\mathfrak{R}_{n/2}(\wh{\cF}) + C\sqrt{\frac{\log(1/\delta)}{n}},
\end{align*}
where we have applied Lipschitz contraction to the Rademacher complexity (using that the class is bounded). To complete the result, we use that $\cH_2(\wh{\cF},\veps,n)\leq{}\cH_{2}(\target,\veps,n)$ and appeal to the Rademacher complexity bound from \pref{lem:dudley}.
\end{proof}
\begin{lemma}
\label{lem:skeleton_aggregation}
Consider the following variant of the Skeleton Aggregation algorithm:\footnoteorhide{See \pref{sec:skeleton} for background.}
\begin{itemize}
\item Split $\sampletwo$ into equal-sized subsets $S'$ and $S''$.
\item Fix a scale $\veps>0$, and let $N=\cH_{2}(\target,\veps,S')$. Let $\crl*{\theta_i}_{i\in\brk*{N}}$ be a collection of functions that realize the cover, and assume the cover is proper without loss of generality. Define $f_i = \wt{X}\mapsto{}\tri*{\Lambda(\estone(\varone),\varone),\theta_i(\vartwo)}$ for each $i\in\brk*{N}$.
\item Let $\esttwo\in\target + \starhull(\target-\target,0)\rdef\targetemp$ realize the output of the star aggregation algorithm using the function class $\crl[\big]{\wh{f}_i}_{i\in\brk*{N}}$ on the dataset $\crl[\big]{(\auxX(z),\auxY(z))}_{z\in{}S''}$.%
\end{itemize}
Under the assumptions of \pref{thm:oracle_well_specified}, when the model is well-specified, Skeleton Aggregation guarantees that with probability at least $1-\delta$,
\begin{align*}
&\poprisk(\esttwo,\estone) - \poprisk(\besttwo,\estone) \\
&\leq C\cdot\nrm*{\estone-\gtone}_{L_4(\ls_2,\cD)}^{4} + C'\cdot{}\left\{
\begin{array}{ll}
\prn*{\dtwo\log(en/\dtwo) + \dimtwo\log(\dimtwo\log{}n/\delta)}n^{-1},\quad&\text{Parametric case,}\\
C_{\ptwo}\cdot{}n^{-\frac{2}{2+\ptwo}} + \dimtwo\log(\dimtwo\log{}n/\delta)n^{-1},\quad&\text{Nonparametric case,}
\end{array}
\right.
\end{align*}
so long as $\dimtwo=o(n^{\frac{\ptwo}{2+\ptwo}\wedge\frac{4}{\ptwo(2+\ptwo)}})$.

\end{lemma}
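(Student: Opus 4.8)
The plan is to reduce the analysis to square-loss regression over the auxiliary class $\wh{\cF}$ defined in \pref{eq:empF}, exactly as sketched in the overview of proofs, and then invoke orthogonality (via \pref{thm:generic_strongly_convex}) to control the extra ``misspecification'' that the plug-in estimator $\estone$ introduces. First I would set $\wh{f}(\auxX)=\tri*{\Lambda(\estone(\varone),\varone),\esttwo(\vartwo)}$ and observe, using \pref{eq:risk_relation}, that bounding $\poprisk(\esttwo,\estone)-\poprisk(\besttwo,\estone)$ is equivalent to bounding $\auxL(\wh{f})-\inf_{f\in\wh{\cF}}\auxL(f)$, the ordinary square-loss excess risk of $\wh{f}$ as an estimator over the auxiliary dataset $\auxS$. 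By construction, $\wh{f}$ is precisely the skeleton-aggregation output for the class $\wh{\cF}$ on $\auxS$ at scale $\veps$; and $\wh{\cF}$ has metric entropy no larger than that of $\target$ (\pref{prop:entropy_aux}) and takes values in $\unitrange$ under \pref{ass:square_oracle} (\pref{rem:shifted}). The issue is that the classical analysis of skeleton aggregation (Section~6 of \cite{rakhlin2017empirical}) gives the fast rate $n^{-2/(2+\ptwo)}$ only in the well-specified regression setting, i.e.\ when $\En[\auxY\mid\auxX]$ lies in $\wh{\cF}$. Because the targets $\auxY=\Gamma(\estone(\varone),\varone)$ are built from the \emph{estimated} nuisance, the regression model for $\wh{\cF}$ is only approximately well-specified, even though the original model for $\gtF$ (with $Y=\Gamma(\gtone(\varone),\varone)$) is exactly well-specified by hypothesis of \pref{thm:oracle_well_specified}.

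The key step is therefore a perturbation argument showing that the amount of misspecification introduced by $\estone$ is of order $\nrm*{\estone-\gtone}_{L_2(\ls_2,\cD)}^{2}$ (or its $L_4$ analogue), and that skeleton aggregation degrades gracefully under misspecification of this size. Concretely, I would run skeleton aggregation's analysis to first obtain the ``naive'' guarantee
\[
\poprisk(\esttwo,\estone)-\poprisk(\besttwo,\estone)=\Rate(\target,\sampletwo,\delta/2\midsem\estone)\leq{}C_{\ptwo}\, n^{-\frac{2}{2+\ptwo}}+\dimtwo\frac{\log(\dimtwo\log n/\delta)}{n}+ (\text{misspecification penalty}),
\]
where the misspecification penalty comes from the cross-term $\En[(\auxY-f^\star(\auxX))(\wh{f}(\auxX)-f^\star(\auxX))]$ with $f^\star$ the $\wh{\cF}$-best predictor; this cross-term vanishes in the exactly well-specified case but here equals a directional derivative of $\poprisk$ in the nuisance direction evaluated at $\besttwo$. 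Invoking \pref{ass:orthogonal} (orthogonality at $\besttwo$), the first-order term in the expansion of this penalty in $\estone-\gtone$ vanishes, and the remainder is second order: a second-order Taylor expansion together with the higher-order smoothness bound in \pref{ass:smooth_loss}(b) controls it by $\beta_2\nrm*{\esttwo-\besttwo}_{\target}\nrm*{\estone-\gtone}_{\nuisance}^{2}$, and an AM--GM split absorbs the $\nrm*{\esttwo-\besttwo}_{\target}$ factor into the leading fast-rate term (using strong convexity, \pref{ass:strong_convex_loss}, to bound $\nrm*{\esttwo-\besttwo}_{\target}^{2}$ by the excess risk). This is exactly the mechanism already used in the proof of \pref{thm:generic_strongly_convex} and in \pref{thm:fast_erm}, so in practice I would simply invoke those theorems with the rate $\Rate(\target,\sampletwo,\delta/2\midsem\estone)$ supplied by the skeleton-aggregation analysis, yielding the $\nrm*{\estone-\gtone}_{L_4(\ls_2,\cD)}^{4}$ term in the statement after converting the $L_2$ nuisance bound to $L_4$ via \pref{ass:moment}.

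The dimension counting for the aggregation step needs care and is where the side condition $\dimtwo=o\!\big(n^{\frac{\ptwo}{2+\ptwo}\wedge\frac{4}{\ptwo(2+\ptwo)}}\big)$ enters. The skeleton construction produces $N=\cH_2(\target,\veps,S')$ candidate functions, and the star-aggregation oracle inequality \pref{eq:ms_aggregate} costs $\log N/n\asymp \veps^{-\ptwo}\log(1/\veps)/n$; balancing this against the bias term $\veps^{2}$ (the squared covering radius) gives the optimal scale $\veps\asymp n^{-1/(2+\ptwo)}$ and the rate $n^{-2/(2+\ptwo)}$, exactly as in the nuisance-free case. The subtlety is that $\wh{\cF}$ is $\dimtwo$-dimensional vector-valued, so the covering number and the star-aggregation cost each carry an extra $\dimtwo$ factor (covering each output coordinate separately and union-bounding), and the concentration events in \pref{lem:l2_concentration} pick up the $\dimtwo\log(\dimtwo\log n/\delta)$ term; requiring $\dimtwo$ to be small relative to the stated power of $n$ is precisely what guarantees these $\dimtwo$-inflated lower-order terms do not dominate the $n^{-2/(2+\ptwo)}$ leading term. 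I expect this bookkeeping — tracking the $\dimtwo$ dependence through covering numbers, the aggregation oracle inequality, and the empirical-to-population $L_2$ comparison, and verifying the threshold — to be the main obstacle; the orthogonality argument itself is a direct application of the machinery already developed. The parametric case ($\cH_2(\target,\veps,n)\propto\dtwo\log(1/\veps)$) follows the same template with $\veps\asymp n^{-1/2}$ and yields the $\dtwo\log(en/\dtwo)/n$ rate.
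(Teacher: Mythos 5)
Your high-level plan — pass through the auxiliary dataset $\auxS$ and class $\wh\cF$, apply the star-aggregation oracle inequality, then use orthogonality to argue that the residual ``misspecification'' introduced by $\estone$ is second-order in $\nrm*{\estone-\gtone}$ — is the right skeleton and matches the paper's structure, including the $\dimtwo$ bookkeeping via per-coordinate covers and \pref{lem:l2_concentration}. However, two ingredients in your description are off.

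First, you compare against $f^\star$ defined as the $\wh\cF$-best predictor (i.e.\ the minimizer of $\auxL$, equivalently of $\poprisk(\cdot,\estone)$), and claim the cross-term $\En[(\auxY-f^\star(\auxX))(\wh f(\auxX)-f^\star(\auxX))]$ ``equals a directional derivative of $\poprisk$ in the nuisance direction evaluated at $\besttwo$.'' This conflates two different reference points. The $\wh\cF$-best predictor corresponds to a minimizer of $\poprisk(\cdot,\estone)$, not to $\besttwo$ (the minimizer of $\poprisk(\cdot,\gtone)$); at the former, the cross-term is controlled by first-order optimality of the plug-in risk, not by Neyman orthogonality, and at the latter the first-order optimality you would want for the usual well-specified regression argument does not hold (since $\besttwo$ does not minimize $\poprisk(\cdot,\estone)$). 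The clean route — and the one the paper takes — is to avoid $f^\star$ entirely: after the oracle inequality gives $\poprisk(\esttwo,\estone)\leq\min_i\poprisk(\theta_i,\estone)+O(\log N/n)$, subtract $\poprisk(\gttwo,\estone)$ from both sides (this is what the lemma asks for, and in the well-specified case $\gttwo=\besttwo$), and then bound the resulting approximation error $\min_i\poprisk(\theta_i,\estone)-\poprisk(\gttwo,\estone)$ by a second-order Taylor expansion in $\theta$ around $\gttwo$ at the point $\estone$, followed by a second-order Taylor expansion of the resulting gradient $D_\theta\poprisk(\gttwo,\estone)[\theta_i-\gttwo]$ in $g$ around $\gtone$. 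Well-specification kills the zeroth-order term, orthogonality kills the first-order term, and the second-order remainders are controlled by \pref{lem:single_index_smooth} and \pref{eq:sufficient_higher_order}, yielding $\nrm*{\theta_i-\gttwo}_{\target}^2+\nrm*{\estone-\gtone}_{L_4(\ls_2,\cD)}^4$ without ever identifying a plug-in minimizer.

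Second, you suggest you would ``simply invoke \pref{thm:generic_strongly_convex} and \pref{thm:fast_erm} with the rate $\Rate(\target,\sampletwo,\delta/2\midsem\estone)$ supplied by the skeleton-aggregation analysis.'' This is circular as stated: those theorems take $\Rate(\target,\cdot\midsem\estone)$ as an input, and the whole content of the present lemma is to establish what that rate is (and in particular that it degrades only by $\nrm*{\estone-\gtone}^4$). The $L_2\to L_4$ conversion via \pref{ass:moment} also does not belong here — the lemma's conclusion is already stated in $L_4$, and the moment comparison is only needed downstream in \pref{thm:oracle_well_specified} to combine with the $L_2$ first-stage rate. These are fixable wiring issues, but as written your plan would not compile into a correct proof.
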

\begin{proof}[\pfref{lem:skeleton_aggregation}]

Let $\wh{\cF} = \crl*{f_i}_{i\in\brk*{N}}$. The Skeleton Aggregation algorithm as described outputs a predictor $\wh{f}\in\wh{\cF} + \starhull(\wh{\cF}-\wh{\cF},0)$ (see \pref{sec:skeleton}) such that
\[
\auxL(\wh{f})\leq{}\min_{i\in\brk*{N}}\auxL(\wh{f}_i) + C\prn*{
\frac{\log{}N}{n} + \frac{\log(1/\delta)}{n}
} \leq{}
\min_{i\in\brk*{N}}\auxL(\wh{f}_i) + C\prn*{
\frac{\cH_{2}(\target,\veps,S)}{n} + \frac{\log(1/\delta)}{n}
}.
\]
Translating back into the language of the lemma statement, recall that we can express each $f_i$ via $f_i=X\mapsto{}\tri*{\Lambda(\estone(\varone),\varone),\theta_i(\vartwo)}$, with $\crl*{\theta_i}_{i\in\brk*{N}}\subset\target$ since we have assumed a proper cover. Since this parameterization is linear in $\theta$, there must be some $\esttwo\in\target + \starhull(\target-\target,0)$ that realizes $\wh{f}$.
Using the expression for the risk in \pref{eq:risk_relation}, this implies
\[
\poprisk(\esttwo,\estone) \leq{}\min_{i\in\brk*{N}}\poprisk(\theta_i,\estone)  + C\prn*{
\frac{\cH_{2}(\target,\veps,S)}{n} + \frac{\log(1/\delta)}{n}
}.
\]
Adding and subtracting from both sides, we rewrite the inequality as
\begin{equation}
\poprisk(\esttwo,\estone) - \poprisk(\gttwo,\estone) \leq{}\min_{i\in\brk*{N}}\poprisk(\theta_i,\estone) - \poprisk(\gttwo,\estone) + C\prn*{
\frac{\cH_{2}(\target,\veps,S)}{n} + \frac{\log(1/\delta)}{n}
}.\label{eq:skeleton_basic_risk}
\end{equation}

Let $\nrm*{\theta -\theta'}_{\target}^2 =
\En\brk*{\tri*{\Lambda(\gtone(\varone),\varone), \theta(\vartwo) -
    \theta'(\vartwo)}^2}$. Observe that by
\pref{lem:single_index_smooth}, we have that for all $\theta,\bar{\theta}$,
\[
D^{2}_{\theta}\poprisk(\bar{\theta},\estone)[\theta-\gttwo,\theta-\gttwo] \leq C\cdot{}\prn*{\nrm*{\theta-\gttwo}_{\target}^{2}
+ \cdot\nrm*{\estone-\gtone}_{L_4(\ls_2,\cD)}^{4}},
\]
and by \pref{lem:suff_single_index}, we have that for all $\theta$ and $\bar{g}$,
\begin{align*}
 D^{2}_{g}D_{\theta}\poprisk(\gttwo,\bar{g})[\theta-\gttwo,\estone-\gtone,\estone-\gtone] &\leq{} 
 C\cdot\nrm*{\theta-\gttwo}_{\target}\cdot\nrm*{\estone-\gtone}_{L_4(\ls_2,\cD)}^{2}\\
&\leq{} 
 C\cdot\prn*{\nrm*{\theta-\gttwo}_{\target}^{2} + \nrm*{\estone-\gtone}_{L_4(\ls_2,\cD)}^{4}}. 
\end{align*}
Hence, since we have assumed orthogonality, \pref{lem:pseudo_risk}
implies that for all $i$,
\[
  \poprisk(\theta_i,\estone) - \poprisk(\gttwo,\estone)
  \leq{} C\cdot{}
    \prn*{\nrm*{\theta_i-\gttwo}_{\target}^{2} + \nrm*{\estone-\gtone}_{L_4(\ls_2,\cD)}^{4}}.
\]
Furthermore, \pref{ass:square_oracle} implies that
$\nrm*{\theta_i-\gttwo}_{\target}\leq{}\nrm*{\theta_i-\gttwo}_{L_2(\ls_2,\cD)}$. Plugging
this bound back into \pref{eq:skeleton_basic_risk}, we have that
\begin{align*}
\poprisk(\esttwo,\estone) - \poprisk(\gttwo,\estone) &\leq{}C\cdot{}\min_{i\in\brk*{N}}\nrm*{\theta_i-\gttwo}_{L_2(\ls_2,\cD)}^{2} + C'\prn*{
\frac{\cH_2(\cF,\veps,S)}{n} + \frac{\log(1/\delta)}{n}
} \\&~~~~+ C''\cdot\nrm*{\estone-\gtone}_{L_4(\ls_2,\cD)}^{4}
\end{align*}
for constants $C,C',C''>0$. We now invoke \pref{lem:l2_concentration} for each of the $\dimtwo$ output coordinates of the target space separately and union bound, which implies that with probability at least $1-\delta$ over the draw of $S'$,
\[
\nrm*{\theta_i-\gttwo}_{L_2(\ls_2,\cD)}^{2}
\leq{} 2d_{2,S'}^{2}(\theta_i,\gttwo) + U,
\]
where \[U\leq{} \dimtwo{}R_{n,\ptwo} + C\frac{\dimtwo\log(\dimtwo\log{}n/\delta)}{n}\]
in the nonparametric case, and \[U\leq{}C\dimtwo{}\frac{\dtwo\log(en/\dtwo)}{n} + C'\frac{\dimtwo\log(\dimtwo\log{}n/\delta)}{n}\] in the parametric case.
Returning to the excess risk, this implies
\begin{align*}
&\poprisk(\esttwo,\estone) - \poprisk(\gttwo,\estone) \\&\leq{}C\cdot{}\min_{i\in\brk*{N}}d_{S'}^{2}(\theta_i,\gttwo) + C'\prn*{
\frac{\cH_2(\cF,\veps,S)}{n}+ \frac{\log(1/\delta)}{n}
} + U + C''\cdot\nrm*{\estone-\gtone}_{L_4(\ls_2,\cD)}^{4}.
\end{align*}
The cover property implies that $\min_{i\in\brk*{N}}d_{2,S'}^{2}(\theta_i,\gttwo) \leq{} \veps^{2}$. So we are left with
\[
\poprisk(\esttwo,\estone) - \poprisk(\gttwo,\estone) \leq{}C\veps^{2} + C'\prn*{
\frac{\cH_2(\cF,\veps,S)}{n} + \frac{\log(1/\delta)}{n}
} + U + C''\cdot\nrm*{\estone-\gtone}_{L_4(\ls_2,\cD)}^{4}.
\]
Solving for the balance $\veps^{2}\asymp\frac{\cH_2(\cF,\veps,S)}{n}$, leads the first two terms to be of order $\dtwo\log(en/\dtwo)$ in the parametric case and $n^{-\frac{2}{2+\ptwo}}$ in the nonparametric case. Thus, in the parametric case, the term $U$ dominates and the final bound is $C\dimtwo{}\frac{\dtwo\log(en/\dtwo)}{n} + C'\frac{\dimtwo\log(\dimtwo\log{}n/\delta)}{n}$. In the nonparametric case, our assumption on the growth of $\dimtwo$ implies that $U$ is of lower order.

\end{proof}

\subsection{Proofs for Oracle Rates}

\begin{proof}[\pfref{thm:oracle_well_specified}]
First we invoke \pref{lem:stageone_rates}, which along with \pref{ass:moment} implies that depending on whether $\pone>2$, one of either global ERM or skeleton aggregation guarantees that with probability at least $1-\delta$,
\[
 \nrm*{\estone-\gtone}_{L_{4}(\ls_2,\cD)}^{2}\leq \prn*{\Rate(\nuisance,\sampleone,\delta)}^{2}
\leq{} \Ot\prn*{
\momentconstant^{2}\cdot{}n^{-\frac{2}{2+\pone}}
}.
\]
Observe that the assumption that the model is well-specified at
$(\gtone,\gttwo)$ implies that \pref{ass:well_specified} is
satisfied. We invoke \pref{ass:single_index} (implied by
\pref{ass:square_oracle}) and \pref{cor:single_index} to get
\[
\poprisk(\esttwo,\gtone) - \poprisk(\besttwo,\gtone) \leq
C\cdot\Rate(\target,\sampletwo,\delta\midsem{}\esttwo,\estone)
+ C'\prn*{\Rate(\nuisance,\sampleone,\delta)}^{4}.
\]
We now invoke \pref{lem:skeleton_aggregation} using that the model is assumed to be well-specified, which implies that with probability at least $1-\delta$, Skeleton Aggregation enjoys
\[
\Rate(\target,\sampletwo,\delta\midsem{}\esttwo,\estone)
\leq{} \Ot\prn*{n^{-\frac{2}{2+\ptwo}}} + C''\prn*{\Rate(\nuisance,\sampleone,\delta)}^{4}.
\]
Combining these results, we get
\[
\poprisk(\esttwo,\gtone) - \poprisk(\besttwo,\gtone) \leq \Ot\prn*{
n^{-\frac{2}{2+\ptwo}} + \momentconstant^{4}\cdot{}n^{-\frac{4}{2+\pone}}
}.
\]
The final result follows by setting $\pone$ to guarantee that the first term dominates this expression.

We mention in passing that to show that global ERM achieves the desired rate for stage two when $\ptwo\leq{}2$, one can appeal to the rates in \pref{app:constrained}.
\end{proof}

\begin{proof}[\pfref{thm:oracle_misspecified}]
As in the proof of \pref{thm:oracle_well_specified}, we invoke \pref{lem:stageone_rates}, which implies that Skeleton Aggregation\footnoteorhide{Alternatively, global ERM can be applied when $\pone\leq{}2$.} guarantees that with probability at least $1-\delta$,
\[
 \nrm*{\estone-\gtone}_{L_{2}(\ls_2,\cD)}^{2}\leq \momentconstant^{2}\prn*{\Rate(\nuisance,\sampleone,\delta) }^{2}
\leq{} \Ot\prn*{\momentconstant^{2}\cdot
n^{-\frac{2}{2+\pone}}
}.
\]
\journal{We remark in passing that global ERM can also be applied when $\pone\leq{}2$.} Observe that since $\target$ is convex, \pref{ass:well_specified} is
satisfied, and we can invoke \pref{ass:single_index} (implied by
\pref{ass:square_oracle}) and \pref{cor:single_index} to get
\[
\poprisk(\esttwo,\gtone) - \poprisk(\besttwo,\gtone) \leq
C\cdot\Rate(\target,\sampletwo,\delta\midsem{}\esttwo,\estone)
+ C'\prn*{\Rate(\nuisance,\sampleone,\delta)}^{4}.
\]
We use global ERM for the second stage. \pref{lem:erm} implies that since the class $\target$ is convex, with probability at least $1-\delta$, global ERM guarantees
\[
\Rate(\target,\sampletwo,\delta\midsem{}\esttwo,\estone)
\leq{} \Ot\prn*{n^{-\frac{2}{2+\ptwo}\wedge\frac{1}{\ptwo}}}.
\]
This leads to a final guarantee of
\[
\poprisk(\esttwo,\gtone) - \poprisk(\besttwo,\gtone) \leq \Ot\prn*{
n^{-\frac{2}{2+\ptwo}\wedge\frac{1}{\ptwo}} + \momentconstant^{4}\cdot{}n^{-\frac{4}{2+\pone}}
},
\]
and the stated result follows by setting $\pone$ to guarantee that the first term dominates this expression.

\end{proof}

\begin{proof}[\pfref{thm:oracle_slow}]
\pref{lem:stageone_rates} implies that either Skeleton Aggregation or global ERM (for $\pone\leq{}2$) guarantees that with probability at least $1-\delta$,
\[
 \nrm*{\estone-\gtone}_{L_{2}(\ls_2,\cD)}^{2}= \prn*{\Rate(\nuisance,\sampleone,\delta) }^{2}
\leq{} \Ot\prn*{
n^{-\frac{2}{2+\pone}}
}.
\]
\pref{thm:orthogonal_slow} guarantees
\[
\poprisk(\esttwo,\gtone) - \poprisk(\besttwo,\gtone) \leq
C\cdot\Rate(\target,\sampletwo,\delta\midsem{}\esttwo,\estone)
+ C'\prn*{\Rate(\nuisance,\sampleone,\delta)}^{2}.
\]
We use global ERM for the second stage. The standard Rademacher complexity bound for ERM (e.g. \cite{shalev2014understanding}), states that with probability at least $1-\delta$, the excess risk is bounded by the Rademacher complexity of the target class $\target$ composed with the loss class as follows:
\begin{align*}
\poprisk(\esttwo,\estone) - \poprisk(\besttwo,\estone) &\leq{} 2\cdot\underbrace{\En_{\eps}\sup_{\theta\in\target}\frac{2}{n}\sum_{i=1}^{n/2}\eps_{t}\ls(\theta(x_i),\estone(w_i);z_i)}_{\textstyle\rdef{} \mathfrak{R}_{n/2}(\ls\circ\target,\sampletwo)} + C\sqrt{\frac{\log(1/\delta)}{n}}.
\end{align*}
Using \pref{lem:dudley} and boundedness of the loss, we have
\[
 \mathfrak{R}_{n/2}(\ls\circ\target,\sampletwo) \leq{} C\cdot\inf_{\alpha>0}\crl*{
\alpha + \int_{\alpha}^{1}
\sqrt{\frac{\cH_2(\ls\circ\target,\veps,\sampletwo)}{n}}d\veps
}.
\]
Since the loss is $1$-Lipschitz with respect to $\ls_{2}$, we have $\cH_2(\ls\circ\target,\veps,\sampletwo)\leq{}\cH_{2}(\target,\veps,\sampletwo)$. Under the assumed growth of $\cH_{2}(\target,\veps,\sampletwo)\propto\veps^{-\ptwo}$, this gives
\[
\Rate(\target,\sampletwo,\delta\midsem{}\esttwo,\estone)
\leq{} \Ot\prn*{n^{-\frac{1}{2}\wedge\frac{1}{\ptwo}}}.
\]
This leads to a final guarantee of
\[
\poprisk(\esttwo,\gtone) - \poprisk(\besttwo,\gtone) \leq \Ot\prn*{
n^{-\frac{1}{2}\wedge\frac{1}{\ptwo}} + n^{-\frac{2}{2+\pone}}.
},
\]
The theorem statement follows by setting $\pone$ so that the first term dominates.

\end{proof}

\iftoggle{supp}{
\journal{\bibliographystyle{imsart-nameyear}}
\bibliography{refs}

\begin{thebibliography}{132}
\providecommand{\natexlab}[1]{#1}
\providecommand{\url}[1]{\texttt{#1}}
\expandafter\ifx\csname urlstyle\endcsname\relax
  \providecommand{\doi}[1]{doi: #1}\else
  \providecommand{\doi}{doi: \begingroup \urlstyle{rm}\Url}\fi

\bibitem[Ai and Chen(2003)]{ai2003efficient}
C.~Ai and X.~Chen.
\newblock Efficient estimation of models with conditional moment restrictions
  containing unknown functions.
\newblock \emph{Econometrica}, 71\penalty0 (6):\penalty0 1795--1843, 2003.

\bibitem[Ai and Chen(2007)]{ai2007estimation}
C.~Ai and X.~Chen.
\newblock Estimation of possibly misspecified semiparametric conditional moment
  restriction models with different conditioning variables.
\newblock \emph{Journal of Econometrics}, 141\penalty0 (1):\penalty0 5--43,
  2007.

\bibitem[Ai and Chen(2012)]{ai2012semiparametric}
C.~Ai and X.~Chen.
\newblock The semiparametric efficiency bound for models of sequential moment
  restrictions containing unknown functions.
\newblock \emph{Journal of Econometrics}, 170\penalty0 (2):\penalty0 442--457,
  2012.

\bibitem[Anthony and Bartlett(1999)]{anthony1999neural}
M.~Anthony and P.~L. Bartlett.
\newblock \emph{Neural network learning: Theoretical foundations}.
\newblock Cambridge University Press, 1999.

\bibitem[Arora et~al.(2019)Arora, Du, Hu, Li, and Wang]{arora2019fine}
S.~Arora, S.~Du, W.~Hu, Z.~Li, and R.~Wang.
\newblock Fine-grained analysis of optimization and generalization for
  overparameterized two-layer neural networks.
\newblock In \emph{International Conference on Machine Learning}, pages
  322--332, 2019.

\bibitem[Athey and Wager(2017)]{athey2017efficient}
S.~Athey and S.~Wager.
\newblock Efficient policy learning.
\newblock \emph{arXiv preprint arXiv:1702.02896}, 2017.

\bibitem[Athey et~al.(2019)Athey, Tibshirani, and Wager]{athey2016generalized}
S.~Athey, J.~Tibshirani, and S.~Wager.
\newblock Generalized random forests.
\newblock \emph{The Annals of Statistics}, 47\penalty0 (2):\penalty0
  1148--1178, 2019.

\bibitem[Audibert(2008)]{audibert2008progressive}
J.-Y. Audibert.
\newblock Progressive mixture rules are deviation suboptimal.
\newblock In \emph{Advances in Neural Information Processing Systems}, pages
  41--48, 2008.

\bibitem[Bartlett et~al.(2005)Bartlett, Bousquet, and
  Mendelson]{bartlett2005local}
P.~L. Bartlett, O.~Bousquet, and S.~Mendelson.
\newblock Local rademacher complexities.
\newblock \emph{The Annals of Statistics}, 33\penalty0 (4):\penalty0
  1497--1537, 2005.

\bibitem[{Bartlett} et~al.(2017){Bartlett}, {Foster}, and
  {Telgarsky}]{bartlett2017spectrally}
P.~L. {Bartlett}, D.~J. {Foster}, and M.~{Telgarsky}.
\newblock {Spectrally-normalized margin bounds for neural networks}.
\newblock \emph{Advances in Neural Information Processing Systems (NIPS)},
  2017.

\bibitem[Bartlett et~al.(2017)Bartlett, Harvey, Liaw, and
  Mehrabian]{bartlett2017nearly}
P.~L. Bartlett, N.~Harvey, C.~Liaw, and A.~Mehrabian.
\newblock Nearly-tight {VC}-dimension and pseudodimension bounds for piecewise
  linear neural networks.
\newblock \emph{Conference on Learning Theory}, 2017.

\bibitem[Belloni et~al.(2017)Belloni, Chernozhukov, Fern{\'a}ndez-Val, and
  Hansen]{belloni2017program}
A.~Belloni, V.~Chernozhukov, I.~Fern{\'a}ndez-Val, and C.~Hansen.
\newblock Program evaluation and causal inference with high-dimensional data.
\newblock \emph{Econometrica}, 85\penalty0 (1):\penalty0 233--298, 2017.

\bibitem[Ben-David and Urner(2012)]{ben2012hardness}
S.~Ben-David and R.~Urner.
\newblock On the hardness of domain adaptation and the utility of unlabeled
  target samples.
\newblock In \emph{International Conference on Algorithmic Learning Theory},
  pages 139--153. Springer, 2012.

\bibitem[Ben-David et~al.(2007)Ben-David, Blitzer, Crammer, and
  Pereira]{ben2007analysis}
S.~Ben-David, J.~Blitzer, K.~Crammer, and F.~Pereira.
\newblock Analysis of representations for domain adaptation.
\newblock In \emph{Advances in neural information processing systems}, pages
  137--144, 2007.

\bibitem[Bertsimas and McCord(2018)]{bertsimas2018optimization}
D.~Bertsimas and C.~McCord.
\newblock Optimization over continuous and multi-dimensional decisions with
  observational data.
\newblock In \emph{Advances in Neural Information Processing Systems}, pages
  2962--2970, 2018.

\bibitem[Beygelzimer and Langford(2009)]{beygelzimer2009offset}
A.~Beygelzimer and J.~Langford.
\newblock The offset tree for learning with partial labels.
\newblock In \emph{Proceedings of the 15th ACM SIGKDD international conference
  on Knowledge discovery and data mining}, pages 129--138. ACM, 2009.

\bibitem[Bickel(1982)]{bickel1982adaptive}
P.~J. Bickel.
\newblock On adaptive estimation.
\newblock \emph{The Annals of Statistics}, pages 647--671, 1982.

\bibitem[Bickel et~al.(1993)Bickel, Klaassen, Bickel, and
  Ritov]{bickel1993efficient}
P.~J. Bickel, C.~A. Klaassen, P.~J. Bickel, and Y.~Ritov.
\newblock \emph{Efficient and adaptive estimation for semiparametric models},
  volume~4.
\newblock Johns Hopkins University Press Baltimore, 1993.

\bibitem[Blitzer et~al.(2008)Blitzer, Crammer, Kulesza, Pereira, and
  Wortman]{blitzer2008learning}
J.~Blitzer, K.~Crammer, A.~Kulesza, F.~Pereira, and J.~Wortman.
\newblock Learning bounds for domain adaptation.
\newblock In \emph{Advances in neural information processing systems}, pages
  129--136, 2008.

\bibitem[Blundell et~al.(2007)Blundell, Chen, and Kristensen]{blundell2007semi}
R.~Blundell, X.~Chen, and D.~Kristensen.
\newblock Semi-nonparametric iv estimation of shape-invariant engel curves.
\newblock \emph{Econometrica}, 75\penalty0 (6):\penalty0 1613--1669, 2007.

\bibitem[Boucheron et~al.(2013)Boucheron, Lugosi, and
  Massart]{boucheron2013concentration}
S.~Boucheron, G.~Lugosi, and P.~Massart.
\newblock \emph{Concentration inequalities: A nonasymptotic theory of
  independence}.
\newblock Oxford university press, 2013.

\bibitem[Bousquet(2003)]{bousquet2003concentration}
O.~Bousquet.
\newblock Concentration inequalities for sub-additive functions using the
  entropy method.
\newblock In \emph{Stochastic inequalities and applications}, pages 213--247.
  Springer, 2003.

\bibitem[Bousquet et~al.(2004)Bousquet, Boucheron, and
  Lugosi]{bousquet2004introduction}
O.~Bousquet, S.~Boucheron, and G.~Lugosi.
\newblock Introduction to statistical learning theory.
\newblock In \emph{Advanced lectures on machine learning}, pages 169--207.
  Springer, 2004.

\bibitem[Cassel et~al.(1976)Cassel, S{\"a}rndal, and Wretman]{cassel1976some}
C.~M. Cassel, C.~E. S{\"a}rndal, and J.~H. Wretman.
\newblock Some results on generalized difference estimation and generalized
  regression estimation for finite populations.
\newblock \emph{Biometrika}, 63\penalty0 (3):\penalty0 615--620, 1976.

\bibitem[Chen and Christensen(2018)]{chen2018optimal}
X.~Chen and T.~M. Christensen.
\newblock Optimal sup-norm rates and uniform inference on nonlinear functionals
  of nonparametric iv regression.
\newblock \emph{Quantitative Economics}, 9\penalty0 (1):\penalty0 39--84, 2018.

\bibitem[Chen and Pouzo(2009)]{chen2009efficient}
X.~Chen and D.~Pouzo.
\newblock Efficient estimation of semiparametric conditional moment models with
  possibly nonsmooth residuals.
\newblock \emph{Journal of Econometrics}, 152\penalty0 (1):\penalty0 46--60,
  2009.

\bibitem[Chen and Pouzo(2012)]{chen2012estimation}
X.~Chen and D.~Pouzo.
\newblock Estimation of nonparametric conditional moment models with possibly
  nonsmooth generalized residuals.
\newblock \emph{Econometrica}, 80\penalty0 (1):\penalty0 277--321, 2012.

\bibitem[Chen and Pouzo(2015)]{chen2015sieve}
X.~Chen and D.~Pouzo.
\newblock Sieve {Wald} and {QLR} inferences on semi/nonparametric conditional
  moment models.
\newblock \emph{Econometrica}, 83\penalty0 (3):\penalty0 1013--1079, 2015.

\bibitem[Chen and White(1999)]{chen1999improved}
X.~Chen and H.~White.
\newblock Improved rates and asymptotic normality for nonparametric neural
  network estimators.
\newblock \emph{IEEE Transactions on Information Theory}, 45\penalty0
  (2):\penalty0 682--691, 1999.

\bibitem[Chernozhukov et~al.(2017)Chernozhukov, Goldman, Semenova, and
  Taddy]{chernozhukov2017orthogonal}
V.~Chernozhukov, M.~Goldman, V.~Semenova, and M.~Taddy.
\newblock Orthogonal machine learning for demand estimation: High dimensional
  causal inference in dynamic panels.
\newblock \emph{arXiv preprint arXiv:1712.09988}, 2017.

\bibitem[Chernozhukov et~al.(2018{\natexlab{a}})Chernozhukov, Chetverikov,
  Demirer, Duflo, Hansen, Newey, and Robins]{chernozhukov2016double}
V.~Chernozhukov, D.~Chetverikov, M.~Demirer, E.~Duflo, C.~Hansen, W.~Newey, and
  J.~Robins.
\newblock Double/debiased machine learning for treatment and structural
  parameters.
\newblock \emph{The Econometrics Journal}, 21\penalty0 (1):\penalty0 C1--C68,
  2018{\natexlab{a}}.

\bibitem[Chernozhukov et~al.(2018{\natexlab{b}})Chernozhukov, Nekipelov,
  Semenova, and Syrgkanis]{chernozhukov2018plugin}
V.~Chernozhukov, D.~Nekipelov, V.~Semenova, and V.~Syrgkanis.
\newblock Plug-in regularized estimation of high-dimensional parameters in
  nonlinear semiparametric models.
\newblock \emph{arXiv preprint arXiv:1806.04823}, 2018{\natexlab{b}}.

\bibitem[Chernozhukov et~al.(2018{\natexlab{c}})Chernozhukov, Newey, and
  Robins]{chernozhukov2018double}
V.~Chernozhukov, W.~Newey, and J.~Robins.
\newblock Double/de-biased machine learning using regularized riesz
  representers.
\newblock \emph{arXiv preprint arXiv:1802.08667}, 2018{\natexlab{c}}.

\bibitem[Chernozhukov et~al.(2021)Chernozhukov, Newey, Quintas-Martinez, and
  Syrgkanis]{chernozhukov2021automatic}
V.~Chernozhukov, W.~K. Newey, V.~Quintas-Martinez, and V.~Syrgkanis.
\newblock Automatic debiased machine learning via neural nets for generalized
  linear regression.
\newblock \emph{arXiv preprint arXiv:2104.14737}, 2021.

\bibitem[Chernozhukov et~al.(2022{\natexlab{a}})Chernozhukov, Escanciano,
  Ichimura, Newey, and Robins]{chernozhukov2016locally}
V.~Chernozhukov, J.~C. Escanciano, H.~Ichimura, W.~K. Newey, and J.~M. Robins.
\newblock Locally robust semiparametric estimation.
\newblock \emph{Econometrica}, 90\penalty0 (4):\penalty0 1501--1535,
  2022{\natexlab{a}}.

\bibitem[Chernozhukov et~al.(2022{\natexlab{b}})Chernozhukov, Newey, and
  Singh]{chernozhukov2018riesz}
V.~Chernozhukov, W.~K. Newey, and R.~Singh.
\newblock De-biased machine learning of global and local parameters using
  regularized riesz representers.
\newblock \emph{The Econometrics Journal}, 2022{\natexlab{b}}.

\bibitem[Cortes et~al.(2010)Cortes, Mansour, and Mohri]{cortes2010learning}
C.~Cortes, Y.~Mansour, and M.~Mohri.
\newblock Learning bounds for importance weighting.
\newblock In \emph{Advances in neural information processing systems}, pages
  442--450, 2010.

\bibitem[Cucker and Smale(2002)]{cucker2002mathematical}
F.~Cucker and S.~Smale.
\newblock On the mathematical foundations of learning.
\newblock \emph{Bulletin of the American Mathematical Society}, 39\penalty0
  (1):\penalty0 1--49, 2002.

\bibitem[Curth et~al.(2020)Curth, Alaa, and van~der
  Schaar]{curth2020estimating}
A.~Curth, A.~M. Alaa, and M.~van~der Schaar.
\newblock Estimating structural target functions using machine learning and
  influence functions.
\newblock \emph{arXiv preprint arXiv:2008.06461}, 2020.

\bibitem[Dao et~al.(2020)Dao, Kamath, Syrgkanis, and Mackey]{dao2020knowledge}
T.~Dao, G.~M. Kamath, V.~Syrgkanis, and L.~Mackey.
\newblock Knowledge distillation as semiparametric inference.
\newblock In \emph{International Conference on Learning Representations}, 2020.

\bibitem[Daume~III and Marcu(2006)]{daume2006domain}
H.~Daume~III and D.~Marcu.
\newblock Domain adaptation for statistical classifiers.
\newblock \emph{Journal of artificial Intelligence research}, 26:\penalty0
  101--126, 2006.

\bibitem[Delyon and Juditsky(1996)]{delyon1996minimax}
B.~Delyon and A.~Juditsky.
\newblock On minimax wavelet estimators.
\newblock \emph{Applied and Computational Harmonic Analysis}, 3\penalty0
  (3):\penalty0 215--228, 1996.

\bibitem[D{\'\i}az and van~der Laan(2013)]{diaz2013targeted}
I.~D{\'\i}az and M.~J. van~der Laan.
\newblock Targeted data adaptive estimation of the causal dose--response curve.
\newblock \emph{Journal of Causal Inference}, 1\penalty0 (2):\penalty0
  171--192, 2013.

\bibitem[Dud{\'\i}k et~al.(2011)Dud{\'\i}k, Langford, and Li]{dudik2011doubly}
M.~Dud{\'\i}k, J.~Langford, and L.~Li.
\newblock Doubly robust policy evaluation and learning.
\newblock In \emph{Proceedings of the 28th International Conference on
  International Conference on Machine Learning}, pages 1097--1104. Omnipress,
  2011.

\bibitem[Farrell et~al.(2021)Farrell, Liang, and Misra]{farrell2018deep}
M.~H. Farrell, T.~Liang, and S.~Misra.
\newblock Deep neural networks for estimation and inference.
\newblock \emph{Econometrica}, 89\penalty0 (1):\penalty0 181--213, 2021.

\bibitem[Foster et~al.(2018)Foster, Kale, Luo, Mohri, and
  Sridharan]{foster2018logistic}
D.~J. Foster, S.~Kale, H.~Luo, M.~Mohri, and K.~Sridharan.
\newblock Logistic regression: The importance of being improper.
\newblock \emph{Conference on Learning Theory}, 2018.

\bibitem[Friedberg et~al.(2020)Friedberg, Tibshirani, Athey, and
  Wager]{friedberg2018}
R.~Friedberg, J.~Tibshirani, S.~Athey, and S.~Wager.
\newblock Local linear forests.
\newblock \emph{Journal of Computational and Graphical Statistics}, 30\penalty0
  (2):\penalty0 503--517, 2020.

\bibitem[Gin{\'e} and Koltchinskii(2006)]{gine2006concentration}
E.~Gin{\'e} and V.~Koltchinskii.
\newblock Concentration inequalities and asymptotic results for ratio type
  empirical processes.
\newblock \emph{The Annals of Probability}, 34\penalty0 (3):\penalty0
  1143--1216, 2006.

\bibitem[{Golowich} et~al.(2018){Golowich}, {Rakhlin}, and
  {Shamir}]{golowich2017size}
N.~{Golowich}, A.~{Rakhlin}, and O.~{Shamir}.
\newblock Size-independent sample complexity of neural networks.
\newblock \emph{Conference on Learning Theory}, 2018.

\bibitem[Graham(2011)]{graham2011efficiency}
B.~S. Graham.
\newblock Efficiency bounds for missing data models with semiparametric
  restrictions.
\newblock \emph{Econometrica}, 79\penalty0 (2):\penalty0 437--452, 2011.

\bibitem[Guntuboyina and Sen(2015)]{guntuboyina2015global}
A.~Guntuboyina and B.~Sen.
\newblock Global risk bounds and adaptation in univariate convex regression.
\newblock \emph{Probability Theory and Related Fields}, 163\penalty0
  (1-2):\penalty0 379--411, 2015.

\bibitem[Hall et~al.(2005)Hall, Horowitz, et~al.]{hall2005nonparametric}
P.~Hall, J.~L. Horowitz, et~al.
\newblock Nonparametric methods for inference in the presence of instrumental
  variables.
\newblock \emph{The Annals of Statistics}, 33\penalty0 (6):\penalty0
  2904--2929, 2005.

\bibitem[Hanneke(2014)]{hanneke2014theory}
S.~Hanneke.
\newblock Theory of disagreement-based active learning.
\newblock \emph{Foundations and Trends{\textregistered} in Machine Learning},
  7\penalty0 (2-3):\penalty0 131--309, 2014.

\bibitem[Hastie et~al.(2015)Hastie, Tibshirani, and
  Wainwright]{hastie2015statistical}
T.~Hastie, R.~Tibshirani, and M.~Wainwright.
\newblock \emph{Statistical learning with sparsity: the lasso and
  generalizations}.
\newblock CRC press, 2015.

\bibitem[Haussler(1995)]{haussler1995sphere}
D.~Haussler.
\newblock Sphere packing numbers for subsets of the boolean n-cube with bounded
  vapnik-chervonenkis dimension.
\newblock \emph{Journal of Combinatorial Theory, Series A}, 69\penalty0
  (2):\penalty0 217--232, 1995.

\bibitem[Hirshberg and Wager(2021)]{hirshberg2017augmented}
D.~A. Hirshberg and S.~Wager.
\newblock Augmented minimax linear estimation.
\newblock \emph{The Annals of Statistics}, 49\penalty0 (6):\penalty0
  3206--3227, 2021.

\bibitem[Ibragimov and Has'Minskii(1981)]{ibragimov1981statistical}
I.~A. Ibragimov and R.~Z. Has'Minskii.
\newblock \emph{Statistical estimation: asymptotic theory}.
\newblock Springer-Verlag, New York, 1981.

\bibitem[Jiang and Zhai(2007)]{jiang2007instance}
J.~Jiang and C.~Zhai.
\newblock Instance weighting for domain adaptation in {NLP}.
\newblock In \emph{Proceedings of the 45th annual meeting of the association of
  computational linguistics}, pages 264--271, 2007.

\bibitem[Kallus and Zhou(2018)]{kallus2018policy}
N.~Kallus and A.~Zhou.
\newblock Policy evaluation and optimization with continuous treatments.
\newblock In \emph{International Conference on Artificial Intelligence and
  Statistics}, pages 1243--1251, 2018.

\bibitem[Kennedy(2016)]{kennedy2016semiparametric}
E.~H. Kennedy.
\newblock Semiparametric theory and empirical processes in causal inference.
\newblock In \emph{Statistical causal inferences and their applications in
  public health research}, pages 141--167. Springer, 2016.

\bibitem[Kennedy(2020)]{kennedy2020optimal}
E.~H. Kennedy.
\newblock Optimal doubly robust estimation of heterogeneous causal effects.
\newblock \emph{arXiv preprint arXiv:2004.14497}, 2020.

\bibitem[Kennedy et~al.(2017)Kennedy, Ma, McHugh, and Small]{kennedy2017non}
E.~H. Kennedy, Z.~Ma, M.~D. McHugh, and D.~S. Small.
\newblock Non-parametric methods for doubly robust estimation of continuous
  treatment effects.
\newblock \emph{Journal of the Royal Statistical Society: Series B (Statistical
  Methodology)}, 79\penalty0 (4):\penalty0 1229--1245, 2017.

\bibitem[Kennedy et~al.(2019)Kennedy, Lorch, and Small]{kennedy2019robust}
E.~H. Kennedy, S.~Lorch, and D.~S. Small.
\newblock Robust causal inference with continuous instruments using the local
  instrumental variable curve.
\newblock \emph{Journal of the Royal Statistical Society: Series B (Statistical
  Methodology)}, 81\penalty0 (1):\penalty0 121--143, 2019.

\bibitem[Kerkyacharian et~al.(2001)Kerkyacharian, Lepski, and
  Picard]{kerkyacharian2001nonlinear}
G.~Kerkyacharian, O.~Lepski, and D.~Picard.
\newblock Nonlinear estimation in anisotropic multi-index denoising.
\newblock \emph{Probability theory and related fields}, 121\penalty0
  (2):\penalty0 137--170, 2001.

\bibitem[Kerkyacharian et~al.(2008)Kerkyacharian, Lepski, and
  Picard]{kerkyacharian2008nonlinear}
G.~Kerkyacharian, O.~Lepski, and D.~Picard.
\newblock Nonlinear estimation in anisotropic multi-index denoising. sparse
  case.
\newblock \emph{Theory of Probability \& Its Applications}, 52\penalty0
  (1):\penalty0 58--77, 2008.

\bibitem[Klaassen(1987)]{klaassen1987consistent}
C.~A. Klaassen.
\newblock Consistent estimation of the influence function of locally
  asymptotically linear estimators.
\newblock \emph{The Annals of Statistics}, pages 1548--1562, 1987.

\bibitem[Koltchinskii(2011)]{koltchinskii2011introduction}
V.~Koltchinskii.
\newblock \emph{Oracle Inequalities in Empirical Risk Minimization and Sparse
  Recovery Problems}.
\newblock Springer, 2011.

\bibitem[Koltchinskii and Panchenko(2000)]{koltchinskii2000rademacher}
V.~Koltchinskii and D.~Panchenko.
\newblock {Rademacher processes and bounding the risk of function learning}.
\newblock \emph{High Dimensional Probability II}, 47:\penalty0 443--459, 2000.

\bibitem[Kosorok(2008)]{kosorok2008introduction}
M.~R. Kosorok.
\newblock \emph{Introduction to empirical processes and semiparametric
  inference.}
\newblock Springer, 2008.

\bibitem[K{\"u}nzel et~al.(2019)K{\"u}nzel, Sekhon, Bickel, and
  Yu]{kunzel2017meta}
S.~R. K{\"u}nzel, J.~S. Sekhon, P.~J. Bickel, and B.~Yu.
\newblock Metalearners for estimating heterogeneous treatment effects using
  machine learning.
\newblock \emph{Proceedings of the National Academy of Sciences}, 116\penalty0
  (10):\penalty0 4156--4165, 2019.

\bibitem[Lecu{\'e} and Mendelson(2016)]{lecue2013learning}
G.~Lecu{\'e} and S.~Mendelson.
\newblock Learning subgaussian classes: Upper and minimax bounds.
\newblock In \emph{Topics in Learning Theory}. Societe Mathematique de France,
  2016.

\bibitem[Lehmann and Casella(2006)]{lehmann2006theory}
E.~L. Lehmann and G.~Casella.
\newblock \emph{Theory of point estimation}.
\newblock Springer Science \& Business Media, 2006.

\bibitem[Lepski(1992)]{lepskii1992asymptotically}
O.~Lepski.
\newblock Asymptotically minimax adaptive estimation. {I}: Upper bounds.
  optimally adaptive estimates.
\newblock \emph{Theory of Probability \& Its Applications}, 36\penalty0
  (4):\penalty0 682--697, 1992.

\bibitem[Levit(1976)]{levit1976efficiency}
B.~Y. Levit.
\newblock On the efficiency of a class of non-parametric estimates.
\newblock \emph{Theory of Probability \& Its Applications}, 20\penalty0
  (4):\penalty0 723--740, 1976.

\bibitem[Liang et~al.(2015)Liang, Rakhlin, and Sridharan]{liang2015learning}
T.~Liang, A.~Rakhlin, and K.~Sridharan.
\newblock Learning with square loss: Localization through offset rademacher
  complexity.
\newblock In \emph{Proceedings of The 28th Conference on Learning Theory},
  pages 1260--1285, 2015.

\bibitem[Mackey et~al.(2018)Mackey, Syrgkanis, and Zadik]{mackey2017orthogonal}
L.~Mackey, V.~Syrgkanis, and I.~Zadik.
\newblock Orthogonal machine learning: Power and limitations.
\newblock In \emph{International Conference on Machine Learning}, pages
  3375--3383, 2018.

\bibitem[Mansour et~al.(2009)Mansour, Mohri, and
  Rostamizadeh]{mansour2009domain}
Y.~Mansour, M.~Mohri, and A.~Rostamizadeh.
\newblock Domain adaptation: Learning bounds and algorithms.
\newblock In \emph{22nd Conference on Learning Theory (COLT) 2009}, 2009.

\bibitem[Maurer(2016)]{maurer2016vector}
A.~Maurer.
\newblock A vector-contraction inequality for {Rademacher} complexities.
\newblock In \emph{International Conference on Algorithmic Learning Theory},
  pages 3--17. Springer, 2016.

\bibitem[Maurer and Pontil(2009)]{maurer2009empirical}
A.~Maurer and M.~Pontil.
\newblock Empirical {Bernstein} bounds and sample variance penalization.
\newblock In \emph{The 22nd Conference on Learning Theory (COLT)}, 2009.

\bibitem[Mendelson(2002)]{mendelson2002improving}
S.~Mendelson.
\newblock Improving the sample complexity using global data.
\newblock \emph{IEEE transactions on Information Theory}, 48\penalty0
  (7):\penalty0 1977--1991, 2002.

\bibitem[Mendelson(2011)]{mendelson2011discrepancy}
S.~Mendelson.
\newblock Discrepancy, chaining and subgaussian processes.
\newblock \emph{The Annals of Probability}, 39\penalty0 (3):\penalty0
  985--1026, 2011.

\bibitem[Mendelson(2014)]{mendelson2014learning}
S.~Mendelson.
\newblock Learning without concentration.
\newblock In \emph{Conference on Learning Theory (COLT)}, pages 25--39, 2014.

\bibitem[Mendelson and Neeman(2010)]{mendelson2010regularization}
S.~Mendelson and J.~Neeman.
\newblock Regularization in kernel learning.
\newblock \emph{The Annals of Statistics}, 38\penalty0 (1):\penalty0 526--565,
  2010.

\bibitem[Newey(1994)]{newey1994asymptotic}
W.~K. Newey.
\newblock The asymptotic variance of semiparametric estimators.
\newblock \emph{Econometrica: Journal of the Econometric Society}, pages
  1349--1382, 1994.

\bibitem[Newey and Powell(2003)]{newey2003instrumental}
W.~K. Newey and J.~L. Powell.
\newblock Instrumental variable estimation of nonparametric models.
\newblock \emph{Econometrica}, 71\penalty0 (5):\penalty0 1565--1578, 2003.

\bibitem[Neyman(1959)]{neyman1959optimal}
J.~Neyman.
\newblock Optimal asymptotic tests of composite hypotheses.
\newblock \emph{Probability and statsitics}, pages 213--234, 1959.

\bibitem[Neyman(1979)]{neyman1979c}
J.~Neyman.
\newblock $c(\alpha)$ tests and their use.
\newblock \emph{Sankhy{\=a}: The Indian Journal of Statistics, Series A}, pages
  1--21, 1979.

\bibitem[Nie and Wager(2021)]{nie2017quasi}
X.~Nie and S.~Wager.
\newblock Quasi-oracle estimation of heterogeneous treatment effects.
\newblock \emph{Biometrika}, 108\penalty0 (2):\penalty0 299--319, 2021.

\bibitem[Ning et~al.(2020)Ning, Sida, and Imai]{ning2018robust}
Y.~Ning, P.~Sida, and K.~Imai.
\newblock Robust estimation of causal effects via a high-dimensional covariate
  balancing propensity score.
\newblock \emph{Biometrika}, 107\penalty0 (3):\penalty0 533--554, 2020.

\bibitem[Oprescu et~al.(2019)Oprescu, Syrgkanis, and Wu]{oprescu2018orthogonal}
M.~Oprescu, V.~Syrgkanis, and Z.~S. Wu.
\newblock Orthogonal random forest for causal inference.
\newblock In \emph{International Conference on Machine Learning}, pages
  4932--4941, 2019.

\bibitem[Pfanzagl(1982)]{pfanzagl1982contributions}
J.~Pfanzagl.
\newblock Contributions to a general asymptotic statistical theory.
\newblock 1982.

\bibitem[Qian and Murphy(2011)]{qian2011performance}
M.~Qian and S.~A. Murphy.
\newblock Performance guarantees for individualized treatment rules.
\newblock \emph{Annals of statistics}, 39\penalty0 (2):\penalty0 1180, 2011.

\bibitem[Rakhlin and Sridharan(2012)]{StatNotes2012}
A.~Rakhlin and K.~Sridharan.
\newblock Statistical learning and sequential prediction, 2012.
\newblock Preprint. Available at {\footnotesize
  \url{http://www.mit.edu/~rakhlin/courses/stat928/stat928_notes.pdf}}.

\bibitem[Rakhlin et~al.(2017)Rakhlin, Sridharan, and
  Tsybakov]{rakhlin2017empirical}
A.~Rakhlin, K.~Sridharan, and A.~B. Tsybakov.
\newblock Empirical entropy, minimax regret and minimax risk.
\newblock \emph{Bernoulli}, 23\penalty0 (2):\penalty0 789--824, 2017.

\bibitem[Robins et~al.(2008)Robins, Li, Tchetgen, and van~der
  Vaart]{robins2008higher}
J.~Robins, L.~Li, E.~Tchetgen, and A.~van~der Vaart.
\newblock Higher order influence functions and minimax estimation of nonlinear
  functionals.
\newblock In \emph{Probability and statistics: essays in honor of David A.
  Freedman}, pages 335--421. Institute of Mathematical Statistics, 2008.

\bibitem[Robins and Rotnitzky(1995)]{robins1995semiparametric}
J.~M. Robins and A.~Rotnitzky.
\newblock Semiparametric efficiency in multivariate regression models with
  missing data.
\newblock \emph{Journal of the American Statistical Association}, 90\penalty0
  (429):\penalty0 122--129, 1995.

\bibitem[Robins and Rotnitzky(2001)]{robins2001comment}
J.~M. Robins and A.~Rotnitzky.
\newblock Comment on the {Bickel} and {Kwon} article,{“Inference for
  semiparametric models: Some questions and an answer”}.
\newblock \emph{Statistica Sinica}, 11\penalty0 (4):\penalty0 920--936, 2001.

\bibitem[Robins et~al.(1994)Robins, Rotnitzky, and Zhao]{robins1994estimation}
J.~M. Robins, A.~Rotnitzky, and L.~P. Zhao.
\newblock Estimation of regression coefficients when some regressors are not
  always observed.
\newblock \emph{Journal of the American statistical Association}, 89\penalty0
  (427):\penalty0 846--866, 1994.

\bibitem[Robinson(1988)]{robinson1988root}
P.~M. Robinson.
\newblock Root-n-consistent semiparametric regression.
\newblock \emph{Econometrica: Journal of the Econometric Society}, pages
  931--954, 1988.

\bibitem[Rosenbaum and Rubin(1983)]{rosenbaum1983central}
P.~R. Rosenbaum and D.~B. Rubin.
\newblock The central role of the propensity score in observational studies for
  causal effects.
\newblock \emph{Biometrika}, 70\penalty0 (1):\penalty0 41--55, 1983.

\bibitem[Rubin and van~der Laan(2005)]{rubin2005general}
D.~Rubin and M.~J. van~der Laan.
\newblock A general imputation methodology for nonparametric regression with
  censored data.
\newblock 2005.

\bibitem[Rubin and van~der Laan(2007)]{rubin2007doubly}
D.~Rubin and M.~J. van~der Laan.
\newblock A doubly robust censoring unbiased transformation.
\newblock \emph{The international journal of biostatistics}, 3\penalty0 (1),
  2007.

\bibitem[Scharfstein et~al.(1999)Scharfstein, Rotnitzky, and
  Robins]{scharfstein1999theory}
D.~O. Scharfstein, A.~Rotnitzky, and J.~M. Robins.
\newblock Rejoinder-{Adjusting} for nonignorable drop-out using semiparametric
  nonresponse models.
\newblock \emph{Journal of the American Statistical Association}, 94\penalty0
  (448):\penalty0 1135--1146, 1999.

\bibitem[Semenova and Chernozhukov(2021)]{semenova2021debiased}
V.~Semenova and V.~Chernozhukov.
\newblock Debiased machine learning of conditional average treatment effects
  and other causal functions.
\newblock \emph{The Econometrics Journal}, 24\penalty0 (2):\penalty0 264--289,
  2021.

\bibitem[Shalev-Shwartz and Ben-David(2014)]{shalev2014understanding}
S.~Shalev-Shwartz and S.~Ben-David.
\newblock \emph{Understanding machine learning: From theory to algorithms}.
\newblock Cambridge university press, 2014.

\bibitem[Shimodaira(2000)]{shimodaira2000improving}
H.~Shimodaira.
\newblock Improving predictive inference under covariate shift by weighting the
  log-likelihood function.
\newblock \emph{Journal of statistical planning and inference}, 90\penalty0
  (2):\penalty0 227--244, 2000.

\bibitem[Srebro et~al.(2010)Srebro, Sridharan, and
  Tewari]{srebro2010smoothness}
N.~Srebro, K.~Sridharan, and A.~Tewari.
\newblock Smoothness, low noise and fast rates.
\newblock In \emph{Advances in Neural Information Processing Systems}, pages
  2199--2207, 2010.

\bibitem[Stone(1980)]{stone1980optimal}
C.~J. Stone.
\newblock Optimal rates of convergence for nonparametric estimators.
\newblock \emph{The annals of Statistics}, pages 1348--1360, 1980.

\bibitem[Stone(1982)]{stone1982optimal}
C.~J. Stone.
\newblock Optimal global rates of convergence for nonparametric regression.
\newblock \emph{The annals of statistics}, pages 1040--1053, 1982.

\bibitem[Swaminathan and
  Joachims(2015{\natexlab{a}})]{swaminathan2015counterfactual}
A.~Swaminathan and T.~Joachims.
\newblock Counterfactual risk minimization: Learning from logged bandit
  feedback.
\newblock In \emph{International Conference on Machine Learning}, pages
  814--823, 2015{\natexlab{a}}.

\bibitem[Swaminathan and Joachims(2015{\natexlab{b}})]{swaminathan2015self}
A.~Swaminathan and T.~Joachims.
\newblock The self-normalized estimator for counterfactual learning.
\newblock In \emph{Advances in Neural Information Processing Systems}, pages
  3231--3239, 2015{\natexlab{b}}.

\bibitem[Tsiatis(2007)]{tsiatis2007semiparametric}
A.~Tsiatis.
\newblock \emph{Semiparametric theory and missing data}.
\newblock Springer Science \& Business Media, 2007.

\bibitem[Tsybakov(1998)]{tsybakov1998pointwise}
A.~B. Tsybakov.
\newblock Pointwise and sup-norm sharp adaptive estimation of functions on the
  {Sobolev} classes.
\newblock \emph{The Annals of Statistics}, 26\penalty0 (6):\penalty0
  2420--2469, 1998.

\bibitem[van~der Laan and Dudoit(2003)]{van2003unified}
M.~J. van~der Laan and S.~Dudoit.
\newblock Unified cross-validation methodology for selection among estimators
  and a general cross-validated adaptive epsilon-net estimator: Finite sample
  oracle inequalities and examples.
\newblock 2003.

\bibitem[van~der Laan and Luedtke(2014)]{van2014targeted}
M.~J. van~der Laan and A.~R. Luedtke.
\newblock Targeted learning of an optimal dynamic treatment, and statistical
  inference for its mean outcome.
\newblock 2014.

\bibitem[van~der Laan and Robins(2003)]{van2003unifiedb}
M.~J. van~der Laan and J.~M. Robins.
\newblock \emph{Unified methods for censored longitudinal data and causality}.
\newblock Springer Science \& Business Media, 2003.

\bibitem[van~der Laan and Rose(2011)]{van2011targeted}
M.~J. van~der Laan and S.~Rose.
\newblock \emph{Targeted learning: causal inference for observational and
  experimental data}.
\newblock Springer Science \& Business Media, 2011.

\bibitem[van~der Laan and Rubin(2006)]{van2006targeted}
M.~J. van~der Laan and D.~Rubin.
\newblock Targeted maximum likelihood learning.
\newblock \emph{The International Journal of Biostatistics}, 2\penalty0 (1),
  2006.

\bibitem[van~der Laan et~al.(2006)van~der Laan, Dudoit, and van~der
  Vaart]{van2006cross}
M.~J. van~der Laan, S.~Dudoit, and A.~van~der Vaart.
\newblock The cross-validated adaptive epsilon-net estimator.
\newblock \emph{Statistics \& Decisions. International Mathematical Journal for
  Stochastic Methods and Models}, 24\penalty0 (3):\penalty0 373--395, 2006.

\bibitem[van~der Laan et~al.(2007)van~der Laan, Polley, and
  Hubbard]{van2007super}
M.~J. van~der Laan, E.~C. Polley, and A.~E. Hubbard.
\newblock Super learner.
\newblock \emph{Statistical applications in genetics and molecular biology},
  6\penalty0 (1), 2007.

\bibitem[van~der Vaart(2000)]{van2000asymptotic}
A.~van~der Vaart.
\newblock \emph{Asymptotic statistics}, volume~3.
\newblock Cambridge university press, 2000.

\bibitem[van~der Vaart and van~der Laan(2006)]{van2006estimating}
A.~van~der Vaart and M.~J. van~der Laan.
\newblock Estimating a survival distribution with current status data and
  high-dimensional covariates.
\newblock \emph{The International Journal of Biostatistics}, 2\penalty0 (1),
  2006.

\bibitem[Vapnik(1995)]{vapnik1995nature}
V.~N. Vapnik.
\newblock \emph{The nature of statistical learning theory}.
\newblock Springer, 1995.

\bibitem[Wainwright(2019)]{wainwright2019}
M.~J. Wainwright.
\newblock \emph{High-Dimensional Statistics: A Non-Asymptotic Viewpoint}.
\newblock Cambridge Series in Statistical and Probabilistic Mathematics.
  Cambridge University Press, 2019.

\bibitem[Wang et~al.(2021)Wang, Wu, Weimer, and Zhu]{wang2021flaml}
C.~Wang, Q.~Wu, M.~Weimer, and E.~Zhu.
\newblock Flaml: A fast and lightweight automl library.
\newblock In \emph{MLSys}, 2021.

\bibitem[Wang et~al.(2010)Wang, Rotnitzky, and Lin]{wang2010nonparametric}
L.~Wang, A.~Rotnitzky, and X.~Lin.
\newblock Nonparametric regression with missing outcomes using weighted kernel
  estimating equations.
\newblock \emph{Journal of the American Statistical Association}, 105\penalty0
  (491):\penalty0 1135--1146, 2010.

\bibitem[Yang and Barron(1999)]{yang1999information}
Y.~Yang and A.~Barron.
\newblock Information-theoretic determination of minimax rates of convergence.
\newblock \emph{Annals of Statistics}, pages 1564--1599, 1999.

\bibitem[Zhang(2002)]{zhang2002covering}
T.~Zhang.
\newblock Covering number bounds of certain regularized linear function
  classes.
\newblock \emph{Journal of Machine Learning Research}, 2\penalty0
  (Mar):\penalty0 527--550, 2002.

\bibitem[Zhao et~al.(2012)Zhao, Zeng, Rush, and Kosorok]{zhao2012estimating}
Y.~Zhao, D.~Zeng, A.~J. Rush, and M.~R. Kosorok.
\newblock Estimating individualized treatment rules using outcome weighted
  learning.
\newblock \emph{Journal of the American Statistical Association}, 107\penalty0
  (499):\penalty0 1106--1118, 2012.

\bibitem[Zheng and van~der Laan(2010)]{zheng2010asymptotic}
W.~Zheng and M.~J. van~der Laan.
\newblock Asymptotic theory for cross-validated targeted maximum likelihood
  estimation.
\newblock 2010.

\bibitem[Zhou et~al.(2017)Zhou, Mayer-Hamblett, Khan, and
  Kosorok]{zhou2017residual}
X.~Zhou, N.~Mayer-Hamblett, U.~Khan, and M.~R. Kosorok.
\newblock Residual weighted learning for estimating individualized treatment
  rules.
\newblock \emph{Journal of the American Statistical Association}, 112\penalty0
  (517):\penalty0 169--187, 2017.

\bibitem[Zhou et~al.(2023)Zhou, Athey, and Wager]{zhou2018offline}
Z.~Zhou, S.~Athey, and S.~Wager.
\newblock Offline multi-action policy learning: Generalization and
  optimization.
\newblock \emph{Operations Research}, 71\penalty0 (1):\penalty0 148--183, 2023.

\end{thebibliography}
}{}

\end{document}